\definecolor{lightgrey}{gray}{0.9}
\newcommand{\expect}[1]{\mathbb{E} \!\left [ #1\right]}
\newcommand{\abs}[1]{\left\lvert #1 \right\rvert}
\newcommand{\innermid}{\nonscript\;\delimsize\vert\nonscript\;}
\newcommand{\activatebar}{%
  \begingroup\lccode`\~=`\|
  \lowercase{\endgroup\let~}\innermid 
  \mathcode`|=\string"8000
}
\newcommand{\C}{\mathbb{C}}
\newcommand{\N}{\mathbb{N}}
\newcommand{\Q}{\mathbb{Q}}
\newcommand{\R}{\mathbb{R}}
\newcommand{\Z}{\mathbb{Z}}
\newcommand{\PP}{\mathbb{P}}
\newcommand{\bcode}{\mathcal{B}}
\newcommand{\Vectk}{\textbf{Vect}_k}
\newcommand{\Linfty}{L^{\infty}}
\newcommand{\pvar}{p-\text{var}}
\newcommand{\Var}{\text{Var}}
\newcommand{\suchthat}{\;\ifnum\currentgrouptype=16 \middle\fi|\;}
\DeclareMathOperator\supp{supp}
\DeclareMathOperator\csch{csch}
\DeclareMathOperator\sech{sech}
\DeclareMathOperator\id{id}
\DeclareMathOperator\erfc{erfc}
\DeclareMathOperator\Pers{Pers}
\DeclareMathOperator\erfi{erfi}
\DeclareMathOperator\Lip{Lip}
\DeclareMathOperator\rk{rank}
\DeclareMathOperator\Dgm{Dgm}
\DeclareMathOperator\Real{Re}
\DeclareMathOperator\Imag{Im}
\DeclareMathOperator\updim{\overline{dim}}
\DeclareMathOperator\Li{Li}
\DeclareMathOperator\Res{Res}
\DeclareMathOperator\Herm{He}
\newcommand{\DS}{\bigoplus}
\newcommand{\al}{\alpha}
\newcommand{\veps}{\varepsilon}
\newcommand{\vp}{\varphi}
\newcommand{\bd}{\begin{displaymath}
\begin{tikzcd}}
\newcommand{\ed}{\end{tikzcd}
\end{displaymath}}
\newcommand{\bmat}{\begin{pmatrix}}
\newcommand{\emat}{\end{pmatrix}}
\newcommand{\be}{\begin{equation*}}
\newcommand{\ee}{\end{equation*}}
\newcommand{\btikz}{\begin{tikzcd}}
\newcommand{\etikz}{\end{tikzcd}}
\newcommand{\bea}{\begin{eqnarray}}
\newcommand{\eea}{\end{eqnarray}}
\newcommand{\bse}{\begin{subequations}}
\newcommand{\ese}{\end{subequations}}
\newcommand{\bc}{\begin{center}}
\newcommand{\ec}{\end{center}}
\newcommand{\nonum}{\nonumber}
\newcommand{\half}{\frac{1}{2}}
\newcommand{\bra}{\langle}
\newcommand{\ket}{\rangle}
\newcommand{\norm}[1]{\left\lVert#1\right\rVert}
\newcommand{\del}{\partial}
\newcommand{\comment}[1]{}
\newcommand{\ie}{{\it i.e. }}
\newcommand{\cf}{{\it cf. }}
\newcommand{\Lag}{{\mathcal{L}}}
\newcommand{\Mel}{{\mathcal{M}}}
\def\blob[#1]{~\parbox{#1mm}{
\begin{fmfgraph*}(#1,#1)
\fmfleft{i1}
\fmfright{o1}
\fmf{phantom}{i1,v1,o1}
\fmfblob{0.4w}{v1}
\end{fmfgraph*}}~}
\def\vertex[#1]{~\parbox{#1mm}{
  \begin{fmfgraph*}(#1,#1)
    \fmfleft{i1, i2}
    \fmfright{o1,o2}
    \fmf{phantom}{i1,i2,o2,o1,i1}
    \fmf{plain}{i1,v,i2}
    \fmf{plain}{o1,v,o2}
    \fmfforce{nw}{i1}
    \fmfforce{sw}{i2}
    \fmfforce{se}{o1}
    \fmfforce{ne}{o2}
    \fmfforce{c}{v}
    \fmfdot{v}
  \end{fmfgraph*}
  }~}
\def\othervertex[#1]{~\parbox{#1mm}{
  \begin{fmfgraph*}(#1,#1)
    \fmfleft{i1, i2}
    \fmfright{o1,o2}
    \fmf{phantom}{i1,i2,o2,o1,i1}
    \fmf{plain}{i1,v}
    \fmf{plain}{i2,v}
    \fmf{plain}{o1,v}
    \fmf{plain}{v,o2}
    \fmfforce{nw}{i1}
    \fmfforce{sw}{i2}
    \fmfforce{se}{o1}
    \fmfforce{ne}{o2}
    \fmfforce{c}{v}
    \fmfblob{0.3w}{v}
  \end{fmfgraph*}
  }~}
\def\bigvertex[#1]{~\parbox{#1mm}{
  \begin{fmfgraph*}(#1,#1)
    \fmfleft{i1, i2}
    \fmfright{o1,o2}
    \fmf{phantom}{i1,i2,o2,o1,i1}
    \fmf{plain}{i1,v,i2}
    \fmf{plain}{o1,v,o2}
    \fmfforce{nw}{i1}
    \fmfforce{sw}{i2}
    \fmfforce{se}{o1}
    \fmfforce{ne}{o2}
    \fmfforce{c}{v}
    \fmfdot{v}
  \end{fmfgraph*}
  }~}
\def\edge[#1]{~\parbox{#1mm}{
  \begin{fmfgraph*}(#1,#1)
    \fmfleft{i}
    \fmfright{o}
    \fmf{plain,l.s=left}{i,o}
  \end{fmfgraph*}
}~}
\def\curlyedge[#1]{~\parbox{#1mm}{
  \begin{fmfgraph*}(#1,#1)
    \fmfleft{i}
    \fmfright{o}
    \fmf{curly,l.s=left}{i,o}
  \end{fmfgraph*}
}~}
\def\wavyedge[#1]{~\parbox{#1mm}{
  \begin{fmfgraph*}(#1,#1)
    \fmfleft{i}
    \fmfright{o}
    \fmf{wiggly,l.s=left}{i,o}
  \end{fmfgraph*}
}~}
\def\tadpole[#1]{~\parbox{#1mm}{
 	\begin{fmfgraph*}(#1,#1)
 		\fmfleft{i1}
 		\fmfright{o1}
		\fmf{plain}{i1,v1,v1,o1}
 		\fmfdot{v1}
 	\end{fmfgraph*}}~}
\def\amputatedtadpole[#1]{\begin{fmfgraph*}(#1,2)
		\fmfleft{i1}
		\fmfright{o1}
		\fmf{phantom}{i1,v1,o1}
		\fmf{plain}{v1,v1}
		\fmfdot{v1}
	\end{fmfgraph*}}
\def\amputatedwigglytadpole[#1]{\begin{fmfgraph*}(#1,2)
		\fmfleft{i1}
		\fmfright{o1}
		\fmf{phantom}{i1,v1,o1}
		\fmf{wiggly}{v1,v1}
		\fmfdot{v1}
	\end{fmfgraph*}}
\def\amputatedcurlytadpole[#1]{\begin{fmfgraph*}(#1,2)
		\fmfleft{i1}
		\fmfright{o1}
		\fmf{phantom}{i1,v1,o1}
		\fmf{curly}{v1,v1}
		\fmfdot{v1}
	\end{fmfgraph*}}
\def\vacfirstord[#1]{~\parbox{#1mm}{
 	\begin{fmfgraph*}(#1,#1)
		\fmfleft{i1,i2}
		\fmfright{o1,o2}
		\fmf{plain,left}{v2,v1}
		\fmf{plain,left}{v1,v2}
		\fmf{plain,left}{v3,v1}
		\fmf{plain,left}{v1,v3}
		\fmf{phantom}{v2,i1}
		\fmf{phantom}{v3,o1}
		\fmfforce{(0,0)}{i1}
		\fmfforce{(w,0)}{o1}
		\fmfforce{(0.5w,0.5h)}{v1}
		\fmfforce{(0,0.5h)}{v2}
		\fmfforce{(w,0.5h)}{v3}
		\fmfforce{nw}{i2}
		\fmfforce{ne}{o2}
		\fmfforce{(0,0.5h)}{i2}
		\fmfforce{(w,0.5h)}{o2}
		\fmfdot{v1}
		\end{fmfgraph*}}~}
\def\doubletadpolehor[#1]{~\parbox{#1mm}{
\begin{fmfgraph}(#1,#1)
	\fmfleft{i1}
	\fmfright{o1}
	\fmf{plain}{i1,v1,v1,v2,v2,o1}
	\fmfdot{v1,v2}
\end{fmfgraph}
}~}
\def\tripletadpolehor[#1]{~\parbox{#1mm}{
\begin{fmfgraph}(#1,#1)
	\fmfleft{i1}
	\fmfright{o1}
	\fmf{plain}{i1,v1,v1,v2,v2,v3,v3,o1}
	\fmfdot{v1,v2,v3}
\end{fmfgraph}
}~}
\def\doubletadpolever[#1]{~\parbox{#1mm}{
\begin{fmfgraph}(#1,#1)
	\fmfleft{i1}
	\fmfright{o1}
	\fmf{plain}{i1,v1}
	\fmf{plain,left}{v1,v2}
	\fmf{plain,left}{v2,v1}
	\fmf{plain}{v1,o1}
	\fmffreeze
	\fmf{plain,left}{v2,v3,v2}
	\fmfforce{c}{v2}
	\fmfforce{(0.5w,h)}{v3}
	\fmfforce{(0.5w,0)}{v1}
	\fmfforce{sw}{i1}
	\fmfforce{se}{o1}
	\fmfdot{v1,v2}
\end{fmfgraph}
}~}
\def\amputateddoubletadpolever[#1]{~\parbox{#1mm}{
	\begin{fmfgraph}(#1,#1)
	\fmfleft{i1}
	\fmfright{o1}
	\fmf{phantom}{i1,v1}
	\fmf{plain,left}{v1,v2}
	\fmf{plain,left}{v2,v1}
	\fmf{phantom}{v1,o1}
	\fmffreeze
	\fmf{plain,left}{v2,v3,v2}
	\fmfforce{c}{v2}
	\fmfforce{(0.5w,h)}{v3}
	\fmfforce{(0.5w,0)}{v1}
	\fmfforce{sw}{i1}
	\fmfforce{se}{o1}
	\fmfdot{v1,v2}
	\end{fmfgraph}~
}
}
\def\sunset[#1]{\parbox{#1mm}{
\begin{fmfgraph}(#1,#1)
\fmfleft{i}
\fmfright{o}
\fmfforce{0,0.5h}{i}
\fmfforce{w,0.5h}{o}
\fmf{plain,tension=5}{i,v1}
\fmf{plain,tension=5}{v2,o}
\fmf{plain,left,tension=0.5}{v1,v2,v1}
\fmf{plain}{v1,v2}
\fmfdot{v1,v2}
\end{fmfgraph}
}}
\def\amputatedsunset[#1]{\parbox{#1mm}{
\begin{fmfgraph}(#1,#1)
\fmfleft{i}
\fmfright{o}
\fmf{phantom,tension=5}{i,v1}
\fmf{phantom,tension=5}{v2,o}
\fmf{plain,left,tension=0.5}{v1,v2,v1}
\fmf{plain}{v1,v2}
\fmfdot{v1,v2}
\end{fmfgraph}
}}
\def\fourpointsecondorder[#1]{~\parbox{#1mm}{
	\begin{fmfgraph}(15,#1)
	\fmfleft{i1,i2}
	\fmfright{o1,o2}
	\fmf{plain}{i1,v1}
	\fmf{plain}{i2,v1}
	\fmf{plain,right}{v1,v2}
	\fmf{plain,left}{v1,v2}
	\fmf{plain}{v2,o1}
	\fmf{plain}{v2,o2}
	\fmfdot{v1,v2}
	\end{fmfgraph}}~
}
\def\fourpointsecondordertwo[#1]{~\parbox{#1mm}{
	\begin{fmfgraph}(15,#1)
	\fmfleft{i1,i2,i3}
	\fmfright{o1}
	\fmf{plain}{i1,v1}
	\fmf{plain}{i2,v1}
	\fmf{plain}{i3,v1}
	\fmf{plain}{v1,v2,v2,o1}
	\fmfdot{v1,v2}
	\end{fmfgraph}
}~}
\newtheorem{theorem}{Theorem}[section]
\theoremstyle{definition}
\newtheorem{definition}[theorem]{Definition}
\newtheorem{convention}[theorem]{Convention}
\newtheorem{notation}[theorem]{Notation}
\newtheorem{lemma}[theorem]{Lemma}
\newtheorem{proposition}[theorem]{Proposition}
\newtheorem{defprop}[theorem]{Definition/Proposition}
\theoremstyle{remark}
\newtheorem{remark}[theorem]{Remark}
\theoremstyle{example}
\newtheorem{corollary}[theorem]{Corollary}
\title{$\zeta$-functions and the topology of superlevel sets of stochastic processes}
\author[1,2,3]{Daniel Perez\thanks{Email: \texttt{daniel.perez@ens.fr}}}
\affil[1]{\footnotesize D\'epartement de math\'ematiques et applications, \'Ecole normale sup\'erieure, CNRS, PSL University, 75005 Paris, France}
\affil[2]{\footnotesize Laboratoire de math\'ematiques d'Orsay, Universit\'e Paris-Saclay, CNRS, 91405 Orsay, France}
\affil[3]{\footnotesize DataShape, Centre Inria Saclay, 91120 Palaiseau, France}
\providecommand{\keywords}[1]{\textbf{\textit{Index terms---}} #1}
\providecommand{\subjclass}[1]{\textbf{MSC2020 Classification---} #1}
\date{\today}
\begin{document}
\maketitle

\begin{abstract}
We describe the topology of superlevel sets of ($\alpha$-stable) L\'evy processes X by introducing so-called stochastic $\zeta$-functions, which are defined in terms of the widely used $\Pers_p$-functional in the theory of persistence modules. The latter share many of the properties commonly attributed to $\zeta$-functions in analytic number theory, among others, we show that for $\alpha$-stable processes, these (tail) $\zeta$-functions always admit a meromorphic extension to the entire complex plane with a single pole at $\alpha$, of known residue and that the analytic properties of these $\zeta$-functions are related to the asymptotic expansion of a dual variable, which counts the number of variations of X of size $\geq \veps$. Finally, using these results, we devise a new statistical parameter test using the topology of these superlevel sets. We further develop an analogous theory, whereby we consider the dual variable to be the number of points in the persistence diagram inside the rectangle $]\infty, x]\times[x+\veps, \infty[$.
\end{abstract}
\keywords{Dirichlet series; zeta functions; stochastic processes; topology of superlevel sets; persistent homology; self-similar processes; Lévy processes} \\
\\
\subjclass{60D05, 62F03, 30B50, 55N31, 60G18}

\tableofcontents

\section{Introduction}
\label{intro}

The problem of the characterization of the topology of superlevel sets of random functions has been a long studied topic in the theory of random fields. While a complete description has been thus far unknown, partial descriptors of the topology of superlevel sets, such as their Euler characteristic, have been described for certain classes of random processes \cite{AdlerTaylor:RandomFields, AzaisWschebor, Baryshnikov_2019,Picard:Trees, ChazalDivol:Brownian, Neveu_1989}. Thus far, the study of the homology of superlevel sets of random functions in dimension one has focused on either smooth random (Gaussian) fields \cite{AdlerTaylor:RandomFields, AzaisWschebor}, or irregular processes which are in some sense canonical, such as Brownian motion \cite{ChazalDivol:Brownian,Picard:Trees,Baryshnikov_2019}. In this paper, following the universality reasoning detailed in \cite[\S 3]{Perez_Pr_2020}, we will adopt the second point of view while enlarging the category of processes considered to objects acting as universal limits of random processes in 1D. 

This work is another stage in a program started in \cite{Perez_2020} and later continued in \cite{Perez_Pr_2020}, which aimed to characterize the barcodes of random functions as completely as possible (in dimension one). To do this, we adopted the tree formalism originally developped by Le Gall \cite{LeGall:Trees, LeGallDuquesne:LevyTrees}, which brings benefits in the probabilistic setting.  This formalism allowed us to partially study the case of Markov or self-similar processes, and to processes admitting the two latter as limits \cite{Perez_Pr_2020}. In this paper, we further develop the theory to describe almost completely the case of ($\al$-stable) L\'evy processes.

The understanding of so-called \textit{topological noise} is an active area of research in Topological Data Analysis (TDA) (\cf section \ref{sec:PH} for a quick introduction, or \cite{Chazal:Persistence, Oudot:Persistence} for a more comprehensive one). This topological noise is characterized by the behaviour of the small bars of the barcode of a function and its role is particularly difficult to grasp. Nonetheless, topological noise has proved useful in a variety of different applications, which seem to exploit the information contained therein, sometimes directly, or indirectly through the use of Wasserstein $p$ metrics on the space or functionals such as $\Pers_p$ \cite{LipschitzStableLpPers,Mileyko_2011,Turner_2014,Divol_2019,Carriere_2017}, despite the absence of general stability results. The program described above is based around the intuition that while it might be hopeless to find a general stability theorem, there should be a form of \textit{statistical stability} of barcodes.

By statistical stability we mean that any two samples drawn from the same probability distribution should have ``close'' or ``similar'' behaviour of its topological noise. For example, this paper shows that, at least in the simple setting of 1D and for a fairly wide range of distributions, this topological noise does indeed exhibit the robustness sought. More precisely, we show that the number of bars of length $\geq \veps$, $N^\veps$, is a dual to $\Pers_p^p$, and that it exhibits the statistical robustness required, such as an almost sure asymptotic behaviour as $\veps \to 0$. 

If this notion of statistical stability is a good one, a natural subsequent question is whether it is possible to differentiate stochastic processes given their topological signature. In this paper, we partially answer this question positively through the development of a statistical test constructed with the functional $N^\veps$, which can differentiate $\al$-stable processes for different values of $\al$. In dimension one, while interesting, this development is unlikely to do better than wavelet analysis or other known techniques (\cf \cite{Dang:2015vx} and the references therein). However, further developments in this direction could eventually lead to ``\textit{topological statistics}'', \textit{i.e.} robust statistical tests for random fields, for which all known techniques do not generalize, but for which barcodes are easily computable.

This discussion and the results of this paper hint at the fact that statistical stability is a correct notion of stability to consider, and there are many open questions to be tackled, some of these questions are:
\begin{itemize}
  \item \textbf{Dimensionality}: are the statistical stability results of this paper particular to dimension one, or do they generalize in some way to higher dimensional random fields?
  \item \textbf{Signal vs. \!noise problem}: Given the regular structure of topological noise shown in this paper, does this allow us to detect the presence of an underlying topological signal? 
  \item \textbf{Statistical robustness of topological noise}: What can we say, quantitatively, about the variation of topological noise induced by perturbations of the distribution of the noise (for instance in some Wasserstein metric)? 
  \item \textbf{Best proxys for topological signatures}: much of this paper was inspired by what is used in practice, namely Wasserstein $p$ metrics and the $\Pers_p$ functional and its dual $N^\veps$. However, there is no guarantee this yields the best possible proxys to answer the two previous problems. In this regard, is it possible to prove or disprove which proxys do best in what context? 
\end{itemize}

From a more probabilistic point of view, this paper introduces so-called $\zeta$-functions associated to a stochastic process, constructed using the $\Pers_p^p$ functional we previously discussed. The main, and perhaps most important, departure from the conventional TDA theory is that we will consider this quantity for complex $p$ for reasons which will become evident throughout this paper, but which are analogous to the ones behind the complexification of the Riemann $\zeta$-function in analytic number theory. There are some similarities to the results of Pitman, Yor and Biane in \cite{pitman_yor_2003,Biane_2001,Pitman_1999} regarding the probabilistic interpretation of the $\zeta$-function and more generally $L$-functions based on connections with some families of infinitely divisible distributions connected to Brownian motion. However, the $\zeta$-functions herein are of different nature to those considered by Pitman, Yor and Biane, as they stem from a different construction. For Brownian motion, we fall back on one of the infinitely divisible distributions considered by these authors. Renewal theory \cite{Gut_2009} intervenes at many different steps in this paper and combined with the results of Pitman, Yor and Biane, it is \textit{a posteriori} perhaps not surprising that the $\zeta$-functions hereby introduced share some of the analycity properties of the Riemann $\zeta$-function.

\subsection{Our contribution}
More precisely, our contribution can be split along the following lines:
\begin{enumerate}
  \item We establish a duality relation with respect to the Mellin transform between the study of $\Pers_p^p$ and the number of leaves of a $\veps$-trimmed tree $\geq \veps$, $N^\veps$ (\cf section \ref{sec:TreeIntegration}). With the help of a correct notion of integration on trees developped in \cite{Picard:Trees}, it is possible to prove an interpolation theorem for $\Pers_p^p$ (proposition \ref{prop:Lyapunovellpp});
  \item We introduce $\zeta$-functions for stochastic processes and for persistent measures (\cf section \ref{sec:Zetafunctions} and section \ref{sec:WassersteinCvg} respectively). We show an interpolation theorem for Wasserstein $p$-distances between diagrams (proposition \ref{prop:interpolationOT}) and a characterization of convergence between diagrams in these Wasserstein distances in terms of the $\zeta$-functions introduced (theorem \ref{thm:zetaWasserstein}).
  \item We show that in the context of $\al$-stable Lévy processes, the associated (tail) $\zeta$-functions always admit a meromorphic extension to the entire complex plane, with a unique pole at $p = \al$ with known residue (theorem \ref{thm:tailzetaLevy}). By duality, this meromorphic extension implies the existence of an asymptotic series for $N^\veps$ as $\veps \to 0$, which we explicitly calculate up to superpolynomial (\ie smaller than any polynomial) corrections (theorem \ref{thm:meroextensionLevy}). An explicit form of the meromorphic continuation of $\hat\zeta$ is shown to be related to the superpolynomial corrections to the asymptotic expansion of theorem \ref{thm:meroextensionLevy} (\cf section \ref{sec:analyticcontinuation}). We also define a generating function for the length of the $k$th longest bar (\cf section \ref{sec:distlengthbars}); 
  \item We give an almost sure result detailing the asymptotics of $N^\veps$ for any continuous semimartingale (proposition \ref{prop:semimartingaleszeta}), which turns out to be related to the quadratic variation of the process, providing further evidence for the link between regularity and $\Pers^p_p$ hinted at in \cite{Picard:Trees,Polterovitch:LaplaceEigenfunctions}. We conclude from this that the $\zeta$-functions associated to continuous semimartingales always have a simple pole at $p=2$;
  \item We apply the theory above to different stochastic processes, such as Brownian motion, reflected Brownian motion. We derive explicit formul{\ae} for the respective $\zeta$-functions of these processes and infer the associated asymptotic expansions of $N^\veps$ (theorems \ref{thm:zetafunctionBrownian}, \ref{thm:ReflectedBrownianZeta} propositions \ref{prop:NvepsExpansionBM} and \ref{prop:ENtvepsforRefBM}) and in the case of Brownian motion, the explicit distribution of the length of the $k$th longest bar (\cf section \ref{sec:BMkthlongestbar}).
  \item We design a statistical test for the parameter $\al$ of $\al$-stable L\'evy processes by using the theory previously described (\cf section \ref{sec:StatisticalTest});
  \item We study local trees and introduce local $\zeta$-functions (\cf section \ref{sec:Zetafunctions}) and deduce formul{\ae} for the number of points contained in the rectangle $]\!-\!\infty, x] \times [x+\veps,\infty[$ of the persistence diagram, $N^{x,x+\veps}$, by introducing the notion of propagators, which, for Markov processes, reduces the problem of the study of $N^{x,x+\veps}$ to the study of hitting times of the process (\cf section \ref{sec:propagatorslocaltrees}), in particular, we link the regularity of these propagators with meromorphic extensions of the local $\zeta$-functions of the process (proposition \ref{prop:MeroExistsLocal});
  \item Finally, we apply the theory above to different stochastic processes, such as Brownian motion, reflected Brownian motion, Brownian motion with drift and the Ornstein-Uhlenbeck process for which explicit computations are possible. We derive explicit formul{\ae} for the respective (local, global) $\zeta$-functions of these processes and infer the associated asymptotic expansions of $N^{x,x+\veps}$ (theorems \ref{thm:Brownianlocalzeta}, \ref{thm:ReflectedBrownianZeta} propositions \ref{prop:RefBMNxxveps} and \ref{prop:ENtvepsforRefBM} and section \ref{sec:BMwithDrift}). We also infer formul{\ae} regarding the Ornstein-Uhlenbeck process, in particular concerning its local time (\cf section \ref{sec:OUprocess}). 
\end{enumerate}

\section{Generalities}
\subsection{The Mellin transform}
\begin{definition}
Let $f$ be a locally integrable function over the ray $]0,\infty[$. The \textbf{Mellin transform} of $f$ is
\be
\Mel[f(x)](s) := \int_0^\infty  x^{s-1} f(x) \; dx \,.
\ee
\end{definition}
Note that $d\log(x) = \frac{dx}{x}$ is the Haar measure of $(\R_+, \times)$. The Mellin transform reflects the Pontryagin duality with respect to this locally compact abelian group. Its theory is analogous to that of the bilateral Laplace transform, as the map $\log : (\R_+,\times) \to (\R,+)$ induces an isomorphism of abelian groups.
\begin{notation}
For convenience, we will also employ the shorthand notation $\Mel[f](s) = f^*(s)$. 
\end{notation}
\begin{definition}
The \textbf{fundamental strip} of $f$, $\bra \al, \beta \ket$ is the maximal set 
\be
\bra \al, \beta \ket := \left\{z \in \C \, |\, \al < \Real(z) < \beta\right\}
\ee
where $f^*(s)$ is well defined.
\end{definition}
The Mellin transform can be inverted by virtue of the following theorem, which follows from the Laplace inversion theorem.
\begin{theorem}[Mellin inversion,\cite{Doetsch_1950,OFFORD_1938}]
\label{thm:MellinInversion}
Let $f$ have fundamental strip $\bra \al, \beta\ket$ and let $c \in\, ]\al, \beta[$. Then 
\begin{enumerate}
  \item If $f$ is integrable and $f^*(c+it)$ is integrable, then for almost every $x$
  \be
  f(x) = \frac{1}{2\pi i} \int_{c-i\infty}^{c+i\infty} f^*(s) x^{-s} \;ds 
  \ee
  If $f$ is continuous, the equality holds everywhere. 
  \item If $f$ is locally integrable and of bounded variation in a neighbourhood of $x$, then 
  \be
  \frac{f(x^+)+f(x^-)}{2} = \lim_{T \to \infty} \frac{1}{2\pi i}\int_{c-iT}^{c+iT} f^*(s) x^{-s} \; ds 
  \ee
\end{enumerate}
\end{theorem}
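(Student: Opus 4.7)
The plan is to reduce both statements to the corresponding Laplace (equivalently, Fourier) inversion theorems via the standard substitution $x = e^{-u}$, which, as noted after the definition, realises the isomorphism $\log : (\R_+,\times) \to (\R,+)$ explicitly on function spaces. Under this change of variable one has $dx/x = -du$, so writing $g(u) := f(e^{-u})$ the Mellin transform becomes
\be
f^*(s) = \int_0^\infty x^{s-1} f(x)\, dx = \int_{-\infty}^{\infty} e^{-su} g(u)\, du,
\ee
which is exactly the bilateral Laplace transform of $g$. The condition that $c \in \,]\alpha,\beta[$ is precisely the statement that $u \mapsto e^{-cu}g(u)$ is integrable, so $f^*(c+it)$ is the Fourier transform (in $t$) of $e^{-cu}g(u)$.

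For part (1), I would then invoke the classical Fourier inversion theorem: if both $e^{-cu}g(u)$ and its Fourier transform $t \mapsto f^*(c+it)$ are in $L^1(\R)$, then for almost every $u$,
\be
e^{-cu} g(u) = \frac{1}{2\pi} \int_{-\infty}^{\infty} e^{itu} f^*(c+it)\, dt.
\ee
Multiplying by $e^{cu}$, reinstating $x = e^{-u}$ so that $e^{(c+it)u} = x^{-(c+it)}$, and substituting $s = c+it$ (with $ds = i\, dt$) yields the contour integral formula on the vertical line $\Real(s) = c$. The upgrade to ``everywhere'' under the continuity assumption is the standard removal of the null set by using continuity of both sides of the identity.

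For part (2), I would use the Dirichlet-style pointwise inversion theorem for the Fourier transform: if $h \in L^1(\R)$ is of bounded variation in a neighbourhood of a point $u_0$, then
\be
\frac{h(u_0^+) + h(u_0^-)}{2} = \lim_{T \to \infty} \frac{1}{2\pi} \int_{-T}^{T} e^{itu_0} \hat h(t)\, dt,
\ee
a symmetric principal-value statement which follows from the Dirichlet kernel representation and a Riemann--Lebesgue argument on the tails. Applying this to $h(u) = e^{-cu}g(u) = e^{-cu} f(e^{-u})$, which inherits bounded variation in a neighbourhood of $u_0 = -\log x$ from $f$ near $x$, and noting that the local one-sided limits transform as $h(u_0^{\pm}) = e^{-cu_0} f(x^{\mp})$ (the sign flip coming from the monotone-decreasing character of $u \mapsto e^{-u}$), reverses to give the claimed symmetric-limit formula after the substitution $s = c+it$.

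The main obstacle, and the only part that is not entirely mechanical, is the pointwise inversion of part (2): the integral there is only conditionally convergent, so one cannot appeal to Fubini-type manipulations as in part (1) and must genuinely analyse the Dirichlet kernel $\frac{\sin(T(u-u_0))}{\pi(u-u_0)}$ on both sides of $u_0$, splitting the integration domain into a shrinking neighbourhood of $u_0$ (where bounded variation is used through a second-mean-value-theorem / Jordan test argument) and its complement (where Riemann--Lebesgue gives decay in $T$). Everything else is bookkeeping of the substitution $x = e^{-u}$ and the identification $s = c+it$.
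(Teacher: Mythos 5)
Your reduction via $x = e^{-u}$ to the bilateral Laplace/Fourier transform, followed by $L^1$ Fourier inversion for part (1) and the Jordan--Dirichlet bounded-variation test for part (2), is correct and is exactly the route the paper indicates: it states the theorem as a classical result (citing Doetsch and Offord) that ``follows from the Laplace inversion theorem'' and gives no further proof. Your bookkeeping of the sign flip $h(u_0^{\pm}) = e^{-cu_0} f(x^{\mp})$ and the identification $s = c+it$ is accurate, so nothing is missing.
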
 
A sufficient condition for the Mellin transform to be well-defined on $\bra \al, \beta \ket$ is that the function is such that
\be
f(x) = O(x^{-\al}) \; \text{as } x \to 0 \quad \text{and} \quad f(x) = O(x^{-\beta}) \; \text{as } x \to \infty \,. 
\ee
In fact, Mellin transforms are a good tool to study asymptotic expansions as suggested by the following theorem. 
\begin{figure}[h!]
\centering
\includegraphics[width=0.45\textwidth]{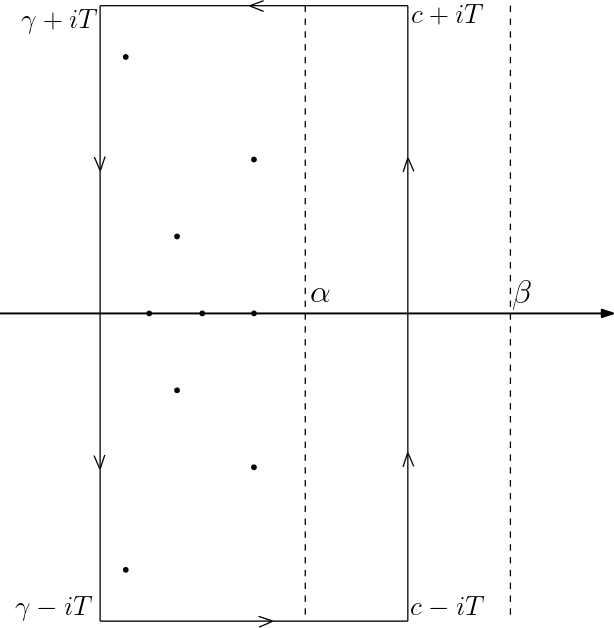}
\caption{Contour for the evaluation of the Bromwich integral of the inverse Mellin transform.}
\label{fig:ContourMellinInverse}
\end{figure}
\begin{theorem}[Fundamental correspondence, \cite{Flajolet_1995}]
\label{thm:FundamentalCorrespondence}
Let $f : \;]0,\infty[ \to \C$ be a continuous function with non-empty fundamental strip $\bra \al, \beta \ket$. Then,
\begin{itemize}
 \item Assume that $f^*(s)$ admits a meromorphic continuation to the strip $\bra \gamma, \beta \ket$ for $\gamma < \al$, that it has only a finite amount of poles there and that it is analytic on $\Real(s) = \gamma$. Assume also that there exists $\eta \in \, ]\al, \beta[$ such that along a denumerable set of horizontal segments with $\abs{\Imag(s)}= T_i$ where $T_i \to \infty$,
 \be
 f^*(s) = O(\abs{s}^{-r}) \text{ with } r>1 \text{ as } \abs{s} \to \infty \text{ and } s \in \bra \gamma, \eta \ket \,.
 \ee
 Indexing the poles on $\bra \gamma, \beta \ket$ by their location $\xi$ and by their order $k$ and denoting $c_{\xi,k}$ the $k$th coefficient in the Laurent expansion around $\xi$ of $f^*(s)$, we have an asymptotic expansion of $f$ around $0$ 
\be
f(x) \sim \sum_{(\xi,k)} c_{\xi,k} \frac{(-1)^{k-1}}{(k-1)!}x^{-\xi} \log^k(x) +O(x^{-\gamma}) \quad \text{as } x \to 0 \,.
\ee 
\item Conversely, if the function $f$ has such an asymptotic expansion around $0$, then $f^*(s)$ has a meromorphic continuation to the strip $\bra \gamma,\beta \ket$. 
\end{itemize}
Furthermore, an analogous statement holds true for asymptotic expansions around $\infty$ and meromorphic continuations beyond $\beta$.
\end{theorem}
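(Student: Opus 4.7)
The theorem is the standard Mellin-asymptotic dictionary (as in Flajolet--Gourdon--Dumas), and the natural strategy is to use the Mellin inversion formula together with contour deformation. For the first implication, I would start from the Bromwich representation
\be
f(x) = \frac{1}{2\pi i} \int_{c-i\infty}^{c+i\infty} f^*(s)\, x^{-s}\, ds
\ee
for some $c \in \,]\alpha,\beta[$, which is valid by Theorem~\ref{thm:MellinInversion}. The idea is then to push the line of integration leftward, from $\Re(s)=c$ to $\Re(s)=\gamma$, by integrating around the rectangular contour in Figure~\ref{fig:ContourMellinInverse} whose vertical sides lie at $\Re(s)=\eta$ and $\Re(s)=\gamma$ and whose horizontal sides lie at $\Im(s)=\pm T_i$.

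The residue theorem applied to $f^*(s)x^{-s}$ on this rectangle equates the difference of the two vertical contour integrals with $2\pi i$ times the sum of the residues at the finitely many poles $\xi$ lying in the strip $\bra \gamma, \eta \ket$. I would next show that the two horizontal sides contribute $o(1)$ as $T_i \to \infty$: by the hypothesis $f^*(s) = O(\abs{s}^{-r})$ with $r>1$ along the chosen horizontal segments, and since $\abs{x^{-s}} = x^{-\Re(s)}$ is uniformly bounded on each horizontal side by a function of $x$ alone, the integrals over the horizontal sides are bounded by a constant times $T_i^{-r}\cdot (\eta-\gamma)$, which tends to $0$ along the subsequence.

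Taking $i \to \infty$ therefore yields
\be
f(x) - \sum_{\xi} \Res_{s=\xi}\bigl(f^*(s) x^{-s}\bigr) = \frac{1}{2\pi i} \int_{\gamma-i\infty}^{\gamma+i\infty} f^*(s)\, x^{-s}\, ds \,,
\ee
and the right-hand side is bounded in modulus by $x^{-\gamma}\cdot \frac{1}{2\pi}\int \abs{f^*(\gamma+it)}\, dt = O(x^{-\gamma})$ using the analyticity and integrability of $f^*$ on $\Re(s)=\gamma$ (this integrability is inherited from the hypotheses, possibly after absorbing any end-of-strip behaviour into the error). Expanding $x^{-s} = x^{-\xi} \sum_{m\geq 0} \frac{(-\log x)^m}{m!} (s-\xi)^m$ around each pole $\xi$ of order $k$ and multiplying by the Laurent series $\sum_j c_{\xi,j}(s-\xi)^{-j}$ of $f^*$, the residue at $\xi$ picks out precisely $c_{\xi,k}\frac{(-1)^{k-1}}{(k-1)!}\, x^{-\xi}\log^{k-1}(x)$, matching the form in the statement.

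For the converse, I would write $f^*(s) = \int_0^1 x^{s-1} f(x)\, dx + \int_1^\infty x^{s-1} f(x)\, dx$. The tail integral is analytic wherever $f$ has sufficient decay at infinity (giving analyticity on $\Re(s) < \beta$), while in the near-zero integral I substitute the asymptotic expansion of $f$: the elementary identity $\Mel[x^{-\xi}\log^{k-1}(x)\mathbf{1}_{[0,1]}](s) = \frac{(-1)^{k-1}(k-1)!}{(s-\xi)^k}$ produces exactly the required poles at $s=\xi$ with the prescribed coefficients, and the Mellin transform of the remainder $R(x) = O(x^{-\gamma})$ is analytic on $\Re(s)>\gamma$, so the two contributions combine to give a meromorphic extension to $\bra \gamma,\beta\ket$. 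The analogous statement at $+\infty$ follows from the symmetry $\Mel[f(1/x)](s) = \Mel[f](-s)/s$ (or equivalently, by deforming the contour rightward). The main obstacle is the control of the horizontal segments: the decay hypothesis is only assumed along a denumerable sequence $\abs{\Im(s)} = T_i$, which is what forces the expansion to be merely asymptotic rather than convergent, and one must truncate the series at finite depth before sending $T_i\to\infty$; this is also what makes the converse direction strictly local in the strip, needing to be carried out one slab at a time when the asymptotic expansion contains infinitely many terms.
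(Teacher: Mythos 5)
Your proposal is correct and follows essentially the same route as the paper's own (sketch) proof: Mellin inversion plus contour deformation with the residues of $f^*(s)x^{-s}$ producing the expansion terms and the line $\Real(s)=\gamma$ giving the $O(x^{-\gamma})$ error, and for the converse the same splitting of $f^*$ into the explicit pole terms, the tail integral over $[1,\infty[$, and the Mellin transform of the $O(x^{-\gamma})$ remainder on $]0,1]$. Your version is in fact more detailed than the paper's sketch (e.g.\ the explicit control of the horizontal segments along the sequence $T_i$ and the residue computation yielding the $x^{-\xi}\log^{k-1}(x)$ terms), and no gap is present.
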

\begin{proof}[Sketch of proof]
It suffices to perform contour integration using the contour of figure \ref{fig:ContourMellinInverse}. The estimates of the theorem allow us to discard the top and bottom integrals and to state that the integral of the path along $\Real(p) =\gamma$ is $O(x^{-\gamma})$. 
Conversely, consider
\be
f(x) \sim \sum_{(\xi,k)} c_{\xi,k} x^{\xi} \log^k(x) + O(x^{-\gamma}) \quad \text{as } x\to 0 
\ee
for some $\gamma < \al$. It follows that
\begin{align*}
f^*(s) &=  \sum_{(\xi,k)} c_{\xi,k}\frac{(-1)^k k!}{(s+\xi)^{k+1}} + \int_1^\infty x^{s-1} f(x) \; dx \\
& + \int_0^1 x^{s-1} \underbrace{\left[f(x) - \sum_{(\xi,k)} c_{\xi,k} x^{\xi} \log^k(x) \right]}_{=O(x^{-\gamma})} \;dx \,,
\end{align*}
which is well-defined on the strip $\bra \gamma, \beta \ket$.
\end{proof}
\begin{table}[h!]
\centering
\begin{tabular}{||c c c ||} 
 \hline
 $f(x)$ & $f^*(s)$ & $\bra \al, \beta \ket$  \\ [3pt]
 \hline
 $x^\nu f(x)$ & $f^*(s+\nu)$ & $\bra \al-\nu , \beta-\nu \ket$  \\ [5pt]
 $f(x^\nu)$ & $\frac{1}{\nu}f^*(\tfrac{s}{\nu})$ & $\bra \nu\al, \nu\beta \ket$  \\ [5pt]
 $f(x^{-1})$ & $f^*(-s)$ & $\bra -\beta, -\al \ket$ \\ [5pt]
 $f(\lambda x)$ & $\lambda^{-s}f^*(s)$ & $\bra \al, \beta \ket$ \\ [5pt]
 $\frac{\del}{\del x} f(x)$ & $-(s-1)f^*(s-1)$ &   \\ [5pt]
 $\int_0^x f(t)\; dt$ & $-\frac{1}{s}f^*(s+1)$ &   \\ [5pt]
 \hline
\end{tabular}
\caption{Functional properties of the Mellin transform}
\label{table:FunctionalPropertiesMellin}
\end{table}
\begin{table}[h!]
\centering
\begin{tabular}{||c c c ||} 
 \hline
 $f(x)$ & $f^*(s)$ & $\bra \al, \beta \ket$  \\ [3pt]
 \hline
 $e^{-x}$ & $\Gamma(s)$ & $\bra 0 , \infty \ket$  \\ [10pt]
 $e^{-x^2}$ & $\half \Gamma(\frac{s}{2})$ & $\bra 0 , \infty \ket $\\ [10pt]
 $\erfc(x)$ & $2^{-s}\frac{\Gamma(s)}{\Gamma(1+\frac{s}{2})}$ & $\bra 0 , \infty \ket$  \\ [10pt]
 $\csch(x)$ & $2^{1-s} \left(2^s-1\right) \Gamma (s)\zeta (s)$ & $\bra 1, \infty \ket$ \\ [10pt]
 $\csch^2(x)$ & $2^{2-s}  \Gamma (s)\zeta(s-1)$ &  $\bra 2, \infty \ket$ \\ [10pt]
 $\frac{1}{e^x -1} $ & $\Gamma(s)\zeta(s)$ & $\bra 1, \infty \ket$ \\ [7pt]
 \hline
\end{tabular}
\caption{A short dictionary of Mellin transforms.}
\label{table:MellinTransforms}
\end{table}

\subsubsection{Analytic continuation}
\label{sec:analyticcontinuation}
As stated by the fundamental correspondence (theorem \ref{thm:FundamentalCorrespondence}), the existence of an asymptotic expansion around $0$ of $f(x)$ entails a meromorphic continuation of $f^*(s)$ to a larger strip. If $f(x)$ admits a converging Laurent series (with finite singular part) on some open disk around the origin, then this extension is in fact valid over all of $\C$, and the residues of the poles of $f^*(s)$ will be related to the Laurent coefficients of $f(x)$. It turns out that in this context, one can even write an explicit integral representation for the extension of $f^*(s)$.

\begin{lemma}[Integral representation of $f^*$]
\label{lemma:IntegralRepresentationoff}
Let $f$ be a meromorphic function admitting a Laurent series at $0$, with singular part of degree $n$, holomorphic on a neighbourhood of $\R_+^*$ and integrable over the Hankel contour (\cf figure \ref{fig:HankelContour}). Suppose further that its fundamental strip $\bra n, \beta \ket$ is non-empty. Then, the function $f^*$ admits a meromorphic continuation on $\bra -\infty, \beta \ket$ given by
\begin{align*}
f^*(s) &= \frac{e^{-i\pi s}}{2i \sin(\pi s)} \oint_H z^{s-1} f(z) \;dz  \\
&= -\frac{\Gamma(s)\Gamma(1-s)}{2\pi i} \oint_H (-z)^{s-1} f(z) \;dz\,,
\end{align*}
where $H$ denotes the Hankel contour.
\end{lemma}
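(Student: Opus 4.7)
The plan is to adapt the classical keyhole-contour argument that yields the Riemann--Hankel formula for $\zeta(s)$. I would establish the identity first in a substrip where both sides make sense as ordinary integrals, and then invoke the identity theorem to upgrade the equality to the full half-plane $\bra -\infty, \beta \ket$ once the right-hand side is shown to be meromorphic there.

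First, I fix $s$ with $n < \Real(s) < \beta$ and consider the closed keyhole contour $\Gamma_{\rho,R}$ made of the segment $[\rho, R]$ just above the positive real axis, a large circle of radius $R$, the segment $[R, \rho]$ just below the axis, and a small circle of radius $\rho$ encircling the origin. Since $z^{s-1}f(z)$ is holomorphic on a neighbourhood of the region enclosed by $\Gamma_{\rho, R}$ (using the branch of $z^{s-1}$ determined by $\arg z \in (0, 2\pi)$, \ie a cut along $\R_+$), Cauchy's theorem gives $\oint_{\Gamma_{\rho, R}} z^{s-1}f(z)\, dz = 0$. I then let $R \to \infty$ and $\rho \to 0$. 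The small-arc contribution is controlled by the Laurent expansion $f(z) = \sum_{k\geq -n} a_k z^k$: on $|z|=\rho$ the integrand is $O(\rho^{\Real(s)-n-1})$, so the integral over the small circle is $O(\rho^{\Real(s)-n})$, which vanishes precisely because $\Real(s) > n$. The large-arc contribution is absorbed into the Hankel-integrability hypothesis together with the upper bound $\Real(s) < \beta$ that controls the decay of $f$ at infinity along the positive real axis.

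In the limit, the two straight-edge contributions realize $f^*(s)$ with two different boundary values of $z^{s-1}$: above the cut $z^{s-1} = x^{s-1}$, while below the cut $z^{s-1} = x^{s-1} e^{2\pi i (s-1)}$. Summing with the correct orientations yields
\be
\oint_H z^{s-1} f(z)\, dz = (e^{2\pi i s}-1)\, f^*(s) = 2i\, e^{i\pi s}\sin(\pi s)\, f^*(s)\,,
\ee
which rearranges into the first stated formula. The second formulation with $(-z)^{s-1}$ is obtained identically, using the branch of $\log(-z)$ with $\arg(-z) \in (-\pi, \pi)$ (so that $\arg(-z) = \mp \pi$ on the upper/lower edges of the cut), followed by the reflection identity $\Gamma(s)\Gamma(1-s) = \pi/\sin(\pi s)$.

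Finally, I would observe that $I(s) := \oint_H z^{s-1} f(z)\, dz$ is an entire function of $s$: by Morera's theorem combined with Fubini, it is enough to verify holomorphicity of the integrand in $s$ for each fixed $z \in H$ (which is immediate) and that $|z^{s-1}f(z)|$ admits a locally $L^1$ dominating function on $H$, uniformly in $s$ over compact subsets of $\bra -\infty, \beta \ket$; the upper bound $\Real(s) < \beta$ is what ensures integrability at $+\infty$ on the Hankel contour. Multiplying the entire function $I(s)$ by $e^{-i\pi s}/(2i\sin(\pi s))$ then yields a meromorphic function on $\bra -\infty, \beta \ket$ with at most simple poles at integers $\leq n$, and by the identity theorem applied in the substrip $\bra n, \beta\ket$ this is the unique meromorphic continuation of $f^*$. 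The main technical nuisance will be a clean justification that the large-arc contribution vanishes on the sole basis of ``integrability over the Hankel contour''; in practice this requires coupling the hypothesis with some mild decay of $f$ in the interior of the cut plane, which is tacit in the statement.
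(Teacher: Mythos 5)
Your computation of the branch-jump factor, the small-circle estimate $O(\rho^{\Real(s)-n})$, the use of Euler's reflection formula, and the final Morera/Fubini-plus-identity-theorem argument for the meromorphic continuation all match what is needed (the last point is in fact spelled out more carefully than in the paper). The problem is the central contour step. You close the contour into a keyhole with a large circle of radius $R$ and invoke Cauchy's theorem, claiming $z^{s-1}f(z)$ is holomorphic on a neighbourhood of the enclosed region. That is not implied by the hypotheses: $f$ is only assumed holomorphic on a neighbourhood of $\R_+^*$ and to admit a Laurent series at $0$; it may have poles elsewhere in the plane, or not even be defined there. Indeed, in the applications later in the paper one takes $f(z)=B(z)=\csch^2(\sqrt{2z})$, which has poles along the negative real axis, all of which would sit inside your keyhole for large $R$. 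Likewise the large-arc contribution cannot be made to vanish from ``integrability over the Hankel contour'': already for the canonical example $f(x)=e^{-x}$, $f^*=\Gamma$, the integrand blows up on the left half of the large circle. Worse, if the closed-keyhole integral were $0$ and both arcs vanished, the two edge integrals would have to cancel, giving $(1-e^{2\pi i s})f^*(s)=0$, which contradicts the very identity you are proving. What you have actually written is a conflation of this lemma with the subsequent one (the functional equation), where the keyhole/Pac-Man completion is legitimate precisely because extra hypotheses — meromorphy on $\C\setminus\R_+$ and decay on large circles — are added, and where the outcome is a residue sum, not zero.

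The repair is to never close the contour. The Hankel contour is an open contour coming from and returning to $+\infty$ along the two sides of the cut: for $s$ in the fundamental strip, use holomorphy of $f$ in a neighbourhood of $\R_+^*$ to collapse the two horizontal legs from height $\pm i\veps$ onto the positive real axis (Cauchy in thin strips hugging the axis), keep your $O(\nu^{\Real(s)-n})$ estimate for the small circle, and read off
\be
\oint_H z^{s-1}f(z)\,dz=(e^{2\pi i (s-1)}-1)\,f^*(s)=2i e^{i\pi s}\sin(\pi s)\,f^*(s)\,,
\ee
with no control whatsoever of $f$ away from the positive axis. Your concluding analyticity-in-$s$ argument then carries over verbatim and yields the stated continuation to $\bra -\infty,\beta\ket$.
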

\begin{figure}[h!]
\centering
\includegraphics[width=0.5\textwidth]{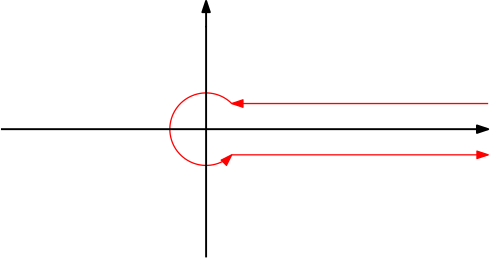}
\caption{The Hankel contour $H$.}
\label{fig:HankelContour}
\end{figure}

\begin{proof}
We start by splitting the Hankel contour into three pieces. 
\begin{enumerate}
  \item A segment from $\infty+i\veps$ to $\nu + i\veps$;
  \item A circle $C_\nu$ around the origin of radius $\nu$;
  \item A segment from $\nu-i\veps$ to $\infty - i\veps$.
\end{enumerate}
For $s \in \bra n, \beta \ket$, $f$ is holomorphic everywhere on this contour, so that we may take $\veps = 0$ according to Cauchy's theorem. Notice also that
\be
\int_{C_\nu} z^{s-1} B(z) \;dz = O(\nu^{\Real(s)-n})  \to 0 \quad \text{as } \nu \to 0 \,.
\ee
It follows that for $\Real(s) > n$
\begin{align*}
\oint_H z^{s-1} f(z) \;dz  &= \lim_{\nu \to 0} \left\{\int_\infty^\nu  + \int_{C_\nu} + \int_{\nu e^{2\pi i}}^{\infty e^{2\pi i}} \right\} z^{s-1} f(z) \;dz  \\
&= (e^{2\pi i (s-1)}-1) \int_{0}^\infty z^{s-1} f(z) \; dz \\
&= 2 i e^{i\pi s} \sin(\pi s) f^*(s) \,,
\end{align*}
as desired. The integral over the complex contour $H$ converges for all $s \in \bra -\infty, \beta \ket \setminus \{n\}$. Finally, the second expression for $f^*$ is obtained through Euler's reflection formula, namely
\be
\Gamma(p)\Gamma(1-p) = \frac{\pi}{\sin(\pi p)} \,,
\ee
which after some simplification yields the desired expression.
\end{proof}
\begin{remark}
If $f$ has fundamental strip $\bra n , \infty \ket$, then the extension given by this procedure holds over $\C$.
\end{remark}
Furthermore, if $f$ posseses a meromorphic continuation to $\C \setminus \R_+$ (\ie we admit the possibility of a branch cut on the positive real axis), then we can find a more explicit formulation for the Hankel representation of $f^*$. 
\begin{lemma}[Functional equation of $f^*$]
\label{lemma:functionalequation}
Suppose $f$ posseses a meromorphic continuation to $\C \setminus \R_+$ and denote $\mathcal{P}$ the set of poles of $f$ not including $0$. Suppose further that $f$ has the following decay condition : for all $s \in \bra n, \beta \ket$ and for some monotone increasing sequence of radii $r_n \to \infty$ as $n \to \infty$,
\be
\int_{C_{r_n,\veps}} \abs{z^{s-1}f(z)} \;dz  \xrightarrow[n \to \infty]{} 0 \,.
\ee
where $C_{r_n,\veps}$ is the circle of radius $r_n$ minus a small (symmetric) arc of length $\veps$ around the positive real axis (\cf figure \ref{fig:HankelPacMan}). Then, 
\be
f^*(s) =  \Gamma(s)\Gamma(1-s) \sum_{z_0 \in \mathcal{P}} \Res((-z)^{s-1}f(z); z_0) \,.
\ee  
\end{lemma}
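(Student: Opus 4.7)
The plan is to deform the Hankel contour outward into the large arc $C_{r_n,\veps}$, collecting the residues of $(-z)^{s-1}f(z)$ at the poles in $\mathcal{P}$ along the way, and then use the decay hypothesis to discard the integral along the outer arc.

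First, I would invoke Lemma \ref{lemma:IntegralRepresentationoff} to rewrite
\[
f^*(s) = -\frac{\Gamma(s)\Gamma(1-s)}{2\pi i} \oint_H (-z)^{s-1} f(z)\, dz .
\]
For each $r_n$, I truncate the Hankel contour to $H_{r_n}$ (entering at $r_n + i0^+$, shrinking to $\nu + i 0^+$, looping counterclockwise around the origin on $C_\nu$, and exiting from $\nu - i0^+$ out to $r_n - i0^+$). Concatenating $H_{r_n}$ with the reverse of $C_{r_n, \veps}$ produces a closed contour in $\C \setminus \R_+$ whose interior is the annular slit region $\{\nu < |z| < r_n\} \setminus \R_+$. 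Choosing $\nu$ small enough that no pole of $\mathcal{P}$ lies in the disk of radius $\nu$ and $r_n$ large enough to eventually enclose all of $\mathcal{P}$, the enclosed poles of $(-z)^{s-1} f(z)$ are precisely those of $f$ in $\mathcal{P}$.

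Next, I apply the residue theorem. Because the contour $H_{r_n} - C_{r_n,\veps}$ traces the inner boundary counterclockwise and the outer boundary clockwise, the enclosed region lies to the right of the traversal, so the orientation is negative. This gives
\[
\oint_{H_{r_n}} (-z)^{s-1} f(z)\, dz - \oint_{C_{r_n,\veps}} (-z)^{s-1} f(z)\, dz = -2\pi i \sum_{\substack{z_0 \in \mathcal{P} \\ \nu < |z_0| < r_n}} \Res\!\left((-z)^{s-1} f(z); z_0\right).
\]
Since $|(-z)^{s-1}| = |z|^{\Real(s)-1} e^{\pm \pi\,\Imag(s-1)}$ differs from $|z^{s-1}|$ only by an $s$-dependent constant on each ray, the hypothesized decay on $C_{r_n,\veps}$ applies to $(-z)^{s-1}f(z)$ as well, so the second integral vanishes as $n \to \infty$. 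Letting $\nu \to 0$, the first integral tends to $\oint_H (-z)^{s-1} f(z)\, dz$, and substitution into the representation above yields the claimed identity.

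The main obstacle is the careful accounting of orientations and branch choices: one must ensure that the closed contour $H_{r_n} - C_{r_n,\veps}$ genuinely lies in the simply connected domain $\C \setminus \R_+$ on which the chosen branch of $(-z)^{s-1}$ is analytic, and track the sign that arises because the natural counterclockwise orientations of both $H$ and $C_{r_n,\veps}$ wind around the origin in the same sense, forcing one of them to be reversed when they are combined. The decay hypothesis and the fact that the two horizontal pieces connecting $H_{r_n}$ and $C_{r_n,\veps}$ can be taken to coincide with parts of the cut (so they cancel automatically) make the rest of the argument essentially routine.
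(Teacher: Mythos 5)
Your argument is correct and is essentially the paper's own proof: the paper likewise completes the Hankel contour into the Pac-Man contour of figure \ref{fig:HankelPacMan}, applies the residue theorem, and uses the decay hypothesis to discard the outer circular contribution, starting from the representation of lemma \ref{lemma:IntegralRepresentationoff}. Your additional bookkeeping (the negative orientation of $H_{r_n}-C_{r_n,\veps}$ and the comparability of $\abs{(-z)^{s-1}}$ with $\abs{z^{s-1}}$ up to a factor $e^{\pi\abs{\Imag s}}$) just fills in details the paper leaves implicit.
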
 
\begin{proof}
The proof relies on the use of the residue theorem by completing the Hankel contour into a Pac-Man (\cf figure \ref{fig:HankelPacMan}), whose circular contribution is going to zero, due to the assumption of the lemma.
\begin{figure}[h!]
\centering
\includegraphics[width=0.5\textwidth]{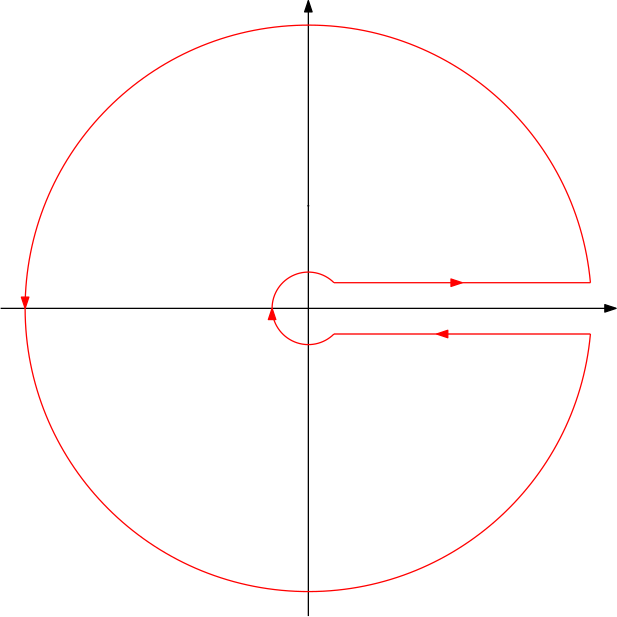}
\caption{The Pac-Man contour.}
\label{fig:HankelPacMan}
\end{figure}
By the residue theorem, we then have 
\begin{align*}
f^*(s) =  \Gamma(s)\Gamma(1-s) \sum_{z_0 \in \mathcal{P}} \Res((-z)^{s-1}f(z); z_0) \,,
\end{align*}
as desired.
\end{proof}

\subsection{Connected components of superlevel sets of stochastic processes}
Let us briefly recall the construction of a tree from a continuous function $f: [0,1] \to \R$. For a more complete description of this, the reader is welcome to consult \cite{LeGall:Trees,Perez_2020}.
\begin{defprop}[\cite{LeGall:Trees}]
Let $x<y \in [0,1]$, the function
\be
d_f(x,y) := f(x) +f(y) -2 \min_{t \in [x,y]} f(t)
\ee
is a pseudo-distance on $[0,1]$ and the quotient metric space
\be
T_f := [0,1]/\{d_f = 0\}
\ee
with distance $d_f$ is a rooted $\R$-tree, whose root coincides with the image in $T_f$ of the point in $[0,1]$ at which $f$ achieves its infimum. 
\end{defprop}
\begin{figure}[h!]
  \centering
    \includegraphics[width=0.8\textwidth]{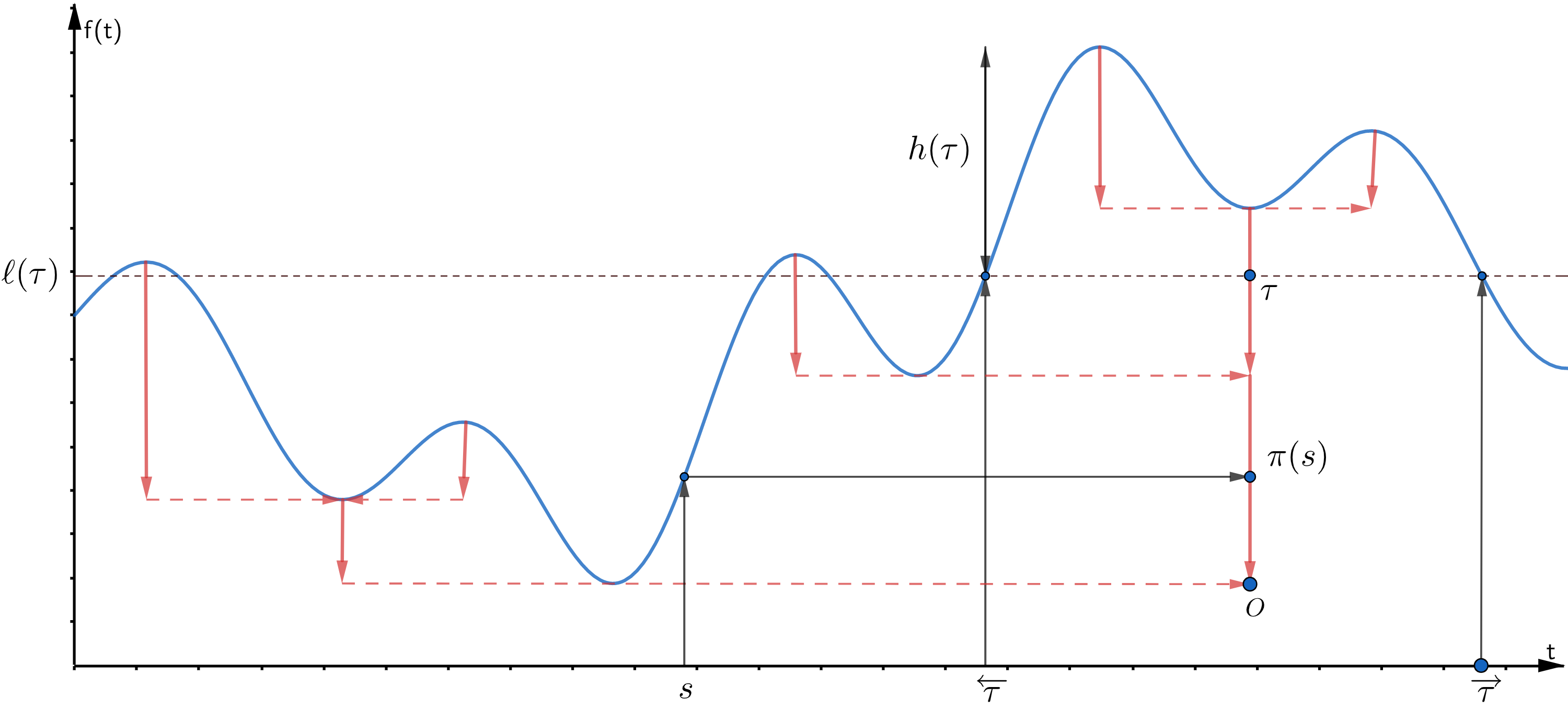}
  \caption{A function $f$ and its associated tree $T_f$ in red.}
\label{fig:treequantities}
\end{figure}
The tree $T_f$ has the particularity that its branches correspond to connected components of the superlevel sets of $f$, as illustrated by figure \ref{fig:treequantities}. 
Let us now introduce the so-called $\veps$-simplified or $\veps$-trimmed tree of $T_f^\veps$.  This object is obtained by ``giving a haircut'' of length $\veps$ to $T_f$. More precisely, if we define a function $h: T_f \to \R$ which to a point $\tau \in T_f$ associates the distance from $\tau$ to the highest leaf above $\tau$ with respect to the filtration on $T_f$ induced by $f$, then
\begin{definition}
Let $\veps \geq 0$. An \textbf{$\veps$-trimming} or \textbf{$\veps$-simplification} of $T_f$ is the metric subspace of $T_f$ defined by
\be
T_f^\veps := \{\tau \in T_f \,\vert \, h(\tau) \geq \veps\}
\ee
\end{definition}
\begin{notation}
Let us denote $N^\veps$ the number of leaves of $T_f^\veps$.
\end{notation} 

% Let us now recall a construction from \cite{Perez_2020}. Starting from $T_f$, we can look at the longest branch (starting from the root) of $T_f$ and remember the minimum and maximum value that $f$ takes along this branch (we will call the difference between this maximum and minimum value the \textbf{length} of this branch). Next, we erase  this longest branch and, on the remaining (rooted) forest, look for the next longest branch. This will be the second longest bar of the barcode. An illustration of this procedure can be found in figure \ref{fig:algorithm}.
% \begin{figure}[h!]
%   \centering
%     \includegraphics[width=0.5\textwidth]{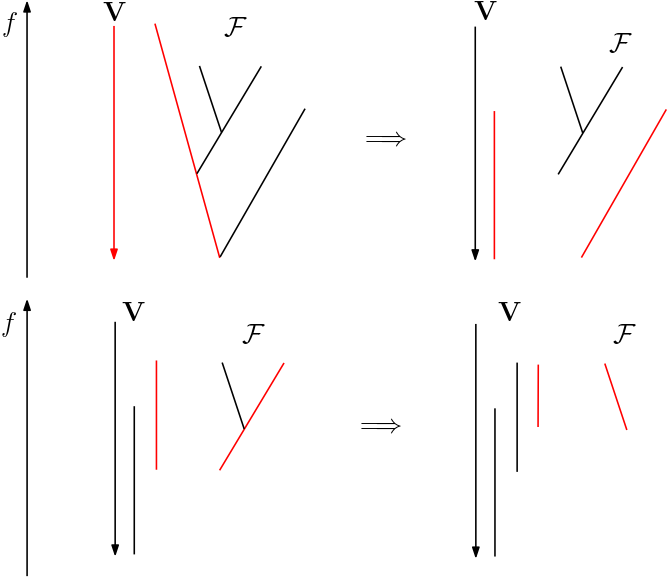}
%   \caption{A depiction of the first steps of the algorithm which assigns a barcode $\bcode(f)$ to a tree $T_f$.}
% \label{fig:algorithm}
% \end{figure}

\subsection{A crashcourse in persistent homology}
\label{sec:PH}
Throughout this section, we will detail and give the ideas behind persistent homology. A proper introduction to this is out of the scope of this paper, so we encourage the reader to consult the following classical references about this topic \cite{Hatcher_2002,MacLane_1963,Chazal:Persistence,Oudot:Persistence}. This section aims nonetheless to give a brief introduction compiling the main results and intuition behind this field. To do so, it is convenient to break down the topic along its title. First, we will briefly recall what homology is and how it can be defined, and then we will explain the \textit{persistent} aspect of persistent homology. It goes without saying that a reader familiar with these concepts may skip this section entirely.

\subsubsection{Homology}
In general, the motivation behind the introduction of objects in algebraic topology such as homology is to study topological spaces through algebra. That is, to attach an algebraic object (such as a module, a group, \textit{etc.}) to a topological space, in such a way that, loosely speaking, this algebraic object remains invariant for any two homeomorphic topological spaces. Furthermore, we would like this invariant to behave well with respect to continuous maps. Namely, if we have a continuous map between two topological spaces $f: X \to Y$, we would like to have an induced morphism at the level of the two invariants we attached to $M$ and $Y$. The most famous such invariant for topological spaces is the fundamental group $\pi_1(X)$ first introduced by Poincaré. Some useful references for further reading are \cite{Hatcher_2002, MacLane_1963}. 

In the terms of category theory, the above discussion is equivalent to saying that we use \textit{functors} between the category of topological spaces, \textbf{Top}, and a category of algebraic objects, such as the category of groups, \textbf{Grp}, or that of modules over some ring $R$, $\textbf{Mod}_R$. In this sense, homology is nothing other than a functor $H_* : \textbf{Top} \to \textbf{Mod}_R$. For our purposes, it is sufficient to consider the ring $R$ to be a field $k$, so that we are really working over the category of vector spaces over this field $\Vectk$. We will not detail the precise definitions of these objects, as we will not really need them, but a good reference as an introduction to category theory is given by Mac Lane in \cite{Mac_Lane_1978}.

Recall that finding such a functor entails attaching a vector space to a topological space, in such a way that continuous maps between topological spaces induce linear maps at the level of vector spaces. To render this practical, let us first focus on \textit{triangulable} spaces. Namely, spaces which are homeomorphic to a simplicial complex (a set of \textit{oriented} simplices glued to one-another along edges, $n$-faces or points). Given a simplicial complex $M$, we can define a so-called \textit{chain complex}, which is nothing other than a sequence of vector spaces, denoted $C_*(M,k)$, called the space of \textit{chains} (the star denotes an index, which we call the \textit{degree}), where $C_n$ is the free $k$-vector space generated by the set of $n$-faces in the simplicial complex. For the sake of notational simplicity, whenever the simplicial complex we are talking about and the field over which we are working on is clear, we may drop $M$ and $k$ and denote $C_*(M,k) = C_*$.

We can define a linear map $\del: C_* \to C_*$ called the \textit{boundary map}, which is defined degree-by-degree as follows. The boundary map $\del$ sends the generator of an $n$-face (an element of $C_n$) to the (signed) sum of the generators of its boundary (which are elements of $C_{n-1}$), where the sign in front of each generator is determined by the compatibility of its orientation with the orientation of the $n$-face. With this definition $\del^2 = 0$, which reflects the fact that the boundary of a boundary is always empty. So, we can see a chain complex as some graded vector space $C_*$ along with the map $\del$. Looking at the restriction of $\del$ to each degree of $C_*$, we can write $\del$ as a chain of morphisms 
\be
\cdots \to C_2 \xrightarrow{\del} C_1 \xrightarrow{\del} C_{0} \xrightarrow{\del} 0  \,,
\ee
with the property that $\del^2=0$. This property implies in particular that $\Imag(\del) \subset \ker(\del)$. We call $\ker(\del)$ the space of \textit{cycles}, reflecting the fact that elements of $\ker(\del)$ tend to be ``loops'' or ``cycles'' of $n$-chains. Homology quantifies exactly which cycles are not boundaries. More precisely, fixing a degree $n$, we can define the $n$th homology group over the field $k$ as
\be
H_n(M,k) = \ker(\del \vert_{C_n})/\Imag(\del \vert_{C_{n+1}})\,,
\ee
where $\del\vert_{C_n}$ denotes the restriction of $\del$ to $C_n$. In some sense, this gives a definition of what we mean by an $n$-dimensional hole. All of this discussion is best illustrated by an example. In order to avoid sign problems, it is often practical to work over $k= \Z_2$, so as to simplify calculations. In general, this is restrictive, but it is enough for our purposes and let us fix the simplicial complex $M$ of figure \ref{fig:simpcomp}.
\begin{figure}[h!]
  \centering
    \includegraphics[width=0.2\textwidth]{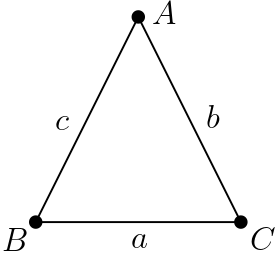}
  \caption{An example of a simplicial complex $M$.}
\label{fig:simpcomp}
\end{figure}
In this case, we see that $C_0 = \bra A, B, C \ket_{\Z_2}$ and $C_1 = \bra a,b,c \ket_{\Z_2}$ with all of the higher chains being $0$. Furthermore, the boundary operator sends 
\be
\del : a \mapsto B+C \;,\; b \mapsto A+C \;, \; c \mapsto A+B \,,
\ee 
with all other generators being sent to zero. From this, it is clear that $\ker(\del\vert_{C_1}) = (a+b+c)\Z_2$, which represents the cycle which loops around the simplicial complex  and $\ker(\del\vert_{C_0})= C_0$. It follows that $H_1(X) = (a+b+c)\Z_2$, so that we indeed detect a hole inside the complex. If we now fill that hole with a cell which fills the triangle (let us note it $\Delta$), we now have $C_2 =\bra \Delta \ket_{\Z_2}$, and its boundary $\del(\Delta)= a+b+c$, so that by filling this ``hole'', we have effectively killed the $H_1$. As for $H_0$, it always quantifies the number of connected components, as (finite) simplicial complexes are connected if and only if they are path connected, so these paths constitute cycles which map any two elements of $C_0$ to each other. In particular, if we suppose that $X$ is path connected, then $\Imag(\del\vert_{C_1})$ is always generated by all the sums of pairs of vertices in $C_0$, so we get one generator of $H_0$ for every connected component. 

At this point, given some triangulable space $Y$, the reader might be worried whether this definition depends on the triangulation we chose for $Y$, but it is a theorem that this is a well-defined invariant of topological spaces, \cf \cite{Hatcher_2002} for details. In fact, we have adopted a rather restrictive point of view throughout this discussion, as in reality we can make sense on how homology can be defined in more general settings \cite{MacLane_1963}.

\subsubsection{Persistent homology}
Now that we have roughly sketched out what homology is, let us introduce the idea behind persistent homology. Once again, we refer the reader to consult the following references if he or she desires a more detailed description of the theory \cite{Chazal:Persistence,Oudot:Persistence}. In this case, instead of dealing with a simple chain complex $C_*(M)$, we induce a filtration on this complex, which is typically done by giving a function on the underlying topological space $M$. For example, if $M$ is a differentiable manifold and $f: M \to \R$ is a smooth function, then we can filter the complex $C_*(M)$ by considering the subsets $M_r = \{f>r\}$ and considering $C_*(M_r)$. We call this filtration of the complex the \textit{superlevel filtration} (an analogous definition can be given for \textit{sublevel filtrations}). Of course, we may then compute the homology of $M_r$ for every $r$, which gives us a family of vector spaces indexed by $r$. However, by the functorial nature of $H_*$, the fact that we have a (continuous) inclusion $i_{r,s}: M_r \xhookrightarrow{} M_s$ for every $r>s$ yields a linear map between the vector spaces $H_*(i_{r,s}): H_*(M_r) \to H_*(M_s)$.

These morphisms are of interest to us, as they tell us about how the homology changes as we vary the level $r$. This motivates the study of the so-called \textit{persistent homology}, which is nothing other than the family of vector spaces $(H_*(M_r))_r$ and the family of morphisms $(H_*(i_{r,s}))_{r>s}$. This is more comfortably expressed in the language of category theory : persistent homology is a \textit{functor} $H_*: \R \to \Vectk$, where $\R$ is seen as a small category (the objects are elements of $\R$ and there is a morphism between $r \to s$ if and only if $r>s$) defined by $H_*(r) = H_*(M_r)$ and $H_*(r\to s) = H_*(i_{r,s})$. 

If the function $f$ is nice enough, for instance $C^1$ and $M$ is compact, the persistent homology induced by the superlevel filtrations of $f$ can be decomposed into so-called \textit{interval modules}. The latter are themselves functors defined as follows. Fixing a field $k$ and if $A$ is an interval of $\R$, then
\be
k_{A}(r) := \begin{cases} k & \text{if } r \in A \\ 0 & \text{else} \end{cases} \quad \quad k_A(r\to s) = \begin{cases} \id & \text{if } r,s \in A \\ 0 & \text{else} \end{cases}
\ee  
The decomposition theorem states the following (\cf Oudot's book \cite{Oudot:Persistence} for a more complete description).
\begin{theorem}[Decomposition theorem, Auslander, Ringel, Tachikawa, Gabriel, Azumaya]
Under some conditions for $M$ and $f$, if $H_*(M,f)$ denotes the persistent homology with values in $\Vectk$, then $H_*(M,f)$ it is isomorphic to a (possibly infinite) direct sum of interval modules. Moreover, this decomposition is unique up to isomorphism and permutation of the terms.
\end{theorem}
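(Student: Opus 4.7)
The plan is to split the proof into existence and uniqueness, and to identify the unstated hypotheses on $(M,f)$ as precisely those that make the persistence module $H_*(M,f) : \R \to \Vectk$ \emph{pointwise finite-dimensional} (pfd), i.e.\ $\dim_k H_*(M_r) < \infty$ for every $r \in \R$. For $M$ compact and $f$ sufficiently tame (for instance Morse, or more generally $q$-tame in the sense of Chazal--de Silva--Oudot) the superlevel set $M_r$ has the homotopy type of a finite CW complex at each level, so this condition is automatic.

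For existence I would first handle the \textbf{tame case}, where $f$ has finitely many homological critical values $c_1 < \cdots < c_n$. Between consecutive $c_i$ the functor $H_*(M,f)$ is locally constant and the inclusions $i_{r,s}$ induce isomorphisms, so the whole persistence module is determined by its restriction to finitely many points $r_0 > r_1 > \cdots > r_n$ together with the connecting linear maps. This restriction is a representation of the equioriented $A_{n+1}$ quiver, and Gabriel's theorem classifies its indecomposables as the thin representations indexed by subintervals $[r_i, r_j]$. Reinflating each indecomposable summand to $\R$ by extending it as a constant functor across the constancy regions yields the desired decomposition into interval modules $k_{I_\al}$. To cover the general pfd case I would then appeal to Crawley-Boevey's theorem, which extends the result beyond the tame setting to all pfd $\R$-indexed persistence modules.

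For uniqueness, I would invoke the Krull--Schmidt--Azumaya theorem. This needs two ingredients: (i) every interval module $k_I$ is indecomposable because $\End(k_I) \iso k$ is a local ring, containing no non-trivial idempotents; (ii) pfd persistence modules sit in a category where Azumaya's theorem applies, so any two direct-sum decompositions into objects with local endomorphism rings are equivalent up to isomorphism and permutation of summands. Both ingredients are standard consequences of the module-theoretic setup once pfd has been established.

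The main obstacle is the general (non-tame) pfd case: Crawley-Boevey's decomposition is genuinely deeper than Gabriel's theorem because the index category is uncountable and one must control compatibility of local decompositions across a continuum of morphisms, which requires a careful transfinite argument. Fortunately, in the settings relevant to this paper -- sample paths of $\al$-stable L\'evy or other stochastic processes on a bounded interval -- one can almost surely reduce to the tame case either directly (the number of bars of length $\geq \veps$, $N^\veps$, is a.s.\ finite for every $\veps>0$) or by taking limits of piecewise-linear discretisations, so that Gabriel's theorem is the only piece of heavy machinery actually needed, with Crawley-Boevey invoked only for the soft general existence statement.
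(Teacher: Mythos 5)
Your outline is correct and is essentially the same route as the proof in the sources the paper cites for this statement (the paper itself gives no proof, deferring to Oudot's book and to Chazal et al.): reduction to a quiver representation and Gabriel's theorem in the tame case, Crawley-Boevey's theorem for general pointwise finite-dimensional (pfd) modules, and Krull--Schmidt--Azumaya for uniqueness via the locality of $\End(k_I) \iso k$.

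One caveat, since it touches the hypotheses you chose to fill in: your parenthetical claim that $q$-tameness makes pointwise finite-dimensionality ``automatic'' is false. $q$-tameness only bounds the ranks of the connecting maps $H_*(i_{r,s})$ for $r > s$; for a merely continuous $f$ on a compact $M$ an individual superlevel set can have infinite-dimensional homology (it need not have the homotopy type of a finite CW complex), so the module can be $q$-tame without being pfd. In that generality Crawley-Boevey's theorem does not apply directly; one instead needs the Chazal--Crawley-Boevey--de Silva result that a $q$-tame module decomposes into interval modules only up to ephemeral summands (equivalently, in the observable category) --- and this is precisely the regime the paper works in when it argues via finite rank for $C^0$ functions. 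Since the theorem as stated leaves the hypotheses as ``some conditions'', taking them to be pfd makes your argument complete as written; just do not ground that hypothesis in $q$-tameness, and note that the a.s.\ finiteness of $N^\veps$ for each $\veps > 0$ likewise gives $q$-tameness, not pfd, so the stochastic applications also rely on the observable-category version rather than on a reduction to Gabriel's theorem alone.
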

This theorem entails that if the filtration function $f$ and the space $M$ are nice enough, the persistent homology functor $H_*$ in fact decomposes as a direct sum of interval modules, more precisely, fixing a degree in homology we have
\be
H_n = \DS_{i} k_{A_i} \,,
\ee
where the $A_i$ are intervals of $\R$. Notice then that this means that the information contained in the persistence module can be encoded by these intervals $A_i$. This collection of intervals is what we call the \textit{barcode} associated to a function $f: M \to \R$, typically denoted $\bcode(f)$. Another way of representing this information is by keeping track of the endpoints of the interval. In this way, we may represent the intervals as a collection of points in the half-plane 
\be
\mathcal{X} := \{(x,y) \in (\R \cup \{\pm \infty\})^2 \, \vert \, x<y\} \,.
\ee 
This collection of points is called the \textit{persistence diagram} associated to $f$, and is typically denoted $\Dgm(f)$.

This is not exactly the full story, as there are some technical caveats to this. Indeed, the theorem requires ``nice enough'' $M$ and $f$. Throughout this paper we will be dealing with $C^0$ funtions (or in $C^0$, up to a finite number of discontinuities), for which these spaces could be of infinite dimension. Crucially, however, if $M$ is compact and $f$ is $C^0$, the rank of the maps $H_*(i_{r,s})$ is always finite. Under this condition, the decomposition theorem above applies so we may consider our modules to be decomposable (\cf \cite{Chazal_2016,Oudot:Persistence} for details). 
%A persistence module with such a condition of finite rank is said to be q-tame. Up to introducing the so-called \textit{observable category} of persistence modules, all q-tame modules are decomposable  \cite{Chazal_2016}. This category is defined as the quotient category of q-tame modules ‘modulo’ ephemeral modules (\ie  modules of the form $k_{]a,a]}$ or $k_{[a,a[}$) in the sense of Serre’s theory of localization. In particular, this means that we can restrict the intervals considered for the decomposition to be of the form $[a,b[$ (with $a<b$). With this technicality out of the way, we may speak of a persistent diagram or barcode for every function which we will consider in this paper, as the modules considered will always have the q-tame property. 

Specializing all of this to $H_0$ amounts to talking about connected components of superlevel sets. In this sense, $\dim H_0(M_r)$ is exactly equal to the number of connected components of $M_r$. The rank of $H_0(i_{r,s})$ corresponds to the number of connected components of $M_s$ which contain all of $M_r$. The decomposition theorem can also be easily understood in this setting : bars in the barcode (or equivalently points in the persistence diagram) indicate when a connected component was ``born'' and when it gets absorbed by another one, with the rule that the ``eldest'' connected component is the one which always ``survives''.  

\subsubsection{Persistent homology as a measure}
\label{sec:PHmeasure}
There is one last point to be discussed, which will come in useful in section \ref{sec:WassersteinCvg}. That is, it can be useful to consider persistence diagrams as \textit{measures} over the upper half-plane of $\R^2$. For q-tame modules, we can define such a measure as follows : if $a <b \leq c < d \in \R$, then
\be
\Dgm(f)([a,b] \times [c,d]) := \rk(H_*(i_{b,c})) - \rk(H_*(i_{a,c})) + \rk(H_*(i_{b,d})) - \rk(H_*(i_{a,d})) \,.
\ee 
This turns out to be a measure, once we have ironed out some details, as done in \cite{Chazal:Persistence}. As an example, if the module is decomposable, then the \textit{persistence measure} is nothing other than the sum of Dirac masses at every point of the persistence diagram.
As we will see, considering persistent diagrams as measures has considerable advantages. The reason for this is because the space of measures is a linear space, so that we can operate on the space of diagrams with greater ease than in the algebraic context. This injection onto a linear space yields some desirable properties for persistence diagrams : for example, the ability to define Fréchet means \cite{Turner_2014}, or a notion of mean and expectation. This suggests that this injection is crucial in simplifying the study of persistence diagrams in a probabilistic setting. 
\par
Furthermore, the fact that it is a space of measures allows us to use tools, such as optimal transport. In particular, there are natural notions of distances -- so-called Wasserstein distances -- which stem from this point of view. This approach has been explored in \cite{Divol_2019}, to which we refer the reader for further details. The idea is to use the diagonal as a source of infinite mass, and study the optimal transport distances between diagrams (now seen as measures), where the distance on the base space (the upper-half plane) is the $\ell^\infty$-distance on the plane, namely
\be
\label{eq:dR2infty}
d_{\R^2,\infty}((p,q),(r,s)) = \max\{\abs{p-r}, \abs{q-s}\} \,.
\ee 
\begin{definition}[Wasserstein $p$ distances for diagrams]
\label{def:pWassersteinDiags}
If $\mu, \nu$ are two persistent measures (to which we have added the diagonal $\Delta$ as a source of infinite mass), then the Wasserstein $p$ distance between two diagrams is given by 
\be
d_p(\mu, \nu) := \left[\inf_{\pi \in \Gamma(\mu,\nu)} \int_{\overline{\mathcal{X}}^2} \!\!\!d_{\R^2,\infty}^p(x,y) \;  d\pi(x,y) \right]^{1/p}
\ee
where $\Gamma(\mu,\nu)$ denotes the set of all measures on $\overline{\mathcal{X}}^2$ whose marginals are $\mu$ and $\nu$.
\end{definition}
If $p=\infty$ we take the $\sup$-norm, which is exactly the so-called bottleneck distance referred to in \cite{Oudot:Persistence, Chazal:Persistence}, which we may define as follows, for any two persistent measures $\mu, \nu$,
\be
d_\infty(\mu, \nu) := \inf_{\pi \in \Gamma(\mu,\nu)} \sup_{(x,y) \in \supp(\pi)} d_{\R^2, \infty}(x,y) \,.
\ee 
An important result testifying of why persistent homology is interesting is that it is a stable construct in the following sense.
\begin{theorem}[Stability theorem, \cite{Chazal:Persistence}]
\label{thm:Stability}
Let $(X,d)$ be a compact, triangulable metric space and let $f, g \in C^0(X,\R)$, then if we see $\Dgm(f)$ and $\Dgm(g)$ as persistence measures, then
\be
d_\infty(\Dgm(f),\Dgm(g)) \leq \norm{f-g}_\infty \,.
\ee
\end{theorem}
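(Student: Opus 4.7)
The plan is to reduce the stability statement to an algebraic statement about persistence modules via the standard interleaving argument. First I would unwind what $d_\infty$ means between persistence measures: by the construction in section \ref{sec:PHmeasure}, it is exactly the bottleneck distance between the persistence diagrams of $f$ and $g$, so I need to exhibit a partial matching of the off-diagonal points of $\Dgm(f)$ and $\Dgm(g)$ with cost at most $\eps := \|f-g\|_\infty$ in the $\ell^\infty$ metric on $\mathcal{X}$ defined by \Eqr{eq:dR2infty}.

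The geometric input is the pointwise inequality: if $\|f-g\|_\infty \leq \eps$, then for every $r \in \R$ we have the inclusions of superlevel sets
\be
\{f \geq r\} \subseteq \{g \geq r-\eps\} \quad \text{and} \quad \{g \geq r\} \subseteq \{f \geq r-\eps\}\,.
\ee
Applying the functor $H_*$, these inclusions induce linear maps $\phi_r : H_*(f \geq r) \to H_*(g \geq r-\eps)$ and $\psi_r : H_*(g \geq r) \to H_*(f \geq r-\eps)$. Because the inclusions are compatible with further inclusions $\{f \geq s\} \hookrightarrow \{f \geq r\}$ whenever $r \leq s$, the families $(\phi_r)$ and $(\psi_r)$ commute with the structure maps of each module and with each other in the sense that $\psi_{r-\eps} \circ \phi_r$ equals the internal structure map $H_*(f \geq r) \to H_*(f \geq r-2\eps)$, and similarly with roles swapped. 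This is precisely an $\eps$-interleaving between the two persistence modules $H_*(M,f)$ and $H_*(M,g)$.

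Next I would invoke the algebraic stability (isometry) theorem of Chazal--de Silva--Glisse--Oudot (see \cite{Chazal:Persistence,Oudot:Persistence}): any two q-tame persistence modules that are $\eps$-interleaved admit an $\eps$-matching between their persistence diagrams, and in particular their bottleneck distance is bounded by $\eps$. The q-tameness hypothesis is where the assumption that $X$ is compact and triangulable and $f,g$ continuous enters: it guarantees that all the structure maps have finite rank, so the decomposition into interval modules and the matching construction apply. Combining the interleaving constructed above with the isometry theorem gives $d_\infty(\Dgm(f),\Dgm(g)) \leq \eps = \|f-g\|_\infty$.

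The main obstacle is unquestionably the algebraic stability half: constructing an explicit $\eps$-matching out of an interleaving is the technical heart of the theorem and typically goes through either the induced-matching theorem or the box lemma for persistence measures. I would simply cite this rather than reprove it, since the excerpt's interest is in stochastic $\zeta$-functions. The geometric half, by contrast, is a one-line consequence of functoriality applied to the inclusions of superlevel sets, and the only subtlety is verifying that one truly lands in the q-tame setting so that the diagrams and the measure-theoretic bottleneck distance in section \ref{sec:PHmeasure} are well-defined.
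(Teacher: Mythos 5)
Your argument is correct and is precisely the standard proof of this result: the paper does not prove Theorem \ref{thm:Stability} itself but simply cites \cite{Chazal:Persistence}, and the route you describe (sup-norm closeness of $f$ and $g$ gives inclusions of superlevel sets, hence an $\veps$-interleaving of the superlevel persistence modules, then the algebraic stability/isometry theorem for q-tame modules converts the interleaving into an $\veps$-matching bounding $d_\infty$) is exactly the argument of that reference. Your only slightly loose point is attributing the conclusion to interval decomposability of q-tame modules; strictly speaking the algebraic stability theorem for q-tame modules works at the level of persistence measures (as in section \ref{sec:PHmeasure}) without needing a full interval decomposition, but this does not affect the validity of the proof.
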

Fixing a smaller functional space, it is possible to prove a stability theorem for $d_p$ as well. 
\begin{theorem}[Wasserstein $p$ stability, \cite{skraba2020wasserstein}]
\label{thm:LipStability}
Let $(X,d)$ be a compact, triangulable metric space of dimension $D$ and let $\Lip(\Lambda, X)$ denote the space of real valued $\Lambda$-Lipschitz functions on $X$. Then if $f,g \in \Lip(\Lambda, X)$, for every $n \geq D$
\be
d_p(\Dgm_k(f),\Dgm_k(g)) \leq C_{X,\Lambda,p} \,\norm{f-g}_\infty^{1-\frac{n}{p}} \,,
\ee
where $\Dgm_k(f)$ denotes the persistence measure associated to the persistent homology in degree $k$ of $f$, $H_k(X,f)$.
\end{theorem}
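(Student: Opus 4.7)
The plan is to bootstrap from the bottleneck stability result of Theorem \ref{thm:Stability} via a counting estimate on the number of large-persistence pairs, exploiting both the Lipschitz regularity of $f,g$ and the $D$-dimensional geometry of $X$. Since $d_\infty$ controls only the largest single displacement in an optimal matching, converting this to a $d_p$-bound costs a factor of $N^{1/p}$ where $N$ counts the number of points that actually need to be moved; by showing that the relevant $N$ grows at most like $\veps^{-n}$ for any $n \geq D$, the resulting trade-off between ``few long bars'' and ``small uniform displacement'' yields precisely the exponent $1 - n/p$.

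The key ingredient is the following persistence moment bound: for any $f \in \Lip(\Lambda, X)$, any $\veps > 0$, and any $n \geq D$,
\begin{equation*}
\#\{(b,d) \in \Dgm_k(f) \,\vert\, d-b \geq \veps\} \leq C_{X,\Lambda,n}\, \veps^{-n}.
\end{equation*}
The natural estimate holds with $n=D$ and is in essence a covering argument: a persistent class of lifetime $\geq \veps$ in a $\Lambda$-Lipschitz filtration must spatially persist over a region of $X$ of ``diameter'' $\gtrsim \veps/\Lambda$, and such regions can coexist only in quantity $O(\veps^{-D})$ by the Minkowski dimension hypothesis. The weaker bound with $n > D$ follows trivially when $\veps \leq 1$ and furnishes the additional flexibility needed to state the theorem for all $n \geq D$.

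With this counting bound in hand, set $\delta := \norm{f-g}_\infty$ and invoke Theorem \ref{thm:Stability} to produce a partial matching $\pi$ between $\Dgm_k(f)$ and $\Dgm_k(g)$ (with the diagonal providing mass as in Definition \ref{def:pWassersteinDiags}) in which every matched pair is at $d_{\R^2,\infty}$-distance at most $\delta$. The $d_p^p$-cost of $\pi$ splits into off-diagonal and diagonal contributions. Off-diagonal pairs can involve only points of persistence $> 2\delta$ on at least one side, of which there are at most $C\delta^{-n}$ by the persistence moment bound, each contributing at most $\delta^p$ for a total of order $\delta^{p-n}$. Pairs involving the diagonal concern bars of length $\ell \leq 2\delta$; grouping these dyadically by length $\ell \in [2^{-j-1}\delta, 2^{-j}\delta]$ and applying the counting bound at each scale produces a geometric sum bounded by $\delta^{p-n}$ provided $p > n$ (which is forced by the requirement that the exponent $1-n/p$ be positive). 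Taking $p$-th roots yields $d_p \leq C_{X,\Lambda,p}\, \delta^{1 - n/p}$, as claimed.

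The main obstacle lies in the persistence moment bound itself. While intuitively clear, rigorously converting the slogan ``persistent class of lifetime $\geq \veps$'' into a genuine spatial ball-packing statement on $X$ requires careful bookkeeping over the simplices of a triangulation of $X$, since one cannot rely on any smoothness of $f$ or $g$ to locate critical points. This is precisely the content of \cite{skraba2020wasserstein}, where the argument is performed at the level of simplex counts in each sublevel set. Once the counting lemma is secured, everything else in the proof reduces to the dyadic bookkeeping sketched above, which is routine.
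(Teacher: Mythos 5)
The paper does not actually prove Theorem \ref{thm:LipStability}: it is imported verbatim from \cite{skraba2020wasserstein}, so there is no internal proof to measure your argument against. Your sketch reproduces the classical route by which such statements are established (Cohen-Steiner--Edelsbrunner--Harer--Mileyko and its refinement by Skraba--Turner): take a (near-)optimal bottleneck matching guaranteed by Theorem \ref{thm:Stability}, split its cost into off-diagonal pairs and pairs sent to the diagonal, bound the number of points of persistence $\geq \veps$ by $C\veps^{-n}$, and sum dyadically near the diagonal. The bookkeeping you describe is sound: off-diagonal pairs each cost at most $\delta^p$ with $\delta=\norm{f-g}_\infty$ and number $O(\delta^{-n})$, the dyadic sum over bars of length $\leq 2\delta$ is geometric precisely when $p>n$, and the $p$-th root gives the exponent $1-\tfrac{n}{p}$, modulo routine adjustments (re-routing to the diagonal any matched pair both of whose endpoints have persistence $\leq 2\delta$, working with a matching realizing $d_\infty$ up to $\eta\to 0$, and noting that the count vanishes for $\veps>\Lambda\,\mathrm{diam}(X)$ so the case $\veps>1$ is harmless). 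For $p\leq n$ the exponent is nonpositive and the claim must be treated separately (it is essentially trivial there, since $d_p$ is bounded by $\Pers_p$ of the two diagrams, which is finite and controlled by $X,\Lambda,p$).

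The genuine gap is the step you yourself flag: the persistence counting bound $\#\{\text{points of persistence}\geq\veps\}\leq C_{X,\Lambda,n}\veps^{-n}$ is the entire mathematical content of the theorem, and you assert it by appeal to the very reference the paper cites rather than proving it, so as a self-contained argument the proposal is incomplete at its crucial point. Be aware also that your covering heuristic silently upgrades ``triangulable of dimension $D$'' to a metric statement: topological dimension alone does not give an $\veps^{-D}$ packing bound (a snowflaked arc is a compact triangulable space of dimension $1$ carrying Lipschitz functions with many more than $\veps^{-1}$ bars of length $\veps$). The counting lemma really requires a quantitative hypothesis of the type used in the cited works --- triangulations whose number of simplices grows polynomially of exponent $D$ in the inverse mesh, or equivalently a bounded-total-persistence assumption --- and any honest proof must either adopt that reading of ``dimension $D$'' explicitly or carry out the simplex-counting argument at the level of such a triangulation, which is exactly the part your sketch leaves to \cite{skraba2020wasserstein}.
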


\subsection{Trees and barcodes}
There is a correspondence between trees and barcodes described in full detail in \cite{Perez_2020}. Starting from $T_f$, we can look at the longest branch (starting from the root) of $T_f$. This branch corresponds to the longest bar of $\bcode(f)$ since branches of $T_f$ correspond to connected components of the superlevel sets of $f$. Next, we erase  this longest branch and, on the remaining (rooted) forest, look for the next longest branch. This will be the second longest bar of the barcode. Proceeding iteratively in this way, we retrieve $\bcode(f)$. An illustration of this algorithm can be found in figure \ref{fig:algorithm}. 

We can interpret $N^\veps$ geometrically as being equal to the number of leaves of $T_f^\veps$. In terms of the barcode, the same $N^\veps$ counts the number of bars of length $\geq \veps$ with the caveat that we count the infinite bar as having length equal to the range of $f$. As we will see, reasoning in terms of trees has some major advantages, so in what will follow we will adopt the following convention
\begin{convention}
\label{conv:infinitebar}
The length of the infinite bar of $\bcode(f)$ will be set to $\sup f - \inf f$.
\end{convention}
\begin{figure}[h!]
  \centering
    \includegraphics[width=0.5\textwidth]{}
  \caption{A depiction of the first steps of the algorithm which assigns a barcode $\bcode(f)$ to a tree $T_f$.}
\label{fig:algorithm}
\end{figure}

\subsection{Integration on trees and the duality between $N^\veps$ and $\Pers_p^p$}
\label{sec:TreeIntegration}
Let us recall the following simple remark made in \cite{Perez_2020}.
On a tree $T_f$, we can define a notion of integration by defining the unique atomless Borel measure $\lambda$ which is characterized by the property that every geodesic segment on $T_f$ has measure equal to its length. Formally, we can express $\lambda$ in two ways \cite{Picard:Trees}
\be
\lambda = \int_\R dx \sum_{\substack{\tau \in T_f \\ f(\tau) = x}} \delta_\tau \quad \text{and} \quad \lambda = \int_0^\infty d\veps \sum_{\substack{\tau \in T_f \\ h(\tau) = \veps}} \delta_\tau
\ee
By using the second way of writing $\lambda$, the identity
\be
\lambda(T_f^\veps) = \int_\veps^\infty N^a\;da
\ee
is clear, as every sum in the second expression is finite for all $\veps >0$ and has $N^\veps$ terms. Of course, we could very well have written it using the first sum, but this poses the difficulty that if $T_f$ is infinite, so is the sum considered in this formal expression for at least some value of $x$. However, the restricted sum
\be
\sum_{\substack{\tau \in T_f \\ f(\tau)=x \\ h(\tau)\geq\veps}} \delta_\tau
\ee
is finite for all $\veps >0$ and there are exactly $N^{x,x+\veps}$ terms in this sum. We thus obtain an alternative expression for $\lambda(T_f^\veps)$
\be
\lambda(T_f^\veps)= \int_\R  N^{x,x+\veps} \;dx\,.
\ee
We deduce that more information is contained in $N^{x,x+\veps}$ than in $\lambda(T_f^\veps)$ (and by extension than in $N^\veps$). The calculation above provides the connection between Chazal and Divol and Baryshnikov's functional $N^{x,x+\veps}$ and the functional detailed in this paper $N^\veps$, since $N^\veps$ is nothing other than the derivative of $\lambda(T_f^\veps)$.
\par
The study of $N^\veps$ is in fact completely equivalent to the study of $\Pers_p^p(f)$. Indeed,
\be
\Pers_p^p(f) = p \int_{T_f} h(\tau)^{p-1} \;\lambda(d\tau) =  p\int_0^\infty \veps^{p-1} N^\veps \; d\veps \,,
\ee
where $h : T_f \to \R$ associating to $\tau \in T_f$ the distance between $\tau$ and the highest leaf (with respect to the filtration of $f$) above $\tau$ in $T_f$. We immediately recognize the above integral as being the Mellin transform of $N^\veps$. Allowing for complex $p$, this integral relation can be inverted by virtue of the Mellin inversion theorem, provided that the fundamental strip of $N^\veps$ is not empty. For compact intervals and continuous functions $f$, this fundamental strip is never empty (provided $\Lag(f) < \infty$) and in fact is exactly equal to $\bra \Lag(f), \infty \ket$. Thus, for any real number $c >\Lag(f)$,
\be
N^\veps = \frac{1}{2\pi i} \int_{c-i\infty}^{c+i \infty} \Pers^p_p(f) \,\veps^{-p} \;\frac{dp}{p} \;,
\ee
which estabilished the duality relation desired. Notice also that $\Pers_p^p$ is a norm in the sense that 
\be
\Pers_p^p(f) = p\norm{h}_{L^{p-1}(\lambda)}^{p-1} \,,
\ee

For any (deterministic) continuous function $f$,  $\Pers_p^p(f)$ is nothing other than a sum of the bars of the barcode to the power $p$. An in depth explanation of this is provided in \cite[\S 2.2]{Perez_2020}, but let us briefly give some intuition for this. By the algorithm depicted in figure \ref{fig:algorithm}, if we denote $b$ any of the bars of the barcode, seen as embedded in the tree $T_f$ the length of the branch, $\ell(b)$, can be written as
\be
p\int_b h(\tau)^{p-1} \lambda(d\tau) = \ell(b)^p \,.
\ee 
The bars of the barcode partition the tree $T_f$, so that the integration present in the definition of $\Pers^p_p$ is nothing other than the sum of the $\ell(b)^p$'s.

\begin{remark}
This definition of $\Pers_p^p$ coincides perfectly with a definition of $\Pers_p^p$ typically used in persistent homology \cite{LipschitzStableLpPers,Mileyko_2011,Turner_2014,Divol_2019,Carriere_2017}, as long as we consider that the infinite bar has the length of the range (\ie the $\sup - \inf$) of the function $f$. Of course, within this framework an equally valid definition for $\Pers_p^p$ would have been to exclude the infinite bar from being counted all-together, and to consider only the bars of finite length. This approach turns out to give the correct definition for the $\Pers_p^p$-functional in the definition of \textit{tail} $\zeta$-functions (\cf definition \ref{def:tailzetaLevy}), which is necessary to study L\'evy $\al$-stable processes for $\al <2$.
\end{remark}
Additionally, by the usual inequalities of $L^p$-spaces, 
\begin{proposition}
\label{prop:Lyapunovellpp}
$\Pers_p^p$ is almost $\log$-convex, \ie let $p_0 <p_1$ and $\theta \in [0,1]$ and set $p = (1-\theta)p_0 + \theta p_1$, then, 
\be
\Pers_p^p \leq \frac{p}{p_0^{1-\theta} p_1^{\theta}}\; \Pers_{p_0}^{p_0(1-\theta)}\, \Pers_{p_1}^{p_1\theta} \,.
\ee
\end{proposition}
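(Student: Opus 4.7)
The plan is to reduce this to the classical log-convexity of $L^q$-norms (Lyapunov's inequality) by exploiting the integral representation of $\Pers_p^p$ over the tree established just before the statement, namely
\be
\Pers_p^p(f) \;=\; p \int_{T_f} h(\tau)^{p-1}\,\lambda(d\tau) \;=\; p\,\|h\|_{L^{p-1}(\lambda)}^{\,p-1}\,.
\ee
The factor of $p$ out front is the source of the ``almost'' in the statement: without it the inequality would be exactly the multiplicative log-convexity of $\|h\|_{L^{q}}^{\,q}$ in $q$.

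Concretely, I would set $q_0 := p_0 - 1$, $q_1 := p_1 - 1$, and $q := p - 1$. Since $p = (1-\theta)p_0 + \theta p_1$, linearity gives $q = (1-\theta) q_0 + \theta q_1$. I then write the integrand as
\be
h(\tau)^{q} \;=\; h(\tau)^{(1-\theta) q_0}\, h(\tau)^{\theta q_1}
\ee
and apply H\"older's inequality on the measure space $(T_f, \lambda)$ with the conjugate exponents $1/(1-\theta)$ and $1/\theta$. This yields
\be
\int_{T_f} h^{q}\,d\lambda \;\leq\; \left(\int_{T_f} h^{q_0}\,d\lambda\right)^{1-\theta} \left(\int_{T_f} h^{q_1}\,d\lambda\right)^{\theta}\,,
\ee
which is exactly $\|h\|_{L^{q}(\lambda)}^{\,q} \leq \|h\|_{L^{q_0}(\lambda)}^{\,q_0(1-\theta)}\|h\|_{L^{q_1}(\lambda)}^{\,q_1\theta}$.

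It then remains to multiply through by $p$ and to reinsert the normalizations dictated by $\Pers_{p_i}^{p_i} = p_i\,\|h\|_{L^{p_i-1}(\lambda)}^{\,p_i-1}$; the factors $p_0^{1-\theta}$ and $p_1^{\theta}$ that one picks up from solving for $\|h\|_{L^{q_i}(\lambda)}^{\,q_i}$ in terms of $\Pers_{p_i}^{p_i}$ are precisely what produces the prefactor $p/(p_0^{1-\theta} p_1^{\theta})$ on the right-hand side. There is no real obstacle: the only thing to watch is that the identification $\Pers_p^p = p\,\|h\|_{L^{p-1}(\lambda)}^{\,p-1}$ requires $p \geq 1$ (so the integrand $h^{p-1}$ is well-defined and $\lambda$-integrable on $T_f^{\veps}$ after trimming), and one may freely assume $\Pers_{p_0}^{p_0}, \Pers_{p_1}^{p_1} < \infty$ since otherwise the inequality is trivial.
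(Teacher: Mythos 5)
Your proposal is correct and follows essentially the same route as the paper, which simply invokes Lyapunov's inequality for $L^p$-norms applied to the representation $\Pers_p^p = p\,\norm{h}_{L^{p-1}(\lambda)}^{p-1}$; your H\"older argument with exponents $1/(1-\theta)$ and $1/\theta$ is just the standard proof of that inequality written out, and the bookkeeping with the prefactors $p$, $p_0^{1-\theta}$, $p_1^{\theta}$ matches the stated bound.
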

\begin{proof}
The statement follows directly from an application of Lyapunov's inequality for $L^p$-spaces.
\end{proof}
More generally, it is always true that one can express the $L^p(\mu)$-norm of a function $f$ as the Mellin transform of the repartition function of $\abs{f}$, $\mu(\abs{f}>x)$.

\subsubsection{Calculation of $N^\veps$ in dimension one} 

In dimension one, it is possible to use the total order of $\R$ and count $N^\veps$ by counting the number of times we go up by at least $\veps$ from a local minimum and down by at least $\veps$ from a local maximum. This idea can be formalized by the following sequence, originally introduced by Neveu \textit{et al.} \cite{Neveu_1989}.
\begin{definition}
\label{def:vepsminmax}
Setting $S_0^\veps = T_0^\veps = 0$, we define a sequence of times recursively
\begin{align*}
T_{i+1}^\veps &:= \inf\left\{\, t \geq S_i^\veps\; \Bigg\vert \; \sup_{[S_i^\veps, t]} f -f(t) > \veps\right\} \\
S_{i+1}^\veps &:= \inf\left\{\, t \geq T_{i+1}^\veps \; \Bigg\vert \; f(t) - \inf_{[T_{i+1}^\veps, t]} f > \veps\right\}
\end{align*}
\begin{figure}[h!]
  \centering
    \includegraphics[width=0.6\textwidth]{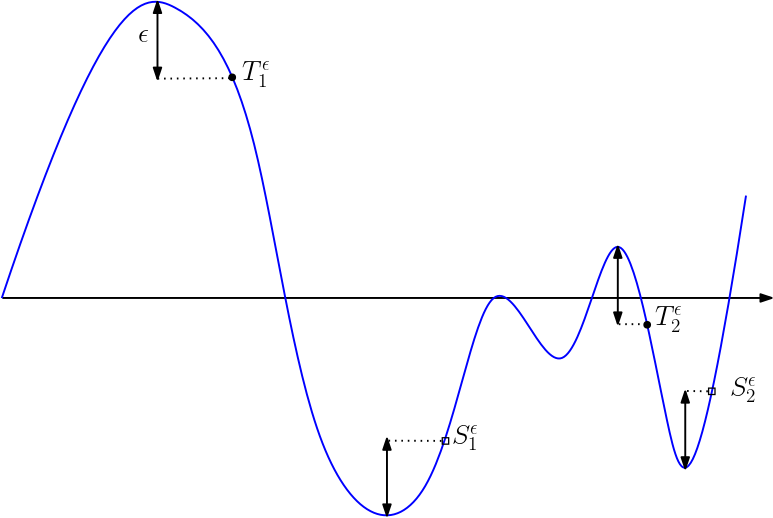}
  \caption{A function $f$ in blue along with the times $T_i^\veps$ and $S_i^\veps$ indicated. Because of the boundary this function has exactly 3 bars of length $\geq \veps$ and not just 2.}
\label{fig:TivepsSiveps}
\end{figure}
\end{definition}
Counting the number of bars of length $\veps$ is thus exactly to count the number of up and downs we make. More precisely
\begin{equation}
\label{eq:Nvepscountingformula}
N^\veps = \inf\{i \, \vert \, T_i^\veps \text{ or } S_i^\veps = \inf \emptyset\}
\end{equation}
by which we mean that it is the smallest $i$ such that the set over which $T_i^\veps$ or $S_i^\veps$ are defined as infima is empty.

\begin{notation}
We denote the range of $X$ $R$. Symbolically,
\be
R_t := \sup_{[0,t]} X - \inf_{[0,t]} X \,.
\ee
Moreoever, denote $N^\veps_t$ the number $N^\veps$ of the process $X$ restricted to the interval $[0,t]$.
\end{notation}

Intuitively, this calculation process hints at the fact that if $\veps$ is small, the number of bars $N^\veps$ should strongly depend on the regularity of the process, as ultimately $N^\veps$ counts the number of ``oscillations'' of size $\veps$. In a very precise sense, regularity almost fully determines the asymptotics of $N^\veps$ in the $\veps \to 0$ regime. This intuition is corroborated by the following theorem.
\begin{theorem}[Picard, \S 3\cite{Picard:Trees} and \cite{Perez_2020}]
\label{thm:pvarandupboxdim}
Given a continuous function $f: [0,1] \to \R$, 
\be
\mathcal{V}(f) = \Lag(f) = \updim T_f = \limsup_{\veps \to 0} \frac{\log N^\veps}{\log(1/\veps)} \vee 1
\ee
where $\updim$ denotes the upper-box dimension, $a \vee b = \max\{a,b\}$,
\be
\mathcal{V}(f) := \inf\{p \, \vert \, \norm{f}_{\pvar} <\infty\} \quad \text{and} \quad \Lag(f) := \inf\{ p \, \vert \, \Pers_p(f) < \infty\} \,.
\ee
\end{theorem}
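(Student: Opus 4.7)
Let $s := \limsup_{\veps \to 0} \tfrac{\log N^\veps}{\log(1/\veps)} \vee 1$. My plan is to establish the chain of equalities by showing that each of $\Lag(f)$, $\updim T_f$, and $\mathcal{V}(f)$ equals $s$. The linchpin is the Mellin identity
\[
\Pers_p^p(f) = p \int_0^\infty \veps^{p-1} N^\veps \, d\veps
\]
from section~\ref{sec:TreeIntegration}, which ties all four quantities to the behaviour of $N^\veps$ as $\veps \to 0$. Since $N^\veps = 0$ as soon as $\veps > \sup f - \inf f$, the integral converges at $\infty$ for every $p$, so finiteness of $\Pers_p^p(f)$ is decided by the polynomial growth exponent of $N^\veps$. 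The fundamental correspondence (theorem~\ref{thm:FundamentalCorrespondence}) then delivers $\Lag(f) = \limsup_{\veps \to 0} \log N^\veps / \log(1/\veps)$ whenever this limsup exceeds $1$, and $\Lag(f) = 1$ otherwise (the infinite bar alone contributes the finite quantity $(\sup f - \inf f)^p$ to $\Pers_p^p$ for every $p \geq 1$), so $\Lag(f) = s$.

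For $\updim T_f = s$, I would exhibit an efficient $\veps$-cover of $T_f$: the $N^\veps$ leaves of $T_f^\veps$ together with an $\veps$-net of the finite-length skeleton of $T_f^\veps$, of cardinality at most $\lceil \lambda(T_f^\veps)/\veps \rceil$, cover $T_f$, since every $\tau \in T_f \setminus T_f^\veps$ satisfies $h(\tau) < \veps$ and therefore lies within $\veps$ of a leaf of $T_f^\veps$. Using the formula $\lambda(T_f^\veps) = \int_\veps^\infty N^a \, da$ and the fact that the leaves of $T_f^{2\veps}$ are pairwise $\veps$-separated (giving a matching lower bound), one obtains $N(T_f, \veps) \asymp N^\veps \vee \veps^{-1}$, whence $\updim T_f = s$.

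The equality $\mathcal{V}(f) = s$ splits into two halves. The direction $\mathcal{V}(f) \geq s$ uses Neveu's sequence from definition~\ref{def:vepsminmax}: the partition it defines has $N^\veps$ consecutive increments of magnitude at least $\veps$, so $\norm{f}_{\pvar}^p \geq N^\veps \veps^p$, forcing $s \leq \mathcal{V}(f)$. For the reverse inequality $\mathcal{V}(f) \leq s = \Lag(f)$, I would fix a partition $\pi = \{0 = t_0 < \cdots < t_n = 1\}$, bound $|f(t_{i+1}) - f(t_i)|$ by the tree-geodesic distance between the classes of $t_i$ and $t_{i+1}$ in $T_f$, decompose that geodesic along the bars of $\bcode(f)$, and regroup contributions by bar; combined with the embedding $\ell^p \hookrightarrow \ell^{p'}$ for $p < p'$, this yields $\norm{f}_{p'\text{-var}}^{p'} \leq C_{p,p'}\, \Pers_p^p(f)$, closing the chain.

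I expect the main obstacle to be this last inequality $\norm{f}_{\pvar} \lesssim \Pers_p(f)$. The subtlety is that a partition increment is a straight-line difference while the bars encode the full oscillatory content of $f$, so a naive bar-by-bar summation overcounts. I plan to resolve this by invoking the algorithm of figure~\ref{fig:algorithm}, which extracts bars in decreasing order of length and thereby canonically partitions $T_f$ into disjoint branches; against this partition each tree-geodesic can be chopped without double-counting, so that the contribution of any single bar to $\sum_i |f(t_{i+1}) - f(t_i)|^p$ stays controlled by the $p$-th power of its own length.
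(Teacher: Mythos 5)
The paper does not actually prove this theorem — it imports it from Picard and from \cite{Perez_2020} — so your proposal has to stand on its own, and as written it has one genuine gap plus one false auxiliary claim. The gap is in the direction $\mathcal{V}(f)\leq \Lag(f)$, which is the hard half and exactly the part Picard's argument is built around. Your plan is to dominate $\abs{f(t_{i+1})-f(t_i)}$ by the tree distance $d_f(t_i,t_{i+1})$, chop each geodesic along the canonical branch decomposition, and then claim that ``the contribution of any single bar stays controlled by the $p$-th power of its own length.'' The quantity you can actually control is $\sum_i a_{i,k}\leq 2\ell_k$, where $a_{i,k}$ is the length of the portion of the $i$-th geodesic inside bar $k$ (each branch is traversed at most twice by the ordered concatenation of geodesics); but for $p\geq 1$ you need to bound $\sum_i\bigl(\sum_k a_{i,k}\bigr)^p$, and there is no inequality taking $\bigl(\sum_k a_{i,k}\bigr)^p$ down to $\sum_k a_{i,k}^p$ — a single large increment can be assembled from many short bars, and conversely a long bar can be visited by many increments with only its \emph{length}, not its $p$-th power, under control. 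The near-monotone case makes the missing ingredient visible: a monotone stretch of height $R$ produces up to $R/a$ ordered increments of size $a$ while contributing only one bar, so any correct bound on $\#\{i:\abs{f(t_{i+1})-f(t_i)}>a\}$ must be of the form $C\bigl(\lambda(T_f^{ca})/a+N^{ca}+1\bigr)$ rather than $CN^{ca}$. Feeding that counting bound into a layer-cake integration $\sum_i\abs{\Delta_i}^p=p\int_0^\infty a^{p-1}\#\{i:\abs{\Delta_i}>a\}\,da$ and using $\lambda(T_f^{a})=\int_a^\infty N^u\,du$ does give $\norm{f}_{\pvar}^p\leq C_p\,\Pers_p^p(f)$ for $p>1$, which is essentially Picard's route; this scale-by-scale use of the trimmed-tree length is the idea your proposal is missing, and the ``canonical partition prevents double counting'' device does not substitute for it.

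Secondarily, in the box-dimension lower bound, the claim that the leaves of $T_f^{2\veps}$ are pairwise $\veps$-separated is false: for a Y-shaped tree whose two arms have lengths $2\veps+\eta$, the two leaves of the $2\veps$-trimmed tree sit at distance $2\eta$, arbitrarily small. The fix is to use instead the leaves of $T_f$ whose bars have length $\geq 2\veps$: if $\ell,\ell'$ are two such leaves with common branch point $b$, the younger bar dies no lower than $f(b)$, so $d_f(\ell,\ell')\geq 2\veps$, giving the packing bound $N(T_f,\veps)\gtrsim N^{2\veps}$ (together with the trunk, which supplies the $\veps^{-1}$ term). Relatedly, a point $\tau\notin T_f^\veps$ is within $\veps$ of $T_f^\veps$ (walk toward the root), but not necessarily of a \emph{leaf} of $T_f^\veps$; this is harmless since your $\veps$-net of $T_f^\veps$ already covers everything within $2\veps$, but the literal two-sided bound $N(T_f,\veps)\asymp N^\veps\vee\veps^{-1}$ should be replaced by the exponent-level statement obtained by carrying the term $\lambda(T_f^\veps)/\veps$ through the limsup. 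The remaining parts — the Mellin/monotonicity argument identifying $\Lag(f)$ with the growth exponent of $N^\veps$, and the lower bound $\mathcal{V}(f)\geq s$ via Neveu's times — are fine.
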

For the rest of this paper it is exactly the functional $\Pers_p^p$ which shall occupy us. 

\begin{remark}
The $\Pers_\infty$ functional is stable under $L^\infty$ perturbations of $f$. However, it is unknown whether a similar stability result exists for $p <\infty$.
\end{remark}

\subsection{$\zeta$-functions associated to stochastic processes}
\label{sec:Zetafunctions} 
\begin{definition}
Let $f$ be a stochastic process on some compact topological space $X$. Its \textbf{$\zeta$-function} $\zeta_f$ is defined by:
\be
\zeta_f(p) := \expect{\Pers_p^p(f)} = p\int_0^\infty \veps^{p-1}\expect{N^\veps} \; d\veps\,.
\ee
for $p \in \bra \Lag(f), \infty \ket$.
\end{definition}
This is reminiscent of the structure of the $\zeta$-function, but \textit{a priori} not enough to draw any parallels. However, it turns out that this nomenclature turns out to have a meaning for stochastic processes. 
Similarly, we could consider the $\Pers_p^p$ functional of $T_f^{>x}$, which denotes the forest
\be
T_f^{>x} = \{\tau \in T_f \, | \, f(\tau) >x\}
\ee
\begin{remark}
In the tree setting the number $N^{x,x+\veps}$ is also the number of branches of length $\geq \veps$ in the forest $T_f^{>x}$.
\end{remark}
Following this analogy, it is natural to define
\begin{definition}
The \textbf{local $\zeta$-function} associated to $f$ at $x$, $\zeta_f^x$ is defined as 
\be
\zeta^x_f(p) := \expect{\Pers_p^p(T_f^{>x})} = p\int_0^\infty \veps^{p-1} \expect{N^{x,x+\veps}} \; d\veps \,.
\ee
\end{definition}
\begin{proposition}
\label{prop:LocalGlobalZeta}
The two $\zeta$-functions we have so far defined are related via the following formula
\be
\label{eq:locvsnonlocZeta}
\zeta_f(p) = p \int_\R  \zeta^x_f(p-1)\;dx \,.
\ee
\end{proposition}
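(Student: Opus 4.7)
The plan is to evaluate the right-hand side directly by expressing $\zeta_f^x(p-1)$ via its defining integral and then repeatedly swapping the order of integration. The two ingredients are the pathwise identity
\be
\int_\R N^{x,x+\veps} \, dx = \int_\veps^\infty N^a\, da
\ee
established in section \ref{sec:TreeIntegration}, and Fubini's theorem.

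First, by the definition of $\zeta_f^x(p-1)$,
\be
p\int_\R \zeta_f^x(p-1)\, dx = p(p-1)\int_\R \int_0^\infty \veps^{p-2}\, \expect{N^{x,x+\veps}}\, d\veps\, dx .
\ee
I would then swap the $\veps$ and $x$ integrals, push the expectation outside, and invoke the pathwise identity to obtain
\be
p(p-1)\,\expect{\int_0^\infty \veps^{p-2} \int_\veps^\infty N^a\, da\, d\veps} .
\ee
Finally, the order of the $\veps$ and $a$ integrals is exchanged using $\{0\le \veps\le a\}$ as the underlying region, so that the inner integration in $\veps$ yields $a^{p-1}/(p-1)$, giving
\be
p\,\expect{\int_0^\infty a^{p-1} N^a\, da} = \zeta_f(p) ,
\ee
which is the claim.

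The only delicate point is justifying the various applications of Fubini. For $p$ real in the common fundamental strip, non-negativity of $N^\veps$ and $N^{x,x+\veps}$ makes every exchange an application of Tonelli's theorem, so nothing is required beyond the finiteness of $\zeta_f(p)$ itself (\ie $p > \Lag(f)$, together with the fact that $N^{x,x+\veps}$ vanishes outside the compact $\veps$-dependent set in $x$ guaranteed by the pathwise identity). For complex $p$ with $\Real(p)$ in this strip, the same bounds upgrade Tonelli to Fubini, since $|\veps^{p-1}| = \veps^{\Real(p)-1}$ reduces the absolute integrability to the real case. This is what I expect to be the only real obstacle in making the argument rigorous, and it is handled by the same estimates already used to define $\zeta_f$ and $\zeta_f^x$ on their fundamental strips.
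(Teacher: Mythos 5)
Your proof is correct, and it reaches the identity by a slightly different route than the paper. The paper writes $N^\veps = -\frac{\del}{\del\veps}\int_\R N^{x,x+\veps}\,dx$, justifies that this derivative exists and is locally constant almost everywhere, and then invokes the derivative functional property of the Mellin transform (table \ref{table:FunctionalPropertiesMellin}) to trade $\veps^{p-1}N^\veps$ for $(p-1)\veps^{p-2}N^{x,x+\veps}$, with Tonelli used to move the $x$-integral and the expectation around. You instead keep the identity in integrated form, $\int_\R N^{x,x+\veps}\,dx = \lambda(T_f^\veps) = \int_\veps^\infty N^a\,da$, and perform the ``integration by parts'' purely by exchanging the $\veps$ and $a$ integrals over the triangle $\{0\le\veps\le a\}$, which produces the factor $a^{p-1}/(p-1)$ directly. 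What your version buys is that you never have to discuss the a.e.\ differentiability of $\veps\mapsto N^{x,x+\veps}$ or the tacit vanishing of boundary terms in the Mellin derivative rule: everything reduces to Tonelli for nonnegative integrands (upgraded to Fubini for complex $p$ via $\abs{\veps^{p-2}}=\veps^{\Real(p)-2}$), exactly as you say. The paper's version, by differentiating, makes the dual relationship $N^\veps = -\del_\veps\lambda(T_f^\veps)$ explicit, which it reuses elsewhere, but as a proof of this proposition your argument is, if anything, the more robust one. One small point worth making explicit: the inner integral $\int_0^a \veps^{p-2}\,d\veps$ converges only for $\Real(p)>1$, but this is harmless since by theorem \ref{thm:pvarandupboxdim} one has $\Lag(f)\geq 1$, so the fundamental strip on which $\zeta_f(p)$ is considered already forces $\Real(p)>1$.
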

\begin{proof}
Let us start by noticing that $N^\veps$ is nothing other than 
\be
N^\veps = - \frac{\del}{\del\veps} \lambda(T_f^\veps) = - \frac{\del}{\del\veps} \int_\R  N^{x,x+\veps} \;dx \,,
\ee
where this derivative is defined and locally constant almost everywhere. From this and the fact that the derivative $\frac{\del N^{x,x+\veps}}{\del \veps}$ is also defined and locally constant almost everywhere and  
\be
N^\veps = - \int_\R \frac{\del N^{x,x+\veps}}{\del \veps} \; dx \,.
\ee
Applying the Mellin transform to both sides and applying Tonelli's theorem,
\be
\int_0^\infty \veps^{p-1} N^\veps\;d\veps = -\int_\R dx \int_0^\infty \veps^{p-1} N^{x,x+\veps} \; d\veps \,.
\ee
From the derivation functional property of the Mellin transform (\cf table \ref{table:FunctionalPropertiesMellin}) we get 
\be
\int_0^\infty \veps^{p-1} N^\veps\;d\veps = \int_\R dx \; (p-1) \int_0^\infty \veps^{p-2} N^{x,x+\veps} \;d\veps \,.
\ee
Applying the expectation to both sides, multiplying times $p$ and applying Tonelli's theorem once again, we have the desired result, namely,
\be
\zeta_f(p) = p \int_\R \zeta^x_f(p-1) \; dx \,.
\ee
\end{proof}
\begin{remark}
If the process starts at $0$, it is in general easy to compute $\zeta^x_f$ for $x>0$, but more challenging to do so for $x<0$. In what will follow, we will always focus on $x>0$. 
\end{remark}

\begin{notation}
For the rest of this paper we will take the following conventions. First, we will sometimes omit the subscript $t$ of $N^\veps_t$ whenever convenient. The Laplace transform $\Lag$ is always taken with respect to the variable $t$ and its conjugate variable will always be $\lambda$. Similarly, Mellin transforms will always be taken with respect to the variable $\veps$ and its conjugate variable will be $p$. 
\end{notation}
\begin{lemma}
\label{lemma:commutingLagMel}
Let $f(x,t) : [0, \infty[^2 \to \R_+$  such that the functions $f(x,-)$ and $f(-,t)$ are monotone in their arguments. Then, denoting $\Lag_t$ the Laplace transform with respect to $t$,
\be
\Mel_x \Lag_t [f] = \Lag_t \Mel_x [f]
\ee  
\end{lemma}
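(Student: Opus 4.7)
The plan is to invoke Fubini--Tonelli. Writing out both sides, what needs to be established is
\be
\int_0^\infty x^{s-1} \int_0^\infty e^{-\lambda t} f(x,t) \, dt \, dx = \int_0^\infty e^{-\lambda t} \int_0^\infty x^{s-1} f(x,t) \, dx \, dt,
\ee
so the statement is purely a swap-of-integrals assertion, and the role of the hypotheses is to justify this swap.

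First I would handle measurability. Since monotone functions on $[0,\infty[$ have at most countably many discontinuities, separate monotonicity of $f(x,-)$ and $f(-,t)$ forces separate Borel measurability, and the superlevel sets $\{(x,t) : f(x,t) > c\}$ can be described as unions of ``staircase'' regions whose boundaries are graphs of monotone functions. This yields joint Borel measurability of $f$ on $[0,\infty[^2$, hence of the integrand $(x,t) \mapsto x^{s-1} e^{-\lambda t} f(x,t)$.

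Next I would handle the swap itself. For $s$ real and positive and for $\lambda$ real and positive, the integrand is non-negative and jointly measurable, so Tonelli's theorem applies unconditionally: both iterated integrals equal the double integral over $[0,\infty[^2$, allowing the value $+\infty$. This already gives the identity $\Mel_x \Lag_t[f] = \Lag_t \Mel_x[f]$ for these real parameters. To extend to complex $s$ in the fundamental strip of $\Mel_x[f(\cdot,t)]$ and complex $\lambda$ with $\Real(\lambda)$ large enough, one applies the Tonelli step to $|x^{s-1} e^{-\lambda t} f(x,t)| = x^{\Real(s)-1} e^{-\Real(\lambda) t} f(x,t)$; once this iterated integral is shown to be finite (which it is, by the nonemptiness of the respective fundamental strips), the full Fubini theorem takes over and the identity lifts to the complex parameters.

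The main --- and only real --- obstacle is the joint measurability argument, since separate measurability alone does not generally imply joint measurability. The monotonicity assumption is precisely what bridges this gap; once it is used, the rest is a textbook Fubini/Tonelli application.
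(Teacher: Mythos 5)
Your proposal is correct and follows essentially the same route as the paper, whose (very terse) proof likewise deduces joint measurability from the monotonicity hypothesis and then applies Tonelli to the non-negative integrand; your added Fubini step for complex $s$ and $\lambda$ is a reasonable elaboration of what the paper leaves implicit. One small caveat: separate monotonicity yields joint \emph{Lebesgue} measurability (the possible defect lies in the graph of a monotone function, a null set), not necessarily Borel measurability as you claim, but this is all that Tonelli/Fubini requires.
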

\begin{proof}
The monotonicity of $f$ in its arguments ensures that $f$ is a measurable, positive function. The statement holds by virtue of Tonnelli's theorem.
\end{proof}
\begin{remark}
Notice this last lemma is applicable to $N^\veps_t$, $\expect{N^\veps_t}$, $\PP(N^\veps_t \geq k)$ and other such quantities.
\end{remark}

\subsection{Average diagrams and characterizations of Wasserstein convergence of diagrams}
\label{sec:WassersteinCvg}
\begin{notation}
For the rest of this section, denote 
\be
\mathcal{X} := \{(x,y) \in \R^2  \, \vert \, y>x\} \text{ and } \Delta := \{(x,x)\in \R^2\} \,,
\ee
which we equip with the metric $\ell^\infty$-metric on $\R^2$ recalled in equation \ref{eq:dR2infty}. Let $\veps >0$ and denote $\Delta_\veps \subset \overline{\mathcal{X}}$ an open tubular neighbourhood of radius $\veps$ around $\Delta$ inside $\overline{\mathcal{X}}$ and denote its complement in $\mathcal{X}$ by $\Delta_\veps^c$. Finally, denote 
\be
R_{x,\veps} := \,]\!-\!\infty,x]\times[x+\veps,\infty[ \,.
\ee
\end{notation}
By looking at persistence diagrams of functions as measures (\cf section \ref{sec:PHmeasure} for details), it is possible to define a notion of an average diagram of a stochastic process.
\begin{definition} 
Let $M$ be a compact, triangulable metric space, $\mathcal{E}(M)$ be a Polish metric space of (q-tame) functions on $M$ and let $\mathcal{D}$ denote the space of measures on the upper half-plane $\mathcal{X}$. Let $\phi : \mathcal{E}(M) \to \mathcal{D}$ be the map taking $f \mapsto \Dgm(f)$, where $\Dgm(f)$ is seen as a measure, and let $X: \Omega \to \mathcal{E}(M)$ be a stochastic process of law $\mu$ on $\mathcal{E}(M)$. Then, the average diagram of $X$ is given by
\be
\expect{\Dgm(X)}:= \mathbb{E}_\mu[\phi]= \int_{\mathcal{E}(M)} \phi(f) \; d\mu(f) \,.
\ee
\end{definition}
\begin{remark}
\label{rmk:densityofavg}
Note that $\expect{\Dgm(X)}$ is itself a measure. Whenever this measure is absolutely continuous with respect to the Lebesgue measure on $\mathcal{X}$, there exists $g:\mathcal{X} \to \R$ such that for any test function $f: \mathcal{X} \to \R$
\be
\expect{\Dgm(X)}(f) = \int_{\mathcal{X}} g(x,y)f(x,y) \;dx \,dy \,.
\ee
A sufficient condition to ensure the existence of $g$ is that $\del_x \del_y \expect{N^{x,y}}$ exists (this is equivalent to requiring the existence of $\del_x\del_\veps \expect{N^{x,x+\veps}}$). Instead of using birth-death coordinates, we may also express this density in terms of birth-persistence coordinates. In these coordinates, we will denote this density function by $g(x,\veps)$, somewhat abusing the notation. 
\end{remark}
\begin{definition}
The space of measures on $\overline{\mathcal{X}}$ with finite $\Pers_p$ will be denoted $\mathcal{D}_p$. Symbolically, 
\be
\mathcal{D}_p := \left\{\mu \in \mathcal{D} \,\big\vert \, \Pers_p(\mu) := d_p(\mu,\Delta) < \infty \right\} \,,
\ee
where $d_p$ is the Wasserstein $p$-distance on the space of diagrams defined in \cite{Divol_2019} and explicited in definition \ref{def:pWassersteinDiags}.
\end{definition}  
This allows us to define a $\zeta$-function for $\mu \in \mathcal{D}_p$ as follows.
\begin{definition}
Let $\mu \in \mathcal{D}_p$ and suppose that for some $q >p$
\be
\mu(\Delta_\veps^c) = O(\veps^{-q}) \quad \text{as } \veps \to \infty \,,
\ee
then define the \textbf{$\zeta$-function associated to $\mu$} by
\be 
\zeta_\mu(p) := \Pers_p^p(\mu) = p\,\Mel\!\left[\mu(\Delta_\veps^c)\right](p) \,.
\ee
\end{definition}
\begin{remark}
\textit{A priori}, $\zeta_\mu(p)$ is defined on the strip $\bra p, q\ket \subset \C$. This could have also been guaranteed by replacing the condition of decay of $\mu(\Delta_\veps^c)$ by requiring that $\mu \in \mathcal{D}_p \cap \mathcal{D}_q$.
\end{remark}
As before, we may also define a local $\zeta$-function.
\begin{definition}
The \textbf{local $\zeta$-function associated to $\mu$} at $x$ is defined as
\be
\zeta^x_\mu(p) := p \,\Mel[\mu(R_{x,\veps})](p) \,.
\ee
\end{definition}
\begin{remark}
These definitions are compatible with the notions of $\zeta$-functions defined for a stochastic process. Seeing $\expect{\Dgm(X)}$ as a measure on $\overline{\mathcal{X}}$, $\zeta_X = \zeta_{\expect{\Dgm(X)}}$ (and the same holds for local $\zeta$-functions).
\end{remark}
A characterization of the topology metrized by the distance $d_p$ is useful and has been investigated by Divol and Lacombe in \cite{Divol_2019}. The reader familiar with optimal transport will recognize this as an adaptation of the known characterization for probability measures of Wasserstein topology by vague convergence and convergence of $p$th-moments. That this equivalence holds for measures of \textit{a priori} infinite mass is, however, a non-trivial extension.
\begin{lemma}[Characterization of the topology metrized by $d_p$, \cite{Divol_2019}]
\label{lemma:DivolWasserstein}
Let $(\mu_n)_n \subset \mathcal{D}_p$ and $\mu \in \mathcal{D}_p$. Then, the following equivalence holds 
\be
\left\{d_p(\mu_n,\mu) \xrightarrow{n\to\infty} 0\right\} \iff \left\{
        \mu_n \xrightarrow[n\to \infty]{v} \mu \text{ and } \Pers_p(\mu_n) \xrightarrow[n\to \infty]{} \Pers_p(\mu)
\right\} \,.
\ee
\end{lemma}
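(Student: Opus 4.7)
The plan is to prove the two directions separately. The forward implication should be essentially immediate from the metric structure, while the reverse implication requires a truncation-and-diagonal argument adapted to the fact that persistence measures can carry infinite mass near $\Delta$.

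For the direction $d_p(\mu_n,\mu)\to 0 \Rightarrow$ vague convergence and $\Pers_p(\mu_n)\to\Pers_p(\mu)$: convergence of $\Pers_p$ is a one-line consequence of the reverse triangle inequality, since $\Pers_p(\mu) = d_p(\mu,\Delta)$ and $d_p$ is a genuine distance on $\mathcal{D}_p$ (after identifying $\Delta$ as infinite reservoir). For vague convergence, I would fix a test function $\phi \in C_c(\mathcal{X})$ whose support sits in some $\Delta_\veps^c$, and bound $|\mu_n(\phi)-\mu(\phi)|$ by $\mathrm{Lip}(\phi)\cdot d_1(\mu_n|_{\Delta_\veps^c},\mu|_{\Delta_\veps^c})$ up to boundary terms; then use that restriction to $\Delta_\veps^c$ is $1$-Lipschitz in $d_p$ and that $d_1 \leq C\, d_p$ on measures with uniformly bounded $p$th moment.

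For the converse, vague $+$ $\Pers_p$-convergence $\Rightarrow d_p(\mu_n,\mu)\to 0$: I would fix $\veps>0$ and decompose the transport problem into the portion in $\Delta_\veps^c$, which is compact (minus the point at infinity, which is handled by $\Pers_p$ control), and the portion in $\Delta_\veps$. On $\Delta_\veps^c$ the measures $\mu_n|_{\Delta_\veps^c}$ have uniformly bounded mass (by $\veps^{-p}\Pers_p^p(\mu_n)$) and vague convergence upgrades to weak convergence of finite measures there; combined with convergence of the $p$th moment on $\Delta_\veps^c$ (which one extracts from $\Pers_p(\mu_n)\to\Pers_p(\mu)$ plus uniform control of the near-diagonal part), classical optimal transport gives $d_p(\mu_n|_{\Delta_\veps^c},\mu|_{\Delta_\veps^c})\to 0$. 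For the near-diagonal contribution, every configuration in $\Delta_\veps$ can be transported directly to $\Delta$ at cost $\leq \veps$, so its contribution to $d_p^p$ is bounded by $\veps^p\cdot \mu_n(\Delta_\veps)$ and $\veps^p\cdot \mu(\Delta_\veps)$; these are controlled by $\Pers_p^p(\mu_n)$ and $\Pers_p^p(\mu)$ respectively, both uniformly in $n$. A standard gluing/sub-optimal coupling then yields
\begin{equation*}
\limsup_{n\to\infty} d_p^p(\mu_n,\mu) \leq C\,\veps^p\,\bigl(\sup_n \Pers_p^p(\mu_n) + \Pers_p^p(\mu)\bigr),
\end{equation*}
and letting $\veps \to 0$ concludes.

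The main obstacle is the handling of the near-diagonal mass, which can be infinite and is exactly what distinguishes persistence measures from probability measures: one cannot directly invoke the classical Villani--style equivalence. The key observation making it work is that $\Pers_p$-convergence provides a tightness substitute, since $\mu(\Delta_\veps) \leq \veps^{-p}\Pers_p^p(\mu)$ and this bound is uniform along the sequence; this is what lets one truncate at level $\veps$ and then pass to the limit $\veps\to 0$. A secondary technical point is ensuring that the cost of transporting mass between $\Delta_\veps^c$ and $\Delta$ across the truncation is negligible, which requires that no mass escapes to infinity — but this is guaranteed by the convergence of $\Pers_p(\mu_n)$ and the definition of $\mathcal{D}_p$.
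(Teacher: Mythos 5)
The paper does not actually prove this lemma: it is imported verbatim from Divol--Lacombe \cite{Divol_2019}, so your attempt can only be measured against the argument given there. Your overall architecture (split the transport problem into a region $\Delta_\veps^c$ far from the diagonal, where the measures have finite mass and classical optimal transport applies, and a near-diagonal region whose mass is dumped onto $\Delta$) is indeed the right skeleton, and your forward direction is essentially sound: $\Pers_p$-convergence from the reverse triangle inequality, and vague convergence by testing against Lipschitz functions supported away from the diagonal together with a H\"older/localization bound (although ``restriction to $\Delta_\veps^c$ is $1$-Lipschitz in $d_p$'' is not literally true, since optimal plans move mass across the boundary of $\Delta_\veps^c$; this is fixable with the usual enlarged-support argument).

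The reverse direction, however, contains a genuine error at exactly the point where the theorem is hard. You use the inequality $\mu(\Delta_\veps)\leq \veps^{-p}\Pers_p^p(\mu)$ as your ``tightness substitute''; but the Chebyshev-type bound goes the other way: $\veps^p\,\mu(\Delta_\veps^c)\leq \Pers_p^p(\mu)$ controls the mass \emph{far from} the diagonal, while the near-diagonal mass $\mu_n(\Delta_\veps)$ is in general infinite (this is precisely the regime relevant to the paper, e.g.\ diagrams of Brownian paths have infinitely many bars of length $<\veps$). Consequently your estimate of the near-diagonal transport cost by $\veps^p\,\mu_n(\Delta_\veps)$ is vacuous, and the displayed inequality $\limsup_n d_p^p(\mu_n,\mu)\leq C\,\veps^p\bigl(\sup_n\Pers_p^p(\mu_n)+\Pers_p^p(\mu)\bigr)$ does not follow. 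The true cost of pushing the near-diagonal part onto $\Delta$ is $\int_{\Delta_\veps} d(x,\Delta)^p\,d\mu_n(x)$, and the entire content of the theorem is that this quantity (together with the $p$-energy escaping horizontally to infinity, since $\Delta_\veps^c$ is unbounded) is small \emph{uniformly in $n$} as $\veps\to 0$. That uniform smallness is not a one-line consequence of $\Pers_p^p(\mu_n)\to\Pers_p^p(\mu)$; it requires combining both hypotheses, e.g.\ showing via the portmanteau theorem that $\int_{\Delta_\veps^c\cap K} d(\cdot,\Delta)^p\,d\mu_n \to \int_{\Delta_\veps^c\cap K} d(\cdot,\Delta)^p\,d\mu$ for suitable continuity sets and subtracting this from the converging total energy, in the spirit of a uniform-integrability argument. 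Without that step, letting $\veps\to 0$ at the end concludes nothing, so the converse implication remains unproved in your sketch.
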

\begin{remark}
Of course, given our choice of notation, if $p<\infty$, we can rewrite $\Pers_p(\mu_n) \to \Pers_p(\mu)$ as $\zeta_{\mu_n}(p) \to \zeta_{\mu}(p)$.
\end{remark}
Furthermore, it is possible to show that 

\begin{proposition}[Interpolation for optimal transport]
\label{prop:interpolationOT}
Let $1 \leq p < q \leq \infty$ and $\theta \in\, ]0,1[$. Define $p_\theta$ by 
\be
\frac{1}{p_\theta} = \frac{\theta}{p} + \frac{1-\theta}{q} \,.
\ee
Then, for $\mu,\nu \in \mathcal{D}_{p}\cap \mathcal{D}_{q}$ 
\be
d_{p_\theta}(\mu,\nu) \leq  2^{1-\theta} \; d_{p}^\theta(\mu,\nu) \,(\Pers_q(\mu) +\Pers_q(\nu))^{1-\theta}\,.
\ee
Consequently, if $p\leq r \leq q$, then $\mathcal{D}_{r} \subset \mathcal{D}_{p}\cap \mathcal{D}_{q}$.
\end{proposition}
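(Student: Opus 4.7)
The plan is to combine H\"older's inequality on a $d_p$-optimal coupling with a cyclic-monotonicity bound coming from optimality, so as to control the $q$-th moment of the plan by the $q$-persistences of the marginals.

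Let $\pi \in \Gamma(\mu,\nu)$ be an optimal coupling for $d_p(\mu,\nu)$. Writing $d^{p_\theta} = d^{\theta p_\theta}\cdot d^{(1-\theta)p_\theta}$ and invoking H\"older's inequality with conjugate exponents $s = p/(\theta p_\theta)$ and $s' = q/((1-\theta)p_\theta)$ (whose reciprocals sum to $1$ by the very definition of $p_\theta$), I obtain
\begin{equation*}
d_{p_\theta}(\mu,\nu)^{p_\theta} \;\leq\; \int d^{p_\theta}\,d\pi \;\leq\; \left(\int d^p\,d\pi\right)^{\!\theta p_\theta/p}\left(\int d^q\,d\pi\right)^{\!(1-\theta)p_\theta/q}.
\end{equation*}
The first factor on the right equals $d_p(\mu,\nu)^{\theta p_\theta}$ by the choice of $\pi$; the real content lies in controlling the second factor.

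To that end, I would argue that the $d_p$-optimality of $\pi$ forces the pointwise bound $d(x,y)^p \leq d(x,\Delta)^p + d(y,\Delta)^p$ for $\pi$-almost every off-diagonal pair $(x,y)$: were this to fail on a positive-measure set, one could locally improve the plan by replacing each offending transport $x \mapsto y$ by the two independent moves $x \mapsto \pi_\Delta(x)$ and $\pi_\Delta(y) \mapsto y$, which remain admissible in the Divol--Lacombe framework thanks to the infinite diagonal reservoir and strictly lower the total cost, contradicting optimality. Raising to the $q/p \geq 1$ power and using convexity of $t \mapsto t^{q/p}$ yields $d(x,y)^q \leq 2^{q/p-1}(d(x,\Delta)^q + d(y,\Delta)^q)$. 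Integrating against $\pi$ and recognising the marginals on $\mathcal{X}$ then gives
\begin{equation*}
\int d^q\,d\pi \;\leq\; 2^{q/p-1}\bigl(\Pers_q(\mu)^q + \Pers_q(\nu)^q\bigr) \;\leq\; 2^{q/p-1}\bigl(\Pers_q(\mu)+\Pers_q(\nu)\bigr)^q,
\end{equation*}
where the last step uses $a^q + b^q \leq (a+b)^q$ for $q \geq 1$.

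Substituting this estimate into the H\"older inequality and extracting the $p_\theta$-th root delivers the claim, the constant simplifying to $2^{(q/p-1)(1-\theta)/q} = 2^{(1/p-1/q)(1-\theta)} \leq 2^{1-\theta}$, since $p \geq 1$ forces $1/p - 1/q \leq 1$. The containment assertion follows at once by specialising to $\nu = \Delta$: the resulting bound $\Pers_{p_\theta}(\mu) \leq 2^{1-\theta}\Pers_p(\mu)^\theta\Pers_q(\mu)^{1-\theta}$ shows that the intermediate persistence is finite whenever the two endpoint persistences are. The principal technical obstacle I expect is the rigorous justification of the local-perturbation-implies-bound step in the Divol--Lacombe formalism, where the diagonal is a formal reservoir of infinite mass and admissibility of the perturbed coupling must be checked at the level of measures rather than pointwise; modulo this careful measurable swap, the argument reduces to a clean H\"older interpolation.
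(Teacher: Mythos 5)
Your argument is essentially the paper's: you take a $d_p$-optimal plan $\pi$, interpolate the $L^{p_\theta}(\pi)$-norm of the cost between its $L^p(\pi)$- and $L^q(\pi)$-norms (your H\"older step is exactly the Littlewood inequality the paper invokes), and control the $q$-moment of $\pi$ via the support inequality obtained by rerouting mass through the diagonal; your $p$-th-power form $d(x,y)^p\le d(x,\Delta)^p+d(y,\Delta)^p$ is in fact the sharper consequence of optimality and implies the additive version used in the paper, and your constant bookkeeping recovers (indeed slightly improves on) the stated $2^{1-\theta}$. The one real omission is the endpoint $q=\infty$, which the statement allows: there your conjugate exponent $q/((1-\theta)p_\theta)$ and the convexity step with power $q/p$ no longer make literal sense, and one should argue separately (as the paper does in a single sentence) that on the support of $\pi$ the cost is bounded by the $\Pers_\infty$'s of the marginals, after which the $L^p$--$L^\infty$ H\"older bound gives the claim. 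Note also that what your specialisation $\nu=\Delta$ proves is the inclusion $\mathcal{D}_p\cap\mathcal{D}_q\subset\mathcal{D}_r$ for intermediate $r$, which is what the interpolation genuinely yields; the proposition's final clause as printed states the reverse inclusion, which neither your argument nor the paper's own proof (which is silent on that clause) establishes, so your reading is the defensible one.
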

\begin{remark}
For probability measures, Wasserstein interpolation follows trivially from an application of Jensen's inequality. However, since diagrams are \textit{a priori} of infinite mass, this interpolation result needs to be shown.
\end{remark}
\begin{proof}
Let $\pi$ be an optimal transport for $d_{p}$. Applying Littlewood's inequality,
\begin{align*}
d_{p_\theta}(\mu,\nu) &\leq \norm{d_{\R^2,\infty}}_{L^{p_\theta}(\pi)} \leq \norm{d_{\R^2,\infty}}_{L^{p}(\pi)}^{\theta} \norm{d_{\R^2,\infty}}_{L^{q}(\pi)}^{1-\theta} \\
&= d_{p}(\mu,\nu)^\theta \left[\int_{\overline{\mathcal{X}}^2} d_{\R^2,\infty}^q(z,z') \; d\pi(z,z') \right]^{\frac{1-\theta}{q}} \\
&\leq d_{p}(\mu,\nu)^\theta \left[2^q \int_{\overline{\mathcal{X}}^2} d_{\R^2,\infty}^q(z,\Delta) + d_{\R^2,\infty}^q(\Delta,z') \; d\pi(z,z') \right]^{\frac{1-\theta}{q}} \,,
\end{align*}
where this equality holds everywhere on the support of $\pi$. This can be shown by defining 
\be
S=\{(z,z') \in \overline{\mathcal{X}}^2 \cap \supp(\pi) \, \vert \, d_{\R^2,\infty}(z,z') > d_{\R^2,\infty}(z,\Delta) + d_{\R^2,\infty}(z',\Delta) \} \,.
\ee
This set $S$ either has null or positive measure. If it has positive measure, then we can modify the transport plan $\pi$ by sending the projections of $S$ to the diagonal, thereby producing a transport plan of strictly inferior cost to that of $\pi$, which is a contradiction. Hence, $S$ is of null measure, so the equality holds over the support of the measure. Finally, this entails
\begin{align*}
d_{p_\theta}(\mu,\nu) = 2^{1-\theta}\; d_{p}(\mu,\nu)^\theta \,(\Pers_q(\mu)+\Pers_q(\nu))^{1-\theta} \,.
\end{align*}
If $q =\infty$, since $\pi$ is an optimal transport between $\mu$ and $\nu$ and $\mu,\nu \in \mathcal{D}_\infty$, $\pi$ itself must have compact support and the diameter of the support is bounded above by $\Pers_\infty(\mu) \vee \Pers_\infty(\nu)$, so the inequality of the proposition follows.
%Finally, the last point is a consequence of properties of the Mellin transform and the fact that $\Pers_p^p(\mu)$ is (up to a factor of $p$) the Mellin transform of $\mu(\Delta_\veps^c)$.
\end{proof}
\begin{lemma}[Sequential continuity of the inverse Mellin transform]
\label{lemma:seqctyofInverseMellin}
Let $f_k : \;]0,\infty[ \to \C$ be a sequence of functions uniformly bounded by a function $g: \,]0,\infty[ \to \R_+$, whose Mellin transform $g^*$ is defined over some non-empty strip $\bra \al, \beta \ket \subset \C$. Suppose further that there is a function $f : \;]0,\infty[ \to \C$ such that $f^* : \bra \al, \beta \ket \to \R_+$ and $f_k^* \to f^*$ uniformly on every compact set of $\bra \al, \beta \ket$. Then, $f_k \to f$ almost everywhere. 
\end{lemma}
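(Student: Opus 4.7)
The natural strategy is to apply the Mellin inversion theorem (Theorem \ref{thm:MellinInversion}) on a vertical line $\Real s = c \in (\alpha,\beta)$, writing
\[
f_k(x) \;=\; \frac{1}{2\pi i}\int_{c-i\infty}^{c+i\infty} f_k^*(s)\, x^{-s}\, ds,
\]
and similarly for $f$, then passing to the limit under the integral. The uniform domination $|f_k(x)| \le g(x)$ immediately yields, via the triangle inequality under the defining Mellin integral, the pointwise bound $|f_k^*(s)| \le g^*(\Real s)$ throughout the strip. Combined with the hypothesis of uniform convergence on compact subsets, this handles the truncated Bromwich integral over any segment $|\Imag s|\le T$: that portion tends to zero as $k\to\infty$ for each fixed $T$ and $x$.

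The main obstacle is the tail of the Bromwich integral on $|\Imag s|>T$, because $g^*(\Real s)$ is constant (hence non-integrable) in $\Imag s$. I would overcome this by regularizing: consider the iterated primitive $F_k^{(n)}(x) := \int_0^x\!\cdots\!\int_0^{x_{n-1}} f_k(x_n)\, dx_n\cdots dx_1$. By the $\int_0^x f$-rule of Table~\ref{table:FunctionalPropertiesMellin} applied $n$ times, its Mellin transform equals $(-1)^n f_k^*(s+n)\,/\,[s(s+1)\cdots(s+n-1)]$, gaining $n$ powers of $|s|^{-1}$ in decay along vertical lines (at the cost of a shifted strip of validity). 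For $n\ge 2$ the integrand is then dominated by the integrable function $g^*(c+n)\cdot|s(s+1)|^{-1}\cdot x^{-c}$, so genuine dominated convergence applies and yields the pointwise convergence $F_k^{(n)}(x) \to F^{(n)}(x)$ for every $x>0$.

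To descend from the iterated primitives back to $f_k$, apply the Lebesgue differentiation theorem $n$ times. The uniform domination $|f_k|\le g$ plays a crucial role at this stage: it ensures equi-integrability of $\{f_k\}$ on compact subsets of $]0,\infty[$, so that the $L^1_{\mathrm{loc}}$-convergence extracted from the successive differentiations can be upgraded to almost-everywhere convergence along a subsequence. A standard subsequence-of-subsequence argument---every subsequence of $\{f_k\}$ has a further subsequence converging almost everywhere to some limit, which must coincide with $f$ by injectivity of the Mellin transform---then delivers the full almost-everywhere convergence. The delicate step throughout is this final descent, where the integrability provided by $g$ against the weight $x^{c-1}$ is what prevents the oscillatory tails of the Bromwich integral from spoiling the passage to the limit.
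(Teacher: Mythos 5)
Your route differs from the paper's: the paper extracts from the weighted-$L^1$ bound a subsequence of $(f_k)_k$ converging a.e.\ to some $h$, passes to the limit under the Mellin integral by dominated convergence, and then identifies $h=f$ via injectivity of the Mellin transform; you instead invert the transform and regularize by iterated primitives. The first half of your argument can be repaired with minor care (the $n$-fold primitive should be inverted along $\Real s = c-n$, so that the numerator is controlled by $g^*(c)$ rather than $g^*(c+n)$, which may fall outside $\bra \al, \beta \ket$ when $\beta<\infty$; and $\int_0^x$ need not exist if $g$ is not integrable at $0$, so one should integrate from a fixed interior base point), but the genuine gap is the descent step.

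Pointwise, even locally uniform, convergence of the primitives $F^{(n)}_k \to F^{(n)}$ together with $\abs{f_k}\le g$ does not yield $f_k \to f$ a.e.: the Lebesgue differentiation theorem applies to each fixed $L^1_{\mathrm{loc}}$ function and cannot be interchanged with the limit in $k$, and uniform integrability on compacts (Dunford--Pettis) gives only weak $L^1$ compactness, not an a.e.-convergent subsequence. Concretely, take $f_k(x)=\sin(kx)\,e^{-x}$, $g(x)=e^{-x}$, $f=0$: then $\abs{f_k}\le g$, $f_k^*(s)=\tfrac{\Gamma(s)}{2i}\left[(1-ik)^{-s}-(1+ik)^{-s}\right]\to 0$ uniformly on every compact of $\bra 0,\infty \ket$, and all iterated primitives converge to $0$ locally uniformly, yet $\sin(kx)$ does not converge a.e., and no subsequence converges a.e.\ on a set of positive measure (the standard $\sin^2$ argument); in particular the ``subsequence of subsequences'' step has nothing to extract. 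Since this example satisfies every hypothesis your proof actually uses (domination plus local-uniform convergence of the transforms), no argument built from those ingredients alone can close the gap: some additional structural input is indispensable. In the paper's applications the $f_k$ are monotone in the variable (e.g.\ $\veps \mapsto \mu_n(\Delta_\veps^c)$), which is what furnishes the missing compactness -- Helly selection provides the a.e.-convergent subsequence that the paper's own proof extracts before invoking injectivity of the Mellin transform -- and your argument would need to invoke this (or an equivalent hypothesis) explicitly at the descent stage.
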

\begin{proof}
For every $k$, $\abs{f_k} \leq g \vee \abs{f}$ and so the sequence $(f_k)_k$ is bounded in a weighted $L^1$ space. Up to extraction of a subsequence, $f_k$ converges a.e. to some function $h$, also bounded above by $g \vee \abs{f}$. By dominated convergence, along this subsequence, $f_k^* \to h^*$, which entails that $h = f$ a.e. since the Mellin transform is injective and $(h-f)^* = 0$ identically on $\bra \al, \beta \ket$, since along \textit{any} subsequence $f_k^* \to f^*$ uniformly on every compact set of $\bra \al, \beta \ket$. It follows that the sequence $(f_k)_k$ has $f$ as its only accumulation point, finishing the proof.
% For any real valued function $h$ define $h_+(x) := \max\{h,0\}$ its positive part and and $h_-(x) := \max\{-h(x),0\}$, so that $h = h_+ - h_-$ and $\abs{h}= h_+ + h_-$. Note that for every $k$, $f_k$ is dominated by $f \vee g$, so by dominated convergence,
% \begin{align*}
% \lim_{k \to \infty} (f_k-f)_\pm^* &= \left(\lim_{k \to \infty} (f_k - f)_\pm\right)^* \,.
% \end{align*}
% Since $(f_k-f)_+^* - (f_k-f)_-^* = (f_k-f)^* \to 0$, 
% \be
% \left(\lim_{k \to \infty} (f_k - f)_+\right)^* = \left(\lim_{k \to \infty} (f_k - f)_-\right)^* \,.
% \ee
% By injectivity of the Mellin transform, it follows that a.e.,
% \be
% \lim_{k \to \infty} (f_k-f)_+  = \lim_{k \to \infty} (f_k-f)_- \implies  \lim_{k \to \infty} (f_k-f)_\pm = 0  \text{ a.e.}
% \ee
% In particular, $\abs{f_k-f} \to 0$ a.e. as well.
\end{proof}
\begin{remark}
It is sufficient to consider that for all $s \in \,]\al, \beta[$, the sequence $(x^{s-1}f_k(x))_k$ is absolutely uniformly integrable. 
\end{remark}

Both of these lemmas allow for a comprehensive characterization of the objects we have thus far been concerned with throughout this paper. 
\begin{theorem}[$\zeta$ characterization of Wasserstein $p$-convergence]
\label{thm:zetaWasserstein}
Let $(\mu_n)_n \subset \mathcal{D}_{p} \cap \mathcal{D}_q$ be a sequence of q-tame measures and $\mu \in \mathcal{D}_{p} \cap \mathcal{D}_q$ and suppose that the sequence $(\mu_n(\Delta_\veps^c))_n$ can be uniformly bounded above by a function $g: \; ]0,\infty[\, \to \R_+$ such that for $\veps \in \;]0,1]$  $g(\veps) = O(\veps^{-p})$ as $\veps\to 0$ and on $[1,\infty[$, $g(\veps) = O(\veps^{-q})$ as $\veps \to \infty$. Then, the following are equivalent:
\begin{enumerate}
  \item There exists $p<r<q$ such that $d_r(\mu_n,\mu) \xrightarrow{n\to\infty} 0$.
  \item There exists $p<r<q$ such that $\mu_n \xrightarrow[n\to \infty]{v} \mu$ and $\zeta_{\mu_n}(r) \to \zeta_\mu(r)$.
  \item For almost every $x\in \R$ and $\veps >0$, $\mu_n(\Delta_\veps^c) \to \mu(\Delta_\veps^c)$ and $\mu_n(R_{x,\veps}) \to \mu(R_{x,\veps})$.
  \item For almost every $x \in \R$, $\zeta^x_{\mu_n} \to \zeta^x_\mu$ and $\zeta_{\mu_n} \to \zeta_\mu$ uniformly on every compact of $\bra p,q \ket$. 
  \item For all $p < r < q$, $d_r(\mu_n,\mu) \xrightarrow{n\to\infty} 0$.
\end{enumerate}
\end{theorem}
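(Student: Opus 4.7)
The plan is to close the logical loop
\begin{equation*}
(1) \Leftrightarrow (2) \Leftrightarrow (3) \Leftrightarrow (4) \Rightarrow (5) \Rightarrow (1),
\end{equation*}
relying crucially on the uniform envelope $g$ to transport pointwise convergence through Mellin transforms and Wasserstein distances. The implication $(5) \Rightarrow (1)$ is immediate, and $(1) \Leftrightarrow (2)$ for any fixed $r \in (p,q)$ is a direct consequence of Lemma \ref{lemma:DivolWasserstein}, since for $r < \infty$ the statement $\Pers_r(\mu_n) \to \Pers_r(\mu)$ is the same as $\zeta_{\mu_n}(r) \to \zeta_\mu(r)$.

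For $(3) \Leftrightarrow (4)$, I would proceed via standard Mellin-transform duality. Forward, write
\begin{equation*}
\zeta_{\mu_n}(s) - \zeta_\mu(s) = s \int_0^\infty \veps^{s-1}\bigl[\mu_n(\Delta_\veps^c) - \mu(\Delta_\veps^c)\bigr]\, d\veps,
\end{equation*}
and observe that for $s$ in any compact $K \subset \bra p, q \ket$ the integrand is dominated by $2\abs{s}\veps^{\Real(s)-1} g(\veps)$, which is $L^1(\veps)$ uniformly in $s \in K$ because of the prescribed decay of $g$ at $0$ and $\infty$. Dominated convergence gives pointwise convergence $\zeta_{\mu_n}(s) \to \zeta_\mu(s)$, and since these functions are holomorphic on $\bra p, q \ket$ and locally uniformly bounded, Vitali's theorem on normal families upgrades pointwise to uniform convergence on compacts. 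The same argument applies to $\zeta^x_{\mu_n}$ using the pointwise bound $\mu_n(R_{x,\veps}) \leq \mu_n(\Delta_\veps^c) \leq g(\veps)$. The reverse direction is a direct application of Lemma \ref{lemma:seqctyofInverseMellin}: the sequences $(\mu_n(\Delta_\veps^c))_n$ and $(\mu_n(R_{x,\veps}))_n$ are uniformly dominated by $g$, whose Mellin transform is defined on the non-empty strip $\bra p, q \ket$, and the uniform convergence of the transforms on compacts is precisely the hypothesis of the lemma.

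For $(2) \Leftrightarrow (3)$, the forward direction relies on the fact that for Lebesgue-a.e. $x$ and $\veps$ the boundaries $\partial R_{x,\veps}$ and $\partial \Delta_\veps^c$ are $\mu$-null, since $\mu$ admits at most countably many atoms on any horizontal or vertical line (and the boundary of these sets is a finite union of such segments). Decomposing $R_{x,\veps} = (R_{x,\veps} \cap B_M) \sqcup (R_{x,\veps} \setminus B_M)$ for a large $\ell^\infty$-ball $B_M$, vague convergence gives $\mu_n(R_{x,\veps} \cap B_M) \to \mu(R_{x,\veps} \cap B_M)$ (Portmanteau on the continuity set), while any point of $R_{x,\veps} \setminus B_M$ has persistence at least $M - \abs{x}$, so
\begin{equation*}
\mu_n(R_{x,\veps} \setminus B_M) \leq \mu_n(\Delta_{M-\abs{x}}^c) \leq g(M - \abs{x}) \xrightarrow[M \to \infty]{} 0
\end{equation*}
uniformly in $n$. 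An analogous truncation handles $\mu_n(\Delta_\veps^c)$. Conversely, $(3)$ implies vague convergence by approximating any $f \in C_c(\overline{\mathcal{X}})$ by finite linear combinations of indicators of rectangles $R_{x,\veps}$ (which generate the Borel $\sigma$-algebra away from $\Delta$), and then dominated convergence using $g$ gives $\zeta_{\mu_n}(r) \to \zeta_\mu(r)$ for any $r \in (p, q)$, yielding (2). Finally, $(3) \Rightarrow (5)$ follows: for any $r \in (p,q)$, the argument just given yields (2) at this $r$, and Lemma \ref{lemma:DivolWasserstein} concludes $d_r(\mu_n, \mu) \to 0$.

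The main obstacle I anticipate is the direction $(2) \Rightarrow (3)$, namely extracting pointwise convergence on the \emph{unbounded} rectangles $R_{x,\veps}$ from vague convergence, which only gives information on compact continuity sets. The truncation argument works only thanks to the uniform envelope $g$, which is why the hypothesis on $g$ is indispensable in the theorem; without it, mass could escape to infinity along the sequence in ways undetectable by vague convergence plus a single $\zeta$-value.
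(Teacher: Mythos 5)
Your proof is correct and takes essentially the same route as the paper: lemma \ref{lemma:DivolWasserstein} for $(1)\Leftrightarrow(2)$, dominated convergence with the envelope $g$ together with lemma \ref{lemma:seqctyofInverseMellin} for the Mellin duality $(3)\Leftrightarrow(4)$, and a Portmanteau/$\pi$-system (rectangle-approximation) argument to pass between vague convergence and convergence on the sets $R_{x,\veps}$ and $\Delta_\veps^c$, with your reorganized cycle ($(4)\Rightarrow(3)\Rightarrow(2)$ at every $r$, hence $(5)$) being only a cosmetic variant of the paper's $(4)\Rightarrow(5)$. The one point where you are more careful than the paper is the truncation in $(2)\Rightarrow(3)$, which explicitly handles the unboundedness of $R_{x,\veps}$ and $\Delta_\veps^c$ using the uniform envelope, whereas the paper's appeal to Portmanteau for ``any continuity set'' leaves this tail control implicit.
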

\begin{proof}
Let us break down the proof in different steps.
\begin{itemize}
  \item $(1) \iff (2)$ by lemma \ref{lemma:DivolWasserstein}. 
  \item $(2) \implies (3)$. By the Portmanteau theorem, the vague convergence of $\mu_n$ to $\mu$ entails that for any continuity set $A$ of $\mu$,
  $\mu_n(A) \to \mu(A)$. In particular, since the $\mu_n$ and $\mu$ are q-tame, $R_{x,\veps}$ is a continuity set of $\mu$ for almost every $x$ and $\veps$, so $\mu_n(R_{x,\veps}) \to \mu(R_{x,\veps})$ and $\mu_n(\Delta_\veps^c) \to \mu(\Delta_\veps^c)$ a.e..
  \item $(3) \implies (4)$. Since $\mu_n(\Delta_\veps^c) \to \mu(\Delta_\veps^c)$ a.e., if we let $s \in \bra\al,\beta\ket$, then
  \begin{align*}
  \abs{\zeta_{\mu_n}(s) - \zeta_\mu(s)} &\leq \abs{s}\int_0^\infty \veps^{\Real(s)-1} \abs{\mu_n(\Delta_\veps^c)-\mu(\Delta_\veps^c)} \;d\veps \\
  &= \abs{s}\left\{\int_0^1 +\int_1^\infty\right\} \veps^{\Real(s)-1} \abs{\mu_n(\Delta_\veps^c)-\mu(\Delta_\veps^c)} \;d\veps
   \end{align*}
  The domination conditions of the theorem on $\mu_n(\Delta_\veps^c)$ and $\mu(\Delta_\veps^c)$ guarantee that this quantity is integrable as soon as $p < \Real(s) <q$. By dominated convergence, $\zeta_{\mu_n} \to \zeta_\mu$ on every compact of $\bra p, q \ket$. We apply the same reasoning to the $R_{x,\veps}$ by noting that the measure of $R_{x,\veps}$ is always dominated by that of $\Delta_\veps^c$.
  \item $(4) \implies (5)$. Fix $r \in\; ]p,q[$ and take a compact set $K$ around $r$ contained in $\bra p,q\ket$. It suffices thus to show that the convergence of these $\zeta$-functions entails vague convergence of the $\mu_n$, which will show the result by lemma \ref{lemma:DivolWasserstein}. Denoting $A_{\veps} \in \{R_{x,\veps},\Delta_\veps^c\}$, the boundedness condition of the theorem on $\mu_n(A_\veps)$ entails that, by lemma \ref{lemma:seqctyofInverseMellin}, $\mu_n(A_\veps) \to \mu(A_\veps)$ almost everywhere. Since the collection of $R_{x,\veps}$ and $\Delta_\veps^c$ together forms a $\pi$-system, so for every continuity set of $\mu$, we have convergence of their measures $\mu_n$ to $\mu$, and so by applying the Portmanteau theorem once again, $\mu_n \to \mu$ vaguely, which shows the result.
  \item $(5) \implies (1)$ trivially.
\end{itemize}
\end{proof}
\begin{remark}
Of course, by interpolation, it is sufficient that for all $\mu$ and $\Real(s)>p$, $\zeta_{\mu_n}(s)$ be bounded to guarantee that for any $s \in \;] p, \infty[$, $d_s(\mu_n,\mu) \to 0$, if $d_p(\mu_n,\mu)\to 0$. 
\end{remark}
\begin{remark}
Theorem \ref{thm:zetaWasserstein} applies to average diagrams from any stochastic process satisfying the hypotheses of the theorem. This entails that if the $\zeta$-functions of a process converge on some open set $\bra \al, \beta \ket \subset \C$, the underlying expected diagrams themselves converge in some Wasserstein distance.
\end{remark}
\section{$\zeta$-functions of L\'evy processes and semimartingales}

\subsection{Semimartingales}
A first important motivating result regarding $\zeta$-functions is the following. 
\begin{proposition}
\label{prop:semimartingaleszeta}
Let $X$ be a continuous semimartingale $X = M + A$ on the interval $[0,t]$ such that for $s \geq 1$
\be
\expect{[M]_{t}^{s/2} + \left(\int_0^t \abs{dA}_s\right)^s \,} < \infty \,.
\ee 
Then, the (local) $\zeta$-function of $X$ is meromorphic on $\Real(p)\geq 2$ (resp. $\Real(p)\geq 1$) with a single simple pole at $p=2$ (resp. $p=1$). Furthermore, if $X$ is a continuous semimartingale (and some conditions) and $[X]_t < \infty$, then in $L^s$
\be
N^\veps_X \sim \frac{[X]_t}{2\veps^2} \quad \text{as } \veps \to 0 \,.
\ee
\end{proposition}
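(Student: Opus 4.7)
The plan is to first establish the $L^s$ asymptotic $\veps^2 N^\veps_t \to \tfrac{1}{2}[X]_t$ via a hitting-time decomposition of the quadratic variation, and then to deduce the meromorphic structure of $\zeta_X$ by applying the fundamental correspondence (Theorem~\ref{thm:FundamentalCorrespondence}) to $g(\veps) := \EE[N^\veps_t]$. The local statement is then obtained for free from Proposition~\ref{prop:LocalGlobalZeta}.

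For the asymptotic, I introduce the hitting times $\tau^\veps_0 := 0$ and $\tau^\veps_{k+1} := \inf\{u \geq \tau^\veps_k \, | \, |X_u - X_{\tau^\veps_k}| \geq \veps\}$, together with $M^\veps_t := \#\{k \, | \, \tau^\veps_k \leq t\}$. By continuity $|X_{\tau^\veps_{k+1}} - X_{\tau^\veps_k}| = \veps$ on $\{\tau^\veps_{k+1} \leq t\}$, and so telescoping yields
\[ \sum_{k=0}^{M^\veps_t - 1}(X_{\tau^\veps_{k+1}} - X_{\tau^\veps_k})^2 = \veps^2 M^\veps_t \, . \]
A standard application of Burkholder-Davis-Gundy to $M$ combined with Hölder on $A$ shows that this Riemann-type sum converges in $L^{s/2}$ to $[X]_t$ as $\veps \to 0$; the integrability hypothesis is precisely what ensures uniform integrability along the random partition. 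Hence $\veps^2 M^\veps_t \to [X]_t$ in $L^{s/2}$. I then use the fact that consecutive $\tau^\veps$-crossings necessarily alternate in direction and that a full oscillation counted by a pair $(T^\veps_i, S^\veps_i)$ of Definition~\ref{def:vepsminmax} corresponds to exactly one up-crossing followed by one down-crossing (or vice-versa), so that $M^\veps_t = 2 N^\veps_t + O(1)$ almost surely. Combining with the previous convergence gives $\veps^2 N^\veps_t \to \tfrac{1}{2}[X]_t$ in the claimed sense.

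For the meromorphic structure, the previous step yields $g(\veps) = \EE[[X]_t]/(2\veps^2) + o(\veps^{-2})$ as $\veps \to 0$. For the tail at infinity, $N^\veps_t \leq R_t/\veps$ (trivial counting bound) and $R_t \in L^s$ by assumption, so $g(\veps) = O(\veps^{-s-1})$ as $\veps \to \infty$. The fundamental strip of $g$ therefore contains $\bra 2, s+1 \ket$, and Theorem~\ref{thm:FundamentalCorrespondence} produces a meromorphic extension of $g^*$ and hence of $\zeta_X(p) = p \, g^*(p)$ to the half-plane $\Real(p) \geq 2$ whose only singularity is a simple pole at $p = 2$, of residue $\EE[[X]_t]$. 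The local statement follows immediately from Proposition~\ref{prop:LocalGlobalZeta}: the identity $\zeta_X(p) = p \int_\R \zeta^x_X(p-1) \, dx$ transforms the pole of $\zeta_X$ at $p = 2$ into a pole of $\zeta^x_X$ at $p = 1$.

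The main obstacle is the precise counting identity $M^\veps_t = 2 N^\veps_t + O(1)$, which requires handling the incomplete boundary excursions near times $0$ and $t$ and verifying that the alternation of up/down crossings is compatible with the alternation of the $T^\veps_i, S^\veps_i$ of Definition~\ref{def:vepsminmax}; a secondary technical point is the convergence of Riemann sums to $[X]_t$ along the random partition $\{\tau^\veps_k\}$, which relies on path continuity of $X$ to guarantee that the mesh $\max_k (\tau^\veps_{k+1} \wedge t - \tau^\veps_k \wedge t)$ tends to zero in probability as $\veps \to 0$.
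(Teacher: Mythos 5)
There is a genuine gap at the crux of your argument: the claimed pathwise identity $M^\veps_t = 2N^\veps_t + O(1)$ is false, and the justification you give for it ("consecutive $\tau^\veps$-crossings necessarily alternate in direction") is incorrect. After an up-move of size $\veps$ from the last crossing level, the next crossing can perfectly well be another up-move (for Brownian motion each successive crossing is up or down with probability $\tfrac12$, independently), so crossings of the Lebesgue partition do not alternate and do not pair off with the $(T^\veps_i,S^\veps_i)$ of definition \ref{def:vepsminmax}. A monotone (finite-variation) stretch of the path makes the discrepancy blatant: a path that rises by $1$, falls by $1$ and rises by $1$ again has $M^\veps_t \approx 3/\veps$ but $N^\veps_t$ bounded, so for a general continuous semimartingale $X=M+A$ the drift part alone contributes order $\veps^{-1}\int_0^t\abs{dA}$ to $M^\veps_t$ with no matching contribution to $N^\veps_t$; and even for Brownian motion the difference $M^\veps_t - 2N^\veps_t$ has fluctuations of order $\sqrt{t}/\veps$, not $O(1)$. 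The factor $2$ linking the two counting schemes is a genuinely probabilistic, asymptotic fact (for BM it comes from $\sup_{[0,s]}B - B_s \overset{d}{=} \abs{B_s}$, so the elementary "drop $\veps$ from the running max" time and the elementary two-sided crossing time have the same mean $\veps^2$), and establishing it is essentially equivalent to the statement you are trying to prove. This is exactly what the paper's proof supplies by a different route: it invokes the downcrossing representation of local time for continuous semimartingales (Revuz--Yor, Ch.~VI Thm.~1.10), $2\veps N^{x,x+\veps}_X \to L^x_X(t)$ in $L^s$ uniformly in $x$, integrates over $x$ with the occupation density formula to get $\lambda(T_f^\veps) = \int_\R N^{x,x+\veps}\,dx \sim [X]_t/(2\veps)$, and then converts this into the asymptotics of $N^\veps$ itself by the monotonicity sandwich $N^{(1+\delta)\veps} \leq \frac{1}{\delta\veps}\int_\veps^{(1+\delta)\veps}N^a\,da \leq N^\veps$. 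Your first step (convergence of squared increments along the stopping-time partition to $[X]_t$) is fine in probability and plausibly in $L^{s/2}$ under the stated hypotheses, but it cannot be transferred to $N^\veps_t$ by the combinatorial argument you propose.

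A secondary error: the "trivial counting bound" $N^\veps_t \leq R_t/\veps$ is false (for Brownian motion $N^\veps_t \sim t/(2\veps^2)$ while $R_t/\veps$ is only of order $\veps^{-1}$). For the tail of the Mellin integral at large $\veps$ you should instead use that $N^\veps_t = 0$ on $\{R_t < \veps\}$ together with, e.g., Cauchy--Schwarz and moment bounds on $R_t$ and $N^\veps_t$; with that repaired, your deduction of the meromorphic structure from theorem \ref{thm:FundamentalCorrespondence} and of the local statement from proposition \ref{prop:LocalGlobalZeta} is in the same spirit as what the paper does.
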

\begin{remark}
By quadratic variation, we mean the quadratic variation in the probabilistic sense, \ie in the sense that we take the limit of the variation of the process as the mesh of the partition of the interval $[0,t]$ goes to zero, as opposed to the \textit{real} quadratic variation, where the supremum over all partitions is considered.
\end{remark}
\begin{proof}
The statement holds by the almost sure existence of continuous modifications of local times for continuous semimartingales and the fact that uniformly in $x$ and $t$, it is known \cite[Ch. VI Thm. 1.10]{Revuz_1999} that under the technical hypothesis of the theorem,
\be
2\veps N^{x,x+\veps}_X \xrightarrow[\veps \to 0]{L^s} L^x_X(t) \,.
\ee
Recall the density occupation formula for the local times of a continuous semimartinglale $X$ \cite[Cor. 9.7]{LeGall:BrownianMotion}, which states that, almost surely, for every $t \geq 0$ and any non-negative measurable function $\phi$ on $\R$
\be
\int_0^t \phi(X_\tau) \; d[X]_\tau = \int_{\R} \phi(a) L^a_X(t) \;da \,.
\ee
Taking $\phi$ to be the constant function $1$, we get that in $L^s$
\begin{align*}
\lambda(T_f^\veps) &= \int_\R N^{x,x+\veps} \; dx \sim \frac{1}{2\veps}\int_\R L_X^x(t) \; dx  \;\; \text{as } \veps \to 0\\
&= \frac{1}{2\veps} \int_0^t d[X]_\tau  = \frac{[X]_t}{2\veps} + o(\veps^{-1}) \;\; \text{as }\veps \to 0 \,.
\end{align*}
But for $\delta>0$ small enough, by monotonicity of $N^\veps$,
\be
N^{(1+\delta)\veps} \leq \frac{\lambda(T_f^\veps)- \lambda(T_f^{(1+ \delta)\veps})}{\delta \veps} = \frac{1}{\delta\veps}\int_\veps^{(1+\delta)\veps} N^a \; da  \leq N^\veps  \,,
\ee
so that in $L^s$ for every $\delta >0$ small enough,
\be
\frac{[X]_t}{2\veps^2}\frac{1}{1+\delta} \lesssim N^\veps \lesssim \frac{[X]_t}{2\veps^2} \frac{1}{1-\delta} \quad \text{as }\veps \to 0 \,.
\ee
Since $\delta$ can be taken to be arbitrarily small
\be
N^\veps \sim \frac{[X]_t}{2\veps^2} \quad \text{as } \veps \to 0 \,
\ee
in $L^s$ as desired.
\end{proof}
\begin{remark}
If the continuous semimartingale is Brownian motion, the same arguments as well as a result regarding the representation by downcrossings of the local time by It\^o \cite[\S 2.4]{Ito_1996} show that this asymptotic relation holds in fact almost surely. 
\end{remark}
\begin{corollary}
If $X$ is a continuous semimartingale, then in expectation $\Pers_p^p(X)$ admits a pole of order $1$ at $p =2$ of residue $[X]_t$.
\end{corollary}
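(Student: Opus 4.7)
My plan is to promote the $L^s$-asymptotic $N^\veps \sim [X]_t/(2\veps^2)$ established in Proposition \ref{prop:semimartingaleszeta} to an asymptotic for the expectation $\expect{N^\veps}$, and then read off the residue of $\zeta_X$ at $p=2$ via the Fundamental Correspondence (Theorem \ref{thm:FundamentalCorrespondence}). The meromorphy of $\zeta_X$ across $p=2$ is \emph{already} supplied by the proposition; only the value of the residue remains to be identified.

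First, I would specialise the $L^s$-convergence of the proposition to $s=1$. The integrability hypothesis $\expect{[M]_t^{s/2} + (\int_0^t \abs{dA}_s)^s} < \infty$ furnishes the uniform integrability of the family $(2\veps^2 N^\veps)_{\veps > 0}$, so that passing to the expectation yields
\begin{equation*}
\expect{N^\veps} \;=\; \frac{\expect{[X]_t}}{2\veps^2} + o(\veps^{-2}) \quad \text{as } \veps \to 0 \,.
\end{equation*}

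Second, starting from the identity $\zeta_X(p) = p\,\Mel[\expect{N^\veps}](p)$, I would invoke the Fundamental Correspondence: the leading $\veps^{-2}$ term produces a simple pole of $\Mel[\expect{N^\veps}]$ at $p=2$ with residue $\expect{[X]_t}/2$. Multiplying by the analytic prefactor $p$ (which equals $2$ at the pole) yields
\begin{equation*}
\Res_{p=2} \zeta_X(p) \;=\; 2 \cdot \frac{\expect{[X]_t}}{2} \;=\; \expect{[X]_t} \,.
\end{equation*}
Equivalently and more concretely, splitting the defining Mellin integral as $\int_0^1 + \int_1^\infty$, the second piece is analytic at $p=2$ (since $\expect{N^\veps}$ is bounded for $\veps \geq 1$, being dominated by the total number of finite bars), while the leading term of the first piece contributes $\frac{p\,\expect{[X]_t}}{2(p-2)}$, from which the residue is immediate. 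The remainder, being $o(\veps^{-2})$, has a Mellin transform analytic on a strip strictly containing $\Real(p) = 2$ and thus adds nothing to the residue.

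The only genuine subtlety is the commutation of limit and expectation in the first step, and this is precisely what the integrability hypothesis of Proposition \ref{prop:semimartingaleszeta} is tailored to provide. Everything else reduces to the elementary Mellin-transform identity $\Mel[\veps^{-2} \mathbf{1}_{[0,1]}](p) = 1/(p-2)$ and the fact that $\zeta_X$ has already been shown to be meromorphic with a single simple pole at $p=2$.
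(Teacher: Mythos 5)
Your proposal is correct and follows the route the paper leaves implicit: the corollary is stated without proof as an immediate consequence of Proposition \ref{prop:semimartingaleszeta}, obtained exactly as you do by taking $s=1$, passing to expectations, and reading the residue off the Mellin relation $\zeta_X(p)=p\,\Mel[\expect{N^\veps_t}](p)$ (so the residue is $\expect{[X]_t}$, which is how the paper's ``$[X]_t$'' must be read when the quadratic variation is not deterministic). One small caveat: a remainder that is merely $o(\veps^{-2})$ does \emph{not} in general have a Mellin transform analytic on a strip containing $\Real(p)=2$; it only satisfies that $(p-2)$ times its Mellin transform tends to $0$ as $p\to 2^{+}$, which, combined with the meromorphy and simple-pole structure already asserted in the proposition, is all that is needed to identify the residue, so your conclusion stands even though that particular justification is overstated.
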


In \cite{Perez_Pr_2020}, we have already studied the functional $N^\veps$ for Markov processes. Let us briefly recall some useful known facts about $N^\veps$. 
\begin{proposition}[P, \cite{Perez_Pr_2020}]
\label{prop:p2resume}
Using summation by parts, it is possible to write
\be
\expect{(N^\veps_t)^s} = \sum_{k\geq 1} (k^s - (k-1)^s) \PP(N^\veps_t \geq k)
\ee
For processes on the interval which are not periodic (in the sense of \cite{Perez_Pr_2020}), if $k \geq 2$
\begin{equation}
\label{eq:NtSk}
\PP(N_t^\veps \geq k)  = \PP(S_{k-1}^\veps \leq t) \,,
\end{equation}
and $\PP(N_t \geq 1) = \PP(R_t \geq \veps)$. Furthermore if $X$ has the strong Markov property,
\be
\expect{N_t^\veps} \sim \PP(R_t \geq \veps) \quad \text{as } \veps \to \infty\,.
\ee  
Finally, for $k \geq 2$ the Laplace transform (with respect to time, as per our convention) of equation \ref{eq:NtSk} is 
\begin{equation}
\label{eq:by339}
\Lag(\PP(N_t^\veps \geq k))(\lambda) = \frac{\expect{e^{-\lambda S_{k-1}^\veps}}}{\lambda} \,.
\end{equation}
\end{proposition}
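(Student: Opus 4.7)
The plan is to prove the four claims of the proposition in sequence. The first (Abel summation) and the last (Laplace transform identity) are elementary algebraic manipulations, while the middle two require combinatorial and probabilistic arguments relying on the structure of the stopping times $T^\veps_i, S^\veps_i$. For the summation-by-parts identity, assuming $N^\veps_t < \infty$ almost surely (which is automatic for $\veps > 0$ on a finite interval), one writes $\expect{(N^\veps_t)^s} = \sum_{k \geq 1} k^s\, \PP(N^\veps_t = k)$, substitutes $\PP(N^\veps_t = k) = \PP(N^\veps_t \geq k) - \PP(N^\veps_t \geq k+1)$, and reindexes the second sum to obtain the telescoped expression. For the Laplace transform, Fubini's theorem gives
\begin{align*}
\Lag[\PP(N^\veps_t \geq k)](\lambda) = \int_0^\infty e^{-\lambda t} \PP(S^\veps_{k-1} \leq t) \, dt = \expect{\int_{S^\veps_{k-1}}^\infty e^{-\lambda t} \, dt} = \frac{\expect{e^{-\lambda S^\veps_{k-1}}}}{\lambda}.
\end{align*}

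To identify $\PP(N^\veps_t \geq k) = \PP(S^\veps_{k-1} \leq t)$ for $k \geq 2$, together with $\PP(N^\veps_t \geq 1) = \PP(R_t \geq \veps)$, I would argue directly from the recursive Definition \ref{def:vepsminmax} together with the counting formula \eqref{eq:Nvepscountingformula}. The monotone sequence $(T^\veps_i, S^\veps_i)_i$ records successive up-down oscillations of amplitude $\veps$, and the elder rule assigns one bar of length $\geq \veps$ per completed cycle, plus the initial infinite bar of length $R_t$ (by Convention \ref{conv:infinitebar}), which exists as soon as $R_t \geq \veps$. The non-periodicity hypothesis ensures that the stopping times are strictly increasing almost surely, so no boundary ambiguities arise with positive probability.

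For the asymptotic $\expect{N^\veps_t} \sim \PP(R_t \geq \veps)$ as $\veps \to \infty$, I would decompose $\expect{N^\veps_t} = \PP(R_t \geq \veps) + \sum_{k \geq 2} \PP(S^\veps_{k-1} \leq t)$ and show the tail sum is $o(\PP(R_t \geq \veps))$. The strong Markov property applied at the stopping times $S^\veps_i$ makes the increments $S^\veps_{i+1} - S^\veps_i$ a renewal-type sequence, and the inclusion $\{S^\veps_k \leq t\} \subset \bigcap_{i \leq k} \{S^\veps_i - S^\veps_{i-1} \leq t\}$ yields a geometric-type bound of the form $\PP(S^\veps_k \leq t) \lesssim \PP(S^\veps_1 \leq t)^k$. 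It then suffices to show $\PP(S^\veps_1 \leq t) = o(\PP(R_t \geq \veps))$ as $\veps \to \infty$, which follows because $\{S^\veps_1 \leq t\}$ requires two $\veps$-sized excursions (an up-down-up of size $\veps$) rather than the single one needed for $\{R_t \geq \veps\}$.

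The most delicate step is the counting formula: one must verify carefully the identification between the algebraic count $N^\veps_t$ from \eqref{eq:Nvepscountingformula} and the probabilistic event $\{S^\veps_{k-1} \leq t\}$, handling in particular boundary effects near time $t$ (as illustrated by Figure \ref{fig:TivepsSiveps}, where a partial cycle at the endpoint contributes an extra bar beyond the completed cycles, explaining the offset by one between the index $k$ of the bar and the index $k-1$ of the stopping time). The non-periodicity hypothesis is invoked precisely here, to rule out pathological ties between consecutive stopping times that would otherwise spoil the clean equality.
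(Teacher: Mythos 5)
Your proposal is correct in substance but takes a broader and partly different route than the paper. The paper's own proof is minimal: it observes that the summation-by-parts identity, the identification $\PP(N_t^\veps \geq k) = \PP(S_{k-1}^\veps \leq t)$, and the asymptotic $\expect{N_t^\veps} \sim \PP(R_t \geq \veps)$ are all imported from \cite{Perez_Pr_2020}, and it only proves equation \ref{eq:by339}, doing so via the functional identity $\Lag[\PP(S_{k-1}^\veps \leq t)] = \Lag[\text{density of } S_{k-1}^\veps]/\lambda$ together with the fact that the Laplace transform of the density is the moment generating function. Your Fubini/Tonelli computation of the same identity is a legitimate alternative and is in fact slightly cleaner, since it does not implicitly require $S_{k-1}^\veps$ to admit a density (only positivity, with the convention $e^{-\lambda\cdot\infty}=0$ if $S_{k-1}^\veps$ may be infinite). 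Your Abel-summation argument and your derivation of $\PP(N_t^\veps\geq k)=\PP(S_{k-1}^\veps\leq t)$ from the counting formula \eqref{eq:Nvepscountingformula}, the decomposition $N_t^\veps = 1_{\{R_t\geq\veps\}} + \hat N_t^\veps$ and convention \ref{conv:infinitebar} are also sound and match the intent of the cited reference, including the boundary offset by one.

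The one place where your sketch is genuinely incomplete is the asymptotic $\expect{N_t^\veps}\sim\PP(R_t\geq\veps)$, which the paper does not reprove. Your geometric bound $\PP(S_k^\veps\leq t)\lesssim\PP(S_1^\veps\leq t)^k$ does not follow directly from the strong Markov property for a general (non-L\'evy) process, because the increments $S_i^\veps - S_{i-1}^\veps$ are not identically distributed: the restarted process begins at the random position $X_{S_{i-1}^\veps}$, so one needs a bound uniform in the starting point, e.g. $\sup_x \PP_x(S_1^\veps\leq t)$, and must argue this supremum is bounded away from $1$ (indeed tends to $0$) as $\veps\to\infty$. Likewise, the key claim $\PP(S_1^\veps\leq t)=o(\PP(R_t\geq\veps))$ is only asserted by the "two excursions versus one" heuristic; making it rigorous requires applying the strong Markov property at $T_1^\veps$ and controlling the conditional probability of the second $\veps$-rise uniformly over starting states. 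Since the paper delegates exactly this point to \cite{Perez_Pr_2020}, your proposal would need either that citation or the uniformized argument spelled out to stand on its own.
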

\begin{proof}[Proof of proposition \ref{prop:p2resume}]
The only thing to prove is the result of equation \ref{eq:by339}, as the previous statements are all proved in \cite{Perez_Pr_2020}. Since we are dealing with processes which are not periodic in the sense of \cite{Perez_Pr_2020}, then
\be
\PP(N_t^\veps \geq k) = \PP(S_{k-1}^\veps \leq t) \,,
\ee
since as soon as the hitting time $S_{k-1}^\veps$ is attained we have at least $k$ bars (due to the boundary of the interval). Using standard functional properties of the Laplace transform it is easy to see that 
\be
\Lag[\PP(S_{k-1}^\veps \leq t)](\lambda) = \frac{\Lag[\PP(S_{k-1}^\veps = t)](\lambda)}{\lambda} \,,
\ee 
where $\PP(S_{k-1}^\veps =t)$ denotes the probability density function of $S_{k-1}^\veps$. However, the Laplace transform of this density function is nothing other than the moment generating function $\expect{e^{-\lambda S_{k-1}^\veps}}$, since $S_{k-1}^\veps$ is a positive random variable.
\end{proof}
\begin{remark}
\label{rmk:whyLaplace}
If the process has the strong Markov property, we can write $\expect{e^{-\lambda S_{k-1}^\veps}}$ as the product of the Laplace transform of the distribution of its increments
\be
S_{k-1}^\veps = \sum_{i=0}^k (S_i^\veps- T_i^\veps) + (T_i^\veps- S_{i-1}^\veps) \,.
\ee
The expression of $\expect{e^{-\lambda S_{k-1}^\veps}}$ is particularly simple as soon as these increments are independent and identically distributed.
\end{remark}
\begin{remark}
Ordering the bars of the barcode of a function $f$ by their length, and denoting the length of the $k$th longest branch by $\ell_k$, the following equivalence holds
\be
N_t^\veps \geq k \iff \ell_k \geq \veps \,.
\ee
The probability distribution of both of the events above are thus the same. Consequently, there is a one-to-one correspondence between the elements of the sums
\be
\expect{\Pers^p_p} = \sum_{k \geq 1} \expect{\ell_k^p} = p\sum_{k \geq 1} \Mel[\PP(N^\veps_t \geq k)](p)
\ee
whenever these quantities are defined. In particular, the distribution of each bar is in principle readily available, since $\expect{\ell_k^{p-1}}$ is the Mellin transform of the distribution of $\ell_k$. We will later see that in particular cases, we can gain access to the explicit distribution of bars in this way (\cf section \ref{sec:distlengthbars}). 
\end{remark}
\subsection{Renewal theory}
Throughout this section, we shall consider that the stopping times $S_{k-1}^\veps$ are such that 
the sequence $(U_k^\veps)_{k \geq 0}$ is a sequence of i.i.d. atomless random variables, where
\be
U_k^\veps := S_k^\veps- S_{k-1}^\veps \,.
\ee 
We shall adapt and enounce some theorems from renewal theory (\cf Allan Gut's book \cite{Gut_2009} for a detailed account of this) Let us define $\hat{N}^\veps_t$ by 
\be
\hat{N}^\veps_t := \max\{k \, \vert \, S_k \leq t\} \,.
\ee 
Tautologically, for every $k$, $\hat{N}^\veps_t \geq k \iff S_{k}^\veps \leq t$. In fact, adopting the convention that the infinite bar has length the range of the process, we can write, always under convention \ref{conv:infinitebar} that
\be
N_t^\veps  = 1_{\{R_t \geq \veps\}} + \hat{N}^\veps_t 
\ee 
\begin{remark}
Notice that convention \ref{conv:infinitebar} for the infinite bar might not be the most natural or more convenient from this point of view. Indeed, if we had adopted the convention that the infinite bar \textit{always} has length at least $\veps$, then
\be
N_t^\veps = 1 + \hat{N}^\veps_t 
\ee
and $N^\veps_t$ is a first passage time process defined by 
\be
N^\veps_t := \min\{k \, \vert \, S_k > t\} \,.
\ee
Nonetheless, as previously stated, we will keep adopting convention \ref{conv:infinitebar} for the rest of this paper. 
\end{remark}
According to renewal theory, $\hat{N}^\veps_t$ posses various desirable limit theorems which we shall briefly recall and refer the reader to \cite{Gut_2009} for complete proofs of the latter. 
\begin{theorem}[Strong Law of Counting Processes, Theorem 5.1 \cite{Gut_2009}]
\label{thm:SLCP}
Let $0<\expect{U_1^\veps}<\infty$, then 
\be 
\frac{\hat{N}^\veps_t}{t} \xrightarrow[a.s]{t \to \infty} \frac{1}{\expect{U_1^\veps}}  \quad \text{and} \quad  \frac{\expect{(\hat{N}^\veps_t)^s}}{t^s} \xrightarrow{t \to \infty} \frac{1}{(\expect{U_1^\veps})^s}
\ee 
for all $s >0$. If $\expect{U^\veps_1} = \infty$, then the limits are $0$.
\end{theorem}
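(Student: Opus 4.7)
The plan is to follow the classical proof of the Strong Law of Large Numbers for renewal counting processes, as presented in Gut's book. The starting observation is that by the very definition of $\hat N^\veps_t$ as $\max\{k \,\vert\, S_k^\veps \leq t\}$, one has the sandwich inequality
\[
S^\veps_{\hat N^\veps_t} \,\leq\, t \,<\, S^\veps_{\hat N^\veps_t +1},
\]
which after dividing through by $\hat N^\veps_t$ becomes
\[
\frac{S^\veps_{\hat N^\veps_t}}{\hat N^\veps_t} \,\leq\, \frac{t}{\hat N^\veps_t} \,<\, \frac{S^\veps_{\hat N^\veps_t +1}}{\hat N^\veps_t+1}\cdot \frac{\hat N^\veps_t +1}{\hat N^\veps_t}.
\]

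First, I would establish that $\hat N^\veps_t \to \infty$ almost surely as $t\to \infty$: each partial sum $S^\veps_k$ is an a.s.\ finite sum of i.i.d.\ random variables, so $\PP(\hat N^\veps_t \geq k) = \PP(S_k^\veps \leq t) \to 1$ as $t\to\infty$ for every fixed $k$, and the monotonicity of $t \mapsto \hat N^\veps_t$ then yields a.s.\ divergence. The classical Kolmogorov SLLN applied to $(U_k^\veps)_k$ gives $S_k^\veps/k \to \expect{U_1^\veps}$ almost surely as $k\to\infty$; composing with the divergence of $\hat N^\veps_t$ and squeezing in the sandwich above,
\[
\frac{t}{\hat N^\veps_t} \xrightarrow[a.s.]{t \to \infty} \expect{U_1^\veps},
\]
which after inversion proves the first claim. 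In the case $\expect{U_1^\veps} = \infty$, the same scheme applies after truncating $U_k^\veps$ at a constant $c > 0$: letting $c \uparrow \infty$, the inequality $\limsup_t \hat N^\veps_t/t \leq 1/\expect{U_1^\veps \wedge c}$ a.s.\ forces $\hat N^\veps_t/t \to 0$ a.s.

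The hard part will be upgrading this almost-sure convergence to convergence of moments $\expect{(\hat N^\veps_t/t)^s} \to 1/(\expect{U_1^\veps})^s$, which requires uniform integrability of $(\hat N^\veps_t/t)^{s+\delta}$ for some $\delta > 0$. The standard strategy is again a truncation: set $\tilde U_k^\veps := U_k^\veps \wedge c$ for a fixed $c > 0$, build the associated counting process $\tilde N^\veps_t$, and use the pathwise domination $\hat N^\veps_t \leq \tilde N^\veps_t$. For a renewal process with bounded inter-arrival times, a Chernoff-type concentration inequality or a Wald-identity argument provides polynomial moment bounds $\expect{(\tilde N^\veps_t)^{s+\delta}} \leq C_{s,\delta,c}\, t^{s+\delta}$, which yield the required uniform integrability; combined with the a.s.\ convergence above, one obtains $L^s$ convergence and hence convergence of the $s$-th moment. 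In the infinite-mean case one concludes by noting that $\expect{\tilde U_1^\veps} \uparrow \infty$ as $c \uparrow \infty$, so $\limsup_t \expect{(\hat N^\veps_t/t)^s}$ can be made arbitrarily small and must therefore vanish. The technical bottleneck is thus the moment control on the bounded-increment renewal process --- classical, but the only step requiring genuine estimation rather than a soft convergence argument.
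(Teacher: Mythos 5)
Your proposal is correct and is the classical renewal-theoretic argument: the sandwich inequality $S^\veps_{\hat N^\veps_t}\leq t < S^\veps_{\hat N^\veps_t+1}$ plus the SLLN for the a.s.\ part, and truncation with polynomial moment bounds (hence uniform integrability) for the moment convergence. The paper itself gives no proof of this statement—it is quoted directly from Gut's book \cite{Gut_2009}, whose proof proceeds along essentially the same lines you describe, so your approach matches the one the paper relies on.
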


\begin{theorem}[CLT for Counting Processes, Theorem 5.2 \cite{Gut_2009}]
\label{thm:CLTCP}
Let $0 < \expect{U_1^\veps} = \mu <\infty$ and $\sigma^2 = \Var(U_1^\veps) <\infty$, then 
\be
\frac{\hat{N}_t^\veps - t/\mu}{\sqrt{\frac{\sigma^2 t}{\mu^3}}} \xrightarrow[\PP]{t \to \infty} \mathcal{N}(0,1)
\ee
Furthermore, 
\begin{align*}
\expect{\hat{N}_t^\veps} &= \frac{t}{\mu} + \frac{\sigma^2 - \mu^2}{2\mu^2} + o(1) \quad \text{as }  t\to \infty \\
\Var(\hat{N}_t^\veps) &= \frac{\sigma^2 t}{\mu^3} + o(t)\quad \text{as } t\to \infty \,.
\end{align*}
\end{theorem}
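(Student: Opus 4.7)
The plan is to reduce everything to the classical central limit theorem for the partial sums $S_k^\veps = \sum_{i=1}^k U_i^\veps$ by exploiting the duality $\{\hat{N}^\veps_t \leq k\} = \{S_{k+1}^\veps > t\}$, which follows directly from the definition of $\hat{N}^\veps_t$. Since the $(U_i^\veps)_i$ are i.i.d.\ with finite mean $\mu$ and variance $\sigma^2$, Lévy's CLT gives
\be
\frac{S_k^\veps - k\mu}{\sigma\sqrt{k}} \xrightarrow[\PP]{k \to \infty} \mathcal{N}(0,1) \,.
\ee

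For the convergence in distribution of the normalized counting variable, first I would set $k_t := \lfloor t/\mu + x\sqrt{\sigma^2 t/\mu^3}\rfloor$ for $x \in \R$ fixed, and compute
\be
\PP\!\left(\frac{\hat{N}_t^\veps - t/\mu}{\sqrt{\sigma^2 t/\mu^3}} \leq x\right) = \PP(\hat{N}_t^\veps \leq k_t) = \PP(S_{k_t+1}^\veps > t) \,.
\ee
The plan is then to rewrite the right-hand side as $\PP\!\left((S_{k_t+1}^\veps - (k_t+1)\mu)/(\sigma\sqrt{k_t+1}) > y_t\right)$, where a direct expansion yields $y_t \to -x$ as $t \to \infty$. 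Applying the CLT along the subsequence $k_t \to \infty$ and using the uniform continuity of the Gaussian CDF then gives convergence to $1-\Phi(-x) = \Phi(x)$, which is the desired weak convergence. The non-triviality here is that the index $k_t$ is itself a function of $t$, but this is handled by Anscombe's theorem or equivalently by a direct truncation argument since $k_t/t \to 1/\mu$ deterministically.

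For the first-moment expansion, I would appeal to the renewal equation satisfied by $R(t) := \expect{\hat{N}_t^\veps}$, namely $R(t) = F(t) + \int_0^t R(t-u) \,dF(u)$ where $F$ is the distribution of $U_1^\veps$. Since $F$ is atomless (as assumed throughout the section), the distribution is non-lattice, so Smith's key renewal theorem applies and yields the refinement $R(t) = t/\mu + (\sigma^2 - \mu^2)/(2\mu^2) + o(1)$ as $t \to \infty$. The constant term is obtained by computing the asymptotic defect $R(t) - t/\mu$ via a direct convolution against the integrated tail $\int_0^\infty \overline{F}(u)\,du$ and recognizing $\expect{(U_1^\veps)^2} = \sigma^2 + \mu^2$.

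The main obstacle is the variance expansion, since a naive bound only yields $\Var(\hat{N}_t^\veps) = O(t)$ and the precise leading constant requires more care. The plan is to compute $\expect{(\hat{N}_t^\veps)^2}$ by summation by parts using $\expect{(\hat{N}_t^\veps)^2} = \sum_{k \geq 1} (2k-1)\PP(\hat{N}_t^\veps \geq k) = \sum_{k \geq 1} (2k-1) \PP(S_k^\veps \leq t)$ and then apply a refined form of the CLT to the partial sums, combined with the moment control provided by the first-moment expansion. Equivalently, one may invoke the companion result of Smith's theorem for the second moment of a renewal counting process (\cf \cite{Gut_2009}), which gives $\Var(\hat{N}_t^\veps) = \sigma^2 t/\mu^3 + o(t)$ under the finite second-moment hypothesis. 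This requires carefully tracking the error terms in the CLT approximation uniformly over the summation range, which is the technical heart of the result.
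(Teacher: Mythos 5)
The paper does not actually prove this statement: Theorem \ref{thm:CLTCP} is recalled verbatim from renewal theory, with the reader explicitly referred to Gut's book \cite{Gut_2009} for the proofs, so there is no in-paper argument to compare against. Your proposal reconstructs the standard textbook proof, and it is essentially sound: the inversion $\{\hat{N}^\veps_t \leq k\} = \{S_{k+1}^\veps > t\}$ with the deterministic index $k_t = \lfloor t/\mu + x\sqrt{\sigma^2 t/\mu^3}\rfloor$ and the CLT along $k_t \to \infty$ is exactly the classical route to the distributional limit (note that Anscombe's theorem is not needed here, since $k_t$ is deterministic rather than random; plain subsequential CLT plus continuity of $\Phi$ suffices, as you in fact observe). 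The expansion of $\expect{\hat{N}^\veps_t}$ via the renewal equation and Smith's key renewal theorem, using that the atomless assumption on $U_1^\veps$ rules out the lattice case, is also the standard derivation, and the constant $(\sigma^2-\mu^2)/(2\mu^2)$ comes out correctly once one accounts for the fact that $\hat{N}^\veps_t$ does not count the renewal at $0$. The only soft spot is the variance: convergence in distribution gives nothing about second moments, and your first route (summation by parts $\expect{(\hat{N}^\veps_t)^2} = \sum_{k\geq 1}(2k-1)\PP(S_k^\veps \leq t)$ plus ``uniform error tracking'' in the CLT) would in practice require a Berry--Esseen-type bound together with exponential control of $\PP(S_k^\veps \leq t)$ for $k$ far from $t/\mu$; the cleaner and more standard argument, and essentially the one in \cite{Gut_2009}, is to establish uniform integrability of $\bigl((\hat{N}^\veps_t - t/\mu)^2/t\bigr)_{t}$ via moment bounds for the counting process and then upgrade the CLT to convergence of second moments. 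Since you flag this step as the technical heart and also offer the legitimate fallback of citing the second-moment renewal result, I would not call this a gap, but that is the step to make rigorous if you want a self-contained proof.
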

These results are in particular applicable for Lévy processes, where it is possible to show that the requirement that the sequence $(U_k)_k$ is i.i.d. is satisfied. In particular, the theorems above give the asymptotic long-time behaviour of the number of bars. 

\subsection{L\'evy processes}
% \begin{definition}
% We say that a stochastic process $X$ on the $\R$ is a \textbf{L\'evy process} if and only if it has the following properties 
% \begin{enumerate}
%   \item $X_0 = 0$ almost surely;
%   \item $X$ has \textbf{independent increments}, \ie for any $0 < t_1 < \cdots <t_n<\infty$, $(X_{t_i} -X_{t_{i-1}})_i$ are all mutually independent; 
%   \item $X$ has \textbf{stationary increments}, \ie for any $s<t$, $X_t - X_s = X_{t-s}$ in distribution;
%   \item $X$ is \textbf{continuous in probability}, \ie for any $\veps >0$ and $t \geq 0$, $\lim_{h \to 0} \PP(\abs{X_{t+h}-X_t} >\veps) =0$.  
% \end{enumerate}
% \end{definition}
% \begin{remark}
% To be continuous in probability does not ensure continuity almost surely. In fact, most L\'evy processes are almost surely discontinuous at least somewhere, although it can be shown that these processes always admit a càdlàg modification.
% \end{remark}
% A particularly important class of L\'evy processes are the so-called $\al$-stable processes.
% \begin{definition}
% A L\'evy process is said to be \textbf{$\al$-stable} if for all $t$, $X_{c^\al t} = cX_t$ in distribution for all $c>0$. 
% \end{definition}
For L\'evy processes, the small scale asymptotics of $N^\veps$ can also be studied up to the following caveat : a wide range of L\'evy processes have almost surely discontinuous paths (but nonetheless càdlàg), but our construction of trees (as done in \cite{Perez_2020}) is based on continuous functions. For this reason, it is necessary to define what tree we associate to a process $X$ when $X_t$ has almost surely discontinuous paths. Luckily, this caveat has been treated for càdlàg processes in \cite{LeGall:Trees, Picard:Trees}. We will adopt the approach taken by Picard in \cite{Picard:Trees}, where the reader can find the details of the construction. Loosely speaking, Picard's approach consists in ``completing'' the function at the discontinuity points by joining an imaginary line linking the points of discontinuity (\cf figure \ref{fig:DiscontinuousTree}). 
\begin{figure}[h!]
\centering
\includegraphics[width=0.7\textwidth]{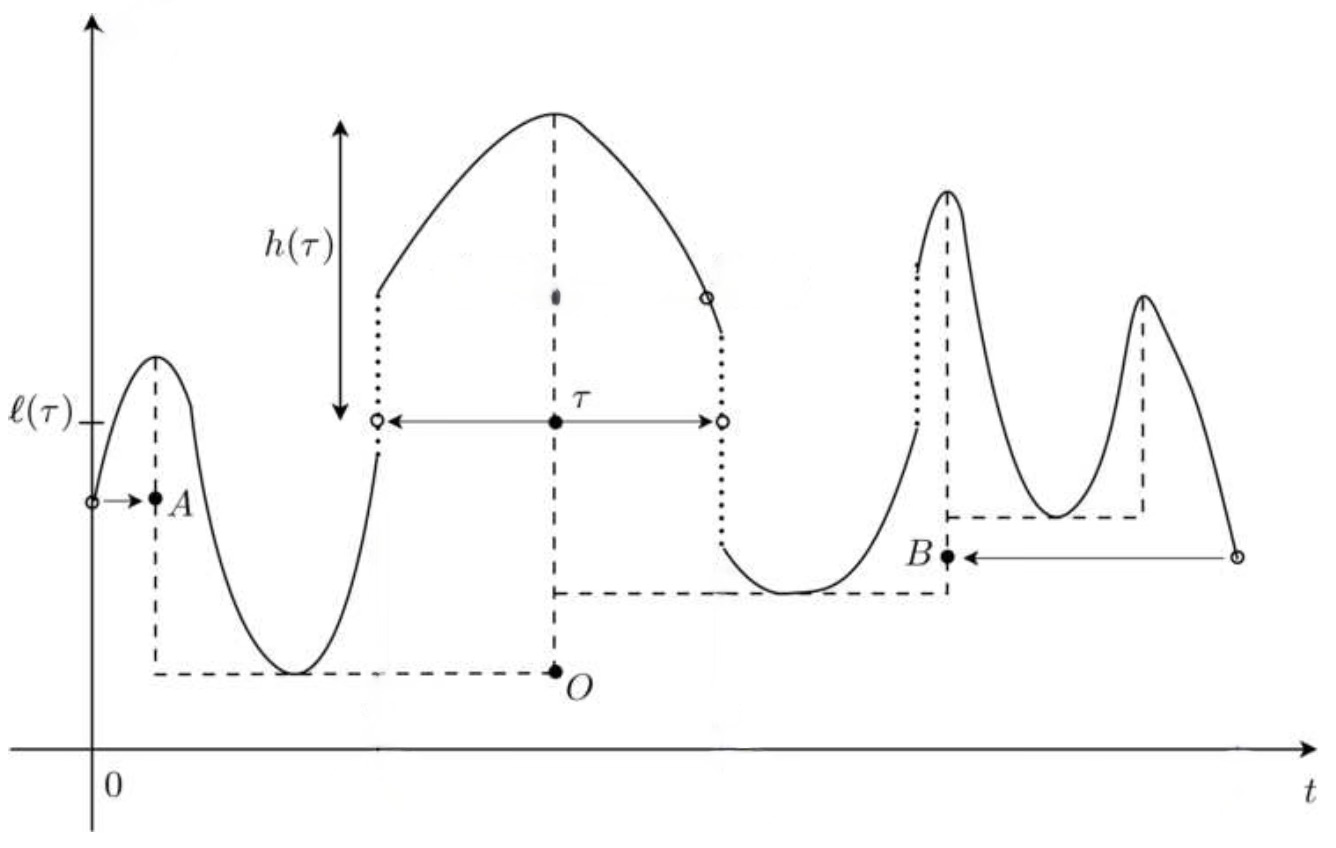}
\caption{A depiction of the construction of a tree associated to a càdlàg function. The figure is taken from \cite{Picard:Trees}}
\label{fig:DiscontinuousTree}
\end{figure}

In any case, it has been shown that on some fixed interval $[0,t]$, it is possible to obtain the behaviour of the number of bars of length $\geq \veps$ as $\veps \to 0$. 
\begin{proposition}[Picard, \S 3 \cite{Picard:Trees}]
\label{prop:PicardProp}
Let $X$ be a L\'evy process and suppose that, almost surely $X$ has no interval on which it is monotone. Define
\be
\xi(\veps) := \expect{S^\veps + T^\veps}
\ee
for 
\be
S^\veps := \inf\{t \, | \, X_t - \inf_{[0,t]} X > \veps\} \quad \text{ and } T^\veps := \inf\{t \, | \, \sup_{[0,t]} X - X_t > \veps\} \,,
\ee
then $\xi(\veps) N^\veps \to 1$ as $\veps \to 0$ in probability. If $\xi(\veps) = O(\veps^\al)$ for some $\al$, then the convergence is almost sure.
\end{proposition}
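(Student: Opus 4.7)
The core observation is that, by the strong Markov property of the Lévy process $X$ applied successively at the stopping times $S_k^\veps$, the increments $U_k^\veps := S_k^\veps - S_{k-1}^\veps$ form an i.i.d.\ sequence distributed as $S^\veps + T^\veps$, with common mean $\xi(\veps)$. The hypothesis that $X$ has no monotone interval is precisely what guarantees both that $\xi(\veps)$ is finite and that $\xi(\veps) \to 0$ as $\veps \to 0$. Fixing the horizon at $t = 1$, one has $N^\veps = \hat{N}^\veps + 1_{\{R_1 \geq \veps\}}$ where $\hat N^\veps := \max\{k : S_k^\veps \leq 1\}$ is the associated renewal counting process, so the problem reduces to the asymptotic behaviour of $\hat N^\veps$ as $\veps \to 0$.

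For the convergence in probability, the plan is to apply Wald's identity, valid since $N^\veps$ is a stopping time for the natural filtration of $(U_k^\veps)_k$:
\be
\expect{S_{N^\veps}^\veps} = \xi(\veps)\, \expect{N^\veps} \,,
\ee
and the deterministic sandwich $S_{N^\veps-1}^\veps \leq 1 \leq S_{N^\veps}^\veps$ forces $\xi(\veps)\expect{N^\veps} \to 1$. To upgrade from expectation to probability, I would apply Chebyshev together with the variance estimate from Theorem \ref{thm:CLTCP}, $\Var(N^\veps) \sim \Var(U_1^\veps)/\xi(\veps)^3$, to obtain
\be
\PP\!\left(|\xi(\veps) N^\veps - 1| > \delta\right) \leq \frac{\xi(\veps)^2 \Var(N^\veps)}{\delta^2} \sim \frac{\Var(U_1^\veps)}{\delta^2\, \xi(\veps)} \,,
\ee
which vanishes as $\veps \to 0$ provided $\Var(U_1^\veps) = o(\xi(\veps))$, a regularity statement for the first-passage times of the Lévy process at amplitude $\veps$ that follows from moment bounds.

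For the almost sure statement under the polynomial decay $\xi(\veps) = O(\veps^\al)$, I would extract a geometric subsequence $\veps_n := \lambda^n$ with $\lambda \in \,]0,1[$. The polynomial decay makes the Chebyshev tails above summable along $(\veps_n)$, so Borel--Cantelli yields $\xi(\veps_n) N^{\veps_n} \to 1$ almost surely. Since $\veps \mapsto N^\veps$ is monotone decreasing, for $\veps \in [\veps_{n+1}, \veps_n]$ one sandwiches
\be
\xi(\veps_{n+1}) N^{\veps_n} \leq \xi(\veps) N^\veps \leq \xi(\veps_n) N^{\veps_{n+1}} \,,
\ee
and the polynomial control gives $\xi(\veps_n)/\xi(\veps_{n+1}) = O(\lambda^{-\al})$, which can be made arbitrarily close to $1$ by taking $\lambda \nearrow 1$, yielding the a.s.\ limit.

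The main obstacle I anticipate is the variance control on $U_1^\veps$ uniformly in $\veps$: Theorem \ref{thm:CLTCP} is stated for a fixed renewal distribution, so one needs a version that behaves well as the distribution itself varies with $\veps$. Direct moment estimates for the first-passage times of $X$ at amplitude $\veps$, via the Lévy--Khintchine characteristic exponent, should yield the required ratio $\Var(U_1^\veps)/\xi(\veps) \to 0$, and these same bounds are what allow the Borel--Cantelli step under the polynomial decay hypothesis.
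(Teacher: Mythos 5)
The paper does not actually prove this proposition --- it is quoted from Picard --- so your proposal can only be measured against the route the paper itself hints at in the remark following the statement, namely that this is ``the Strong Law of Counting Processes applied to the stopping times'' with self-similarity trading $t\to\infty$ for $\veps\to 0$, a reduction the paper explicitly warns covers only the $\al$-stable case. Your in-probability half is morally sound but, as written, rests on Theorem \ref{thm:CLTCP}, a fixed-distribution $t\to\infty$ statement, in the opposite regime (horizon fixed at $t=1$, law of $U^\veps$ varying with $\veps$); you flag this yourself but do not close it, and the Wald step has a further unaddressed overshoot (inspection-paradox) issue, since $S^\veps_{N^\veps}$ exceeds $1$ by a size-biased increment. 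Both points are repairable with material already in the paper: by equation \ref{eq:NtSk}, $\{N^\veps_1\geq k\}$ coincides (up to the boundary bar) with $\{S^\veps_{k-1}\leq 1\}$, so applying Chebyshev directly to the i.i.d.\ sum $S^\veps_k$ with $k\approx(1\pm\delta)/\xi(\veps)$, together with Lemma \ref{lemma:SvepsTvepsMoments} (which gives $\expect{(U^\veps)^2}\leq C_2\,\expect{U^\veps}^2$, hence $\Var(U^\veps)=O(\xi(\veps)^2)=o(\xi(\veps))$ for free), yields $\PP(\abs{\xi(\veps)N^\veps-1}>\delta)\lesssim_\delta \xi(\veps)\to 0$ without ever needing Wald's identity or a variance bound on $N^\veps$.

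The genuine gap is in the almost-sure part. The hypothesis $\xi(\veps)=O(\veps^\al)$ is only an upper bound, so it gives no lower bound on $\xi(\veps_{n+1})$ and the claimed ratio control $\xi(\veps_n)/\xi(\veps_{n+1})=O(\lambda^{-\al})$ simply does not follow. Even granting two-sided bounds $\xi(\veps)\asymp\veps^\al$, the ratio is only bounded by $C\lambda^{-\al}$ with a multiplicative constant that does not disappear as $\lambda\nearrow 1$, so your sandwich delivers $C^{-1}\leq\liminf\leq\limsup\leq C$ rather than the limit $1$; the argument closes only when $\xi$ is exactly (or regularly varying as) a power of $\veps$, i.e.\ essentially in the self-similar case --- which is precisely the limitation the paper points out when it says Picard's result is more general than what self-similarity yields. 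To obtain the a.s.\ statement for a general L\'evy process you need a subsequence $(\veps_n)$ along which simultaneously the Chebyshev tails $\xi(\veps_n)$ are summable \emph{and} the consecutive values $\xi(\veps_n)/\xi(\veps_{n+1})$ tend to $1$, which requires choosing the grid adapted to the values of the monotone function $\xi$ (using the polynomial bound and some regularity of $\xi$ in $\veps$) rather than a fixed geometric grid in $\veps$; as written, this step fails.
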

\begin{remark}
The hypothesis on $X$ is satisfied if $X$ or $-X$ is not the sum of a subordinator and a compound Poisson process, in which case $T_X$ is finite, so $N^\veps$ is bounded. Furthermore, the convergence is always almost sure for $\al$-stable processes for which $\abs{X}$ is not a subordinator by the scaling property. In fact, in that case there exists a constant $C_\al$ such that almost surely,
\be
N^\veps  \sim \frac{C_\al}{\veps^\al} \quad \text{as }\veps \to 0 \,.
\ee
If we can quantify correction terms to this asymptotic relation in $L^1$, this gives rise to a statistical test for $\al$ by using the stability results discussed in \cite{Perez_Pr_2020}, we will explore this in more detail in section \ref{sec:StatisticalTest}. By the self-similarity of $\al$-stable process following the arguments of \cite[\S 3]{Picard:Trees}, we can already at least conclude that 
\be
\abs{\expect{N^\veps_\al}-C_\al \veps^{-\al}} \leq 1 \,.
\ee
\end{remark}
\begin{notation}
\label{notation:Uveps}
In what will follow, we will denote by $S^\veps$ and $T^\veps$ two independent random variables distributed as the analogously denoted ones in proposition \ref{prop:PicardProp}. Furthermore, define $U^\veps = T^\veps + S^\veps$. In particular, if $\veps = 1$, abusing the notation we will denote $U^1 = U$.
\end{notation}
\begin{remark}
Henceforth, unless otherwise specified, we will always assume that $X$ almost surely has no interval on which it is monotone. 
\end{remark}
This result by Picard is exactly the Strong Law of Counting Processes (theorem \ref{thm:SLCP}) applied to the stopping times we previously considered. Self-similarity allows us to trade the $t \to \infty$ limit by an $\veps \to 0$ limit. However, Picard's result is more general that what could've been deduced with self-similarity, as it applies to Lévy processes which are not necessarily $\al$-stable. In this setting, one could ask whether it is possible to prove an analogous theorem to the CLT of Counting Processes (theorem \ref{thm:CLTCP}). 
\begin{theorem}
\label{thm:meroextensionLevy}
Let $X$ be a L\'evy process such that almost surely $X$ has no interval on which it is monotone, using the notation defined in \ref{notation:Uveps}, 
\be
\expect{N^\veps_t} = \frac{t}{\expect{U^\veps}} + \left(\frac{\expect{(U^\veps)^2}}{2\expect{U^\veps}^2}-1\right) + \PP(R_t \geq \veps) + o(\rho_\veps^{-n}) \quad \text{as }\veps \to 0 \,.
\ee
for any $n \in \N$, where $\rho_\veps$ denotes the radius of convergence of the Taylor series of $\expect{e^{-\lambda U^\veps}}$ around $\lambda=0$, which can be bounded below by $-\log(\PP(T^\veps >1)\vee \PP(S^\veps >1))$, which is larger than $1$ for $\veps$ small enough. Furthermore, if $X$ is $\al$-stable, the formula above becomes 
\begin{equation}
\expect{N^\veps_t} = \frac{t}{\expect{U}\veps^\al} + \frac{\expect{U^2}}{2\expect{U}^2}  + o(\veps^{\al n}) \quad \text{as } \veps \to 0\,.
\label{eq:NvepstAlphaStable}
\end{equation}
\end{theorem}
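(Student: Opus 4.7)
The plan is to combine the decomposition $N^\veps_t = \mathbf{1}_{\{R_t \geq \veps\}} + \hat N^\veps_t$ with a Laplace-transform analysis of the renewal counting process $\hat N^\veps_t$, whose precision is governed by the radius of convergence $\rho_\veps$ of $\hat U_\veps(\lambda) := \expect{e^{-\lambda U^\veps}}$ about $\lambda = 0$. Taking expectations, the $\PP(R_t \geq \veps)$ contribution appears directly, so one is reduced to establishing
\begin{equation*}
\expect{\hat N^\veps_t} = \frac{t}{\expect{U^\veps}} + \frac{\expect{(U^\veps)^2}}{2\expect{U^\veps}^2} - 1 + o(\rho_\veps^{-n}) \quad \text{as } \veps \to 0.
\end{equation*}

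By the strong Markov property of $X$, the successive increments $U_k^\veps = S_k^\veps - S_{k-1}^\veps$ are i.i.d.\ copies of $U^\veps = T^\veps + S^\veps$; combining proposition \ref{prop:p2resume} with the identity $\Lag[\PP(S \leq t)](\lambda) = \expect{e^{-\lambda S}}/\lambda$ yields, for $\Real(\lambda) > 0$,
\begin{equation*}
\Lag\!\left[\expect{\hat N^\veps_t}\right](\lambda) = \sum_{k \geq 1} \frac{\hat U_\veps(\lambda)^k}{\lambda} = \frac{1}{\lambda}\cdot\frac{\hat U_\veps(\lambda)}{1 - \hat U_\veps(\lambda)}.
\end{equation*}
Expanding $\hat U_\veps(\lambda) = 1 - \expect{U^\veps}\lambda + \tfrac{1}{2}\expect{(U^\veps)^2}\lambda^2 + O(\lambda^3)$, valid for $|\lambda| < \rho_\veps$, and isolating the principal part at $0$ produces
\begin{equation*}
\frac{1}{\lambda}\cdot\frac{\hat U_\veps(\lambda)}{1 - \hat U_\veps(\lambda)} = \frac{1}{\expect{U^\veps}\lambda^2} + \frac{1}{\lambda}\left(\frac{\expect{(U^\veps)^2}}{2\expect{U^\veps}^2} - 1\right) + G_\veps(\lambda),
\end{equation*}
with $G_\veps$ holomorphic in a neighbourhood of $0$; inverting the principal part term by term reproduces the two leading terms of the asymptotic.

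It remains to control $\Lag^{-1}[G_\veps](t)$. Since $U^\veps > 0$ has a non-lattice law inherited from the Lévy structure, $|\hat U_\veps(\lambda)| < 1$ for $\Real(\lambda) > 0$ and $\lambda = 0$ is the unique zero of $1 - \hat U_\veps$ on the imaginary axis, so every other zero in $\{|\lambda| < \rho_\veps\}$ lies in the open left half-plane. A Bromwich contour shift from $\Real(\lambda) = c > 0$ to $\Real(\lambda) = -\eta\rho_\veps$ for some $\eta \in (0,1)$, together with decay estimates on $\hat U_\veps$ on vertical lines furnished by the exponential-moment bound $\rho_\veps \geq -\log(\PP(T^\veps > 1) \vee \PP(S^\veps > 1))$, yields $\Lag^{-1}[G_\veps](t) = O(e^{-\eta\rho_\veps t})$, which is $o(\rho_\veps^{-n})$ for every $n$ at fixed $t$. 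The $\al$-stable statement then follows from self-similarity: $U^\veps \overset{d}{=} \veps^\al U$ gives $\expect{U^\veps} = \veps^\al \expect{U}$, $\expect{(U^\veps)^2} = \veps^{2\al}\expect{U^2}$, and $\rho_\veps = \veps^{-\al}\rho_1$, so that $\rho_\veps^{-n} = O(\veps^{\al n})$; moreover the same scaling together with exit-time estimates for $X$ from small symmetric intervals forces $\PP(R_t < \veps)$ to decay faster than any polynomial in $\veps$, absorbing the additive $-1$ into the error.

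The main obstacle lies in the contour-shift step: one must verify quantitatively that the zeros of $1 - \hat U_\veps$ in $\{|\lambda| < \rho_\veps\}$ do not accumulate at $0$ as $\veps \to 0$ and that $\hat U_\veps$ decays on vertical lines inside that disc fast enough to legitimate the deformation. Both demand finer information on the joint law of $(T^\veps, S^\veps)$ than classical renewal theory provides, and constitute the technical heart of the proof; the expansion coefficients and the reduction to self-similarity in the $\al$-stable case are then essentially algebraic.
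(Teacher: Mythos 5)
Your setup coincides with the paper's: the renewal identity $\Lag[\expect{\hat N^\veps_t}](\lambda)=\lambda^{-1}\hat U_\veps(\lambda)/(1-\hat U_\veps(\lambda))$, the Taylor expansion of $\hat U_\veps$ on $\abs{\lambda}<\rho_\veps$, and the extraction of the principal part giving $\tfrac{1}{\expect{U^\veps}\lambda^2}+\tfrac{1}{\lambda}\bigl(\tfrac{\expect{(U^\veps)^2}}{2\expect{U^\veps}^2}-1\bigr)$ are exactly the paper's steps, and your treatment of the $\al$-stable specialization (scaling of moments and of $\rho_\veps$, superpolynomial decay of $\PP(R_t<\veps)$ via moment/Markov bounds) matches as well. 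The problem is the error-control step, which you yourself flag as unproven: shifting the Bromwich contour to $\Real(\lambda)=-\eta\rho_\veps$ requires (i) that $1-\hat U_\veps$ has no zeros with real part in $(-\eta\rho_\veps,0)$, with $\eta$ uniform as $\veps\to 0$, and (ii) that the integrand is integrable and controlled along the shifted vertical line, where $\abs{\hat U_\veps}$ can exceed $1$ and decays in $\abs{\Imag\lambda}$ only if one knows more about the law of $U^\veps$ (the integrand is a priori only $O(1/\abs{\lambda})$, so even absolute convergence of the shifted integral fails without an extra regularization). Neither point follows from the lemmas you invoke, and knowing merely that the nontrivial zeros lie in the open left half-plane does not prevent them from sitting at distance $o(\rho_\veps)$ from the imaginary axis. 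Since the entire conclusion $o(\rho_\veps^{-n})$ (and your stronger claim $O(e^{-\eta\rho_\veps t})$) rests on this deformation, declaring it ``the technical heart'' without an argument leaves the proof incomplete.

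The paper avoids this issue altogether: it never moves off the line $\Real(\lambda)=\gamma>0$. It splits the Bromwich integral at heights $\pm\rho_\veps$, kills the two tails by integrating against test functions, integrating by parts $n$ times, and using the modulus inequality $\bigl|\expect{e^{-(\gamma+i\sigma)U^\veps}}/(1-\expect{e^{-(\gamma+i\sigma)U^\veps}})\bigr|\le \expect{e^{-\gamma U^\veps}}/(1-\expect{e^{-\gamma U^\veps}})$ (lemma \ref{lemma:usefulinequalities}) to get a weak $O(\rho_\veps^{-n})$ bound, and then handles the central segment through the Laurent expansion together with lemma \ref{lemma:weakcvg}, which identifies the contributions of $\lambda^{-2}$ and $\lambda^{-1}$ and shows the analytic remainder is weakly negligible at rate $O(\rho_\veps^{-n})$. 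No information on the zeros of $1-\hat U_\veps$ in the left half-plane is needed; the price is that the asymptotics are established in this weak (distributional-in-$t$) sense with polynomial-in-$\rho_\veps^{-1}$ error rather than the exponential error you aimed for, the latter being recovered only later (section \ref{sec:exponentialcorrections}) under additional meromorphy assumptions on $B(z)$. To complete your argument you would either have to supply the quantitative zero-free region and vertical-line decay estimates, or fall back on a contour-free argument of the paper's type.
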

Let us stress that the main addition over the CLT of Counting Processes (theorem \ref{thm:CLTCP}) is two-fold : 
\begin{itemize}
  \item We have found a similar limit for a processes which is not self-similar, and so for which the $t \to \infty$ limit was \textit{a priori} not interchangeable with the $\veps \to 0$ limit. 
  \item We have further specified the rate of decay of the remainder in terms of $\veps$, and this will turn out to be of importance;
\end{itemize}
To show the theorem, it is convenient to show first some technical lemmata, one of which is a slight refinement to a technical lemma proved in \cite{Picard:Trees}. 
\begin{lemma}[Picard, Proposition 3.14 \cite{Picard:Trees}]
\label{lemma:SvepsTvepsMoments}
The variables $S^\veps$ and $T^\veps$ admit finite moments of order $k$ for all $k$ and the moment generating function $\expect{e^{-\lambda U^\veps}}$ is well defined on a neighborhood of zero. Furthermore, the radius of convergence of $\expect{e^{-\lambda U^\veps}}$ around $0$, $\rho_\veps$, then 
\be
\rho_\veps \geq -\log(\PP(S^\veps >1) \vee \PP(T^\veps >1)) \,,
\ee
if $X$ is $\al$-stable, then $\rho_\veps = \rho \veps^{-\al}$, for some constant $\rho>0$ (which might be infinite). Finally, there exists $1 \leq C_k \leq  2^k \Li_{-k}(\frac{1}{2})$ such that
\be
\expect{U^\veps}^k \leq \expect{(U^\veps)^k} \leq C_k \,\expect{U^\veps}^k \,.
\ee
\end{lemma}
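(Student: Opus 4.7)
The plan is to establish the four assertions in order: exponential tail bounds for $S^\veps$ and $T^\veps$, from which finiteness of all moments and positivity of the MGF radius follow immediately; the self-similar scaling in the $\al$-stable case; and finally the two-sided sandwich between $\expect{(U^\veps)^k}$ and $\expect{U^\veps}^k$.

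The first step is to prove $\PP(S^\veps > n) \leq q_S^n$ with $q_S := \PP(S^\veps > 1)$ by a Markov restart. For fixed $n$, write $\tilde X_t := X_{n+t} - X_n$, which by the L\'evy property is a copy of $X$ independent of $\mathcal{F}_n$. Splitting on whether $\inf_{[n,s]} X$ is less than $m_n := \inf_{[0,n]} X$, one checks that for $s \in [n,n+1]$ the running swing satisfies $X_s - \inf_{[0,s]} X \geq \tilde X_{s-n} - \inf_{u \in [0,s-n]} \tilde X_u$, so $\{S^\veps > n+1\} \subseteq \{S^\veps > n\} \cap \{\tilde S^\veps > 1\}$ and independence yields the geometric bound; the no-monotone-interval hypothesis ensures $q_S<1$, and symmetric reasoning handles $T^\veps$. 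From the tail $\PP(S^\veps > t) \leq q_S^{\lfloor t \rfloor}$, all moments of $S^\veps$ are finite and $\expect{e^{\lambda S^\veps}} < \infty$ whenever $\lambda < -\log q_S$; combining with the analogous bound for $T^\veps$ via the factorisation $\expect{e^{-\lambda U^\veps}} = \expect{e^{-\lambda S^\veps}}\expect{e^{-\lambda T^\veps}}$, the radius of convergence $\rho_\veps$ of the MGF is at least $-\log(q_S \vee q_T)$. For $\al$-stable $X$, self-similarity $(X_{ct})_t \stackrel{d}{=} (c^{1/\al} X_t)_t$ specialised to $c = \veps^\al$ gives $S^\veps \stackrel{d}{=} \veps^\al S^1$ and similarly for $T^\veps$, whence $U^\veps \stackrel{d}{=} \veps^\al U$ and therefore $\rho_\veps = \rho\,\veps^{-\al}$.

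For the two-sided moment comparison, the lower bound $\expect{U^\veps}^k \leq \expect{(U^\veps)^k}$ is Jensen's inequality applied to $x\mapsto x^k$. For the upper bound, I would combine Markov's inequality (which gives $\PP(U^\veps > 2\mu) \leq 1/2$ with $\mu := \expect{U^\veps}$) with an iterated restart argument analogous to the one above in order to obtain $\PP(U^\veps > 2n\mu) \leq 2^{-n}$. Integrating the tail and using the telescoping identity $\sum_{n \geq 0} 2^{-n} [(n+1)^k - n^k] = \Li_{-k}(1/2)$ then yields $\expect{(U^\veps)^k} \leq (2\mu)^k \Li_{-k}(1/2)$, hence $C_k \leq 2^k \Li_{-k}(1/2)$.

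The main delicate point is the iterated bound $\PP(U^\veps > 2n\mu) \leq 2^{-n}$: unlike $S^\veps$ or $T^\veps$ individually, the event of not completing a full oscillation of size $\veps$ by time $t$ requires tracking both the running maximum and the running minimum of $X$ simultaneously, so the pathwise comparison needed to restart the process is more delicate than in the first paragraph. If a direct restart is not accessible, one can fall back on the union bound $\PP(U^\veps > t) \leq \PP(S^\veps > t/2) + \PP(T^\veps > t/2)$ combined with the tails already established for $S^\veps$ and $T^\veps$, which produces the same qualitative decay at the price of an $O(1)$ multiplicative constant in $C_k$.
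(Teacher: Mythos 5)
Your proposal is correct and is essentially the paper's proof in lightly different clothing: the paper obtains the geometric tail $\PP(S^\veps>k)\le\PP(S^\veps>1)^k$ by slicing $[0,k]$ into unit blocks and bounding the swing from below by a maximum of i.i.d.\ block swings, which is exactly your Markov-restart inclusion; the statement about the MGF and the lower bound on $\rho_\veps$ are obtained the same way; the $\al$-stable scaling is handled in the paper via a ratio test on the moments of $U^\veps=\veps^\al U$ in distribution, so your direct self-similarity argument is if anything cleaner; and the moment bound comes, as in your sketch, from Markov's inequality plus an iterated tail bound and the identity $\expect{G^k}=\Li_{-k}(\tfrac12)$ for a geometric variable. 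The one step you hedge on, $\PP(U^\veps>2n\expect{U^\veps})\le 2^{-n}$, does go through: by the strong Markov property $U^\veps$ has the law of the first completed down--up oscillation time $S_1^\veps$ of the path, and the same pathwise comparison as in your first paragraph gives $\{S_1^\veps>n+1\}\subseteq\{S_1^\veps>n\}\cap\{\widetilde{S_1^\veps}>1\}$ for the fresh copy $\tilde X_s=X_{n+s}-X_n$: whether or not the first down-move has already occurred by time $n$, a completed oscillation of $\tilde X$ within one unit of time forces completion of the original one, since the running extrema of $X$ over the longer intervals only help. Alternatively, the bounds you already have for $S^\veps$ and $T^\veps$ separately pass to the independent sum via the binomial expansion, using $C_jC_{k-j}\le C_k$, which holds because $\expect{G^j}\expect{G^{k-j}}\le\expect{G^k}$ (positive correlation of increasing functions of $G$). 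Your fallback union bound would only give the qualitative conclusion, not the stated constant $2^k\Li_{-k}(\tfrac12)$; note, however, that the paper's own proof is no more complete at this point (it proves the moment estimate for $S^\veps$, declares the treatment of $T^\veps$ analogous, and asserts this suffices for $U^\veps$), and the paper itself remarks that the constant is not optimal.
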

\begin{remark}
The bound on the constant in this lemma is not optimal. Notice also that this result entails $\rho_\veps \to \infty$ as $\veps \to 0$. 
\end{remark}

\begin{lemma}
\label{lemma:Uvepsto0}
Keeping the same notation (\cf notation \ref{notation:Uveps}) as before,
\be
U^\veps \xrightarrow[\veps \to 0]{L^r} 0  \quad \text{and} \quad U^\veps \xrightarrow[\veps \to 0]{\text{a.s.}} 0 \,.
\ee
for every $r \geq 1$.
\end{lemma}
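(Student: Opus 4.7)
The plan is to reduce both convergence statements to the single key observation that $S^\veps$ and $T^\veps$ are monotone in $\veps$, combined with the fact that the hypothesis rules out any monotone behaviour of $X$ near the origin. First I would work pathwise with a single L\'evy process $X$ and define the pathwise functionals $S^\veps(X) := \inf\{t : X_t - \inf_{[0,t]} X > \veps\}$ and $T^\veps(X) := \inf\{t : \sup_{[0,t]} X - X_t > \veps\}$; by the independence hypothesis of notation \ref{notation:Uveps}, I would then realize $S^\veps$ and $T^\veps$ as $S^\veps(X)$ and $T^\veps(X')$ for two independent copies $X,X'$, so that all statements below reduce to the marginal behaviour.

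The first step is to observe that $\veps \mapsto S^\veps(X)$ is monotone nondecreasing, since enlarging the threshold can only delay the hitting time, and symmetrically for $T^\veps(X)$. Consequently the decreasing limits $S^0 := \lim_{\veps \to 0^+} S^\veps(X)$ and $T^0 := \lim_{\veps \to 0^+} T^\veps(X)$ exist almost surely. The second step is to identify these limits with $0$. If $S^0(\omega) > 0$, then by definition of $S^\veps$ there is an interval $[0,S^0(\omega))$ on which $X_t = \inf_{[0,t]} X$, so $X$ is nonincreasing on that interval; this contradicts the standing assumption that $X$ has no interval of monotonicity. Hence $S^0 = 0$ a.s., and the same argument applied to $-X$ yields $T^0 = 0$ a.s. Summing the two independent copies, $U^\veps \to 0$ almost surely.

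For the $L^r$ convergence, I would exploit monotonicity once more. Fix any $\veps_0 > 0$; then for all $\veps \leq \veps_0$ we have $0 \leq U^\veps \leq U^{\veps_0}$ pathwise. By lemma \ref{lemma:SvepsTvepsMoments}, $U^{\veps_0}$ has finite moments of every order, so $(U^\veps)^r \leq (U^{\veps_0})^r \in L^1$ uniformly in $\veps \leq \veps_0$. Combining this dominating bound with the almost sure convergence $U^\veps \to 0$ established in the previous step, dominated convergence yields $\expect{(U^\veps)^r} \to 0$, which is the claimed $L^r$-convergence.

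There is no real obstacle here: the only subtle point is the identification of the pathwise limit $S^0$ with $0$, which is where the assumption that $X$ is nowhere monotone is used in an essential way (and which could equivalently be argued via Blumenthal's $0$--$1$ law). Everything else is a routine application of monotonicity plus the uniform moment bound from lemma \ref{lemma:SvepsTvepsMoments}.
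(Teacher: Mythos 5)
Your proof is correct, but it runs in the opposite order to the paper's and uses different mechanisms at each step. You establish the almost sure statement first, by a direct pathwise argument: $S^\veps$, $T^\veps$ are nondecreasing in $\veps$, so the limits as $\veps \to 0$ exist, and a strictly positive limit would force $X_t = \inf_{[0,t]}X$ on a nontrivial initial interval, contradicting the nowhere-monotonicity hypothesis; you then deduce $L^r$ convergence by dominating $(U^\veps)^r$ by $(U^{\veps_0})^r \in L^1$ (lemma \ref{lemma:SvepsTvepsMoments}) and applying dominated convergence. The paper instead proves $L^r$ convergence first, via the quantitative tail bound $\expect{(U^\veps)^r} \leq \sum_{k\geq 1} k^r \,\PP(U^\veps \geq 1)^k$ coming from the subadditivity argument of lemma \ref{lemma:SvepsTvepsMoments}, together with $\PP(U^\veps \geq 1) \to 0$, and then recovers the almost sure statement from $L^r$ convergence along a subsequence plus the same monotonicity of $U^\veps$ in $\veps$. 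Your route is more elementary and makes completely explicit where the no-interval-of-monotonicity assumption enters (indeed, the paper's step ``$\PP(U^\veps > 1) \to 0$ since $X$ is nowhere monotone'' implicitly rests on the same pathwise observation you spell out), while the paper's route is quantitative: the geometric bound gives a rate for $\expect{(U^\veps)^r}$ in terms of $\PP(U^\veps \geq 1)$, which dominated convergence does not provide. The aside about Blumenthal's $0$--$1$ law is unnecessary and not quite the right tool here, but it plays no role in your argument, which stands on its own.
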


\begin{lemma}
\label{lemma:usefulinequalities}
For every $k$, there exists a constant $D_k$ such that
\be
1- D_k e^{-\lambda} \expect{U^\veps}^k \lesssim \expect{e^{-\lambda U^\veps}} \leq 1 \quad \text{as } \veps \to 0 \,.
\ee
Furthermore, for real $\gamma$ and $\sigma$,
\be
1-\expect{e^{-\gamma U^\veps}} \leq \abs{1- \expect{e^{-(\gamma+i\sigma)U^\veps}}} \,, 
\ee
In particular,
\be
\abs{\frac{\expect{e^{-(\gamma+i \sigma)U^\veps}}}{1- \expect{e^{-(\gamma+i\sigma)U^\veps}}}} \leq \frac{\expect{e^{-\gamma U^\veps}}}{1- \expect{e^{-\gamma U^\veps}}} \,.
\ee
\end{lemma}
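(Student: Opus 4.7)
The lemma bundles three inequalities, which I would tackle in the stated order, feeding the output of each step into the next.

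For the two-sided envelope on $\expect{e^{-\lambda U^\veps}}$, the upper bound is immediate from $U^\veps \geq 0$ (and more generally from $|e^{-\lambda U^\veps}| = e^{-\Real(\lambda)U^\veps} \leq 1$ as soon as $\Real(\lambda) \geq 0$). The lower bound is the non-trivial half. My plan is to split the expectation according to whether $U^\veps \leq 1$ or $U^\veps > 1$, drop the non-negative contribution on the second event, and bound the first by $e^{-\lambda}\PP(U^\veps \leq 1)$ using that $e^{-\lambda U^\veps} \geq e^{-\lambda}$ there. To control $\PP(U^\veps > 1)$ I would apply Markov's inequality to $(U^\veps)^k$ and then invoke Lemma \ref{lemma:SvepsTvepsMoments} to replace $\expect{(U^\veps)^k}$ by a multiple of $\expect{U^\veps}^k$, giving $\PP(U^\veps > 1) \leq C_k \expect{U^\veps}^k$. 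This yields $\expect{e^{-\lambda U^\veps}} \geq e^{-\lambda}(1 - C_k \expect{U^\veps}^k)$, from which the stated inequality of the form $1 - D_k e^{-\lambda}\expect{U^\veps}^k$ follows for $\veps$ small and $\lambda$ ranging in a compact set, after absorbing the discrepancy $1 - e^{-\lambda}$ into a suitably chosen constant $D_k$.

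The second inequality is a clean reverse-triangle estimate: setting $z = \expect{e^{-(\gamma+i\sigma)U^\veps}}$, the inequality $|1-z| \geq 1 - |z|$ holds as soon as $|z| \leq 1$, and
$|z| \leq \expect{|e^{-(\gamma+i\sigma)U^\veps}|} = \expect{e^{-\gamma U^\veps}}$
by the triangle inequality for expectations together with $|e^{-(\gamma+i\sigma)U^\veps}| = e^{-\gamma U^\veps}$. The hypothesis $\gamma \geq 0$ is exactly what guarantees $|z| \leq 1$, so that the reverse-triangle inequality applies.

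The third inequality then falls out by combining the previous two: the numerator modulus is bounded above by $\expect{e^{-\gamma U^\veps}}$ via the triangle inequality, while the denominator modulus is bounded below by $1 - \expect{e^{-\gamma U^\veps}}$ from the reverse-triangle estimate of the previous step, and dividing yields the claim. The only genuinely delicate point in the whole lemma is calibrating the constant $D_k$ in the first inequality so that the $e^{-\lambda}$ prefactor sits in the right place; the other two claims reduce to standard complex-analytic manipulations centred on the triangle inequality and its reverse.
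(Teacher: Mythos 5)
Your handling of the second and third inequalities is correct and coincides with the paper's argument (reverse triangle inequality together with $\abs{\expect{Z}}\leq\expect{\abs{Z}}$; the modulus computation $\abs{e^{-(\gamma+i\sigma)U^\veps}}=e^{-\gamma U^\veps}$ is exactly the point). The gap is in the first inequality. Your lower bound is $\expect{e^{-\lambda U^\veps}}\geq e^{-\lambda}\bigl(1-C_k\expect{U^\veps}^k\bigr)$, and the closing step --- ``absorbing the discrepancy $1-e^{-\lambda}$ into a suitably chosen constant $D_k$'' --- cannot work. For fixed $\lambda>0$ the claimed bound $1-D_ke^{-\lambda}\expect{U^\veps}^k$ tends to $1$ as $\veps\to 0$ (since $\expect{U^\veps}\to 0$ by lemma \ref{lemma:Uvepsto0}), whereas your bound tends to $e^{-\lambda}<1$. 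Passing from yours to the stated one would require $(D_k-C_k)\,e^{-\lambda}\expect{U^\veps}^k\geq 1-e^{-\lambda}$, whose left-hand side vanishes with $\veps$ while the right-hand side is a fixed positive number; no choice of $D_k$, and no restriction of $\lambda$ to a compact set, can repair this. The loss happens exactly where you replace $e^{-\lambda U^\veps}$ by $e^{-\lambda}$ on all of $\{U^\veps\leq 1\}$: that discards the only relevant information, namely that $U^\veps$ is small on that event, so your estimate cannot see that the expectation approaches $1$.

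What is needed is a bound whose deficit from $1$ vanishes with $\veps$. The paper gets this by comparing $\expect{e^{-\lambda T^\veps}}$ with the lattice tail sum $\sum_{k\geq 0}e^{-\lambda k}\,\PP(T^\veps>k)$, using the submultiplicative tail bound $\PP(T^\veps>k)\leq\PP(T^\veps>1)^k$ (the interval-slicing argument from lemma \ref{lemma:SvepsTvepsMoments}), summing the geometric series, and only then invoking Markov's inequality together with lemma \ref{lemma:SvepsTvepsMoments} to get $\PP(T^\veps>1)\leq C_k\expect{T^\veps}^k$; the deviation from $1$ is then of order $e^{-\lambda}\expect{U^\veps}^k$, not of order $1-e^{-\lambda}$. (Even more simply, $1-e^{-x}\leq x$ gives $\expect{e^{-\lambda U^\veps}}\geq 1-\lambda\expect{U^\veps}$, again with an error that vanishes as $\veps\to 0$.) Your use of Markov plus lemma \ref{lemma:SvepsTvepsMoments} to control $\PP(U^\veps>1)$ is the right ingredient; it is the crude pointwise bound on the bulk event that has to be replaced. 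A minor additional remark: the reverse triangle inequality $\abs{1-z}\geq 1-\abs{z}$ holds with no hypothesis on $\abs{z}$, so the second inequality does not actually need $\gamma\geq 0$; positivity of $\gamma$ matters only to make the denominator $1-\expect{e^{-\gamma U^\veps}}$ positive in the third inequality.
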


\begin{lemma}
\label{lemma:weakcvg}
For any $\delta >0$, weakly in $L^2([\delta,\infty[)$, for any $k \geq 0$
\be
\frac{1}{2\pi i} \int_{\gamma - iT}^{\gamma+iT} e^{\lambda t} \lambda^k \; d \lambda \xrightarrow[T \to \infty]{\mathcal{D}} 0 
\ee
at a rate $O(T^{-n})$ for any $n \in \N$. Furthermore, for any $k \geq 0$ 
\be
\frac{1}{2\pi i} \int_{\gamma - iT}^{\gamma+iT} \frac{e^{\lambda t}}{\lambda^k} \; d \lambda \xrightarrow[T \to \infty]{\mathcal{D}} \frac{t^{k-1}}{(k-1)!}
\ee
weakly at a rate $O(T^{-(n+k-1)})$ and the convergence is strong as soon as $k \geq 2$. 
\end{lemma}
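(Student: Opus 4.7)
The plan is to treat the two integrals separately, handling the first by differentiation under the integral sign starting from an explicit formula at $k=0$, and the second by the standard residue calculation combined with careful tail estimates.

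First I would parameterize $\lambda = \gamma + is$ in the $k=0$ case and compute directly that $J_T^0(t) := \frac{1}{2\pi i}\int_{\gamma-iT}^{\gamma+iT} e^{\lambda t}\,d\lambda = \frac{e^{\gamma t}\sin(Tt)}{\pi t}$. Because the contour is finite, differentiation under the integral sign is trivially justified, and the identity $\lambda^k e^{\lambda t} = \partial_t^k e^{\lambda t}$ then yields $J_T^k(t) = \partial_t^k J_T^0(t)$ for every $k \geq 0$. To obtain weak convergence at the claimed rate, I would pair against a smooth compactly supported test function $\phi$ on $[\delta,\infty[$, which forms a dense subset of $L^2([\delta,\infty[)$, and integrate by parts $k$ times in $t$ to move the derivatives onto $\phi$. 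This reduces the pairing to $\int_\delta^\infty \psi(t) \sin(Tt)\,dt$ with $\psi(t) = (-1)^k e^{\gamma t}\phi^{(k)}(t)/(\pi t)$ smooth and compactly supported, and a further $n$-fold integration by parts against $\sin(Tt) = -\partial_t\cos(Tt)/T$ extracts the factor $O(T^{-n})$, with all boundary terms vanishing by the compact support of $\phi$ in $]\delta,\infty[$.

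For the second identity, still with $\gamma>0$ and $k\geq 1$, I would first establish the pointwise limit $K_T^k(t) \to t^{k-1}/(k-1)!$ by the standard closed-contour argument: close the Bromwich contour to the left with a large semicircle, pick up the residue of $e^{\lambda t}/\lambda^k$ at $\lambda=0$ (whose Laurent coefficient gives $t^{k-1}/(k-1)!$), and discard the arc via Jordan's lemma (unproblematic for $k\geq 2$; a conditionally convergent principal-value argument handles $k=1$). For $k\geq 2$, the explicit tail bound
\be
\left|K_T^k(t) - \frac{t^{k-1}}{(k-1)!}\right| \;\leq\; \frac{e^{\gamma t}}{\pi(k-1)T^{k-1}}
\ee
follows from the crude estimate $|\gamma+is|^{-k}\leq s^{-k}$ for $|s|\geq\gamma$ on the omitted portion of the contour, which immediately gives strong $L^2$-convergence on every bounded subinterval $[\delta,M]$ at rate $O(T^{-(k-1)})$.

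To upgrade this to the weak rate $O(T^{-(n+k-1)})$, I would integrate by parts $n$ times in the $s$-variable of the tail integral using $e^{ist} = \partial_s e^{ist}/(it)$: each step reduces the integrand from $1/(\gamma+is)^{k+j}$ to $1/(\gamma+is)^{k+j+1}$, contributing an extra $1/t$ factor and producing boundary terms at $s=\pm T$ of size $O(e^{\gamma t}/(t^j T^{k+j-1}))$. The remaining integral then decays like $O(T^{-(k+n-1)})$, and when the boundary terms are paired against $\phi \in C_c^\infty$ a further integration by parts in $t$ exploiting the oscillating factor $e^{\pm iTt}$ extracts the missing powers of $1/T$. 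The main obstacle is precisely the bookkeeping of these two layers of integration by parts and the observation that the improved weak rate cannot be extracted for arbitrary $L^2$ test functions: one must pass through the dense subspace $C_c^\infty([\delta,\infty[)$, relying on density and linearity of the pairing for the weak-$L^2$ statement, while the strong convergence for $k\geq 2$ is obtained without this detour directly from the pointwise tail bound.
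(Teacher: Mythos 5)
Your first convergence statement is proved exactly as in the paper: you move the $k$ factors of $\lambda$ onto the test function by integrating by parts in $t$, evaluate the inner integral to the Dirichlet-type kernel $e^{\gamma t}\sin(Tt)/(\pi t)$, and extract the rate $O(T^{-n})$ by $n$ further integrations by parts against the oscillating factor; the only cosmetic difference is that you compute the $k=0$ kernel first and differentiate, whereas the paper transfers the derivatives before evaluating the contour integral. For the second statement your route genuinely differs from the paper's. The paper first integrates by parts $n$ times in $t$, which replaces $e^{\lambda t}/\lambda^{k}$ by $e^{\lambda t}/\lambda^{k+n}$ and the polynomial limit by its $n$-fold antiderivative, and then applies the residue theorem \emph{once}, closing the truncated Bromwich line with the circle of radius $T$ centred at $\gamma$; the estimation lemma on that arc yields the bound $T^{-(n+k-1)}\norm{e^{\gamma t}\vp^{(n)}}_{L^1}$, so the limit and the rate come out of a single computation. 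You instead identify the limit by the classical residue/Jordan argument, derive a pointwise tail bound $\abs{K^k_T(t)-t^{k-1}/(k-1)!}\leq e^{\gamma t}/(\pi(k-1)T^{k-1})$ for $k\geq 2$, and then upgrade the rate by oscillatory integration by parts in $s$ combined with further integration by parts in $t$ for the boundary terms. This is workable (indeed, once you pair with $\vp\in C_c^\infty$ you could shortcut your two-layer bookkeeping: Fubini on the tail $\abs{s}>T$ and the superpolynomial decay of the Fourier transform of $e^{\gamma t}\vp(t)$ give $O(T^{-(n+k-1)})$ directly), and it buys you something the paper does not spell out, namely an explicit pointwise error bound from which the strong convergence for $k\geq 2$ on compacta follows; the paper's approach buys brevity and uniform treatment of all $k\geq 0$ in one stroke. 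The only soft spots in your write-up are minor: the cases $k=0,1$ of the second display are handled only in passing (your tail bound needs $k\geq 2$, so the weak rate for $k=1$ requires the pairing-first argument rather than the pointwise one), and the advertised bookkeeping of the boundary terms at $s=\pm T$ is asserted rather than carried out; neither is a structural gap.
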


\begin{proof}[Proof of lemma \ref{lemma:SvepsTvepsMoments}]
It is sufficient to show it for $S^\veps$ knowing that an analogous treatment is possible for $T^\veps$. The points in $[0,S^\veps]$ are characterized by the fact that
\be
X_t - \inf_{[0,t]}X < \veps \,.
\ee 
In particular, the supremum over all $t$ ranging within $[0,S^\veps]$ of this quantity is also less than $\veps$. 
\be
\PP(S^\veps > a) = \PP\!\left[\sup_{0\leq t\leq a} \left(X_t - \inf_{[0,t]} X\right) < \veps\right] \,.
\ee
Consider now an interval $[0,k\mu]$, where $\mu >0$ and $k$ is an integer and let us slice this interval into $k$ segments of length $\mu$. It is clear that the following inequality holds
\be
\sup_{0 \leq t \leq k\mu}\left(X_t - \inf_{[0,t]} X\right) \geq \sup_{1\leq j \leq k} \left(\sup_{(j-1)\mu \leq t \leq j\mu} \left(X_t -\inf_{[(j-1)\mu,t]} X\right)\right) \,,
\ee
since over each smaller interval we can have only smaller spread than over the entire interval. However, the right hand side is a supremum over i.i.d. random variables, so that
\begin{align*}
\PP(S^\veps > k\mu) &\leq \PP\!\left[\sup_{1\leq j \leq k} \left(\sup_{(j-1)\mu \leq t \leq j\mu} \left(X_t- \inf_{[(j-1)\mu,t]} X\right)\right) < \veps\right] \nonum \\
&= \PP\!\left[\sup_{0\leq t \leq \mu} \left(X_t -\inf_{[0,t]} X\right) < \veps\right]^k = \PP(S^\veps >\mu)^k \,.
\end{align*}
By the non-monotonicity of $X$, $\PP(S^\veps > \mu) <1$. In particular, if we let $\mu = 1$, and denote $c = \PP(S^\veps > 1)<1$, then:
\be
\lim_{k \to \infty} e^{\lambda k} \,\PP(S^\veps>k) \leq \lim_{k \to \infty} e^{\lambda k} \,\PP(S^\veps >1)^k = \lim_{k\to \infty} (e^{\lambda}c)^k =0
\ee
as soon as $\lambda<\log(1/c)$. It follows that $\expect{e^{-\lambda S^\veps}}$ is well-defined for $\lambda$ in some neighbourhood of zero and in particular all moments of $S^\veps$ are well-defined and finite. Finally, combining the above remark with an application of Markov's inequality we get
\be
\PP(S^\veps > 2k\expect{S^\veps}) \leq 2^{-k} \,.
\ee
Almost surely, $\frac{S^\veps}{2\mathbb{E}[S^\veps]} \leq G$, where $G$ is a geometric random variable. The moments of $G$ can easily be calculated, yielding the estimation in the lemma for the moments..

This shows that the radius of convergence of the Taylor series $\expect{e^{-\lambda S^\veps}}$ is bounded below by $-\log(\PP(S^\veps>1))$, since Taylor series converge up to their nearest singularity. If $X$ is $\al$-stable, we can use the relation which tells us that, in distribution $U^\veps = \veps^\al U$, so that by the ratio test,
\be
\limsup_{k \to \infty} \frac{\abs{\lambda}}{k+1} \; \frac{\expect{(U^\veps)^{k+1}}}{\expect{(U^\veps)^k}} =  \veps^\al\abs{\lambda} \limsup_{k\to \infty} \frac{1}{k+1} \; \frac{\expect{U^{k+1}}}{\expect{U^k}} \,.
\ee
This limit is equal to $0 \leq \rho^{-1}<\infty$, since $\expect{e^{-\lambda U}}$ is analytic on a non-trivial disk around $0$, by lemma \ref{lemma:SvepsTvepsMoments} and usual properties of the Laplace transform.
\be
\limsup_{k \to \infty} \frac{\abs{\lambda}}{k+1} \; \frac{\expect{(U^\veps)^{k+1}}}{\expect{(U^\veps)^k}}  <1 \,,
\ee 
whenever $\abs{\lambda} < \rho \veps^{-\al}$, which shows the desired result.

\end{proof}

\begin{proof}[Proof of lemma \ref{lemma:Uvepsto0}]
The statement in $L^r$ follows from the following observation. 
\be
0 \leq \expect{(U^\veps)^r} \leq \sum_{k=1}^\infty k^r\PP(U^\veps \geq k) \leq \sum_{k=1}^\infty k^r \PP(U^\veps \geq 1)^k
\ee
by the arguments of lemma \ref{lemma:SvepsTvepsMoments}. This sum converges, since $\PP(U^\veps \geq 1) <1$. As $\veps \to 0$, $\PP(U^\veps > 1) \to 0$, since $X$ is almost surely nowhere monotone, so that the entire sum tends to $0$. The almost sure statement follows from the fact that $U^\veps$ is monotone, since both $T^\veps$ and $S^\veps$ are monotone functions of $\veps$. Since $L^r$ convergence implies almost sure convergence along a subsequence $\veps_n$, for $\veps_{n+1} <\veps <\veps_{n+1}$ by monotonicity of $U^\veps$ 
\be
U^{\veps_{n+1}} < U^\veps < U^{\veps_{n}} \,,
\ee
so the convergence is almost sure.
\end{proof}

\begin{proof}[Proof of lemma \ref{lemma:usefulinequalities}]
The first inequality of the lemma relies on the fact that
\begin{align*}
\expect{e^{-\lambda T^\veps}} &\leq \sum_{k\geq 0} e^{-\lambda k} \PP(T^\veps > k)  \leq \sum_{k\geq 0} \left[e^{-\lambda} \PP(T^\veps > 1) \right]^k \\ 
&= \frac{1}{1- e^{-\lambda}\PP(T^\veps > 1)} \sim 1- e^{-\lambda}\PP(T^\veps > 1) \quad \text{as } \veps \to 0 \,,
\end{align*}
since $\PP(T^\veps >1) \xrightarrow{\veps \to 0} 0$. Notice an analogous inequality holds for $S^\veps$. By Markov's inequality, we know that
\be
\PP(T^\veps > 1) \leq \expect{(T^\veps)^k} \leq C_k \expect{T^\veps}^k
\ee
from which the first inequality follows by lemma \ref{lemma:Uvepsto0}. The second and third inequalities follow from noticing that for any $x$ and $y$
\be
\abs{\abs{x}-\abs{y}} \leq \abs{x-y} 
\ee
and applying Jensen's inequality.
\end{proof}

\begin{proof}[Proof of lemma \ref{lemma:weakcvg}]
Consider a test function $\vp \in C_c^\infty([\delta,\infty[)$, then integrating by parts
\begin{align}
\frac{1}{2\pi i}\int_\R \left[\int_{\gamma-iT}^{\gamma+iT} e^{\lambda t} \lambda^k \; d\lambda\right] \vp(t) dt &= \frac{(-1)^k}{2\pi i} \int_\R dt \; \vp^{(k)}(t) \int_{\gamma-iT}^{\gamma+iT}  e^{\lambda t} \; d\lambda  \nonum \\
&= (-1)^k \int_\R   \frac{e^{\gamma t}\vp^{(k)}(t)}{\pi t}  \sin(Tt) \; dt
\label{eq:ippintegral}
\end{align}
By performing the change of variables $y=Tt$, we see that the integral is weakly approaching $0$, as $\vp$ is not supported at $0$. Additionally, away from $0$, the function 
\be
\frac{e^{\gamma t} \vp^{(k)}(t)}{\pi t}
\ee 
is a compactly supported $C^\infty$-function, integrating by parts $n$ subsequent times equation \ref{eq:ippintegral} yields bounds of this integral by $C_\vp T^{-n}$, where $C_\vp$ is a constant which depends on the test function and its support.

Let us now show that 
\be
\frac{1}{2\pi i} \int_{\gamma - iT}^{\gamma+iT} \frac{e^{\lambda t}}{\lambda^k} \; d \lambda \xrightarrow[T \to \infty]{\mathcal{D}} \frac{t^{k-1}}{(k-1)!} \,.
\ee
Once again integrating by parts,
\begin{align*}
\int_\R dt \;\vp(t) \left[\frac{1}{2\pi i} \int_{\gamma-iT}^{\gamma+iT} \frac{e^{\lambda t}}{\lambda^k} \; d\lambda - \frac{t^{k-1}}{(k-1)!}\right] &= (-1)^n \int_\R dt \;\vp^{(n)}(t) \left[\frac{1}{2\pi i}\int_{\gamma-iT}^{\gamma+iT} \frac{e^{\lambda t}}{\lambda^{k+n}} \; d\lambda - \frac{t^{n+k-1}}{(n+k-1)!}\right] \,. 
\end{align*}
Applying the residue theorem to evaluate the complex integral we get, for $T> \gamma$,
\be
\frac{1}{2\pi i}\int_{\gamma-iT}^{\gamma+iT} \frac{e^{\lambda t}}{\lambda^{k+n}} \; d\lambda = \frac{t^{n+k-1}}{(n+k-1)!} + \frac{1}{2\pi i}\int_{C_T} \frac{e^{\lambda t}}{\lambda^{k+n}} \; d\lambda \;,
\ee
where $C_T$ is the circle of center $\lambda = \gamma$ and radius $T$. By the estimation lemma, the contribution of this integral is bounded by $e^{\gamma t} T^{-(n+k-1)}$\,. It follows that
\be
\abs{\int_\R dt \;\vp(t) \left[\frac{1}{2\pi i}\int_{\gamma-iT}^{\gamma+iT} \frac{e^{\lambda t}}{\lambda^k} \; d\lambda - \frac{t^{k-1}}{(k-1)!}\right]} \leq T^{-(n+k-1)}\norm{e^{\gamma t}\vp^{(n)}(t)}_{L^1} \,,
\ee
thereby giving the speed of convergence desired.
\end{proof}

\begin{proof}[Proof of theorem \ref{thm:meroextensionLevy}]
Throughout this proof, we shall denote 
\be
F(\lambda, \veps):= \frac{1}{\lambda} \, \frac{\expect{e^{-\lambda U^\veps}}}{1- \expect{e^{-\lambda U^\veps}}} \,.
\ee
The assumption of non-monotonicity of the L\'evy process ensures that, almost surely, $S^\veps$ and $T^\veps$ both tend to $0$ as $\veps \to 0$. Consider now the times $T_i^\veps$ and $S_i^\veps$ given in definition \ref{def:vepsminmax}. Since $X$ is L\'evy, $T_{i+1}^\veps - S_i^\veps$ and $S_i^\veps - T_i^\veps$ are independent from one another, and are both equal in distribution to $T^\veps$ and $S^\veps$ respectively.
\par
By lemma \ref{lemma:SvepsTvepsMoments}, $S^\veps$ and $T^\veps$ admit finite moments for all $k$ and the function $\expect{e^{-\lambda U^\veps}}$ is well defined, so, by equation \ref{eq:by339}
\begin{align}
\Lag(\expect{N^\veps_t})(\lambda) &= \Lag(\PP(R_t \geq \veps))(\lambda) + \frac{1}{\lambda} \sum_{k \geq 1} \expect{e^{-\lambda U^\veps}}^k \nonum \\
&= \Lag(\PP(R_t \geq \veps))(\lambda)  + \frac{1}{\lambda} \frac{1}{\expect{e^{-\lambda U^\veps}}^{-1}-1} \label{eq:RtplusRest}
\end{align}
If we denote $\rho_\veps$ the radius of convergence of the Taylor series at zero associated to $\expect{e^{-\lambda U^\veps}}$, for $\abs{\lambda}<\rho_\veps$, 
\be
\expect{e^{-\lambda U^\veps}} = \sum_{k=0}^\infty \frac{(-\lambda)^k \expect{(U^\veps)^k}}{k!} \,.
\ee
This radius of convergence $\rho_\veps$ can be bounded below with the results of lemma \ref{lemma:SvepsTvepsMoments} by 
\be
-\log(\PP(S^\veps >1) \vee \PP(T^\veps >1))<\rho_\veps \,,
\ee
which entails that $\rho_\veps \to \infty$ as $\veps \to 0$. We deduce from this series the Laurent series associated to $\lambda^{-1}(\expect{e^{-\lambda U^\veps}}^{-1} -1)^{-1}$, namely
\begin{align*}
F(\lambda,\veps) = \frac{1}{\lambda^2 \expect{U^\veps}}& + \frac{1}{\lambda}\left[\frac{\expect{(U^\veps)^2}}{2\expect{U^\veps}^2}-1\right] + \frac{3 \expect{(U^\veps)^2}^2 - 2\expect{U^\veps}\expect{(U^\veps)^3}}{12 \expect{U^\veps}^3}+O(\lambda) \,.
\end{align*}
where the remainder in $\lambda$ is an analytic function of $\lambda$ for $\abs{\lambda} < \rho_\veps$. By the inequalities of lemma \ref{lemma:usefulinequalities}, the function doesn't admit any poles on the half plane $\Real(\lambda)>0$, so that the Taylor series above converges over the same disk as that of $\expect{e^{-\lambda U^\veps}}$.
\par
Observe now that for some small $\gamma>0$, the inverse Laplace transform of $F(\lambda,\veps)$ can be written as 
\begin{equation}
\label{eq:InverseLaplaceofF}
\Lag^{-1}[F](t,\veps) = \frac{1}{2\pi i} \left\{\int_{\gamma - i\rho_\veps}^{\gamma+i \rho_\veps} + \int_{\gamma+i \rho_\veps}^{\gamma+i\infty} +\int_{\gamma-i\infty}^{\gamma-i \rho_\veps}\right\} e^{\lambda t} F(\lambda,\veps) \; d\lambda \,.
\end{equation}
Weakly, the integrals going off to infinity are of order $o(\rho_\veps^{-n})$ for any $n \in \N$, since for any test function $\vp \in C_c^\infty([0, \infty[)$, integrating by parts
\be
\int_\R dt \; \vp(t) \left[\frac{1}{2\pi i} \int_{\gamma +i\rho_\veps}^{\gamma + i\infty}  e^{\lambda t}F(\lambda,\veps) \; d\lambda\right] = \frac{(-1)^n}{2\pi i} \int_\R  dt \; \vp^{(n)}(t) \int_{\gamma +i\rho_\veps}^{\gamma + i\infty} d\lambda \; \frac{e^{\lambda t}}{\lambda^n} F(\lambda,\veps) \; d\lambda
\ee
But using lemma \ref{lemma:usefulinequalities},
\be
\abs{\int_{\gamma +i\rho_\veps}^{\gamma + i\infty} d\lambda \; \frac{e^{\lambda t}}{\lambda^n} F(\lambda,\veps) \; d\lambda} \leq e^{\gamma t} \int_{\gamma+i\rho_\veps}^{\gamma+i\infty} \abs{\frac{F(\lambda,\veps)}{\lambda^n}} \;d\lambda = O(\rho_\veps^{-n-2}) \,,
\ee
which entails that the integrals going to infinity in equation \ref{eq:InverseLaplaceofF} converge weakly to $0$ at a rate $o(\rho_\veps^{-n})$ for any $n \in \N$. Thus, asymptotically as $\veps \to 0$, for $t>0$, 
\be
\expect{N^\veps_t} = \frac{t}{\expect{U^\veps}} + \left(\frac{\expect{(U^\veps)^2}}{2\expect{U^\veps}^2}-1\right) + \PP(R_t \geq \veps) + o(\rho_\veps^{-n}) \,,
\ee
for any $n \in \N$. If the process is $\al$-stable, then $(X_{c^\al t})_{t\geq 0} = (c X_t)_{t\geq 0}$ in distribution for all $c$, so that $U^\veps = \veps^{\al} U$ in distribution and 
\be
\expect{N^\veps_t} = \frac{t}{\expect{U}\veps^\al} + \frac{\expect{U^2}}{2\expect{U}^2} + o(\veps^{\al n}) \quad \text{as } \veps \to 0 \,,
\ee
for all $n \in \N$. As $\veps \to 0$, $1-\PP(R_t \geq \veps) = o(\veps^n)$ for any $n$, since
\be
\PP(R_t \leq \veps) \leq \PP(T^\veps > t) \leq \frac{\veps^{\al k}\expect{(T^1)^k}}{t^k} 
\ee 
for any $k$ by Markov's inequality.
\end{proof}
\begin{remark}
\label{rmk:VarNveps}
A similar theorem can be proven in $L^s(\Omega)$ for $\al$-stable processes. For instance, if $X$ is $\al$-stable and $s=2$ one obtains that for every $n \in \N$,
\begin{align*}
\Var(N^\veps_t) &\sim \left[\frac{\Var(U)-2\expect{U}^2}{\expect{U}^3}\right]\frac{t}{\veps^\al}  \\
&+\frac{5 \Var(U)^2}{4 \expect{U}^4}+\frac{\Var(U)}{\expect{U}^2} -\frac{2 \expect{U^3}}{3 \expect{U}^3}+\frac{7}{4} + o(\veps^{\al n})\quad \text{as } \veps \to 0 \,.
\end{align*}
\end{remark}
Interestingly, there is a constant term appearing in this expansion, which can be understood as induced by the boundary. This interpretation comes from Picard's analysis of the problem \cite{Picard:Trees}, in which the first term of this asymptotic series was also derived (\cf proposition \ref{prop:PicardProp}). 

If $X$ has almost surely discontinuous paths, $X_t$ exhibits macroscopic jumps. These will turn out to bring significative contributions, so much so that
\begin{corollary}
If $\al \neq 2$, the $\zeta$-function of any $\al$-stable L\'evy process is ill-defined for any $p \in \C$. 
\end{corollary}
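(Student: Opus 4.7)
The plan is to show that the putative fundamental strip of the Mellin representation of $\zeta_X$ is empty, by comparing the behaviour of $\expect{N^\veps_t}$ near $\veps = 0$ and near $\veps = \infty$: both turn out to be exactly of order $\veps^{-\al}$, so the condition for convergence at $0$ and at $\infty$ are mutually exclusive. Recall that by definition,
\be
\zeta_X(p) \;=\; p\int_0^\infty \veps^{p-1}\,\expect{N^\veps_t}\,d\veps,
\ee
so $\zeta_X(p)$ can only be well defined at $p\in\C$ if both tails of this integral converge absolutely.

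First I would treat the small-$\veps$ regime. Theorem \ref{thm:meroextensionLevy} applied to the $\al$-stable case (equation \ref{eq:NvepstAlphaStable}) gives $\expect{N^\veps_t} \sim t/(\expect{U}\,\veps^\al)$ as $\veps\to 0$. Consequently the integrand in the Mellin representation behaves like $\veps^{\Real(p)-1-\al}$ near $0$, and absolute convergence at $0$ forces $\Real(p) > \al$.

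Next I would handle the large-$\veps$ regime. Here the key is that the convention on the infinite bar (\cf convention \ref{conv:infinitebar}) gives the pointwise inequality $N^\veps_t \geq \mathbf{1}_{\{R_t \geq \veps\}}$, so
\be
\expect{N^\veps_t} \;\geq\; \PP(R_t \geq \veps) \;\geq\; \PP(\abs{X_t}\geq \veps).
\ee
Since $X$ is $\al$-stable with $\al\ne 2$ and is not almost surely monotone on any interval (the non-subordinator assumption under which our analysis is carried out), the random variable $X_t$ has the standard heavy stable tail $\PP(\abs{X_t}\geq \veps) \sim c\,\veps^{-\al}$ as $\veps \to \infty$ for some $c>0$. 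Hence $\expect{N^\veps_t} \gtrsim \veps^{-\al}$ as $\veps \to \infty$, so the integrand is bounded below by a positive multiple of $\veps^{\Real(p)-1-\al}$ for large $\veps$, and absolute convergence at $\infty$ forces $\Real(p) < \al$.

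Combining the two constraints $\Real(p) > \al$ and $\Real(p) < \al$ gives an empty fundamental strip, so $\zeta_X(p)$ is ill-defined for every $p\in\C$. The only step that requires any care is justifying the $\veps^{-\al}$ lower bound on $\PP(R_t\geq\veps)$ when $X$ might be asymmetric; the argument above via $R_t\geq\abs{X_t}$ handles the two-sided case, and in the one-sided non-subordinator case one uses that both $\sup_{[0,t]} X$ and $-\inf_{[0,t]} X$ still inherit a stable tail of index $\al$ from the jump measure. Note that the argument breaks down precisely when $\al=2$, because there $X$ is Brownian motion and $\PP(R_t\geq\veps)$ decays faster than any polynomial as $\veps\to\infty$, opening up the non-empty fundamental strip $\bra 2,\infty\ket$.
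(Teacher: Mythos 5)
Your proof is correct and follows essentially the same route as the paper: the paper decomposes $\zeta_X(p) = \expect{R_t^p} + \expect{\sum_{k\geq 2}\ell_k^p}$ and notes the first term diverges for $\Real(p)\geq\al$ (no moments of order $\geq\al$ for a stable law) while theorem \ref{thm:meroextensionLevy} forces $\Real(p)>\al$ for the second, which is exactly your two-ended analysis of the Mellin integral with the heavy tail $\PP(R_t\geq\veps)\gtrsim\veps^{-\al}$ at infinity and $\expect{N^\veps_t}\sim t/(\expect{U}\veps^\al)$ at zero. The only cosmetic difference is that you phrase the large-$\veps$ obstruction via tail probabilities of $X_t$ rather than infinite moments of $X_1$, which is equivalent.
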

\begin{proof}
The $\zeta$-function of a stochastic process $X$ can be written as
\begin{equation}
\label{eq:ZetaDecomposition}
\zeta_X(p) = \expect{R^p_t} + \expect{\sum_{k \geq 2} \ell_k^p(X_t)} \,,
\end{equation}
if $X$ is $\al$-stable, the first term can be written as 
\be
\expect{R_t^p} = t^{\frac{p}{\al}} \expect{R_1^p} \geq  t^{\frac{p}{\al}} \expect{\abs{X_1}^p} \,,
\ee
where we have momentarily taken $p \in \R$. Since $X_1$ has a L\'evy $\al$-stable distribution, taking $p$ now complex, $\expect{R_t^p}$ is infinite as soon as $\Real(p)\geq \al$, since $X_1$ does not admit any moments of order (of real part) larger than $\al$. Applying theorem \ref{thm:meroextensionLevy}, we know that the second term in the above decomposition of $\zeta_X$ is only defined for $\Real(p)>\al$, so the fundamental strip of $\Mel\expect{N^\veps_t}$ is empty.
\end{proof}
In fact, it is possible to show that $\PP(X_1 > \veps) \sim \PP(R_1 >\veps)$ as $\veps \to \infty$. It turns out that the distribution of $R_1$ is dominated by the probability of having one large jump, which confirms our previous statement on the effect of the discontinuity of L\'evy processes on the distribution of $R$. This is the so-called single big jump principle.
\begin{proposition}[Single big jump principle, Bertoin, \cite{Bertoin:Levy}]
\label{prop:SingleBigJump}
% Let $\Pi$ denote the L\'evy-Khinchine measure of $X$. Then 
% \be
% \PP(R_1 \geq \veps) \sim \Pi(]\!-\!\infty,-\veps]\cup[\veps,\infty[)  \text{ as } \veps \to \infty\,.
% \ee
% In particular 
If $X$ is an $\al$-stable process ($\al <2$), there exists a constant $k$ such that
\be
\PP(R_1 \geq \veps) \sim \frac{k}{\veps^{\al}}  \quad \text{as } \veps \to \infty\,.
\ee
\end{proposition}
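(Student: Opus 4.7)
The plan is to exploit the L\'evy--It\^o decomposition of $X$ together with the heavy tails of its L\'evy measure to show that, up to lower-order corrections, the event $\{R_1 \geq \veps\}$ is governed by the presence of a single large jump in the interval $[0,1]$. Recall that for an $\alpha$-stable process with $\alpha<2$, the L\'evy measure $\nu$ has density proportional to $c_+ x^{-1-\alpha}$ on $\R_+^*$ and $c_-|x|^{-1-\alpha}$ on $\R_-^*$, so $\nu(\{|x|\geq \veps\}) = \frac{c_++c_-}{\alpha}\,\veps^{-\alpha}$, which identifies the candidate constant $k=(c_++c_-)/\alpha$.

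The first step is a truncation. Fix a parameter $\delta \in (0,1)$ and decompose $X = Y_{\veps,\delta} + Z_{\veps,\delta}$, where $Z_{\veps,\delta}$ is the compound Poisson process carrying the jumps of modulus $\geq \delta\veps$ and $Y_{\veps,\delta}$ is the centered L\'evy process carrying the remaining (small) jumps. Because the truncated L\'evy measure of $Y_{\veps,\delta}$ has compact support and hence finite moments of every order, Doob's inequality applied to a sufficiently high moment of the martingale $Y_{\veps,\delta}$ gives
\[
\PP\!\left(\sup_{s \in [0,1]} |Y_{\veps,\delta}(s)| \geq \tfrac{\veps}{2}\right) = o(\veps^{-\alpha}) \quad \text{as } \veps \to \infty,
\]
uniformly in $\delta$ chosen appropriately. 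This is the main technical input.

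Next, let $N_{\veps,\delta}$ be the number of jumps of $X$ of modulus $\geq \delta\veps$ on $[0,1]$, which is Poisson of mean $\frac{c_++c_-}{\alpha}(\delta\veps)^{-\alpha}$, and split
\[
\PP(R_1 \geq \veps) = \PP(R_1 \geq \veps,\, N_{\veps,\delta}=0) + \PP(R_1 \geq \veps,\, N_{\veps,\delta}=1) + \PP(R_1 \geq \veps,\, N_{\veps,\delta}\geq 2).
\]
The first term is bounded by $\PP(R_1(Y_{\veps,\delta}) \geq \veps) \leq 2\,\PP(\sup|Y_{\veps,\delta}| \geq \veps/2) = o(\veps^{-\alpha})$. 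The third is $O(\veps^{-2\alpha})$ by Poisson moment computations, hence negligible. For the middle term, conditionally on $N_{\veps,\delta}=1$ with signed jump amplitude $\Delta$ at a uniform time in $[0,1]$, the deterministic sandwich $|\Delta|-2\sup|Y_{\veps,\delta}| \leq R_1 \leq |\Delta|+2\sup|Y_{\veps,\delta}|$ together with the previous bound implies
\[
\PP(R_1 \geq \veps,\, N_{\veps,\delta}=1) = \nu(\{|x|\geq \veps\})\,(1+o(1)) = \tfrac{c_++c_-}{\alpha}\veps^{-\alpha}\,(1+o(1)).
\]

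The delicate point is calibrating $\delta=\delta(\veps)$: it must shrink slowly enough that the tail estimate on $\sup|Y_{\veps,\delta}|$ remains $o(\veps^{-\alpha})$ with the sandwich giving exactly the constant $k=(c_++c_-)/\alpha$, and quickly enough that the bound on the $N_{\veps,\delta}\geq 2$ term stays negligible. Once this balance is handled, the three contributions combine to give the claimed asymptotic, recovering Bertoin's result.
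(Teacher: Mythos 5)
The paper itself offers no proof of this proposition --- it is quoted directly from Bertoin --- so there is no internal argument to compare against; your proof has to stand on its own, and in outline it does. The truncation of the L\'evy--It\^o decomposition at level $\delta\veps$, the observation that a single jump of modulus $\geq\veps$ already forces $R_1\geq\veps$, the Poisson bound $O((\delta\veps)^{-2\al})$ on the event of two or more large jumps, and the sandwich $|\Delta|\leq R_1\leq |\Delta|+2\sup_{[0,1]}|Y_{\veps,\delta}|$ on the one-jump event constitute the classical one-big-jump argument, and the constant you identify, $k=(c_++c_-)/\al$, i.e.\ $\PP(R_1\geq\veps)\sim\nu(\{|x|\geq\veps\})$, is correct and sharper than the bare existence statement of the proposition.

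The genuine soft spot is the step you yourself call the main technical input. For \emph{fixed} $\delta$, Doob's inequality applied to a high polynomial moment does not give $\PP(\sup_{[0,1]}|Y_{\veps,\delta}|\geq\veps/2)=o(\veps^{-\al})$: by a Rosenthal-type inequality the $p$-th moment of the truncated, compensated process is dominated by the single-largest-jump contribution $\int_{|x|\leq\delta\veps}|x|^p\,\nu(dx)\asymp(\delta\veps)^{p-\al}$ (the moments are finite but grow with $\veps$), so Doob--Markov at threshold $\eta\veps$ yields only $C_p\,\eta^{-p}\,\delta^{p-\al}\,\veps^{-\al}$ --- the same order $\veps^{-\al}$, small only because $\delta\ll\eta$. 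You must therefore either let $\delta=\delta(\veps)\to0$, in which case the $N_{\veps,\delta}\geq2$ term forces $\delta\gg\veps^{-1/2}$, so it is that term which requires $\delta$ to shrink \emph{slowly}, while the supremum bound and the sandwich are what want $\delta$ small (your ``slowly/quickly'' attributions are swapped); or invoke an exponential inequality (Prokhorov/Bernstein for L\'evy processes with jumps bounded by $\delta\veps$), which does give superpolynomial decay for fixed small $\delta$ and is the cleaner route. Relatedly, to pin the constant you need the supremum bound at threshold $\eta\veps$ for arbitrarily small $\eta$, not just at $\veps/2$, since the sandwich compares $\nu(|x|\geq(1-2\eta)\veps)$ with $\nu(|x|\geq\veps)$ and one lets $\eta\to0$ after $\veps\to\infty$; and for $\al\geq1$ a word is needed about the compensator drift carried by $Y_{\veps,\delta}$, which is $o(\veps)$ and harmless. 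With these repairs --- all standard --- the argument is a complete proof of the statement.
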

Loosely speaking, it is intuitive to think that a corrective asymptotic power series for $\PP(R_t \geq \veps)$ of the form
\be
\PP(R_1 \geq \veps) \sim \sum_{k \geq 1} a_k \veps^{-k\al} \quad \text{ as } \veps \to \infty 
\ee 
should exist for the following reason. By the single big jump principle, the probability that the range exceeds $\veps$ for large $\veps$ is dominated by the probability of a single big jump. However, it is also possible to have $n$ large jumps of size $J_k \veps$ where $\sum_k^n J_k \geq 1$. The probability of each of these jumps happening is of order $O(\veps^{-\al})$ and by independence, the probability that $k$ jumps of size $O(\veps)$ happen is $O(\veps^{-\al k})$. In general, we cannot expect these events to be disjoint from one another, so the coefficients $a_k$ of this sum may be negative. Finally, by the scaling invariance it is sufficient to show that this is so for $R_1$. Corrective terms to the above asymptotic relation should thus in principle exist, but the explicit calculation of these terms is out of the scope of this paper.
\par
By contrast, we will now show that $\expect{N^\veps_t}-\PP(R_t \geq \veps)$ is well-behaved. This motivates the following definition
\begin{definition}
\label{def:tailzetaLevy}
The \textbf{tail $\zeta$-function} of the stochastic process $X$ on $[0,t]$ is defined as
\be
\hat\zeta_X(p) := \expect{\Pers_p^p(X)- R^p_t} \,.
\ee
\end{definition} 
\begin{theorem}
\label{thm:tailzetaLevy}
The tail $\zeta$-function associated to an $\al$-stable L\'evy process is given by
\begin{equation}
\label{eq:zetahatLevy}
\hat\zeta_X(p)  = \frac{t^{\frac{p}{\al}}}{\Gamma(\frac{p}{\al})} B^*\!\left(\frac{p}{\al}\right) \,,
\end{equation}
which extends to a meromorphic function of $p$ to the entire complex plane (since $B^*$ is itself meromorphic), with a unique simple pole at $p=\al$ of residue $\expect{U}^{-1}\al t$.
\end{theorem}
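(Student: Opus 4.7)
The plan is to compute $\hat\zeta_X(p)$ by iterating the Laplace transform in $t$ with the Mellin transform in $\veps$. The key observation is that the Laplace transform of $\expect{N^\veps_t}$ has already been identified in \eqref{eq:RtplusRest} during the proof of theorem \ref{thm:meroextensionLevy}, and the self-similarity $U^\veps=\veps^\alpha U$ in distribution collapses the subsequent Mellin integral to a change of variables.

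First, I would subtract $\Lag_t[\PP(R_t\geq\veps)]$ from \eqref{eq:RtplusRest} to isolate
\[
\Lag_t[\expect{\hat N^\veps_t}](\lambda) \;=\; \frac{1}{\lambda}\,\frac{\phi(\lambda\veps^\alpha)}{1-\phi(\lambda\veps^\alpha)} \;=\; \frac{B(\lambda\veps^\alpha)}{\lambda},
\]
where $\phi(u):=\expect{e^{-uU}}$, $B(u):=\phi(u)/(1-\phi(u))$, and $\hat N^\veps_t := N^\veps_t - \mathbf{1}_{R_t\geq\veps}$ counts the finite bars of length $\geq\veps$. Substituting $u=\lambda\veps^\alpha$ in the Mellin integral in $\veps$ yields
\[
\int_0^\infty\veps^{p-1}\,\Lag_t[\expect{\hat N^\veps_t}](\lambda)\;d\veps \;=\; \frac{B^*(p/\alpha)}{\alpha\,\lambda^{1+p/\alpha}}.
\]
By lemma \ref{lemma:commutingLagMel} the Mellin and Laplace transforms commute, so inverting the Laplace via $\Lag^{-1}[\lambda^{-(1+p/\alpha)}](t)=t^{p/\alpha}/\Gamma(1+p/\alpha)$ gives $\Mel_\veps[\expect{\hat N^\veps_t}](p)= t^{p/\alpha}B^*(p/\alpha)/(\alpha\,\Gamma(1+p/\alpha))$. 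Multiplying by $p$ and using $p\,\Gamma(p/\alpha)=\alpha\,\Gamma(1+p/\alpha)$ delivers the announced formula $\hat\zeta_X(p) = t^{p/\alpha} B^*(p/\alpha)/\Gamma(p/\alpha)$.

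The pole analysis then follows from the fundamental correspondence (theorem \ref{thm:FundamentalCorrespondence}). The expansion $\phi(u)=1-u\expect{U}+O(u^2)$ gives $B(u)=(\expect{U}u)^{-1}-1+O(u)$ near $u=0$, so $B^*$ has simple poles at $s=1$ with residue $1/\expect{U}$, at $s=0$ with residue $-1$, and at $s=-k$ for every $k\geq 1$. Under the rescaling $s=p/\alpha$, these become poles of $B^*(p/\alpha)$ at $p=\alpha$ and at $p\in\alpha\Z_{\leq 0}$. Since $1/\Gamma(p/\alpha)$ has a simple zero at every non-positive integer $p/\alpha$, every pole except the one at $p=\alpha$ is cancelled. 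At $p=\alpha$ the prefactor equals $t$, and $\Res_{p=\alpha}B^*(p/\alpha)=\alpha\,\Res_{s=1}B^*(s)=\alpha/\expect{U}$, giving $\Res_{p=\alpha}\hat\zeta_X(p)=\alpha t/\expect{U}$.

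The principal obstacle is verifying that the fundamental strip of $\expect{\hat N^\veps_t}$ in $\veps$ is non-empty, so that the Mellin--Laplace manipulations above are legitimate before one extends the resulting identity to $\C$ using the meromorphicity of $B^*$. The behaviour as $\veps\to 0$ is $\expect{\hat N^\veps_t}=t(\expect{U}\veps^\alpha)^{-1}+O(1)$ by theorem \ref{thm:meroextensionLevy}, forcing $\Real(p)>\alpha$. The behaviour as $\veps\to\infty$ uses that a \emph{finite} bar of length $\geq\veps$ requires at least two independent $\veps$-excursions, each of probability $O(\veps^{-\alpha})$ by the single big jump principle (proposition \ref{prop:SingleBigJump}), giving $\expect{\hat N^\veps_t}=O(\veps^{-2\alpha})$ and hence a non-empty strip $\alpha<\Real(p)<2\alpha$ in which the entire chain of identities lives.
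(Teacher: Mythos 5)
Your proposal is correct and takes essentially the same route as the paper: subtract the range term, identify $\Lag_t[\expect{N^\veps_t}-\PP(R_t\geq\veps)](\lambda)=B(\lambda\veps^\al)/\lambda$, apply the Mellin transform with the scaling substitution, invert the Laplace transform to produce the $t^{p/\al}/\Gamma(1+p/\al)$ factor, and then obtain the unique pole at $p=\al$ from the Laurent expansion of $B$ at $0$ with the poles at $p\in\al\Z_{\leq 0}$ cancelled by the zeros of $1/\Gamma(p/\al)$, exactly as in the paper's proof and its subsequent alternate proof of theorem \ref{thm:meroextensionLevy}. The only departures are minor: you justify the $\veps\to\infty$ end of the fundamental strip via the single big jump principle (giving $O(\veps^{-2\al})$ and the strip $\bra\al,2\al\ket$) where the paper argues decay directly from the Laplace transform, and the residue of $B^*$ at $s=0$ is $\tfrac{\expect{U^2}}{2\expect{U}^2}-1$ rather than $-1$, an immaterial slip since that pole is cancelled anyway.
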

\begin{proof}[Proof of theorem \ref{thm:tailzetaLevy}]
To show that this quantity is well-defined, let us start by noticing that
\be
\Lag(\expect{N^\veps_t}-\PP(R_t \geq \veps))(\lambda) = \frac{\expect{e^{-\lambda \veps^\al U}}}{\lambda(1- \expect{e^{-\lambda \veps^\al U}})}
\ee
which for $\Real(\lambda) >0$ goes to zero (uniformly in $\lambda$) exponentially fast as $\veps \to \infty$, showing that $\expect{N^\veps_t}-\PP(R_t \geq \veps)$ does as well for $\veps \to \infty$ by an application of Markov's inequality. We can also compute the contribution of the second term of equation \ref{eq:ZetaDecomposition}. First, notice that
\be
\Lag\!\left(\expect{\sum_{k \geq 2} \ell_k^p(X_t)} \right)\!(\lambda) = \frac{p}{\lambda} \Mel\!\left[\frac{\expect{e^{-\lambda \veps^\al U}}}{1- \expect{e^{-\lambda \veps^\al U}}} \right](p)
\ee
Using the scaling property of the Mellin transform $\Mel_z[f(\lambda z)](p) = \lambda^{-p} f^*(p)$ and inverting the Laplace transform
\begin{equation}
\label{eq:MelofLevy}
\expect{\Pers_p^p(X) - R^p_t} = \frac{p t^{\frac{p}{\al}}}{\Gamma(1+\frac{p}{\al})} \Mel\!\left[\frac{\expect{e^{-\veps^\al U}}}{1- \expect{e^{-\veps^\al U}}}\right](p) \,.
\end{equation}
Finally, setting 
\be
B(z) := \frac{\expect{e^{-z U}}}{1- \expect{e^{-z U}}} \quad \text{and} \quad B^*(p) := \Mel_z[B(z)](p) \,,
\ee
the polynomial scaling property of the Mellin transform, $\Mel_z[f(z^\al)](p) = \frac{1}{\al} f^*(\frac{p}{\al})$ yields the final result.
\end{proof}
\begin{remark}
Theorem \ref{thm:tailzetaLevy} can be used to give an alternative proof for the series expansion of theorem \ref{thm:meroextensionLevy}.
\end{remark}
\begin{proof}[Alternate proof of theorem \ref{thm:meroextensionLevy}]
By lemma \ref{lemma:SvepsTvepsMoments} and the analyticity of the expresion of $B$ with respect to $\expect{e^{-zU}}$, $B$ admits a Laurent series on some non-trivial annulus around zero with a single simple pole at $z=0$. By the fundamental correspondence (theorem \ref{thm:FundamentalCorrespondence}), the existence of this Laurent expansion guarantees that $B^*(\frac{p}{\al})$ admits a meromorphic continuation to the whole complex plane with only simple poles at every $p=-n\al$ for every $n \in \N$ and at $p=\al$. The poles at the negative integer multiples of $\al$ are compensated exactly by those of the $\Gamma$-function in the denominator of the expression of $\hat\zeta$, leaving only a pole at $\al$. Now, recalling that 
\be
\hat\zeta(p) = p \Mel[\expect{N^\veps_t} - \PP(R_t \geq \veps)](p) \,,
\ee 
$\Mel[\expect{N^\veps_t}- \PP(R_t \geq \veps)]$ has a supplementary pole at $p=0$. Admitting that $\hat\zeta(p)/p$ has the decay condition to apply the fundamental correspondence by inverting the Mellin transform we get the asymptotic relation desired.  
\end{proof}
\subsubsection{Exponential corrections}
\label{sec:exponentialcorrections}
The fundamental correspondence is limited in that it allows us only to describe $\expect{N^\veps}$ asymptotically up to terms smaller than any polynomial. However, in accordance to the discussion of section \ref{sec:analyticcontinuation}, a finer study of the analytic properties of $\hat\zeta$ can yield the superpolynomial corrections to our estimate, assuming that $B(z)$ admits a meromorphic extension to the whole complex plane. Using lemmata \ref{lemma:IntegralRepresentationoff} and \ref{lemma:functionalequation},
\begin{align}
&\hat\zeta_X(p) = t^{\frac{p}{\al}} \Gamma\!\left(1-\frac{p}{\al}\right) \sum_{z_0 \in \mathcal{P}} \Res((-z)^{\frac{p}{\al}-1}B(z); z_0) \\
&\Mel(\expect{N^\veps_t} - \PP(R_t \geq \veps))(p) = -\frac{t^{\frac{p}{\al}} \Gamma\!\left(-\frac{p}{\al}\right)}{\al} \sum_{z_0 \in \mathcal{P}} \Res((-z)^{\frac{p}{\al}-1}B(z); z_0)\,.
\label{eq:MellinNvepsLevy}
\end{align}
Recognizing that
\be
\Mel_z\left[\frac{e^{z_0/z}}{z_0}\right](p) = -\Gamma(-p) \left(\frac{-1}{z_0}\right)^{1-p} \,,
\ee
we may formally invert the Mellin transform if all the $z_0$'s are simple poles to obtain the exponentially small corrections
\be
\expect{N_t^\veps} - \PP(R_t \geq \veps) - \frac{t}{\expect{U}\veps^\al} - \left[\frac{\expect{U^2}}{2\expect{U}^2}-1\right] \sim \sum_{z_0 \in \mathcal{P}} \frac{e^{tz_0/\veps^\al}}{\al z_0} \Res(B(z);z_0) \quad \text{as }\veps \to 0 \,.
\ee
Generally, the poles are not simple so the corrective terms to this series stem from residues of higher order poles (the corrections remain nonetheless superpolynomially small as $\veps \to 0$).

\subsubsection{Distribution of the length of branches}
\label{sec:distlengthbars}
The distribution of the length of the $k$th branch (in the sense of figure \ref{fig:algorithm}) can be calculated. Recall that
\be
\expect{\ell_k^p(X)} = p \Mel[\PP(N_t^\veps \geq k)](p) \,.
\ee
For $k \geq 2$,
\be
\Lag[\expect{\ell_k^p(X)}](\lambda) = \frac{p}{\lambda} \Mel\left[\expect{e^{-\lambda U^\veps}}^{k-1}\right](p) \,.
\ee
If we suppose once again that $X$ is a L\'evy $\al$-stable process, this can be simplified to yield
\be
\expect{\ell_k^p(X)} = \frac{t^{\frac{p}{\al}}}{\Gamma(\frac{p}{\al})}  \,\Mel\!\left[\expect{e^{-\veps U}}^{k-1}\right]\!\left(\frac{p}{\al}\right) \,,
\ee
Taking the Mellin transform of a power is in general difficult. Inversion is also in general complicated due to the presence of the $\Gamma$-function in the denominator of the above expression. To remediate the first problem, we can form the generating function yielding the distribution for the $k$th bar,
\be
G_\al(z;p) := \sum_{k \geq 2} \expect{\ell_k^p(X)}z^k = \frac{t^{\frac{p}{\al}}}{\Gamma(\frac{p}{\al})} \,\Mel\!\left[\frac{z}{(z \expect{e^{-\veps U}})^{-1}-1}\right]\!\left(\frac{p}{\al}\right) \,,
\ee
which allows us to express
\begin{proposition}
For $k \geq 2$, the distribution of the length of the $k$th longest branch is characterized by its Mellin transform which is given by
\be
\expect{\ell_k^p(X)} = \frac{1}{k!}\left.\frac{\del^k}{\del z^k} \right\vert_{z=0} G_\al(z;p) \,.
\ee
\end{proposition}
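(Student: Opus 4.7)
The plan is to obtain this identity by Taylor's theorem, which requires verifying that $z \mapsto G_\al(z;p)$ is holomorphic in a neighborhood of the origin. Once analyticity is established, the uniqueness of Taylor coefficients of the power series $\sum_{k\geq 2} \expect{\ell_k^p(X)} z^k$ immediately gives the formula, so the substantive work is a convergence/interchange argument rather than any genuinely new computation.

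First I would bound $|\expect{\ell_k^p(X)}|$ uniformly in $k$ via the Mellin representation
\be
\expect{\ell_k^p(X)} = \frac{t^{p/\al}}{\Gamma(p/\al)} \Mel\!\left[\expect{e^{-\veps U}}^{k-1}\right]\!\left(\tfrac{p}{\al}\right)
\ee
and the pointwise inequality $0 \leq \expect{e^{-\veps U}} \leq 1$. For $p$ in the appropriate fundamental strip, the integrand $\veps^{\Real(p/\al)-1} \expect{e^{-\veps U}}^{k-1}$ is integrable on $]0,\infty[$ uniformly in $k$: integrability near $\veps = 0$ follows from $\Real(p) > 0$ (since the exponential factor is close to $1$ there), while the exponential decay of $\expect{e^{-\veps U}}$ at $\veps \to \infty$ guaranteed by lemma \ref{lemma:SvepsTvepsMoments} supplies the needed tail control. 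This establishes that $G_\al(\cdot;p)$ has positive radius of convergence (in fact at least $1$).

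Next I would identify $G_\al(z;p)$ with the closed form given just before the proposition by exchanging the sum in $k$ with the Mellin integral. For $|z| < 1$, the geometric identity
\be
\sum_{k \geq 2} z^k \expect{e^{-\veps U}}^{k-1} = \frac{z}{(z \expect{e^{-\veps U}})^{-1} - 1}
\ee
holds pointwise in $\veps$, and the partial sums are uniformly dominated by the integrable envelope obtained by replacing $z$ with $|z|$, so Fubini-Tonelli legitimates the exchange. With $G_\al(\cdot;p)$ now recognized as a convergent power series whose $k$th coefficient is precisely $\expect{\ell_k^p(X)}$, Taylor's theorem yields the stated formula. The only real subtlety is the dominated convergence step required for Fubini, which is routine given the exponential tails of $\expect{e^{-\veps U}}$ provided by lemma \ref{lemma:SvepsTvepsMoments}.
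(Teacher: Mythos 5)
Your overall route is the same as the paper's: the proposition is, for the paper, an immediate consequence of defining $G_\al(\cdot\,;p)$ as the power series $\sum_{k\geq 2}\expect{\ell_k^p(X)}z^k$ and extracting coefficients (by differentiation or Cauchy's formula), so your contribution is precisely the convergence bookkeeping you describe, and its skeleton (uniform bound on the coefficients via $\expect{e^{-\veps U}}^{k-1}\leq \expect{e^{-\veps U}}$, hence radius of convergence at least $1$; Fubini--Tonelli on the geometric series to identify the closed Mellin form; Taylor) is sound.

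There is, however, one genuinely wrong justification: the tail control you invoke does not come from lemma \ref{lemma:SvepsTvepsMoments}, and the "exponential decay of $\expect{e^{-\veps U}}$ as $\veps\to\infty$" is false for the processes this section is about. That lemma controls the moment generating function near $0$, equivalently the upper tail $\PP(U>u)$ for large $u$; the behaviour of the Laplace transform as $\veps\to\infty$ is governed instead by the law of $U$ near $0$. For an $\al$-stable process with $\al<2$, a single large jump gives $\PP(U\leq u)\asymp u$ as $u\to 0$, so $\expect{e^{-\veps U}}$ decays only polynomially, of order $\veps^{-2}$ (only in the Brownian case does one get the super-polynomial decay of $\sech^2(\sqrt{2\veps})$). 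Consequently your uniform-in-$k$ integrable envelope, which is forced to be the $k=2$ term $\veps^{\Real(p/\al)-1}\expect{e^{-\veps U}}$, is integrable at infinity only when $\Real(p)<2\al$; for $\Real(p)\geq 2\al$ and $\al<2$ the $k=2$ Mellin integral genuinely diverges, so "for $p$ in the appropriate fundamental strip" must be read as this restricted strip (with the identity then extended by meromorphic continuation as elsewhere in the paper). The fix is cosmetic -- replace the appeal to lemma \ref{lemma:SvepsTvepsMoments} by the correct small-time estimate on $U$ and state the strip $\al<\Real(p)<2\al$ explicitly -- but as written the cited reason for the key integrability step is not valid.
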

Whenever convenient, the expression above can also be evaluated by considering a circular contour of small enough radius $r$ around the origin $C_r$ and evaluating
\be
\expect{\ell_k^p(X)} = \frac{1}{2\pi i} \oint_{C_r} \frac{G_\al(z;p)}{z^{k+1}}\; dz \,.
\ee

\subsubsection{Statistical parameter testing for $\al$-stable processes and perspectives}
\label{sec:StatisticalTest}
What we will aim to do in this section is to illustrate by example why barcodes can be a robust statistical tools for parameter testing. Parameter testing is a widely studied subject, notably for self-similar processes, where the problem has been treated in dimension $1$ (a non-comprehensive list of references is \cite{Dang:2015vx} and the references therein). A variety of different methods, such as multi-scale wavelet analysis, have been used to produce these results (although other methods such as the ones of \cite{Dang:2015vx} have also been used), so our approach does not offer anything new in this respect. The interest of our method lies in possible applications to higher dimensional random fields, for which wavelet analysis is not an effective tool. A complete theoretical framework for this would require the study of the trees of higher dimensional random fields, which are out of the scope of this paper : instead, this section acts as a proof of concept for the use of topological estimators and their utility, by studying what happens in dimension 1.
\par
In what follows, we will consider $X$ to be an $\al$-stable L\'evy process, of which we will aim to estimate the parameter $\al$. From proposition \ref{prop:PicardProp} we know that almost surely 
\be
N^\veps_t \sim C t \veps^{-\al} \quad \text{as } \veps \to 0 \,.
\ee
In particular, given some sample we may compute the sampled value of $N^\veps_t$, which we will denote $\hat{N}^\veps_t$ explicitly. A close inspection of the behaviour of the sample mean $\overline{N}^\veps_t$ should thus yield an estimation for the parameter $\al$ of the process $X$. 
\begin{remark}
In fact, the same reasoning allows us to estimate the Hurst parameter $H$ of a fractional Brownian motion (fBM), which also exhibits self-similarity. In this case, the analogue of the asymptotic result of proposition \ref{prop:PicardProp} is \cite[\S 3]{Picard:Trees}
\be
\text{a.s.} \quad  N^\veps \sim Ct \veps^{-\frac{1}{H}} \quad \text{as } \veps \to 0 \,.
\ee
\end{remark}
More precisely, given a sample, our test consists in performing the following steps. 
\begin{enumerate}
  \item Sample $M$ paths of the stochastic process $X$ (for example at regular intervals of size $\frac{1}{N}$ for some $N$) ;  
  \item Compute the barcode of the sampled paths. To do this, first construct a filtered simplicial complex (which is in this case nothing other than a chain with $\!\sim \! N$ links) by taking each point to be a vertex of a complex and joining adjacent sampling points with an edge. The filtration on this complex is the value of the process at the edge (for an edge connecting vertex $a$ to vertex $b$, the value of the filtration is $X_a \wedge X_b$). Finally, the persistent homology of this complex can be computed with the \texttt{gudhi} package \cite{Gudhi}, which incidentally also offers a convenient implementation of filtered simplicial complexes due to Boissonnat and Maria \cite{Boissonnat_2014}. 
  \item For \textit{some range of small enough} $\veps$, and for some positive constant $c>1$ compute the quantity 
  \be
  \hat\al_M := \log_c\left[\frac{\overline{N}^{\veps/c}_t-\overline{N}^{2\veps/c}_t}{\overline{N}^\veps_t-\overline{N}^{2\veps}_t}\right] \,.
  \ee
  Here, the notion of \textit{some range of small enough} $\veps$ and the constant $c$ both depend on $N$, with the limiting condition that as $N \to \infty$, the lower bound on the range of valid $\veps$ goes to zero. 
\end{enumerate}
Our claim is that the computed quantity $\hat\al$ is a valid estimation of the parameter $\al$ (for fBM, the quantity obtained in this way is an estimate of $\frac{1}{H}$). %In practice, it is possible to see what the range of valid $\veps$ is by plotting $\overline{N}_t^\veps$ in a log-log scale and observing where the linear regime closest to zero lies.

\begin{lemma}[Convergence of the sample means]
\label{lemma:samplemeantoExpected}
The quotient 
\be
\frac{\overline{N}^{\veps/c}_t-\overline{N}^{2\veps/c}_t}{\overline{N}^\veps_t-\overline{N}^{2\veps}_t} \xrightarrow[M \to \infty]{\PP} \frac{\expect{N^{\veps/c}_t-N^{2\veps/c}_t}}{\expect{N^\veps_t-N^{2\veps}_t}}
\ee
at a rate $C_s M^{-s}$, for every $1\leq s \leq 2$ where $C_s$ is a constant depending on $s$ and the $s$th moment of $N^{\veps/c}$. In particular, 
\be
\hat\al_M \xrightarrow[M \to \infty]{\PP} \al+\xi(\veps)
\ee
at the same rate, where $\xi(\veps)$ is a superpolynomially small function of $\veps$.
\end{lemma}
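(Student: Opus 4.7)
The plan is to derive the first convergence from a quantitative law of large numbers applied separately to each of the four sample means, combine them by a continuous mapping argument to handle the quotient, and finally obtain the result on $\hat\al_M$ by substituting the asymptotic expansion of theorem \ref{thm:meroextensionLevy}.

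First, fix a scale $a \in \{\veps, 2\veps, \veps/c, 2\veps/c\}$ and consider the $M$ i.i.d.\ copies of $N^a_t$ underlying $\overline{N}^a_t$. By iterating the Laurent expansion used in the proof of theorem \ref{thm:meroextensionLevy} to higher orders (remark \ref{rmk:VarNveps} is the case of the second moment; the same calculation yields analogous expressions for every integer moment, all finite by lemma \ref{lemma:SvepsTvepsMoments}), $N^a_t$ admits finite moments of every order. Applying the Marcinkiewicz--Zygmund inequality with exponent $2s$ yields
\be
\EE\!\left|\overline{N}^a_t - \EE[N^a_t]\right|^{2s} \leq \frac{K_s\, \EE[|N^a_t|^{2s}]}{M^s},
\ee
and Markov's inequality gives convergence in probability of $\overline{N}^a_t$ to $\EE[N^a_t]$ at rate $M^{-s}$ for each of the four scales, with constant controlled by a high moment of $N^a_t$.

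Next I would write the quotient as $A_M/B_M$ with $A_M = \overline{N}^{\veps/c}_t - \overline{N}^{2\veps/c}_t$ and $B_M = \overline{N}^\veps_t - \overline{N}^{2\veps}_t$, and call their limits $A$ and $B$. Theorem \ref{thm:meroextensionLevy} shows that $B \sim t(1 - 2^{-\al})/(\EE[U]\,\veps^\al) > 0$ for $\veps$ small, so on the event $\{|B_M - B| \leq B/2\}$ (which has probability at least $1 - O(M^{-s})$ by the previous step),
\be
\left|\frac{A_M}{B_M} - \frac{A}{B}\right| \leq \frac{2}{B}\,|A_M - A| + \frac{2|A|}{B^2}\,|B_M - B|,
\ee
so Slutsky's theorem transfers convergence of the quotient to $A/B$ at the same rate $M^{-s}$. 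Substituting the expansion of theorem \ref{thm:meroextensionLevy} makes the constant term $\EE[U^2]/(2\EE[U]^2)$ cancel from both $A$ and $B$, leaving $A/B = c^\al + \xi(\veps)$ with $\xi$ superpolynomially small; applying $\log_c$, which is smooth and Lipschitz near $c^\al$, then yields the statement for $\hat\al_M$.

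The hard part will be keeping the constants quantitative in $\veps$: the right-hand side of the Marcinkiewicz--Zygmund bound grows like $a^{-2s\al}$, while $B$ is itself of order $\veps^{-\al}$, so the overall rate for the quotient picks up powers of $\veps$ that must be tracked carefully through lemma \ref{lemma:SvepsTvepsMoments}. In particular, the factor $|A|/B^2$ in the bound above could degrade the effective constant, and one must verify that the choice of the window of ``small enough $\veps$'' in step 3 of the test is compatible with the resulting $\veps$-dependent prefactor in $C_s$.
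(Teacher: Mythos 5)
Your proposal is correct and follows essentially the same route as the paper: a quantitative law of large numbers for the four sample means (moment bound plus Markov's inequality), control of the quotient, and then substitution of the expansion of theorem \ref{thm:meroextensionLevy} so that the constant terms cancel and the limit is $c^{\al}$ up to a superpolynomially small error, to which $\log_c$ is applied. The only differences are matters of detail the paper leaves implicit — you make the denominator-bounded-away-from-zero step explicit and invoke Marcinkiewicz--Zygmund at exponent $2s$, so your constant is governed by the $2s$th moment of $N^{\veps/c}_t$ (finite, by the geometric tail of $N^\veps_t$) rather than the $s$th moment quoted in the statement.
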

\begin{remark}
The at first seemingly convoluted expression for the estimator $\hat\al_M$ can be explained due to the results of theorem \ref{thm:meroextensionLevy}. The substraction present in the numerator and denominator is performed so that the constant terms of equation \ref{eq:NvepstAlphaStable} vanish. Ignoring the superpolynomial contributions to this expression which remain small, we then have that the argument inside the $\log$ of the estimator is roughly
\be
c^{\hat\al_M} \approx \frac{\frac{t}{\expect{U}(\veps/c)^\al}-\frac{t}{\expect{U}(2\veps/c)^\al}}{\frac{t}{\expect{U}\veps^\al}- \frac{t}{\expect{U}(2\veps)^\al}} \approx c^\al \,.
\ee
With this in mind, let us now formally prove the statement of lemma \ref{lemma:samplemeantoExpected}.
\end{remark}
\begin{proof}
That the numerator and the denominator tend to the respective expected values holds by a simple application of the weak law of large numbers, since $N^\veps_t$ is a random variable in $L^s$ for $s\geq 1$. The rate of convergence of this limit can be obtained via a simple application of Markov's inequality, by noting first that the summands in the denominator tend to their limits faster than those of the numerator, as the latter's $s$th moments are always larger than the former's. From theorem \ref{thm:meroextensionLevy}, we see that the limit can be expressed as
\be
\frac{\expect{N^{\veps/c}_t-N^{2\veps/c}_t}}{\expect{N^\veps_t-N^{2\veps}_t}} = c^\al \;\frac{1+ g(c\veps)}{1 + g(\veps)} \,,
\ee
where $g$ is a function tending to $0$ superpolynomially fast as $\veps \to 0$, determined by the superpolynomial corrections to the results of theorem \ref{thm:meroextensionLevy}. The statement of the lemma ensues.
\end{proof}

\begin{lemma}[Probable $L^\infty$-distance of sampling]
\label{lemma:LinftySampling}
Denote $\hat{X}$ the trajectory samples of the $\al$-stable process $X$ at every interval of length $\frac{1}{N}$. More precisely, somewhat abusing the notation we can write,
\be
\hat{X}_t  = \sum_{n=0}^{N-1} 1_{[\frac{n}{N},\frac{n+1}{N}[}(t)\; X_{\frac{n}{N}} \,.
\ee 
There exists a constant $k$ such that 
\be
\PP\left(\sup_{t \in \R} \abs{X_t - \hat{X}_t} \leq \veps \right) \gtrsim 1- \left(\frac{k}{N\veps^{\al}}\right)^N \quad  \text{ as } \veps N^{1/\al} \to \infty\,.
\ee
\end{lemma}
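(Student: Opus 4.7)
The plan is to exploit the Lévy structure of $X$ to factor the event into $N$ independent identically distributed pieces, then control each piece via $\al$-self-similarity and the single big jump principle (Proposition \ref{prop:SingleBigJump}).

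First, I would rewrite
\[
\sup_{t \in \R}\abs{X_t - \hat{X}_t} \;=\; \max_{0 \leq n < N}\; \sup_{s \in [0,1/N]}\abs{X_{n/N+s} - X_{n/N}}.
\]
By the stationarity and independence of the increments of the Lévy process $X$, the $N$ suprema $Y_n := \sup_{s\in[0,1/N]}\abs{X_{n/N+s}-X_{n/N}}$ are i.i.d.\ copies of $Y := \sup_{s\in[0,1/N]}\abs{X_s}$, whence
\[
\PP\!\left(\sup_{t\in\R}\abs{X_t-\hat{X}_t}\leq \veps\right) \;=\; \PP(Y\leq \veps)^N .
\]

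Next, I would use the $\al$-self-similarity $X_{ct}\stackrel{d}{=}c^{1/\al}\,X_t$ to normalize to the unit interval: $Y \stackrel{d}{=} N^{-1/\al}\sup_{s\in[0,1]}\abs{X_s}$. Since $X_0=0$, the inequality $\sup_{s\in[0,1]}\abs{X_s}\leq R_1$ holds trivially, so
\[
\PP(Y > \veps) \;=\; \PP\!\left(\sup_{s\in[0,1]}\abs{X_s} > \veps N^{1/\al}\right) \;\leq\; \PP(R_1 \geq \veps N^{1/\al}).
\]
Applying Proposition \ref{prop:SingleBigJump} in the regime $\veps N^{1/\al}\to\infty$ gives a constant $k$ (slightly larger than the one in the proposition, to absorb the $o(1)$) such that $\PP(R_1 \geq \veps N^{1/\al}) \leq k/(N\veps^\al)$. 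Combining yields, for $\veps N^{1/\al}$ sufficiently large,
\[
\PP\!\left(\sup_t\abs{X_t-\hat{X}_t}\leq \veps\right) \;\geq\; \left(1 - \frac{k}{N\veps^\al}\right)^N,
\]
which is the desired estimate.

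The main obstacle is that Proposition \ref{prop:SingleBigJump} furnishes only an asymptotic equivalence, so the constant $k$ appearing in the lemma has to be enlarged slightly to dominate the tail uniformly throughout the asymptotic regime $\veps N^{1/\al}\to\infty$. A secondary point worth checking is that the single big jump principle as invoked is stated for $\al<2$; for the Gaussian case $\al=2$, the reflection principle and a Gaussian tail bound on $\sup_{s\in[0,1]}\abs{B_s}$ yield a much stronger estimate, so the statement of the lemma persists. Finally, one should verify that the hypothesis of non-monotonicity of $X$ is not needed here: the argument only uses independence, stationarity, self-similarity, and the range tail bound, all of which hold for any $\al$-stable Lévy process.
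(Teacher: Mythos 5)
Your proposal follows essentially the same route as the paper's proof: reduce to $[0,1]$, split into $N$ subintervals of length $\tfrac1N$ on which $X_t-\hat X_t$ is an increment of the Lévy process, control each subinterval by the range via self-similarity and the single big jump principle (proposition \ref{prop:SingleBigJump}), and conclude by independence of the increments. The only point to flag is your final line: the independence argument you (correctly) carry out yields
\be
\PP\Bigl(\sup_{t}\abs{X_t-\hat X_t}\leq \veps\Bigr)\;\geq\;\Bigl(1-\tfrac{k}{N\veps^{\al}}\Bigr)^{N},
\ee
and this is \emph{not} the same as the bound $1-\bigl(\tfrac{k}{N\veps^{\al}}\bigr)^{N}$ displayed in the lemma: for fixed $\veps$ and $N\to\infty$ (a regime allowed by $\veps N^{1/\al}\to\infty$) your bound tends to $e^{-k\veps^{-\al}}$, i.e.\ roughly $1-k\veps^{-\al}$, whereas the lemma's expression tends to $1$. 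The displayed form would follow only if the path had to stray on \emph{every} one of the $N$ subintervals, whereas a single bad subinterval already spoils the event; the paper's own proof makes exactly the same interval decomposition and then writes the $1-(k/(N\veps^\al))^N$ bound in its last step, so your derivation is the honest output of the shared argument, but you should not assert that it "is the desired estimate" without noting that it is the weaker (and, from this argument, the only justified) form. Your side remarks — enlarging $k$ to turn the asymptotic equivalence into a uniform tail bound, the separate treatment of $\al=2$, and the observation that non-monotonicity is not needed — are all sound and go slightly beyond what the paper records.
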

\begin{remark}
The asymptotic dependence above fixes admissible values of $\veps$ as a function of $N$ as holding whenever the asymptotic dependence above is valid (it must be valid between $\veps/c$ and $2\veps$). Furthermore, the parameter $c$ we chose above is also further constrained by the requirement that the asymptotic relation of theorem \ref{thm:meroextensionLevy} holds between $\veps/c$ and $2\veps$. More precisely, we fix $c$ and $\veps$ such that the superpolynomial contributions in the expansion of theorem \ref{thm:meroextensionLevy} are negligible with respect to the term in $\veps^{-\al}$ \textit{and} by imposing that $\veps N^{1/\al}$ is large enough so that the asymptotic relation of lemma \ref{lemma:LinftySampling} simultaneously holds within the range $[\frac{\veps}{c},2\veps]$. In practice, we may fix $c$ and $\veps$ by looking at a log-log chart of $N^\veps$, the regime of validity of $\veps$ and the value of $c$ become clear, as shown in figure \ref{fig:HistLevySim}.  
\end{remark}
\begin{remark}
With respect to the barcode, linear interpolation between values of $X$ or the consideration of the process $\hat{X}$ is equivalent.
\end{remark}
\begin{remark}
It is not a priori obvious that the event above is measurable. However, continuity in probability and the a.s. existence of a càdlàg modification of the process allows us to interpret this event to be a supremum over every $t \in \Q$, rendering the event measurable. 
\end{remark}
\begin{proof}
It suffices to show the result over the interval $[0,1]$. Noticing that the sampling coincides with the value of the path at every $t= \frac{1}{N}$, it suffices to evaluate the probability that over $N$ intervals of length $\frac{1}{N}$ the real sampled path $X_t(\omega)$ (notice the absence of a hat) strays away from the sampled path $\hat{X}_t(\omega)$.  
Focusing on a single interval $[0,\frac{1}{N}]$, the single big jump principle \ref{prop:SingleBigJump} states that there exists a constant $k$ such that this probability of straying away in this interval is
\be
\PP(R_{\frac{1}{N}} \geq \veps) \sim \frac{k\veps^{-\al}}{N} \quad  \text{ as } \veps N^{1/\al} \to \infty\,.
\ee
By independence, over $N$ such intervals
\be
\PP\left(\sup_{t \in \R} \abs{X_t - \hat{X}_t} \leq \veps \right) \gtrsim 1- \left(\frac{k}{N\veps^{\al}}\right)^N \sim 1  \quad  \text{ as } \veps N^{1/\al} \to \infty\,,
\ee
as desired.
\end{proof}
\begin{figure}[h!]
\centering
\includegraphics[width=0.6\textwidth]{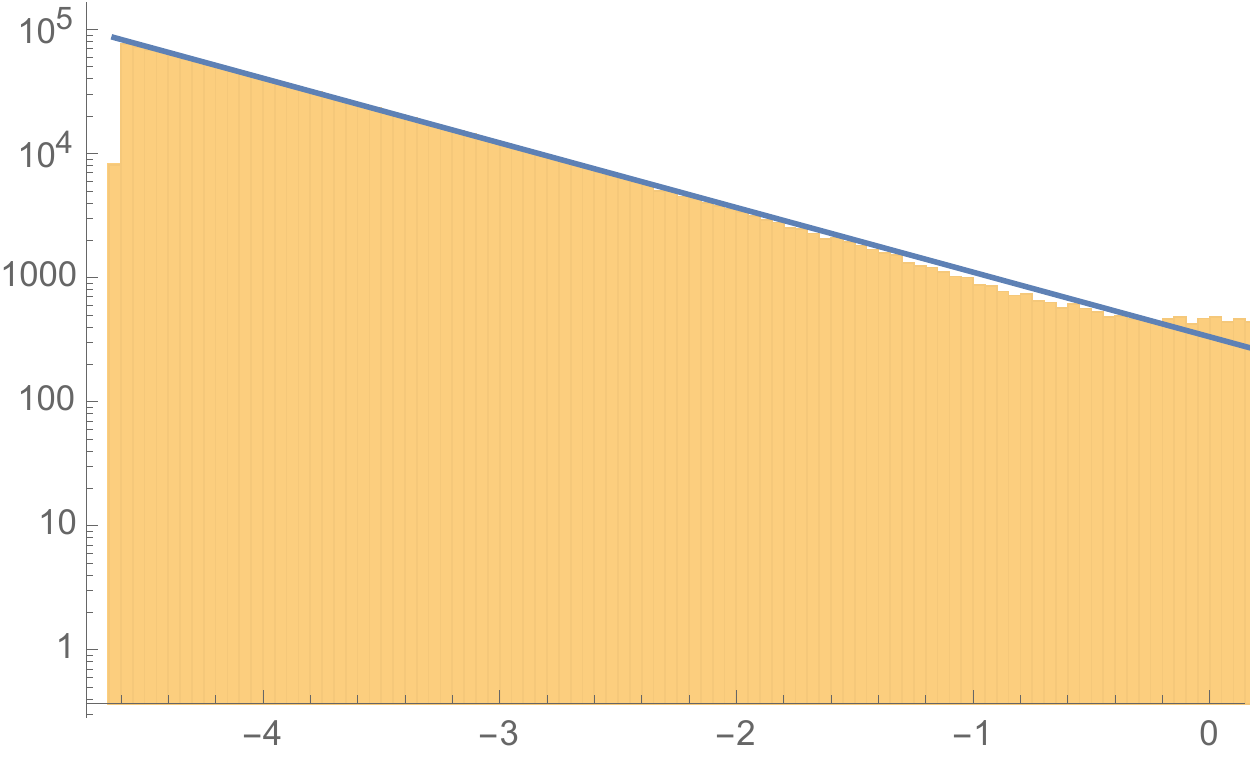}
\caption{In orange, a histogram of the number of bars of length $\geq \veps$, $N^\veps$ found as a function of $\log \veps$ from a simulation of a Lévy $1.2$-stable process as a random walk. In blue, the function $C_{1.2} \veps^{-1.2}$.}
\label{fig:HistLevySim}
\end{figure}
Now, let us recall the following theorem.
\begin{theorem}[P, Theorem 3.1 \cite{Perez_Pr_2020}]
\label{thm:btsnbtmatching}
Let $\delta_N := \norm{X-\hat{X}}_{\Linfty}$, then there exists a $\delta_N$-matching between the barcodes of $\hat{X}$ and $X$. In particular, for any $\veps \geq 2\delta_N$
\be
N_X^{\veps + \delta_N} \leq N_{\hat{X}}^\veps \leq N_X^{\veps-\delta_N}
\ee
Moreover, if $\expect{N_X^\veps}$ is continuous with respect to $\veps$, then 
\be
N_{\hat{X}}^\veps \xrightarrow[N\to \infty]{L^1} N_X^\veps \quad \text{and} \quad N_{\hat{X}}^\veps \xrightarrow[n\to \infty]{\PP} N_X^\veps \,, \nonum
\ee
which at fixed $N$ quantitatively translates to
\be
\expect{\abs{N_X^\veps- N_{\hat X}^\veps}} \leq 2\omega_\veps(\delta_N) \quad \text{and} \quad \PP(\abs{N_X^\veps- N_{\hat X}^\veps} \geq k) \leq \frac{2\omega_{\veps}(\delta_N)}{k}
\ee
where $\omega_\veps$ is the modulus of continuity of $\expect{N_X^\veps}$ on the interval $[\veps -\delta_N,\veps+\delta_N]$. Finally, the following inequalities also hold
\begin{align*}
N^{\delta_n}_{\hat{X}} \geq  N^{2\delta_N}_X \quad \text{ and } \quad N^{\delta_N}_X \geq N_{\hat{X}}^{2 \delta_N} \;.
\end{align*}
\end{theorem}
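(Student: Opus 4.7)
The proof would rest on the bottleneck stability theorem (Theorem \ref{thm:Stability}) applied to $X$ and its piecewise-constant sampling $\hat{X}$, which immediately produces a $\delta_N$-matching between $\Dgm(X)$ and $\Dgm(\hat{X})$ since $\|X-\hat{X}\|_{\Linfty} = \delta_N$ by definition. This gives the first assertion for free, and the remainder of the proof consists of extracting quantitative consequences from the existence of this matching.

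To derive the pointwise sandwich $N_X^{\veps+\delta_N} \leq N_{\hat{X}}^\veps \leq N_X^{\veps-\delta_N}$, I would translate the matching into a bar-by-bar correspondence. Any bar $[b,d]$ of $\bcode(X)$ with $d-b \geq \veps + \delta_N$ (under the hypothesis $\veps \geq 2\delta_N$) has length strictly greater than $2\delta_N$, so it cannot be matched to the diagonal; it must therefore be paired with some $[b',d']$ in $\bcode(\hat{X})$, with endpoints satisfying $\max\{\abs{b-b'},\abs{d-d'}\} \leq \delta_N$. In dimension one, births and deaths correspond to local extrema and can be controlled independently, and I would use this to promote the naive bound $d'-b' \geq (d-b)-2\delta_N$ to the sharper $d'-b' \geq \veps$. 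The symmetric inequality follows by exchanging the roles of $X$ and $\hat{X}$.

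For the convergence statements, I would first recall that $\delta_N \to 0$ as $N \to \infty$ (in probability, or almost surely if uniform continuity is available) from the càdlàg regularity of $X$ and the single big jump principle (Proposition \ref{prop:SingleBigJump}), which controls the size of jumps over intervals of length $1/N$. The pointwise inequality then sandwiches $N^\veps_{\hat{X}}$ between $N^{\veps \pm \delta_N}_X$; taking expectations and invoking the continuity of $\veps \mapsto \expect{N^\veps_X}$ yields $\expect{\abs{N^\veps_X-N^\veps_{\hat{X}}}} \leq 2\omega_\veps(\delta_N)$ directly from the definition of the modulus of continuity. The $L^1$-convergence then gives convergence in probability, with the quantitative bound following from Markov's inequality applied to the integer-valued random variable $\abs{N^\veps_X - N^\veps_{\hat{X}}}$.

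The two final inequalities $N^{\delta_N}_{\hat{X}} \geq N^{2\delta_N}_X$ and $N^{\delta_N}_X \geq N^{2\delta_N}_{\hat{X}}$ are extracted by specializing the general pointwise inequalities to the boundary case $\veps = 2\delta_N$, at which the hypothesis becomes tight. The main obstacle in this proof is not the stability theorem itself but the tight translation between a bottleneck matching and the sharp bounds $\veps \pm \delta_N$ (rather than the naive $\veps \pm 2\delta_N$), which genuinely uses the one-dimensional structure; everything downstream is a routine application of Markov's inequality, Fubini, and continuity.
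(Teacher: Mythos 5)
The paper does not reprove this statement (it is imported wholesale from \cite{Perez_Pr_2020}), so your proposal has to stand on its own, and its central step does not. From the existence of a $\delta_N$-matching you can only conclude that a matched bar's two endpoints each move by at most $\delta_N$, and nothing prevents both from moving adversely at once; so the matching alone yields $d'-b'\geq (d-b)-2\delta_N$, i.e.\ the sandwich $N_X^{\veps+2\delta_N}\leq N_{\hat X}^{\veps}\leq N_X^{\veps-2\delta_N}$, and not the stated $\pm\delta_N$ bounds. Your one sentence claiming to ``promote'' the naive bound because, in dimension one, births and deaths ``can be controlled independently'' is exactly the point at issue, and independence is the problem rather than the cure: a narrow dip of depth $\delta_N$ below the preceding sample value followed by a narrow peak of height $\delta_N$ above the preceding sample value makes the corresponding bar of $\hat X$ shorter than that of $X$ by the full $2\delta_N$, while still having $\norm{X-\hat X}_{\Linfty}=\delta_N$. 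So the sharper inequality cannot be a formal consequence of bottleneck stability plus a generic matching; any proof must exploit the specific structure of $\hat X$ as a subordinate sampling of $X$ (for instance, every oscillation of $\hat X$ is realized by actual values of $X$ at sample times, which already gives the one-sided bound $N_{\hat X}^{\veps}\leq N_X^{\veps}$, and the counting times $T_i^{\veps},S_i^{\veps}$ of definition \ref{def:vepsminmax} are the natural tool), and your proposal supplies no such argument. Relatedly, the last pair of inequalities does not follow by ``specializing to $\veps=2\delta_N$'': that substitution gives $N_X^{3\delta_N}\leq N_{\hat X}^{2\delta_N}\leq N_X^{\delta_N}$, which yields $N_X^{\delta_N}\geq N_{\hat X}^{2\delta_N}$ but not $N_{\hat X}^{\delta_N}\geq N_X^{2\delta_N}$, whose scale $\veps=\delta_N$ lies outside the admissible range $\veps\geq 2\delta_N$ and needs a separate argument.

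Two secondary problems. First, Theorem \ref{thm:Stability} as quoted applies to continuous functions on a compact triangulable space, whereas $X$ is c\`adl\`ag and $\hat X$ is piecewise constant; you need the q-tame version of stability (or the tree-level comparison) to even get the $\delta_N$-matching. Second, your justification that $\delta_N\to 0$ ``from the c\`adl\`ag regularity of $X$ and the single big jump principle'' is unfounded: for a process with jumps, the piecewise-constant discretization does not converge uniformly (near a jump time inside a sampling interval the discrepancy is of the order of the jump size), so $\delta_N$ does not tend to $0$ almost surely, and the passage from the pointwise sandwich to the $L^1$ and in-probability limits has to be handled more carefully (e.g.\ conditionally on the event controlled by Lemma \ref{lemma:LinftySampling}, or for continuous processes). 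The downstream steps you describe --- taking expectations in the sandwich, bounding by $2\omega_\veps(\delta_N)$, and Markov's inequality --- are fine once the pointwise inequalities are granted, but as written the heart of the theorem is asserted, not proved.
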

This statement can be specialized given our two lemmas above. The theorem provides bounds on $N^{\veps}_{\hat X}$ provided that we know the value of $\delta_N$, since if $\veps$ is small enough, $N^\veps_X$ has some almost sure asymptotic behaviour. On the other hand, by virtue of lemma \ref{lemma:LinftySampling} we have a \textit{probable} estimate of $\delta_N$, \textit{i.e.} with probability $q$, we may give a bound of $\delta_N$, rendering the statement quantitative. The second part of the statement of theorem \ref{thm:btsnbtmatching} provides bounds on the $L^1$ distance between $N^\veps_{\hat{X}}$ and $N^\veps_X$, provided that we know that $\expect{N^\veps_X}$ is continuous. This happens to be the case for Brownian motion, as shown in \cite{Perez_Pr_2020}. Showing it in full generality for L\'evy processes requires a closer study of the range of L\'evy processes and the continuity of the inverse Mellin transform of $\hat\zeta_X(p)/p$. However, for the purposes of the construction of our statistical test, lemma \ref{lemma:samplemeantoExpected} suffices, as it provides us with a quantitative guarantee that the parameter $\al$ is well estimated by our estimator $\hat{\al}_M$.

\subsection{Propagators and local $\zeta$-functions}
\label{sec:propagatorslocaltrees}

In dimension one, it is possible to use the total order of $\R$ and count $N^{x,x+\veps}$ by counting the number of times we go up from $x$ to $x+\veps$. This idea can be formalized by the following sequence of stopping times already introduced in the literature of classical probability theory (\cf for instance \cite{Revuz_1999}).
\begin{definition}
\label{def:vepsminmax}
Setting $S_0^{x,\veps} = T_0^{x,\veps} = 0$, we define a sequence of times recursively
\begin{align}
T_{i+1}^{x,\veps} &:= \inf\left\{\, t \geq S_i^{x,\veps} \; \Bigg\vert \; f(t) \leq x \wedge (x+\veps) \right\} \nonum\\
S_{i+1}^{x,\veps} &:= \inf\left\{\, t \geq T_{i+1}^{x,\veps}\; \Bigg\vert \; f(t) \geq x \vee (x+\veps) \right\} \,. \label{eq:TiSixveps}
\end{align}

\end{definition}
Counting the number of bars of length $\veps$ is thus exactly to count the number of up and downs we make. More precisely,
\begin{equation}
\label{eq:Nvepscountingformula}
N^{x,x+\veps} = \inf\{i \, \vert \, T_i^{x,\veps} \text{ or } S_i^{x,\veps} = \inf \emptyset\} \,,
\end{equation}
by which we mean that it is the smallest $i$ such that the set over which $T_i^\veps$ or $S_i^\veps$ are defined as infima is empty.

With the aforementioned, it is possible to define a Feynman-like formalism to perform the computation of $\expect{N^{x,x+\veps}}$.
\begin{definition}
Let $X$ be a (strong Markov) stochastic process and let 
\be
T^a := \inf \{t \geq 0 \, \mid \, X_t > a \} \,
\ee 
be the hitting time of $a$ by $X$. We define the \textbf{propagator from $x$ to $y$} by
\be
\bra x | y \ket := \mathbb{E}_x\!\left[e^{-\lambda T^y}\right] \,
\ee
whenever this exists.
\end{definition}
\begin{remark}
Whenever convenient, we may take $\lambda = i \omega$, and modify the subsequent expressions appropriately. 
\end{remark}
If the process $X$ has the strong Markov property and the increments between the stopping times $T_i^{x,\veps}$ and $S_i^{x,\veps}$ are identically distributed, we can once again apply renewal theory. In particular, the Laplace transform of the occupation numbers $\Lag N^{x,x+\veps}_t$ can be understood in terms of the propagators above. For $x>0$,
\begin{align*}
\Lag(\expect{N^{x,x+\veps}_t})(\lambda) &= \frac{\bra 0 | x+\veps \ket }{\lambda} \sum_{k \geq 0} (\bra x+\veps |x \ket \bra x | x+\veps \ket)^k \\
&= \frac{\bra 0 | x +\veps \ket}{\lambda(1-\bra x+\veps |x \ket \bra x | x+\veps \ket)}
\end{align*}
Similarly, all moments of the distribution can in principle be calculated
\begin{align*}
\Lag(\expect{(N^{x,x+\veps}_t)^{s-1}})(\lambda) = \frac{\bra 0 | x+\veps \ket (1- \bra x+\veps |x \ket \bra x | x+\veps \ket)}{\lambda} \Li_{-s+1}(\bra x+\veps |x \ket \bra x | x+\veps \ket)
\end{align*}
where $\Li$ denotes the polylogarithm.
\begin{remark}
\label{rmk:LevyProc}
If $X_t - X_s = X_{t-s}$ in distribution, we can rewrite the above as,
\be
\Lag(\expect{(N^{x,x+\veps})^{s-1}})(\lambda) = \frac{\bra 0 | x+\veps \ket (1- \bra \veps | 0 \ket \bra 0 | \veps \ket)}{\lambda} \Li_{-s+1}(\bra \veps | 0 \ket \bra 0 | \veps \ket)
\ee 
\end{remark}
If $\bra x+\veps |x \ket \bra x | x+\veps \ket$ and $\bra 0 | x+\veps \ket$ admit an asymptotic expansion for small $\veps$ (alternatively, we can suppose that this function is a smooth enough function of $\veps$), then the Mellin transform of the expression above admits a meromorphic continuation. Furthermore, 
\be
\bra x+\veps |x \ket \bra x | x+\veps \ket = 1 + o(1) \quad \text{as }\veps \to 0
\ee
so that the order of the divergence of $\Lag(\expect{N^{x,x+\veps}_t})$ is dictated exclusively by the order of the first correction in $\veps$ to the product above. In reality, 
\begin{corollary}
\label{prop:MeroExistsLocal}
Suppose that the increments in the sequence of stopping times defined by $(T^{x,\veps}_i,S_i^{x,\veps})$ are i.i.d. and that $\bra 0 | x+\veps \ket$ and $\bra x+\veps |x \ket \bra x | x+\veps \ket$ have asymptotic expansions of the form of that of the fundamental correspondence (\cf theorem \ref{thm:FundamentalCorrespondence}), then, the function $\Mel\Lag(\expect{N^{x,x+\veps}_t})(p,\lambda)$ has a meromorphic extension on the half plane $\Real(p) > -k$ for any $k \in \N$. 
\end{corollary}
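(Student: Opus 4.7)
My plan is to apply the converse direction (second bullet) of the fundamental correspondence (Theorem \ref{thm:FundamentalCorrespondence}) to the explicit formula
\be
\Lag(\expect{N^{x,x+\veps}_t})(\lambda) = \frac{\bra 0 | x+\veps \ket}{\lambda\bigl(1-\bra x+\veps|x\ket\bra x|x+\veps\ket\bigr)},
\ee
obtained just above the corollary via the geometric summation legitimized by the i.i.d. hypothesis on the increments of $(T_i^{x,\veps}, S_i^{x,\veps})$. Fixing $\lambda$ and viewing the right-hand side as a function of $\veps$, its Mellin transform in $\veps$ is \emph{a priori} well-defined on some right half-plane $\Real(p) > \al_0$. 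It therefore suffices to exhibit, for every $k_0 \in \N$, an asymptotic expansion of the form $\sum c_{\xi,j}\veps^{\xi}\log^j\veps + O(\veps^{k_0})$ as $\veps \to 0$; Theorem \ref{thm:FundamentalCorrespondence} then yields a meromorphic continuation to the strip $\bra -k_0, \infty \ket$, and letting $k_0\to\infty$ gives the claim.

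To produce such an expansion from the hypotheses, I would set $A(\veps) := \bra 0|x+\veps\ket$ and $P(\veps) := \bra x+\veps|x\ket\bra x|x+\veps\ket$, both of which admit expansions of the required form by assumption. Since $P(\veps)\to 1$ as $\veps \to 0$, the expansion of $1-P(\veps)$ starts with some leading term of positive $\veps$-exponent, generically $c\veps^\alpha$ with $\alpha > 0$. Factoring it out, writing $1-P(\veps) = c\veps^\alpha\bigl(1+\rho(\veps)\bigr)$ with $\rho(\veps)\to 0$ of the same asymptotic type, and inverting via the geometric series $(1+\rho)^{-1} = \sum_n (-\rho)^n$ truncated at sufficiently large order, produces an expansion for $1/(1-P(\veps))$ starting at order $\veps^{-\alpha}$. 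Multiplying by the expansion of $A(\veps)/\lambda$ then yields the required expansion for the whole quotient.

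The main technical obstacle is the bookkeeping of asymptotic expansions with logarithmic factors through the inversion and multiplication steps: one must verify that the class of expansions $\sum c_{\xi,j}\veps^\xi \log^j\veps + O(\veps^{k_0})$ is stable under finite sums, products, and the truncated geometric inversion above, and that cutting the geometric series at a large enough order $n_0$ suffices to control the tail by $O(\veps^{k_0})$. These manipulations remain finite at each step because at each fixed exponent only finitely many log powers can arise in the hypothesis expansions; the case where the leading term of $1-P(\veps)$ itself carries a log factor works identically, with the factorization taking the form $c\veps^\alpha\log^r(\veps)\bigl(1+\rho(\veps)\bigr)$. As a byproduct, the locations and orders of the poles of the meromorphic extension are read off directly from the exponents and log powers in the expansions of $A$ and $1-P$, and in particular a pole at $p = -\alpha$ arises automatically.
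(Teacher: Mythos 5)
Your proposal is correct and follows essentially the same route as the paper, whose proof consists of the single line that the statement is ``a simple application of the fundamental correspondence'' applied to the propagator formula for $\Lag(\expect{N^{x,x+\veps}_t})$. Your write-up simply makes explicit the bookkeeping (factoring the leading term of $1-\bra x+\veps|x\ket\bra x|x+\veps\ket$ and inverting by a truncated geometric series) that the paper leaves implicit.
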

\begin{proof}
It's a simple application of the fundamental correspondence.
\end{proof}

\subsubsection{It\^o diffusions}
\label{sec:ItoDiffusions}
Computing the local $\zeta$-functions is sometimes easier than computing the $\zeta$-function associated with the process $X$. This is because the computation of the distribution of $U^\veps$ might not always be straightforward.
An example of this is the case of It\^o diffusions, \ie solutions to stochastic differential equations of the form
\be
dX_t = \mu(X_t)\,dt + \sigma(X_t)\, dB_t \,,
\ee
for some smooth functions $\mu$ and $\sigma$. These processes have the strong Markov property and the sequence of stopping times $(T_i^\veps,S_i^\veps)$ defined above are identically distributed as $\mu$ and $\sigma$ do not depend explicitly on time. Furthermore, these processes have infinitesimal generator
\be
\mathcal{G} = \mu(x) \frac{\del}{\del x} + \frac{\sigma^2(x)}{2} \frac{\del^2}{\del x^2} \,.
\ee 
We can use the theory of diffusion developped by It\^o and McKean \cite{Ito_1996} to find explicit expressions for the propagators $\bra x | y\ket$. The propagator is exactly the fundamental solution associated with the equation 
\begin{align*}
(\mathcal{G}-\lambda) \rho(x,\lambda) =  0 \text{ subject to } \begin{cases}\rho(y,\lambda) =1 \quad  \rho(x,\lambda) \xrightarrow{x \to -\infty} 0 & \text{for } x<y  \\ \rho(y,\lambda) =1 \quad \rho(x,\lambda) \xrightarrow{x \to +\infty} 0 & \text{for } x>y \end{cases}
\end{align*}
The solution to the above boundary value problem has been shown to be of the form \cite[p.130]{Ito_1996}
\be
\rho(x,\lambda) = \frac{\Psi_\lambda(x)}{\Psi_\lambda(y)} = \bra x | y \ket \,, 
\ee
where if $x<y$ (resp. $x>y$) $\Psi_\lambda(z)$ is (up to some constant) the unique increasing (resp. decreasing) positive solution of the equation 
\be
\mathcal{G}\Psi_\lambda = \lambda \Psi_\lambda \,.
\ee
Noting $\Psi_\lambda$ the solution for $x<y$ and $\Phi_\lambda$ the solution for $x>y$, for $x>0$,
\begin{equation}
\label{eq:LagNxxvepsIto}
\Lag(\expect{N^{x,x+\veps}_t})(\lambda) = \frac{\Psi_\lambda(0)}{\lambda \Psi_\lambda(x+\veps)} \frac{1}{1- \frac{\Psi_\lambda(x)}{\Phi_\lambda(x)} \frac{\Phi_\lambda(x+\veps)}{\Psi_\lambda(x+\veps)}} \,.
\end{equation}
These solutions $\Psi_\lambda(z)$ and $\Phi_\lambda(z)$ are smooth in $z$, so that the (Laplace transform of the) local $\zeta$-function $\zeta_X^x$ admits a meromorphic continuation to $\C$. 

\subsubsection{Obtention of the local time for continuous semimartingales}
\label{sec:LocalTimeFromNxxveps}
The obtention of an expression for the propagators of a semimartingale is in principle sufficient to obtain an expression for the Laplace transform (in time) of the local time. This is possible due to the following theorem.
\begin{theorem}[Revuz,Yor,  Ch. VI Theorem 1.10 \cite{Revuz_1999}]
Let $X$ be a continuous semimartingale and let $L^x_X(t)$ be its local time on the interval $[0,t]$ at level $x$. Writing $X= M+A$ the Doob decomposition of $X$, suppose that for $s \geq 1$ 
\be
\expect{[M]_t^{s/2} + \left(\int_0^t \abs{dA}_s\right)^s} < \infty \,,
\ee
then in $L^s$,
\be
2\veps N^{x,x+\veps}_t \xrightarrow[\veps \to 0]{L^s} L^x_X(t) \,,
\ee
in particular, this convergence holds in distribution as well.
\end{theorem}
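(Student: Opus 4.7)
The statement is classical, attributed in the paper to \cite[Ch.~VI Thm.~1.10]{Revuz_1999}, so the plan is to follow the Revuz-Yor argument which combines the Tanaka-Meyer formula with excursion counting.

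First, I would apply the Tanaka-Meyer formula to the convex functions $y \mapsto (y-x)^+$ and $y \mapsto (y-(x+\veps))^+$ and subtract, obtaining
\be
\tfrac{1}{2}\bigl(L^x_X(t)-L^{x+\veps}_X(t)\bigr) = \psi_\veps(X_t) - \psi_\veps(X_0) - \int_0^t \mathbf{1}_{\{x<X_s\leq x+\veps\}}\,dX_s \,,
\ee
where $\psi_\veps(y) := (y-x)^+-(y-(x+\veps))^+$ takes values in $[0,\veps]$. This identity is the source of all the local-time information; the task is to turn it into a statement about $N^{x,x+\veps}_t$.

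Second, I would decompose the stochastic integral over the partition of $[0,t]$ provided by the stopping times $T^{x,\veps}_i,S^{x,\veps}_i$ of definition \ref{def:vepsminmax}. On each upcrossing subinterval $[T_i^{x,\veps},S_i^{x,\veps}]$, the trajectory moves from $x$ to $x+\veps$ (dipping below $x$ is allowed, since the integrand vanishes there), and on each downcrossing subinterval $[S_i^{x,\veps},T_{i+1}^{x,\veps}]$ it moves from $x+\veps$ back to $x$. Applying $\psi_\veps$ locally on each such subinterval yields a contribution of $\pm\veps$ plus an error term of local-time and martingale type. Summing the contributions and comparing with the global identity yields an approximate equation of the form $2\veps N^{x,x+\veps}_t = L^x_X(t) + \mathcal{E}^\veps_t$, the factor $2$ arising because $N^{x,x+\veps}_t$ counts both up- and downcrossings.

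Third, one controls $\mathcal{E}^\veps_t$ in $L^s$. The martingale part of the fluctuation is handled by the Burkholder-Davis-Gundy inequality, which bounds it in terms of $\expect{[M]_t^{s/2}}^{1/s}$, and the finite-variation part by $\expect{\bigl(\int_0^t|dA|_s\bigr)^s}^{1/s}$. Both quantities are finite by hypothesis, and the integrand localises to the slab $[x,x+\veps]$, whose $[X]$-occupation vanishes as $\veps \to 0$ by the occupation density formula. Dominated convergence together with the $L^s$-continuity of $a \mapsto L^a_X(t)$ then gives $\mathcal{E}^\veps_t\to 0$ in $L^s$.

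The main obstacle lies in the precise book-keeping across the $N^{x,x+\veps}_t$ crossings --- in particular, controlling the initial and terminal (possibly incomplete) excursions, and showing that the martingale errors telescope correctly to produce a vanishing net contribution in $L^s$ despite $N^{x,x+\veps}_t$ itself blowing up as $\veps\to 0$. The convergence in distribution follows immediately from the $L^s$-convergence.
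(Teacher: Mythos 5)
The first thing to note is that the paper does not prove this statement at all: it is quoted directly from Revuz--Yor (Ch.~VI, Thm.~1.10) as a known result, so your proposal can only be measured against the classical argument. Your overall toolbox is the right one and matches that argument: Tanaka--Meyer, a decomposition along the stopping times $T_i^{x,\veps},S_i^{x,\veps}$, and an $L^s$ control of the error via Burkholder--Davis--Gundy for the martingale part, the total-variation bound for $A$, and the occupation density formula plus dominated convergence (this is exactly where the moment hypothesis enters). Your third step is essentially the textbook one.

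However, your second step, as written, does not produce $2\veps N^{x,x+\veps}_t = L^x_X(t) + \mathcal{E}^\veps_t$. If you sum the increments of $\psi_\veps$ over \emph{all} the subintervals determined by the $T_i^{x,\veps},S_i^{x,\veps}$, the $+\veps$ and $-\veps$ contributions telescope back to $\psi_\veps(X_t)-\psi_\veps(X_0)$, which is itself bounded by $\veps$; comparing with your global subtracted identity then only yields $\int_0^t 1_{\{x<X_s\le x+\veps\}}\,dX_s \to 0$, and no crossing count appears. The classical extraction applies Tanaka to $(X-x)^+$ on \emph{one} family of intervals only, say the upcrossing intervals $[T_i^{x,\veps},S_i^{x,\veps}]$: each completed upcrossing contributes an increment $\veps$, and since $X_s>x$ on the complementary intervals, the local time $L^x$ accrues only during the upcrossing intervals; summing gives $\veps\,\#\{\text{upcrossings completed by }t\} = \tfrac12 L^x_X(t) + \int_0^t G_s\,dX_s + O(\veps)$ with $\abs{G_s}\le 1_{\{x<X_s< x+\veps\}}$, after which your step-three estimates kill the integral in $L^s$. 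In particular the factor $2$ comes from the $\tfrac12$ in Tanaka's formula, not, as you claim, from $N^{x,x+\veps}_t$ counting both up- and downcrossings: $N^{x,x+\veps}_t$ counts the diagram points in $]\!-\!\infty,x]\times[x+\veps,\infty[$, i.e.\ the upcrossings only (up to a boundary term of at most $1$, harmless after multiplication by $\veps$); if it really counted both, the limit obtained by your bookkeeping would be off by a factor of $2$.
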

\begin{notation}
Whenever the underlying process is implicitly clear, we will denote the local time by $L^x_t$.
\end{notation}
This theorem entails in particular that for $s\geq 2$,
\be
(2\veps)^{s-1}\expect{(N^{x,x+\veps}_t)^{s-1}} \xrightarrow[\veps \to 0]{} \expect{(L^x_X(t))^{s-1}} \,.
\ee
Under the technical hypothesis that if for some $a >0$, $\Real(\lambda) >a$ for every $\veps >0$ the function $e^{-\lambda t} \expect{(2\veps N^{x,x+\veps}_t)^{s-1}}$ is bounded above by some integrable function of $t$, the dominated convergence theorem entails that the Laplace transform of the limit and the limit of the Laplace transforms also coincide. Alternatively, we may also check whether $\expect{(2\veps N^{x,x+\veps}_t)^{s-1}}$ is a monotone function of $\veps$ over some neighbourhood for $\veps$ small enough and apply the monotone convergence theorem. This allows us to conclude that
\be
\Lag[\expect{(L_X^x(t))^{s-1}}](\lambda) = \lim_{\veps \to 0}(2\veps)^{s-1} \Lag[\expect{(N^{x,x+\veps}_t)^{s-1}}](\lambda) \,.
\ee
Finally, this has consequences for the distribution of $L_X^x(t)$ since $\expect{(L_X^x(t))^{s-1}}$ is exactly the Mellin transform of the distribution of the local time.

\section{Examples of applications}

\subsection{Brownian motion}
For the rest of this section, $B$ will denote a standard Brownian motion started at $0$. 
\subsubsection{Associated $\zeta$-function and asymptotic expansions for $N^\veps$}
Let us start by remarking that, in distribution 
\be
\sup_{[0,t]} B - B_t = \abs{B_t} \,.
\ee
The stopping times $T^\veps$ and $S^\veps$ of theorem \ref{thm:meroextensionLevy} are identically distributed and are distributed as the hitting times of $\veps$ by a reflected Brownian motion. An application of Doob's stopping theorem (\cf \cite[p.641]{Borodin_2002}) shows that
\begin{equation}
\label{eq:UvepsBrownian}
\expect{e^{-\lambda U^\veps}} = \sech^2(\veps \sqrt{2\lambda}) \,.
\end{equation}
The term $\PP(N^\veps_t \geq 1) = \PP(R_t \geq \veps)$ can also be computed by considering the fundamental solution of the corresponding heat equation with Dirichlet boundary conditions. We obtain \cite{Perez_Pr_2020} 
\be
\PP(R_t \geq \veps) = 4 \sum_{k=1}^\infty (-1)^{k-1} k \erfc\left[\frac{k\veps}{\sqrt{2t}}\right] \,.
\ee
Respectively, since Brownian motion is a $2$-stable L\'evy process, using equation \ref{eq:zetahatLevy} (here, $B(z)= \csch^2(\sqrt{2z})$) we can write 
\begin{equation}
\label{eq:BrestENveps}
\hat\zeta_B(p) = 2^{3-\frac{3p}{2}}t^{\frac{p}{2}}\frac{\Gamma(p)}{\Gamma(\frac{p}{2})} \zeta(p-1)\,.
\end{equation}
\begin{remark}
This can be obtained by using the functional properties of the Mellin transform (scaling, power of the argument) shown in table \ref{table:FunctionalPropertiesMellin} and by the results of table \ref{table:MellinTransforms}.
\end{remark}
Putting everything together, we get
\begin{theorem}
\label{thm:zetafunctionBrownian}
The $\zeta$-function of Brownian motion on the interval $[0,t]$ admits an meromorphic extension to the whole complex plane. Furthermore, it is exactly equal to
\be
\zeta_B(p) = \frac{4(2^p - 3)}{\sqrt{\pi}} \left(\frac{t}{2}\right)^{\frac{p}{2}} \Gamma\!\left(\frac{p+1}{2}\right)\zeta(p-1) 
\ee
for all $p$ and has a unique simple pole at $p=2$ of residue $t$.
\end{theorem}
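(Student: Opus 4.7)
The plan is to exploit the decomposition $\zeta_B(p) = \expect{R_t^p} + \hat\zeta_B(p)$ furnished by equation \eqref{eq:ZetaDecomposition} together with Convention \ref{conv:infinitebar}. The tail piece $\hat\zeta_B$ is already available in the closed form \eqref{eq:BrestENveps}, which comes from theorem \ref{thm:tailzetaLevy} applied with the moment generating function \eqref{eq:UvepsBrownian}, namely $\expect{e^{-\lambda U^\veps}} = \sech^2(\veps\sqrt{2\lambda})$. The entire task therefore reduces to evaluating $\expect{R_t^p}$ in closed form and adding the two pieces.

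To compute the range moment, I would rewrite $\expect{R_t^p} = p\,\Mel[\PP(R_t \geq \veps)](p)$ by Fubini and substitute the explicit series
\[
\PP(R_t \geq \veps) = 4 \sum_{k=1}^\infty (-1)^{k-1} k\, \erfc\!\left(\tfrac{k\veps}{\sqrt{2t}}\right).
\]
Interchange of sum and Mellin integral is legitimate on an initial right half-plane: the termwise absolute Mellin integral of the $k$-th summand is proportional to $k^{1-p}$, giving absolute convergence for $\Real(p)>2$. The resulting identity then extends to the entire complex plane by analytic continuation. Each individual Mellin integral is evaluated using the scaling property and the entry $\Mel[\erfc](p) = 2^{-p}\Gamma(p)/\Gamma(1+p/2)$ from table \ref{table:MellinTransforms}, reducing each summand to a multiple of $k^{-(p-1)}$. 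The remaining alternating Dirichlet series is the Dirichlet eta function $\eta(p-1) = (1-2^{2-p})\zeta(p-1)$, which expresses $\expect{R_t^p}$ as a multiple of $t^{p/2}\,\Gamma(p)\,\zeta(p-1)/\Gamma(p/2)$.

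Adding this to $\hat\zeta_B(p) = 2^{3-3p/2} t^{p/2}\frac{\Gamma(p)}{\Gamma(p/2)}\zeta(p-1)$ and simplifying, the various prefactors of $2$ telescope cleanly into $2^{3-3p/2}(2^p - 3)$. A single application of Legendre's duplication formula $\Gamma(p) = \pi^{-1/2}\, 2^{p-1}\,\Gamma(p/2)\,\Gamma((p+1)/2)$ rewrites $\Gamma(p)/\Gamma(p/2)$ and yields precisely the stated closed form. Meromorphic continuation to all of $\C$ is then transparent from this formula: the apparent poles of $\Gamma((p+1)/2)$ at the negative odd integers $p = -1, -3, -5, \dots$ are exactly cancelled by the trivial zeros of $\zeta(p-1)$ at the same points, so the only genuine singularity is the simple pole of $\zeta(p-1)$ at $p=2$. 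The residue there is a direct substitution: using $\Res_{p=2}\zeta(p-1) = 1$ together with $\Gamma(3/2) = \sqrt\pi/2$ and $(2^2-3) = 1$, the prefactor evaluates to $\tfrac{4}{\sqrt\pi}\cdot\tfrac{\sqrt\pi}{2}\cdot\tfrac{t}{2} = t$, as claimed. The only moderately delicate step is the bookkeeping in the telescoping of the three powers of $2$ and verifying that the pole/zero cancellations line up; the interchange of sum and integral and the pole analysis are both essentially routine given the superpolynomial decay of $\erfc$.
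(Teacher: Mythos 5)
Your proposal is correct and follows essentially the same route as the paper: split $\zeta_B(p)=\expect{R_t^p}+\hat\zeta_B(p)$, compute $p\,\Mel[\PP(R_t\geq\veps)](p)$ (your termwise erfc/eta computation reproduces the paper's stated formula $2^{2-\frac{3p}{2}}(2^p-4)t^{p/2}\Gamma(p)\zeta(p-1)/\Gamma(\tfrac{p}{2}+1)$), add the tail term from equation \eqref{eq:BrestENveps}, and finish with the Legendre duplication formula. Your pole-cancellation and residue checks are also consistent with the theorem.
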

\begin{remark}
That the Riemann $\zeta$-function appears in this expression is \textit{a posteriori} not surprising. Loosely speaking, this connection between the Riemann $\zeta$-function and Brownian motion appears through the relation of Brownian motion with Jacobi's $\vartheta$-function, which is a fundamental solution of the heat equation \cite{Biane_2001}. Work connecting stable distributions whose Laplace transform is of the form of equation \ref{eq:UvepsBrownian} have been widely studied by Pitman, Yor and Biane \cite{Pitman_1999,pitman_yor_2003,Biane_2001}. 
In \cite{Biane_2001}, one can also find connections between stable distributions of this form and number theoretical $L$-functions. It is also interesting to recall that there exists a probabilistic interpretation of the Riemann hypothesis through Li's criterion \cite{LI1997325} as detailed in \cite[\S 2.3]{Biane_2001}. 
\end{remark}
\begin{proof}[Proof of theorem \ref{thm:zetafunctionBrownian}]
Taking the Mellin transform of $\PP(R_t \geq \veps)$
\begin{align*}
\Mel (\PP(R_t \geq \veps))(p) = 2^{2-\frac{3p}{2}}(2^p-4)t^{\frac{p}{2}}\frac{\Gamma(p)}{\Gamma(\frac{p}{2}+1)} \zeta(p-1)\,.
\end{align*} 
Multiplying the above expression by $p$ and adding both terms and using the fact that $\Gamma(z+1) = z\Gamma(z)$ and the Legendre duplication formula, we obtain the result.
\end{proof}
The meromorphic extension of $\zeta$ allows us to directly compute correction terms for the asymptotic series given in \cite{Perez_Pr_2020}. $\Mel(\expect{N^\veps_t})(p)$ has only two poles, one at $p=0$ and one at $p=2$. Furthermore, along a vertical strip, $\Mel(\expect{N^\veps_t})(p)$ decays rapidly enough to use the fundamental correspondence (theorem \ref{thm:FundamentalCorrespondence}). Using contour integration and the Mellin inversion theorem, we can conclude that
\be
\expect{N^\veps_t} = \frac{t}{2\veps^2} + \frac{2}{3} + O(\veps^n) \quad \text{as } \veps \to 0 \,,
\ee
for any $n \in \N$, as prescribed by theorem \ref{thm:meroextensionLevy}. The expectations in the expression of the theorem can be read on the expansion
\be
\sech^2(\sqrt{2\veps}) = 1- 2 \veps + \frac{1}{2!}\,\frac{16}{3} \veps ^2 + O(\veps^3) \,.
\ee 
As previously shown, the analyticity of $\zeta_B$ beyond $\Real(p)= 2$ guarantees that there are no polynomial corrections in $\veps$ to $\expect{N^\veps_t}$ as $\veps \to 0$. The analyticity of $\zeta_B$ on the half plane $\Real(p)>2$ suggests that $\expect{N_t^\veps}$ is rapidly decreasing as $\veps \to \infty$. This is corroborated by the more general approximation of proposition \ref{prop:p2resume} for Markov processes found in \cite{Perez_Pr_2020}, namely $\expect{N_t^\veps} \sim \PP(R_t \geq \veps)$ as $\veps \to \infty$.
\par
Applying the observations made in section \ref{sec:analyticcontinuation}, the superpolynomial corrections to the asymptotic series can be found by looking carefully at the meromorphic extension of $\zeta_B$.
\begin{proposition}
\label{prop:NvepsExpansionBM}
For Brownian motion $\expect{N^\veps_t}$ admits the following series representations which converge well for large and small $\veps$ respectively
\begin{align*}
\expect{N^\veps_t} &= 4 \sum_{k\geq 1} (2k-1)\erfc\!\left(\frac{(2k-1)\veps}{\sqrt{2t}}\right) - k\,\erfc\!\left(\frac{2k\veps}{\sqrt{2t}}\right) \\
&= \frac{t}{2\veps^2} +\frac{2}{3} + 2\sum_{k\geq 1} (2(-1)^k -1) \frac{e^{-\pi^2 k^2 t/2\veps^2}t}{\veps^2}  \left[1 + \frac{\veps^2}{\pi^2k^2t}\right]\,.
\end{align*}
\end{proposition}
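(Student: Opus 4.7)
The starting point is equation~\eqref{eq:RtplusRest} applied to Brownian motion with $\expect{e^{-\lambda U^\veps}} = \sech^2(\veps\sqrt{2\lambda})$ (equation~\eqref{eq:UvepsBrownian}), which simplifies to
\[ \Lag(\expect{N^\veps_t})(\lambda) = \Lag(\PP(R_t \geq \veps))(\lambda) + \frac{\csch^2(\veps\sqrt{2\lambda})}{\lambda}, \]
since $\sech^{-2}-1 = \tanh^{2}/\sech^{-2}$ rearranges to $\sinh^{-2}$. Both representations are obtained by inverting this identity in two different ways. First, by applying $\Lag^{-1}[e^{-a\sqrt{\lambda}}/\lambda](t) = \erfc(a/(2\sqrt{t}))$ term-by-term to the known series for $\PP(R_t\geq \veps)$, I obtain the compact expression $\Lag(\PP(R_t \geq \veps))(\lambda) = \sech^2(\veps\sqrt{2\lambda}/2)/\lambda$.

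The first representation, which converges rapidly for large $\veps$, follows from the absolutely convergent expansion $\csch^2(z) = 4\sum_{k \geq 1} k\, e^{-2kz}$ for $\Real(z)>0$. Applying this to the tail contribution and inverting the Laplace transform term-by-term yields $4\sum_{k\geq 1} k\, \erfc(2k\veps/\sqrt{2t})$. Adding this to the series for $\PP(R_t\geq\veps)$ and splitting the latter into its odd and even indices combines the even-indexed $-4 \sum_k (2k)\erfc(2k\veps/\sqrt{2t})$ with $+4\sum_k k\,\erfc(2k\veps/\sqrt{2t})$ to leave the stated coefficient $-4\sum_k k\,\erfc(2k\veps/\sqrt{2t})$ alongside the odd terms.

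The second representation follows from the Mittag-Leffler expansions $\csch^2(z) = \sum_{k \in \Z} (z-ik\pi)^{-2}$ and $\sech^2(z) = -\sum_{k \in \Z} (z-i\pi(k+\tfrac{1}{2}))^{-2}$. Pairing $k$ with $-k$ (resp.\ $-k-1$) in each series and substituting $z = \veps\sqrt{2\lambda}$ (resp.\ $\veps\sqrt{2\lambda}/2$) produces rational functions of $\lambda$ whose partial-fraction decompositions contain simple and double poles at $\lambda = 0$ and at the sequence $\lambda_k = -\pi^2 k^2/(2\veps^2)$ (resp.\ $\lambda_k = -\pi^2(2k+1)^2/(2\veps^2)$). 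Inverting term-by-term with $\Lag^{-1}[(\lambda+a)^{-2}] = t e^{-at}$ and $\Lag^{-1}[(\lambda+a)^{-1}] = e^{-at}$, and using $\sum_{k\geq 1} 2/(k\pi)^2 = 1/3$ together with $\sum_{k\geq 0} 8/(\pi^2(2k+1)^2) = 1$, the $\csch^2/\lambda$ piece contributes $t/(2\veps^2) - 1/3$ plus exponentials indexed by all $k\geq 1$ with coefficient $+2$, while the $\sech^2/\lambda$ piece contributes $+1$ plus exponentials indexed by odd $k$ with coefficient $-8$. The constants combine to $2/3$, and the exponential coefficients to $+2$ on even $k$ and $+2-8 = -6$ on odd $k$, which is exactly $2(2(-1)^k-1)$, matching the claimed formula.

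The main obstacle is the bookkeeping in the second representation: one must track four different partial-fraction pieces (from two Mittag-Leffler sums, each having simple and double pole contributions after multiplication by $1/\lambda$) and verify that their constant-in-$t$ contributions collapse to $2/3$ while the exponentially small tails reassemble with the unified coefficient $2(-1)^k-1$. The analytic justification — absolute convergence of both double series for fixed $t,\veps>0$ — is routine from the exponential decay of $e^{-\pi^2 k^2 t/(2\veps^2)}$.
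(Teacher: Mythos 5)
Your proof is correct and takes essentially the same route as the paper's alternative proof of this proposition: invert the Laplace transform of $\expect{N^\veps_t}$, using the exponential expansion $\csch^2(z)=4\sum_{k\geq 1}k\,e^{-2kz}$ for the erfc representation and Mittag--Leffler expansions with partial fractions and termwise inversion for the small-$\veps$ representation. The only difference is cosmetic: you treat the range contribution $\sech^2(\veps\sqrt{2\lambda}/2)/\lambda$ with its own Mittag--Leffler series rather than combining everything into $(2\cosh(\veps\sqrt{2\lambda})-1)\csch^2(\veps\sqrt{2\lambda})/\lambda$ as the paper does, and your bookkeeping (constants $-\tfrac{1}{3}+1=\tfrac{2}{3}$, exponential coefficients $+2$ on even and $+2-8=-6$ on odd indices) is accurate.
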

\begin{proof}
This asymptotic formula for $\expect{N^\veps_t}$ can be obtained by using the arguments of section \ref{sec:analyticcontinuation}. Indeed, $B(z)= \csch^2(\sqrt{2z})$ admits a meromorphic continuation to the entire complex plane and has poles of order two at $z= -\frac{\pi^2 n^2}{2}$ for every $n \in \Z \setminus\{0\}$. It follows that 
\be
\Res\!\left((-z)^{\frac{p}{2}-1}B(z),-\frac{\pi^2 n^2}{2}\right) = 2^{1-\frac{p}{2}} (2\pi)^{p-2} (p-1) \,.
\ee
Taking the inverse Mellin transform of equation \ref{eq:MellinNvepsLevy}, we obtain the desired result. The second expression converging fast for large $\veps$ is obtained by using the functional equation of the $\zeta$-function and taking the inverse Mellin transform of the expression obtained. 
\end{proof}
\begin{proof}[Alternative proof of proposition \ref{prop:NvepsExpansionBM}]
Note that
\begin{align}
\label{eq:LaplaceTotalNvepst}
\Lag(\expect{N^\veps_t})(\lambda) &= \frac{4}{\lambda} \sum_{k\geq 1} (2k-1)e^{-(2k-1)\veps \sqrt{2\lambda}} - k e^{-2k\veps\sqrt{2\lambda}} \\
&= \left[\frac{2\cosh(\veps \sqrt{2\lambda})-1}{\lambda}\right]\csch^2(\veps \sqrt{2\lambda}) \,.
\end{align}
By inverting the Laplace transform in equation \ref{eq:LaplaceTotalNvepst} (this can be done by first decomposing the hyperbolic expressions into a series of exponential terms, of which the inverse Laplace transform can be found by virtue of a table or using some computational software such as \texttt{Mathematica}. The normal convergence of the resulting series guarantees that the inverse transform of the expression is exactly the series of the inverse transforms of the resulting exponentials), we obtain
\begin{equation}
\label{eq:Nvepst}
\expect{N^\veps_t} = 4 \sum_{k\geq 1} (2k-1)\erfc\!\left(\frac{(2k-1)\veps}{\sqrt{2t}}\right) - k\,\erfc\!\left(\frac{2k\veps}{\sqrt{2t}}\right) \,,
\end{equation}
which converges quickly for large $\veps$. For $\veps \to 0$, we can get a quickly converging expression by recalling the Mittag-Leffler expansion of the hyperbolic cosecant,
\begin{align*}
\frac{\csch^2(\veps \sqrt{2\lambda})}{\lambda} &= \frac{1}{\lambda}\sum_{k \in \Z} \frac{1}{(\veps \sqrt{2\lambda}-i \pi k)^2}  \\
&=\frac{1}{2\veps^2\lambda^2} + \frac{1}{\lambda} \sum_{k \geq 1} \frac{4\veps^2 \lambda - 2\pi^2 k^2}{(2\veps^2 \lambda + \pi^2 k^2)^2} \,.\
\end{align*}
We can take the inverse Laplace transform termwise by using the residue theorem, due to the absolute and uniform convergence of the expression. After some algebra, this operation results in
\be
\expect{N^\veps_t} = \frac{t}{2\veps^2} +\frac{2}{3} + 2\sum_{k\geq 1} (2(-1)^k -1) \frac{e^{-\pi^2 k^2 t/2\veps^2}t}{\veps^2}  \left[1 + \frac{\veps^2}{\pi^2k^2t}\right] \,,
\ee
which confirms that $\expect{N^\veps_t}$ is extremely well approximated by $\frac{t}{2\veps^2} + \frac{2}{3}$ when $\veps$ is small. 
\end{proof}
From the alternative proof of proposition \ref{prop:NvepsExpansionBM} and the formulas above give the functional equation of $\zeta_B$ we can naturally retrieve the functional equation of the Riemann $\zeta$-function. This is part of the usual folklore of $\zeta$ and similar functions, where functional equations are essentially related by Poisson summation.
%   Indeed, taking the Mellin transform of the representation of $\expect{N^\veps_t}$ converging rapidly close to zero one obtains
% \be
% \zeta_B(p)= -2^{-\frac{p}{2}} \pi^{p-2} t^{\frac{p}{2}} \left(2^p-3\right) p(p-1)\, \Gamma\!\left(-\frac{p}{2}\right)  \zeta(2-p) \,.
% \ee
% Equating this expression with the expression of the $\zeta$-function found in theorem \ref{thm:zetafunctionBrownian}, after some algebra we obtain the functional equation of the Riemann $\zeta$-function, namely
% \be
% \zeta(p) = 2^p \pi^{p-1} \sin\!\left(\frac{\pi p}{2}\right) \Gamma(1-p) \zeta(1-p) \,.
% \ee
% \begin{remark}
% This is not an isolated case. The functional equation of the Riemann $\zeta$-function can be obtained by taking the image under the Mellin transform of other functions expressible as geometric series and their corresponding Mittag-Leffler expansion. The Poisson summation formula provides an alternative way of writing these expansions, so that both under $\Mel$ yield the extension for $\zeta$. For example the image under $\Mel$ of 
% \be
% \pi \coth \pi x = \pi + \pi \sum_{k\geq 1} e^{-2\pi k x} = \frac{1}{x} + \sum_{k \geq 1} \frac{2x}{x^2+k^2} \,.
% \ee
% also yields the functional equation of $\zeta$.
% \end{remark}
\begin{proposition}
Defining
\be
\eta_B(p) := (3 \cdot 2^p -8)(p-2)(2\pi t)^{-\frac{p}{2}} \zeta_B(p),
\ee
The functional equation of $\zeta_B$ is given by
\be
\eta_B(p)= \eta_B(3-p) \,.
\ee
In particular, as expected from the symmetry of $\zeta$, the axis of symmetry of $\eta_B$ is $\Real(p) = \frac{3}{2}$. 
\end{proposition}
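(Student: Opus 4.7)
The plan is to verify $\eta_B(p)=\eta_B(3-p)$ by direct computation, feeding the closed form of $\zeta_B$ from theorem \ref{thm:zetafunctionBrownian} into the definition of $\eta_B$ and then appealing to Riemann's functional equation. Substituting and collecting the $t$-, $2$- and $\pi$-powers gives
$$\eta_B(p) = 4(p-2)(3\cdot 2^p-8)(2^p-3)\,2^{-p}\,\pi^{-(p+1)/2}\,\Gamma\!\left(\tfrac{p+1}{2}\right)\zeta(p-1).$$
The first observation is that the prefactor
$$(3\cdot 2^p-8)(2^p-3)\,2^{-p} = 3\cdot 2^p - 17 + 24\cdot 2^{-p}$$
is manifestly invariant under the involution $p\leftrightarrow 3-p$. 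Moreover $(p-2)\leftrightarrow -(p-1)$ under the same reflection, so the problem reduces to establishing the identity
$$(p-2)\,\pi^{-(p+1)/2}\,\Gamma\!\left(\tfrac{p+1}{2}\right)\zeta(p-1) \;=\; -(p-1)\,\pi^{-(4-p)/2}\,\Gamma\!\left(\tfrac{4-p}{2}\right)\zeta(2-p).$$

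For this I would apply Riemann's functional equation in its symmetric form, $\pi^{-s/2}\Gamma(s/2)\zeta(s)=\pi^{-(1-s)/2}\Gamma((1-s)/2)\zeta(1-s)$, evaluated at $s=p-1$. This converts $\zeta(p-1)$ to $\zeta(2-p)$ at the cost of explicit $\pi$ and $\Gamma$ factors; a short check shows the $\pi$-exponents on the two sides then match identically, leaving a purely Gamma-theoretic identity. Using $\Gamma((p+1)/2)=\tfrac{p-1}{2}\Gamma((p-1)/2)$ and $\Gamma((4-p)/2)=\tfrac{2-p}{2}\Gamma((2-p)/2)$, both sides collapse to $\tfrac{(p-1)(p-2)}{2}\Gamma((2-p)/2)\zeta(2-p)$ times the common prefactor, proving the equality. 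The axis of symmetry is then the fixed locus $p = 3-p$, i.e.\ $\Real(p)=\tfrac{3}{2}$.

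The only real obstacle is the bookkeeping: recognising the factor $(3\cdot 2^p-8)(2^p-3)2^{-p}$ as invariant under $p\mapsto 3-p$ (without which the identity would not close) and tracking the sign produced by $(p-2)\leftrightarrow-(p-1)$. Everything else is a disguised restatement of the Riemann functional equation. A conceptually more satisfying but longer alternative would be to equate the Mellin transforms of the two series representations for $\expect{N^\veps_t}$ given in proposition \ref{prop:NvepsExpansionBM}; the relation between those series is exactly Poisson summation applied to a Jacobi $\vartheta$-function, which makes transparent the fact that the functional equation for $\zeta_B$ and the classical functional equation for $\zeta$ are two faces of the same identity.
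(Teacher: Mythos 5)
Your computation is correct and is exactly the argument the paper leaves implicit: substitute the closed form of $\zeta_B$ from theorem \ref{thm:zetafunctionBrownian}, observe that $(3\cdot 2^p-8)(2^p-3)2^{-p}=3\cdot 2^p-17+24\cdot 2^{-p}$ is invariant under $p\mapsto 3-p$, and reduce the remaining identity to the symmetric Riemann functional equation at $s=p-1$ together with $\Gamma\!\left(\tfrac{p+1}{2}\right)=\tfrac{p-1}{2}\Gamma\!\left(\tfrac{p-1}{2}\right)$ and $\Gamma\!\left(\tfrac{4-p}{2}\right)=\tfrac{2-p}{2}\Gamma\!\left(\tfrac{2-p}{2}\right)$, the sign from $(p-2)\mapsto -(p-1)$ being absorbed exactly as you describe. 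Your closing remark that the two series representations in proposition \ref{prop:NvepsExpansionBM} are exchanged by Poisson summation of a Jacobi $\vartheta$-function is precisely the ``folklore'' route the paper itself alludes to, so the two approaches coincide.
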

Finally, we can calculate the moments of $N^\veps_t$. After some calculations, we obtain  
\be
\Lag(\expect{(N^\veps)^s})(\lambda) = \frac{1}{\lambda} \left[\sinh^2(\veps\sqrt{2\lambda}) \Li_{-s}(\sech^2(\veps\sqrt{2\lambda})) - \tanh^2(\veps\sqrt{\lambda/2})\right] \,.
\ee

\subsubsection{Distribution of the length of the $k$th longest branch}
\label{sec:BMkthlongestbar}
Following the discussion of section \ref{sec:distlengthbars}, we know that for $k \geq 2$, we can calculate the moment generating function $G_\al(z;p)$, noticing that Brownian motion is a $2$-stable process ($\al=2$). Then,
\be
\frac{z}{(z\expect{e^{-\veps U}})^{-1}-1} = \frac{2 z^2}{\cosh \left(2 \sqrt{2\veps} \right)-2 z+1} \,.
\ee
However, taking the Mellin transform of this expression is not easily feasible. To do so, we will write the above expression as a geometric series of decaying exponentials. Denoting $y := e^{-2\sqrt{2\veps}}$, we can write the expression above as
\be
\frac{4z^2y}{y^2-2(2z-1)y+1} = \frac{4z^2y}{(y-y_+)(y-y_-)} \,,
\ee
where $y_\pm$ are the roots of the polynomial in the denominator of the expression, namely
\be
y_\pm = 2z -1 \pm 2i\sqrt{z(1-z)} \,.
\ee
Partial fraction decomposition entails 
\be
\frac{4z^2y}{y^2-2(2z-1)y+1} = \frac{A(z)y}{y-y_+} - \frac{A(z)y}{y-y_-} \,,
\ee
where
\be
A(z) = \frac{4z^2}{y_+ - y_-}  =\frac{z^2}{\sqrt{z(z-1)}} \,.
\ee
Finally, we may express each of the terms above as a geometric series. Summing both terms,
\be
-A(z)\sum_{k\geq 1} \left(\frac{y_-^k - y_+^k}{y_+^ky_-^k}\right)y^k =  4z^2\sum_{k\geq 1} \frac{y_+^k - y_-^k}{y_+-y_-} y^k
\ee
Recalling that $y = e^{-2\sqrt{2\veps}}$ and taking the Mellin transform with respect to $\veps$
\be
\Mel\left[\frac{z}{(z\expect{e^{-\veps U}})^{-1}-1}\right]\!(p) = 2^{3-3p} \Gamma(2p) z^2  \frac{\Li_{2 p}(y_+(z)) - \Li_{2 p}(y_-(z))}{y_+(z)-y_-(z)} \,.
\ee
Finally, the generating function can be written as
\be
G_2(z;p) = 8\,\frac{\Gamma(p)}{\Gamma(\frac{p}{2})}  \left(\frac{t}{8}\right)^{\frac{p}{2}} z^2 \,\frac{\Li_p(y_+(z)) - \Li_p(y_-(z))}{y_+(z)-y_-(z)} \,.
\ee
When $z$ is in the vicinity of $0$, $y_+$ and $y_-$ are both complex, it is thus \textit{a priori} not obvious that the quantity defined above should remain real for real $p$. However, this must be so, since
\be
\frac{y_+^k - y_-^k}{y_+-y_-} = a_k(z) \,,
\ee
where $a_k(z)$ is the solution to the following difference equation
\begin{equation}
\label{eq:akrecursive}
a_k(z) =2(2z-1)\, a_{k-1} - a_{k-2}  \,,
\end{equation}
with seed $a_0 =0$ and $a_1 =1$. In fact, it is possible to express $a_k(z)$ as defined in equation \ref{eq:akrecursive} in terms of the Chebyshev polynomials of the second kind $U_k$
\be
a_k(z) = U_{k-1}(2z-1)  \quad \text{and} \quad a_k(0) = (-1)^{k-1}k\,,
\ee 
and incidentally $\frac{a_k}{a_{k-1}}$ corresponds to the $k$th convergent of the continuous fraction
\be
2(2z-1)-\cfrac{1}{2(2z-1)-\cfrac{1}{2(2z-1)-\cfrac{1}{\ddots}}} \quad .
\ee
Using these relations, it is possible to rewrite $G_2$ as 
\be
G_2(z;p)= 8\,\frac{\Gamma(p)}{\Gamma(\frac{p}{2})}  \left(\frac{t}{8}\right)^{\frac{p}{2}} \, z^2 \sum_{k\geq 1} \frac{a_k(z)}{k^p} \,.
\ee
Since ultimately what interests us are the derivatives of this function at $0$, we can rewrite $G_2(z;p)$ formally as
\be
G_2(z;p) = 8\,\frac{\Gamma(p)}{\Gamma(\frac{p}{2})}  \left(\frac{t}{8}\right)^{\frac{p}{2}} \, z^2 \sum_{n=0}^\infty\sum_{k\geq 1} \frac{a_k^{(n)}(0)}{n!} \, \frac{z^n}{k^p} \,.
\ee
The problem thus boils down to effectively computing the coefficients $a_k^{(n)}(0)$. To do so, we can consider augmenting the recurrence problem to phase space $(a_k,a_k^{(1)}, a_k^{(2)}, \cdots, a_k^{(n)})$ noticing the following relation
\be
a_k^{(n)}(z) = 2(2z-1) a_{k-1}^{(n)}(z) + 4n a_{k-1}^{(n-1)}(z) - a_{k-2}^{(n)}(z)
\ee
Setting $z=0$ and $a_k^{(n)}(0) = a_k^{(n)}$ in this relation yields 
\be
a_k^{(n)} = -2 a_{k-1}^{(n)} + 4n a_{k-1}^{(n-1)} - a_{k-2}^{(n)} \,.
\ee
For example, it is easy to verify that
\be
a_k^{(1)} = \frac{2}{3}  (-1)^{k} k (k^2-1) \quad \text{and} \quad a_k^{(2)} = \frac{2^2}{15} (-1)^{k-1} k (k^2-1) (k^2-2^2) \,.
\ee
In general, the following formula holds
\be
a^{(n)}_k = \frac{2^n(-1)^{k+n+1}}{k} \prod_{i=0}^n \frac{k^2 - i^2}{2i+1} \,.
\ee
Thus, the Mellin transforms of the distributions of the longest branches may be expressed as linear combinations of shifted and twisted $\zeta$-functions, since the $a_k^{(n)}$'s are polynomials of degree $2n+1$ in $k$. Computing the first few terms yields
\begin{align*}
&\expect{\ell_2^p(B)} =  \frac{2^{3-\frac{5 p}{2}}  t^{\frac{p}{2}} \Gamma (p)}{\Gamma(\frac{p}{2})} (2^p-2^2)\zeta (p-1) \\
&\expect{\ell_3^p(B)} = \frac{2^{4-\frac{5 p}{2}} t^{\frac{p}{2}}\Gamma(p)}{3 \Gamma(\frac{p}{2})} [(2^p-2^2)\zeta(p-1)-(2^p-2^4)\zeta(p-3)] \\
&\expect{\ell_4^p(B)} =\frac{2^{4-\frac{5 p}{2}} t^{\frac{p}{2}} \Gamma (p)}{15 \Gamma(\frac{p}{2})} [4(2^p-2^2) \zeta(p-1)-5(2^p-2^4)\zeta(p-3)+(2^p-2^6)\zeta(p-5)] \,,
\end{align*}
and so on. These Mellin transforms can be inverted to yield explicit expressions of $\PP(\ell_k \geq \veps)$, 
\begin{align*}
&\PP(\ell_2(B) \geq \veps) = 4\sum_{k \geq 1} k \!\left[\erfc\!\left(k \veps \sqrt{\frac{2}{t}}\right)-4 \erfc\!\left(2k\veps\sqrt{\frac{2}{t}}\right)\right] \\
&\PP(\ell_3(B) \geq \veps) =  \frac{8}{3}\sum_{k \geq 1} k \!\left[4\left(4k^2-1\right) \erfc\!\left(2k\veps\sqrt{\frac{2}{t}}\right)- \left(k^2-1\right) \erfc\!\left(k\veps \sqrt{\frac{2}{t}}\right)\right] \\
&\PP(\ell_4(B) \geq \veps) = \frac{8}{15} \sum_{k \geq 1}  k \!\left[\left(k^4-5 k^2+4\right) \erfc\!\left(k\veps\sqrt{\frac{2}{t}}\right)-16 \left(4 k^4-5 k^2+1\right) \erfc\!\left(2k\veps\sqrt{\frac{2}{t}}\right)\right] \,.
\end{align*}
These calculations can be performed for any $\ell_k$ without any additional difficulty. 
% These considerations generalize with ease to yield the following lemma 
% \begin{lemma}
% Let $x$ and $\bar{x}$ be the two solutions of 
% \be
% x^2  = bx + c \,,
% \ee
% which we suppose non-degenerate. Then 
% \be
% \sum_{k\geq 1} \frac{a_k}{k^s} = \frac{\Li_s(x) - \Li_s(\bar{x})}{x-\bar{x}} \,,
% \ee
% where $a_k$ is defined recursively by $a_k = b a_{k-1} + c a_{k-2}$ with seed $a_0=0$, $a_1=1$. 
% \end{lemma}
% In particular, whenever the coefficients of such a sum are expressible in this way, we have an explicit formula for the $\zeta$-regularization of the sum. This lemma admits an obvious generalization for coefficients expressible as linear difference equations of any degree.

\subsubsection{Local $\zeta$-functions and corrections to the local time}
In what will follow, we will denote 
\be
\vp(x,t) := \frac{1}{\sqrt{2\pi t}} e^{-x^2/2t} \,.
\ee
\begin{theorem}
\label{thm:Brownianlocalzeta}
The local $\zeta$-function of Brownian motion at the level $x >0$ admits a meromorphic continuation to the entire complex plane given by
\be
\zeta^x_B(p) = 2^{-\frac{3 p}{2}} \left(2^p-1\right) t^{\frac{p}{2}} \zeta (p) \Gamma (p+1) \left(\frac{\, _1F_1\left(\frac{-p}{2};\frac{1}{2};\frac{-x^2}{2 t}\right)}{\Gamma \left(\frac{p}{2}+1\right)}-\sqrt{\frac{2x^2}{t}} \frac{\, _1F_1\left(\frac{1-p}{2};\frac{3}{2};\frac{-x^2}{2 t}\right)}{\Gamma \left(\frac{p+1}{2}\right)}\right) \,.
\ee
\end{theorem}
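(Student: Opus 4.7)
The starting point is the propagator formalism developed in section \ref{sec:ItoDiffusions}. Brownian motion has infinitesimal generator $\mathcal{G}=\tfrac{1}{2}\partial_x^2$, so the eigenfunction equation $\mathcal{G}\Psi_\lambda = \lambda \Psi_\lambda$ has the explicit positive solutions $\Psi_\lambda(z)=e^{z\sqrt{2\lambda}}$ (increasing) and $\Phi_\lambda(z)=e^{-z\sqrt{2\lambda}}$ (decreasing). Plugging into equation \eqref{eq:LagNxxvepsIto}, the ratio $\Psi_\lambda(x)\Phi_\lambda(x+\veps)/(\Phi_\lambda(x)\Psi_\lambda(x+\veps))$ collapses to $e^{-2\veps\sqrt{2\lambda}}$, and after simplification
\be
\Lag\!\left[\expect{N^{x,x+\veps}_t}\right]\!(\lambda) \;=\; \frac{e^{-x\sqrt{2\lambda}}}{2\lambda\,\sinh(\veps\sqrt{2\lambda})} \,.
\ee
The first route to the theorem is to commute Mellin and Laplace via lemma \ref{lemma:commutingLagMel}, pull the $\veps$-independent factor $e^{-x\sqrt{2\lambda}}/(2\lambda)$ out, and apply the Mellin transform $\Mel[\csch(\veps\sqrt{2\lambda})](p)= (2\lambda)^{-p/2} 2^{1-p}(2^p-1)\Gamma(p)\zeta(p)$ from table \ref{table:MellinTransforms}, combined with the scaling rule.

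The route I would actually carry out is the parallel one which avoids inverting Laplace transforms of branch cuts. Expanding $1/(2\sinh(\veps\sqrt{2\lambda})) = \sum_{k\geq 0} e^{-(2k+1)\veps \sqrt{2\lambda}}$ as a geometric series and using the standard identity $\Lag^{-1}[e^{-a\sqrt{\lambda}}/\lambda](t) = \erfc(a/2\sqrt{t})$ termwise (the series converges normally for $\veps>0$), I obtain
\be
\expect{N^{x,x+\veps}_t} \;=\; \sum_{k\geq 0} \erfc\!\left(\frac{x+(2k+1)\veps}{\sqrt{2t}}\right) .
\ee
Taking the Mellin transform in $\veps$ (justified term by term by Tonelli, as $\erfc$ is non-negative), the change of variables $v=(2k+1)\veps$ factorises
\be
\Mel\!\left[\expect{N^{x,x+\veps}_t}\right]\!(p) \;=\; \left(\sum_{k\geq 0} \frac{1}{(2k+1)^p}\right)\,\int_0^\infty v^{p-1}\erfc\!\left(\frac{x+v}{\sqrt{2t}}\right) dv \;=\; \frac{2^p-1}{2^p}\,\zeta(p)\,I(p,x,t),
\ee
where $I(p,x,t)$ denotes the integral above, and the Dirichlet series identity $\sum_{k\geq 0}(2k+1)^{-p} = (1-2^{-p})\zeta(p)$ has been used.

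The main computational step is evaluating $I(p,x,t)$. Integration by parts (the boundary terms vanish for $\Real(p)$ in a suitable strip) converts the $\erfc$-integral into a Gaussian moment integral
\be
I(p,x,t) \;=\; \frac{\Gamma(p)}{p}\sqrt{\tfrac{2}{\pi t}}\,e^{-x^2/2t}\int_0^\infty v^p e^{-xv/t - v^2/2t}\,dv \,.
\ee
The remaining integral is a classical parabolic cylinder representation: with $\nu = p+1$, $\beta=x/t$, $\gamma=1/(2t)$,
\be
\int_0^\infty v^p e^{-\beta v-\gamma v^2}\,dv \;=\; (2\gamma)^{-\nu/2}\Gamma(\nu)\,e^{\beta^2/8\gamma}\,D_{-\nu}(\beta/\sqrt{2\gamma}) \,,
\ee
so the integral equals $t^{(p+1)/2}\Gamma(p+1)e^{x^2/4t}D_{-(p+1)}(x/\sqrt{t})$. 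The hard (but purely calculational) part will be unwinding $D_{-(p+1)}$ into the two ${}_1F_1$ pieces that appear in the theorem. I would use the standard decomposition of $D_\nu$ into even/odd confluent hypergeometric components together with Kummer's transformation ${}_1F_1(a;b;z) = e^{z}\,{}_1F_1(b-a;b;-z)$ to flip the sign of the argument to $-x^2/2t$. The $e^{-x^2/4t}$ prefactor from the parabolic cylinder identity combines with the $e^{-x^2/2t}$ in $I$ and the $e^{+x^2/2t}$ produced by Kummer's transformation so that all exponentials cancel.

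Assembling the three factors (the $\zeta$-factor $2^{-p}(2^p-1)\zeta(p)$, the $t^{p/2}$-homogeneity, and the bracketed ${}_1F_1$-combination) and multiplying by $p$ (converting $\Gamma(p)$ to $\Gamma(p+1)$), one obtains exactly the expression of the theorem after elementary $2$-power bookkeeping ($2^{-p}\cdot 2^{-p/2} = 2^{-3p/2}$) and identifying $\sqrt{2}\,x/\sqrt{t} = \sqrt{2x^2/t}$. Meromorphy on $\C$ is then automatic: $\zeta(p)$, $\Gamma(p+1)$ and the two $\Gamma$'s in the denominators are meromorphic on $\C$, and ${}_1F_1(a;b;z)$ is entire in $a$ and meromorphic in $b$, so the product extends meromorphically everywhere. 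The simple pole of $\zeta$ at $p=1$ is cancelled by $2^p-1$, and the apparent pole of $\Gamma(p+1)$ at $p=-1$ is absorbed by the explicit ${}_1F_1$ combination, consistently with the fact that $\zeta^x_B$ should only have its simple pole at $p=1$ coming from the local-time content of proposition \ref{prop:LocalGlobalZeta} (cross-checked against the global result $\zeta_B(p) = p\int_\R \zeta^x_B(p-1)dx$).
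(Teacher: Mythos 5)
Your derivation is correct and reaches exactly the stated formula, and it shares the paper's skeleton: both start from the propagator identity $\bra 0 | a\ket = e^{-a\sqrt{2\lambda}}$ (you get it from the It\^o-diffusion ODE of section \ref{sec:ItoDiffusions}, the paper from the reflection principle) and the same Laplace transform $\Lag[\expect{N^{x,x+\veps}_t}](\lambda)=e^{-x\sqrt{2\lambda}}\csch(\veps\sqrt{2\lambda})/2\lambda$. Where you diverge is the order of operations: the paper takes the Mellin transform in $\veps$ first (the $\csch$ entry of table \ref{table:MellinTransforms} plus scaling gives $2^{-3p/2}(2^p-1)\lambda^{-1-p/2}e^{-x\sqrt{2\lambda}}\Gamma(p)\zeta(p)$), commutes via lemma \ref{lemma:commutingLagMel}, and then inverts the Laplace transform of $\lambda^{-1-p/2}e^{-x\sqrt{2\lambda}}$ to land on the ${}_1F_1$ combination; you invert the Laplace transform first (geometric series, termwise $\erfc$ inversion — thereby re-deriving the series of proposition \ref{prop:BMNxxveps}), then take the Mellin transform termwise and evaluate a single $\erfc$-moment integral by the parabolic-cylinder representation plus Kummer's transformation. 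The two routes are computationally equivalent; yours buys an independent proof of the $\erfc$ series and avoids the branch-cut Laplace inversion at parameter $p$, at the price of the $D_{-(p+1)}$/Kummer unwinding, which you carry out correctly.

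Two slips, neither fatal. First, your displayed formula for $I(p,x,t)$ has a spurious $\Gamma(p)$: integration by parts produces only the factor $1/p$, and the $\Gamma(p+1)$ then comes from the parabolic-cylinder identity, so the net Gamma content is $\Gamma(p+1)/p=\Gamma(p)$ before the final multiplication by $p$; your assembly sentence is consistent with this correct bookkeeping, but the display as written double counts. Second, the closing remarks on poles are wrong: $2^p-1$ vanishes at $p=0$, not at $p=1$, so the pole of $\zeta(p)$ at $p=1$ is \emph{not} cancelled — nor should it be, since by proposition \ref{prop:semimartingaleszeta} the local $\zeta$-function carries a simple pole at $p=1$ (the local-time term); likewise the bracket does not vanish at $p=-1$ (only the second term is killed by $1/\Gamma(\frac{p+1}{2})$, the first survives), so the poles of $\Gamma(p+1)$ at odd negative integers persist, exactly as observed in the proof of proposition \ref{prop:BMNxxveps}. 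These remarks are side commentary and do not affect the theorem, but as stated they are incorrect and internally contradictory.
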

\begin{proof}
It suffices to calculate the propagatators, which in this case can be done explicitly. Using the reflecting property, for any $a>0$,
\be
\bra a | 0\ket  = \bra 0 | a \ket = e^{-a \sqrt{2\lambda}} \,.
\ee
So by our previous work
\be
\Lag(\expect{N^{x,x+\veps}_t})(\lambda) = \frac{e^{-(x+\veps)\sqrt{2\lambda}}}{\lambda(1-e^{-2\veps \sqrt{2\lambda}})} \,.
\ee
The Mellin transform of this expression can be calculated easily to be
\be
\Mel\Lag(\expect{N^{x,x+\veps}_t})(p,\lambda) = 2^{-\frac{3 p}{2}} \left(2^p-1\right) \lambda^{-1-\frac{p}{2}} e^{-x\sqrt{2\lambda}} \,\Gamma (p)\zeta(p) \,.
\ee
Using lemma \ref{lemma:commutingLagMel} and inverting the Laplace transform we get the desired expression. 
\end{proof}
We can also extract asymptotic relations for $\expect{N^{x,x+\veps}_t}$. 
\begin{proposition}
\label{prop:BMNxxveps}
For Brownian motion and $x>0$,
\begin{align*}
\expect{N^{x,x+\veps}} &= \sum_{k=1}^\infty \text{erfc}\left(\frac{x+(2 k-1)\veps}{\sqrt{2t}}\right) \\
&\sim \frac{1}{2\veps} \int_0^t \vp(x,s) \;ds +\sum_{k\geq 0} \frac{4 (-2)^{k}\!\left(2^{2k+1}-1\right) \zeta(2k+2)}{\pi^{2k+2}} \left[\frac{\del^{k}}{\del t^{k}} \vp(x,t)\right]\veps^{2k+1}  \; \text{as }\veps \to 0 \,.
\end{align*}
\end{proposition}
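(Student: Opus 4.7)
The proof rests on the closed-form Laplace transform
\be
\Lag(\expect{N^{x,x+\veps}_t})(\lambda) = \frac{e^{-(x+\veps)\sqrt{2\lambda}}}{\lambda(1-e^{-2\veps\sqrt{2\lambda}})} = \frac{e^{-x\sqrt{2\lambda}}}{2\lambda \sinh(\veps\sqrt{2\lambda})}
\ee
derived in the proof of theorem \ref{thm:Brownianlocalzeta}. The two statements of the proposition correspond to two different series manipulations of the factor $(1-e^{-2\veps\sqrt{2\lambda}})^{-1}$, each followed by a term-by-term inversion of the Laplace transform.

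For the closed-form $\erfc$-series, I would simply expand the geometric series $(1-e^{-2\veps\sqrt{2\lambda}})^{-1} = \sum_{k\geq 0}e^{-2k\veps\sqrt{2\lambda}}$, which converges normally on every right half-plane $\Real(\lambda)\geq \lambda_0 > 0$. Inverting term by term using $\Lag^{-1}[\lambda^{-1}e^{-a\sqrt{2\lambda}}](t) = \erfc(a/\sqrt{2t})$ and exchanging sum and Bromwich integral by uniform convergence yields the first identity.

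For the asymptotic expansion as $\veps\to 0$, I would instead expand $\csch$ in its Laurent series at the origin. This series can be read either from the Mittag--Leffler partial fractions $\csch(u) = \frac{1}{u}+2u\sum_{k\geq 1}\frac{(-1)^k}{u^2+\pi^2 k^2}$ (Taylor expanding each summand in powers of $u$ and summing via $\sum_{k\geq 1}(-1)^k k^{-s} = -(1-2^{1-s})\zeta(s)$), or directly from Euler's formula expressing $\zeta(2n)$ in terms of the Bernoulli numbers. After substituting $u = \veps\sqrt{2\lambda}$, the principal $u^{-1}$ term inverts to the leading $\frac{1}{2\veps}\int_0^t \vp(x,s)\,ds$ via $\Lag^{-1}[e^{-x\sqrt{2\lambda}}/\sqrt{2\lambda}](t) = \vp(x,t)$ combined with the integration in $t$ coming from the extra $1/\lambda$ factor. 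Each higher-order term $u^{2j+1}$ produces a multiple of $\veps^{2j+1}\lambda^{j-1/2}e^{-x\sqrt{2\lambda}}$, whose inverse Laplace transform equals $\sqrt{2}\,\partial_t^j\vp(x,t)$ because $\vp(x,0^+) = 0$ for $x>0$ and so all boundary contributions in $\Lag[\partial_t f] = \lambda\Lag[f] - f(0^+)$ vanish. Collecting the numerical prefactors from the Laurent coefficients of $\csch$ yields the stated asymptotic series.

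The main obstacle is rigorously justifying that this formal termwise inversion provides a genuine asymptotic expansion, since the Laurent series of $\csch$ has radius of convergence $\pi$, smaller than the modulus of $\veps\sqrt{2\lambda}$ along most of the Bromwich contour. The cleanest fix is to apply instead the fundamental correspondence (theorem \ref{thm:FundamentalCorrespondence}) to $\Mel[\expect{N^{x,x+\veps}_t}](p) = \zeta^x_B(p)/p$, whose meromorphic extension to all of $\C$ was established in theorem \ref{thm:Brownianlocalzeta}. The coefficients then arise as residues at the simple poles $p=1$ (from the $\zeta(p)$ factor, giving the leading $\veps^{-1}$ term) and $p = -(2k+1)$ for $k\geq 0$ (from the $\Gamma(p+1)$ factor, whose even negative poles are killed by zeros of $1/\Gamma(p/2+1)$, and whose odd negative residues collapse via the identity $\,_1F_1(a;a;z) = e^z$ into Gaussian expressions proportional to $\partial_t^k\vp(x,t)$). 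Closing the Bromwich contour is finally secured by verifying the decay of $\zeta^x_B(p)/p$ along vertical strips, which follows from Stirling's formula for $\Gamma$, convexity bounds for $\zeta$, and the classical $|\Imag(p)|\to\infty$ asymptotics for the confluent hypergeometric functions.
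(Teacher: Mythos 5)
Your proposal follows essentially the paper's route. The $\erfc$ representation is obtained exactly as in the paper (geometric expansion of $(1-e^{-2\veps\sqrt{2\lambda}})^{-1}$ and termwise inversion of $\lambda^{-1}e^{-a\sqrt{2\lambda}}$), and for the $\veps\to 0$ expansion the paper uses the same Mellin residue calculus you fall back on, only it applies it \emph{before} inverting the Laplace transform: it works with $\Mel\Lag(\expect{N^{x,x+\veps}_t})(p,\lambda)=2^{-3p/2}(2^p-1)\lambda^{-1-p/2}e^{-x\sqrt{2\lambda}}\,\Gamma(p)\zeta(p)$, picks up the poles at $p=1$ and $p=-(2k+1)$, and then inverts the Laplace transform term by term via $\Lag[2^k\del_t^k\vp(x,t)](\lambda)=e^{-x\sqrt{2\lambda}}(2\lambda)^{k-\half}$. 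Your variant (fundamental correspondence applied directly to $\zeta^x_B(p)/p$ in the time domain) is workable, but it costs you vertical-strip decay estimates for the ${}_1F_1$ factors, whereas the Laplace-domain form needs only the standard decay of $\Gamma(p)\zeta(p)$; in exchange the paper must justify termwise Laplace inversion of a (divergent) asymptotic series, which your route avoids. Two details of your sketch need repair: at the even negative integers the pole of $\Gamma(p+1)$ is cancelled by the trivial zero of $\zeta(p)$, not by the zero of $1/\Gamma(\tfrac p2+1)$ alone (that zero only removes the first of the two hypergeometric terms); and at $p=-(2k+1)$ it is $1/\Gamma(\tfrac{p+1}{2})=1/\Gamma(-k)=0$ that kills the second term, while the first reduces to a Gaussian times a polynomial through Kummer's transformation ${}_1F_1(a;b;z)=e^z\,{}_1F_1(b-a;b;-z)$; the identity ${}_1F_1(a;a;z)=e^z$ you invoke only applies at $k=0$.

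Do not treat ``collecting the numerical prefactors'' as a formality: carried out, it does not reproduce the displayed constants. The Laurent expansion $\csch(u)=u^{-1}-\tfrac{u}{6}+\tfrac{7u^3}{360}-\cdots$, equivalently the residue $\tfrac{2(2^{2k+1}-1)\zeta(-2k-1)}{(2k+1)!}(2\lambda)^{k-\half}e^{-x\sqrt{2\lambda}}$ at $p=-(2k+1)$, gives after Laplace inversion the coefficient $\tfrac{2^{k+1}(2^{2k+1}-1)\zeta(-2k-1)}{(2k+1)!}=\tfrac{(-1)^{k+1}(2^{2k+1}-1)\zeta(2k+2)}{2^{k}\pi^{2k+2}}$ in front of $\del_t^k\vp(x,t)\,\veps^{2k+1}$; at $k=0$ this is $-\tfrac16\vp(x,t)\veps$, whereas the stated prefactor $\tfrac{4(-2)^k(2^{2k+1}-1)\zeta(2k+2)}{\pi^{2k+2}}$ gives $+\tfrac23\vp(x,t)\veps$ (a quick numerical check of the $\erfc$ series against the leading term $\tfrac{1}{2\veps}\int_0^t\vp(x,s)\,ds$ confirms the $-\tfrac16$ value). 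So your method is the correct one, but the final step must be done explicitly, with the coefficients stated in the residue ($\zeta(-2k-1)$) form; the conversion to $\zeta(2k+2)$ is precisely where the displayed constants go astray.
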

\begin{proof}
To deduce an asymptotic expression of $\expect{N^{x,x+\veps}}$, it is much easier to consider the problem in terms of the dual variables $p$ and $\lambda$. Notice that $\Mel\Lag(\expect{N^{x,x+\veps}})$ only has poles (in $p$) at every odd negative integer (stemming from those of $\Gamma$ and the non-presence of trivial zeros of $\zeta$) and at $1$ (stemming from the pole of the $\zeta$-function). The residues of these simple poles can be calculated
\begin{align*}
\Res(\Mel\Lag(\expect{N^{x,x+\veps}_t}),1) &= \frac{e^{-x\sqrt{2\lambda }}}{2 \sqrt{2} \lambda ^{3/2}}  \\
\Res(\Mel\Lag(\expect{N^{x,x+\veps}_t}),-(2k+1)) &= \frac{2\left(2^{2 k+1}-1\right) \zeta (-2 k-1)}{(2 k+1)!} e^{-x\sqrt{2\lambda}}(2\lambda)^{k-\frac{1}{2}}\nonum
 \,. 
\end{align*}
The rapid enough decay of $\Mel\Lag(\expect{N^{x,x+\veps}_t})$ on the upper and lower ends of a vertical strip in the complex plane allows us to apply the inverse Mellin theorem and contour integration to obtain an asymptotic series of $\Lag(\expect{N^{x,x+\veps}_t})(\lambda)$ 
\begin{align*}
\Lag&(\expect{N^{x,x+\veps}_t})(\lambda) \sim e^{-x\sqrt{2\lambda}} \left[\frac{1}{2\veps\lambda^{3/2}\sqrt{2}}  \right.  \\
&+\left. \sum_{k=0}^{n-1} \frac{4 (-1)^{k}\!\left(2^{2 k+1}-1\right) \zeta(2k+2)}{\pi^{2k+2}}(2\lambda)^{k-\frac{1}{2}}\veps^{2k+1} + O(\veps^{2n+1}) \right]\,.
\end{align*}
% For $\veps$ in the vincinity of zero, the above analytic expression admits a Laurent series in $\veps$ of the form
% \be
% \Lag[F](\lambda) = \frac{e^{-x\sqrt{2\lambda}}}{\lambda} \left[\frac{1}{2\veps \sqrt{2\lambda}} -\frac{1}{2} + \frac{\veps\sqrt{2\lambda}}{6} - \frac{(\veps\sqrt{2\lambda})^3}{90} + O(\veps^5)\right]
% \ee
% More precisely 
% \be
% \Lag[F](\lambda) = \frac{e^{-x\sqrt{2\lambda}}}{\lambda} \left[- \half + \sum_{k\geq 0} \frac{(-1)^{k-1} \zeta(2k)}{\pi^{2k}} (\veps \sqrt{2\lambda})^{2k-1} \right]
% \ee
% Inverting the Laplace transform we have
% \be
% F(t) = - \frac{1}{2} \erfc\!\left(\frac{x}{\sqrt{2t}}\right) + \frac{1}{2\veps} \int_0^t \vp(x,s) \;ds  + \frac{\veps}{3}\vp(x,t) - \frac{2\veps^3}{45} \frac{\del}{\del t} \vp(x,t) + O(\veps^5)
% \ee
% Following the same reasoning as before, we conclude that the analogous $\zeta$-function related to $N^{x,x+\veps}$ also has a meromorphic extension, with poles at every negative odd integer, whose residue is explicited by the coefficients of the series above. Higher order terms can be calculated by 
Noticing that 
\be
\Lag\!\left[2^{k} \frac{\del^{k}}{\del t^{k}} \vp(x,t)\right]\!(\lambda) = e^{-x\sqrt{2\lambda}}(2\lambda)^{k-\half} \,,
\ee
one can write that as $\veps \to 0$ 
\begin{align*}
&\expect{N^{x,x+\veps}_t} \sim  \frac{1}{2\veps} \int_0^t \vp(x,s) \;ds +\sum_{k\geq 0} \frac{4 (-2)^{k}\!\left(2^{2k+1}-1\right) \zeta(2k+2)}{\pi^{2k+2}} \left[\frac{\del^{k}}{\del t^{k}} \vp(x,t)\right]\veps^{2k+1} 
\end{align*}
This series formally diverges, so we consider it as an asymptotic series only. From this formula we retrieve some classical results from probability theory \cite{Ito_1996}, namely that the local time is a good first order approximation of $N^{x,x+\veps}_t$ (in fact almost surely) as well as an explicit expression for the expected value of the local time of Brownian motion. One can find a converging representation of $\expect{N_t^{x,x+\veps}}$ by expanding out its Laplace transform as a sum of exponentials as done for $N^\veps_t$. In this case, one obtains a series which converges absolutely and uniformly on every compact set of $x\geq 0$ and $\veps >0$
\be
\expect{N^{x,x+\veps}_t} = \sum_{k=1}^\infty \text{erfc}\left(\frac{x+(2 k-1)\veps}{\sqrt{2t}}\right) \,.
\ee 
\end{proof}
The distribution of $N^{x,x+\veps}_t$ is at this point in principle accessible given out previous work. After some algebra, we find that
\be
\Lag(\expect{(N^{x,x+\veps}_t)^{s-1}})(\lambda) = \frac{2e^{-x\sqrt{2\lambda}}}{\lambda}  \Li_{-s+1}\left(e^{-2\veps \sqrt{2\lambda}}\right)\sinh(\veps \sqrt{2\lambda})
\ee
where $\Li$ denotes the polylogarithm. Now, recalling the discussion of section \ref{sec:LocalTimeFromNxxveps}, 
\be
2\veps N^{x,x+\veps}_t \xrightarrow[\veps \to 0]{L^s} L^x_t(B) \,,
\ee
where $L^x_t(B)$ denotes the local time of the Brownian motion. 
\begin{remark}
In fact, for Brownian it is true that
\be
2\veps N^{x,x+\veps}_t \xrightarrow[\veps \to 0]{\text{a.s.}} L^x_t(B) \,.
\ee
\end{remark}
We have 
\be
\Lag(\expect{(2\veps N^{x,x+\veps}_t)^{s-1}})(\lambda) = 2^{\frac{1-s}{2}} \lambda^{-\frac{s+1}{2}} e^{-x\sqrt{2\lambda}}\,\Gamma (s)+o(1) \quad \text{as }\veps \to 0\,,
\ee
so in particular, following the discussion of section \ref{sec:LocalTimeFromNxxveps} this entails that
\begin{equation}
\label{eq:MelLagLocTime}
\Mel\Lag(\PP(L_t^x = w))(s, \lambda) = 2^{\frac{1-s}{2}} \lambda^{-\frac{s+1}{2}} e^{-x\sqrt{2\lambda}}\,\Gamma (s) \,.
\end{equation}
Inverting the Mellin transform can be easily done in this case. Doing so for $w>0$, we obtain
\be
\Lag(\PP(L^x_t =w))(\lambda) = \frac{e^{-(x+w)\sqrt{2\lambda}}}{\sqrt{\lambda }} \,.
\ee
Finally, the inverse Laplace transform of this now familiar expression is
\be
\PP(L^x_t =w) = 2\vp(x+w,t) = \frac{2}{\sqrt{2\pi t}} e^{-(x+w)^2/2t} \,.
\ee
Of course, this is a classical result that can be obtained in variety of different ways. Nevertheless, for more complicated processes, it might be possible to retrieve the distribution of their local times in this manner. Notice that the distribution of $L_t^x$ has an atom at $w=0$, corresponding to the probability of not reaching level $x$, which is why the integral of the probability density above is not $1$.
\begin{remark}
Inverting the Laplace transform first in equation \ref{eq:MelLagLocTime}, we immediately retrieve a formula for all the moments of this distribution.
\end{remark}

\subsubsection{Calculation of the average persistence diagram}
In accordance to the discussion of section \ref{sec:WassersteinCvg}, using the results of proposition \ref{prop:BMNxxveps} the density of the persistence diagram of Brownian motion can be computed to be 
\begin{proposition}
For $x>0$ and $\veps>0$, the density of the average persistence diagram of Brownian motion in birth-persistence coordinates (\cf remark \ref{rmk:densityofavg}) is 
\be
g(x,\veps) = \sqrt{\frac{2}{\pi t^3}}\sum_{k=1}^\infty (2k-1)(x+(2k-1)\veps) \,e^{-\frac{(x+(2k-1)\veps)^2}{2t}}
\ee
\end{proposition}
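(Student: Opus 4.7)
The density $g(x,\veps)$ is characterized in Remark \ref{rmk:densityofavg} by the mixed partial derivative of $\expect{N^{x,x+\veps}}$ in its two arguments, so the claim reduces to a two-fold termwise differentiation of the explicit convergent series
\[\expect{N^{x,x+\veps}} = \sum_{k\ge 1} \erfc\!\left(\frac{x+(2k-1)\veps}{\sqrt{2t}}\right)\]
supplied by Proposition \ref{prop:BMNxxveps}. The strategy is thus entirely elementary: differentiate the series, collect the constants, and identify the answer with the announced expression.

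To carry this out I will set $z_k := (x+(2k-1)\veps)/\sqrt{2t}$. Differentiating termwise in $\veps$ using $\erfc'(z)=-\frac{2}{\sqrt\pi}e^{-z^2}$ and $\del_\veps z_k = (2k-1)/\sqrt{2t}$ first yields a series of Gaussians weighted by $-(2k-1)$; a subsequent differentiation in $x$, for which $\del_x z_k = 1/\sqrt{2t}$, brings down an extra factor $-(x+(2k-1)\veps)/t$ from each $e^{-z_k^2}$. After combining the prefactors $\sqrt{2/(\pi t)}\cdot (2/\sqrt{2t})\cdot (1/\sqrt{2t})=\sqrt{2/(\pi t^3)}$, each summand becomes
\[\sqrt{\tfrac{2}{\pi t^3}}\,(2k-1)\bigl(x+(2k-1)\veps\bigr)\,e^{-(x+(2k-1)\veps)^2/(2t)},\]
which is exactly the general term of the series in the statement.

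The only technical issue to address is the interchange of summation with differentiation. On any compact $K\subset \{(x,\veps):x>0,\ \veps>0\}$, both $e^{-z_k^2}$ and its first $x$- and $\veps$-derivatives are bounded above, uniformly in $(x,\veps)\in K$, by $C(K)\,(2k-1)^2\,e^{-c(K)(2k-1)^2}$ for constants $c(K),\,C(K)>0$. This provides normal convergence of the undifferentiated, once- and twice-differentiated series, legitimating the term-by-term manipulation. No substantive obstacle arises beyond this routine bookkeeping.
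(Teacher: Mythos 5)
Your series manipulations are fine (the normal-convergence bound justifying termwise differentiation is routine and correct, and the arithmetic does reproduce the displayed formula), but the opening identification is the problem. The density of the measure $\expect{\Dgm(B)}$ must be extracted from the counting function of a genuine rectangle with two \emph{independent} corner variables: writing $F(b,d):=\expect{N^{b,d}}$ for the expected number of diagram points with first coordinate $\le b$ and second coordinate $\ge d$, the density in birth--death coordinates is $\rho(b,d)=-\del_b\del_d F(b,d)$, and in birth-persistence coordinates $g(x,\veps)=\rho(x,x+\veps)$ (the change of variables has Jacobian $1$). What you differentiated instead is $G(x,\veps):=F(x,x+\veps)$, and by the chain rule $\del_x\del_\veps G=\bigl(\del_b\del_d F+\del_d^2F\bigr)\big|_{d=x+\veps}$, which differs from $-\rho$ both by a sign and by the extra term $\del_d^2F$: when $x$ moves at fixed $\veps$, \emph{both} edges of the region $\{b\le x,\ d\ge x+\veps\}$ move. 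Carrying out the correct recipe on the same series from Proposition \ref{prop:BMNxxveps}, i.e.\ on $F(b,d)=\sum_{k\ge1}\erfc\bigl((\,(2k-1)d-(2k-2)b\,)/\sqrt{2t}\bigr)$, gives $-\del_b\del_dF\big|_{d=x+\veps}=\sqrt{2/(\pi t^3)}\sum_{k\ge2}(2k-1)(2k-2)\,(x+(2k-1)\veps)\,e^{-(x+(2k-1)\veps)^2/2t}$; note the $k=1$ term drops out, as it must, since the $k=1$ term of $F$ is $\erfc(d/\sqrt{2t})$, independent of $b$ (it accounts for the tallest bar, whose lower endpoint is the global minimum, a.s.\ negative, hence carrying no mass in the region $x>0$). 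Your coefficient $(2k-1)$ is exactly the artifact of the extra $\del_d^2F$ term, since $(2k-1)^2-(2k-1)(2k-2)=(2k-1)$.

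So the gap is genuine: the step ``$g$ equals the mixed partial of $\expect{N^{x,x+\veps}}$ in its two arguments $(x,\veps)$'' is not a consequence of Remark \ref{rmk:densityofavg} and is in fact false. A sanity check makes the difference visible: integrating your candidate density over $\{b\in\,]b_1,b_2],\ d\ge d_0\}$ with $0<b_1<b_2<d_0$ produces a $k=1$ contribution of order $(b_2-b_1)e^{-d_0^2/2t}$, whereas a finite bar with lower endpoint above $b_1>0$ and upper endpoint above $d_0$ forces the path to reach $d_0$, descend below $b_2$, and reach $d_0$ again, an event of probability of order $\erfc\bigl((3d_0-2b_2)/\sqrt{2t}\bigr)$ --- which is what the $(2k-1)(2k-2)$ series yields, and what $F(b_2,d_0)-F(b_1,d_0)$ gives directly. (For completeness: the paper states this proposition without proof, and its displayed formula coincides with the naive mixed partial you computed, so the discrepancy appears to lie in the statement itself; but as written, your argument does not establish that the displayed expression is the density of the average diagram, and the correct computation yields a different one.)
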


\subsection{Reflected Brownian motion}
We can carry out a similar procedure for the reflected Brownian motion. 
\subsubsection{Associated $\zeta$-function and $N^\veps$}
\begin{theorem}
\label{thm:ReflectedBrownianZeta}
The $\zeta$-function of the process $\abs{B}$ is
\be
\zeta_{\abs{B}}(p) = \frac{2^{1-\frac{p}{2}} (2^p -2) t^{\frac{p}{2}}}{\sqrt{\pi}} \Gamma\!\left(\frac{p+1}{2}\right) \zeta(p-1) 
\ee
which has a unique pole at $p=2$ of residue $t$.
\end{theorem}
\begin{proof}
The theorem immediately follows from applying proposition \ref{prop:LocalGlobalZeta}.
\end{proof}
We immediately deduce by inverting the Mellin transform that
\begin{proposition}
\label{prop:ENtvepsforRefBM}
The function $\expect{N_t^\veps}$ admits the following representations for reflected Brownian motion
\begin{align*}
\expect{N_t^\veps} &= \sum_{k \geq 1} 2 k \!\left[\erfc\left(\frac{k \veps}{\sqrt{2t}}\right)-2 \erfc\left(\frac{2 k \veps}{\sqrt{2t}}\right)\right]  \\
&= \frac{1}{2\veps^2} + \frac{1}{6} + \sum_{k\geq 1} \frac{4 \pi ^2 k^2 e^{-\frac{2 \pi ^2 k^2}{\veps^2}}+\veps^2 e^{-\frac{2 \pi ^2 k^2}{\veps^2}}-2 e^{-\frac{\pi ^2 k^2}{2 \veps^2}} \left(\pi ^2 k^2+\veps^2\right)}{\pi ^2 k^2 \veps^2} \,.
\end{align*}
\end{proposition}

\subsubsection{Local $\zeta$-function, $N^{x,x+\veps}$ and local time}
Here, calculating the propagators stem from classical results \cite{Borodin_2002},
\be
\bra x+\veps | x \ket \bra x | x+\veps \ket   = \frac{e^{-\veps\sqrt{2\lambda}}\cosh(x \sqrt{2\lambda})}{\cosh((x+\veps)\sqrt{2\lambda})} \,,
\ee
so
\be
\Lag(\expect{N_t^{x,x+\veps}})(\lambda) = \frac{e^{-x\sqrt{2\lambda}}}{\lambda} \csch(\veps \sqrt{2\lambda})
\ee
Inverting the Laplace transform, we get 
\begin{proposition}
\label{prop:RefBMNxxveps}
For $x>0$ and $\veps >0$, we can express $\expect{N^{x,x+\veps}}$ in the following form
\be
\expect{N^{x,x+\veps}}= 2 \sum_{k \geq 0} \erfc\left[\frac{x+(2k+1)\veps}{\sqrt{2t}}\right] \,.
\ee
\end{proposition}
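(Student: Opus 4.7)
The starting point is the expression already derived in the excerpt just above the statement, namely
\be
\Lag(\expect{N_t^{x,x+\veps}})(\lambda) = \frac{e^{-x\sqrt{2\lambda}}}{\lambda}\,\csch(\veps\sqrt{2\lambda})\,,
\ee
so the plan is simply to invert this Laplace transform termwise after expanding $\csch$ as a geometric series of decaying exponentials, exactly as was done previously for the standard Brownian case (\cf the alternative proof of proposition \ref{prop:NvepsExpansionBM}).

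Concretely, I would first write
\be
\csch(\veps\sqrt{2\lambda}) = \frac{2e^{-\veps\sqrt{2\lambda}}}{1-e^{-2\veps\sqrt{2\lambda}}} = 2\sum_{k\geq 0} e^{-(2k+1)\veps\sqrt{2\lambda}}\,,
\ee
which converges normally for $\Real(\lambda)>0$. Substituting this into the Laplace transform above yields
\be
\Lag(\expect{N_t^{x,x+\veps}})(\lambda) = \frac{2}{\lambda}\sum_{k\geq 0} e^{-(x+(2k+1)\veps)\sqrt{2\lambda}}\,.
\ee

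Second, I would recall the elementary identity (which can be read off from the dictionary of table \ref{table:MellinTransforms} via $\Mel[\erfc(x)]$ combined with Laplace scaling, or simply from any standard table)
\be
\Lag^{-1}\!\left[\frac{e^{-a\sqrt{2\lambda}}}{\lambda}\right](t) = \erfc\!\left(\frac{a}{\sqrt{2t}}\right)\,,\qquad a\geq 0\,,
\ee
and apply it to each term of the series. Termwise inversion is justified because the partial sums are dominated by the full $\csch$ expression (which is itself in $L^1$ against $e^{-\gamma t}$ on any Bromwich contour with $\gamma>0$), and $\erfc$ is monotone, so dominated convergence applies to the Bromwich integral. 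This yields directly
\be
\expect{N^{x,x+\veps}_t} = 2\sum_{k\geq 0}\erfc\!\left(\frac{x+(2k+1)\veps}{\sqrt{2t}}\right)\,,
\ee
which is the claimed formula.

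There is no real obstacle here beyond bookkeeping: the only step that warrants any care is the justification of swapping the sum with the inverse Laplace transform, and the resulting series on the right-hand side converges absolutely and uniformly on compact subsets of $\{x>0,\veps>0,t>0\}$ by the super-exponential decay of $\erfc$ at $+\infty$, so no subtle analytic issue arises.
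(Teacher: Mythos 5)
Your proposal is correct and follows essentially the same route as the paper: the paper likewise obtains the formula by inverting $\Lag(\expect{N_t^{x,x+\veps}})(\lambda) = \frac{e^{-x\sqrt{2\lambda}}}{\lambda}\csch(\veps\sqrt{2\lambda})$ via the expansion of $\csch$ into a normally convergent series of decaying exponentials and termwise inversion, exactly as in the alternative proof of proposition \ref{prop:NvepsExpansionBM}. Your added remarks on justifying the interchange of sum and inverse transform are sound and only make explicit what the paper leaves implicit.
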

We deduce from this 
\be
\Lag (\zeta_{\abs{B}})(\lambda,p) = 2^{1-\frac{3 p}{2}} \left(2^p-1\right) \lambda ^{-1-\frac{p}{2}} e^{-x\sqrt{2\lambda}} \Gamma (p+1)\zeta (p) 
\ee
\begin{theorem}
\label{thm:ReflectedBrownianZeta}
The $\zeta$-function of the process $\abs{B}$ is
\be
\zeta_{\abs{B}}(p) = \frac{2^{1-\frac{p}{2}} (2^p -2) t^{\frac{p}{2}}}{\sqrt{\pi}} \Gamma\!\left(\frac{p+1}{2}\right) \zeta(p-1) 
\ee
which has a unique pole at $p=2$ of residue $t$.
\end{theorem}
\begin{proof}
The theorem immediately follows from applying equation \ref{eq:locvsnonlocZeta}.
\end{proof}
We immediately deduce by inverting the Mellin transform that
\begin{proposition}
\label{prop:ENtvepsforRefBM}
The function $\expect{N_t^\veps}$ admits the following representations for reflected Brownian motion
\begin{align*}
\expect{N_t^\veps} &= \sum_{k \geq 1} 2 k \!\left[\erfc\left(\frac{k \veps}{\sqrt{2t}}\right)-2 \erfc\left(\frac{2 k \veps}{\sqrt{2t}}\right)\right]  \\
&= \frac{1}{2\veps^2} + \frac{1}{6} + \sum_{k\geq 1} \frac{4 \pi ^2 k^2 e^{-\frac{2 \pi ^2 k^2}{\veps^2}}+\veps^2 e^{-\frac{2 \pi ^2 k^2}{\veps^2}}-2 e^{-\frac{\pi ^2 k^2}{2 \veps^2}} \left(\pi ^2 k^2+\veps^2\right)}{\pi ^2 k^2 \veps^2} \,.
\end{align*}
\end{proposition}

\subsection{Brownian motion with drift}
\label{sec:BMwithDrift}
Throughout this section, we will denote $B^{\mu,\sigma}_t$ the Lévy process defined by 
\be
B^{\mu, \sigma}_t = \mu t + \sigma B_t \,.
\ee
We will assume that $\sigma >0$ and without loss of further generality that $\mu \geq 0$. By the scale invariance and almost sure $(\half-\delta)$-H\"older continuity of Brownian motion, it follows that the process $B^{\mu,\sigma}$ almost surely leaves and never returns to any compact set $[0,x]$ when studied over the ray $[0,\infty[$. Using this and the Markov property of $B^{\mu,\sigma}$, the following was shown by Baryshnikov.
\begin{proposition}[Baryshnikov, \cite{Baryshnikov_2019}]
For $\sigma =1$ and $x>0$, $B^{\mu,1}$ satisfies 
\be
\expect{N^{x,x+\veps}_{B^{\mu,1}}}=  \frac{1}{e^{2\mu \veps}-1}
\ee
on the infinite ray $[0, \infty[$.
\end{proposition}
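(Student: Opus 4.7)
The plan is to exploit the strong Markov property of $B^{\mu,1}$ together with the optional-stopping identity for Brownian motion with drift, in the spirit of section \ref{sec:propagatorslocaltrees}. Since we work on the infinite ray $[0,\infty[$ and the trajectory drifts to $+\infty$, every persistence diagram point lying in $R_{x,\veps} = \,]\!-\!\infty,x]\times[x+\veps,\infty[$ corresponds bijectively to a \emph{complete} upcrossing of the slab $[x,x+\veps]$: the process must reach $\{B^{\mu,1}\geq x+\veps\}$ \emph{and} subsequently return to $\{B^{\mu,1}\leq x\}$, so that both the matched local maximum (coordinate $\geq x+\veps$) and the matched local minimum (coordinate $\leq x$) are finite. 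The final escaping excursion that sends $B^{\mu,1}$ to $+\infty$ without ever returning to $x$ produces the ``infinite'' bar, whose birth coordinate is $+\infty$ and which therefore does not contribute to the persistence measure on $\overline{\mathcal{X}}$.

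The first step is to compute the return probability $p := \PP_{x+\veps}(T^x<\infty)$ that $B^{\mu,1}$, started from $x+\veps$, hits $x$ before escaping to $+\infty$. Applying Doob's optional stopping theorem to the bounded positive martingale $M_t = \exp(-2\mu B^{\mu,1}_t)$ at the exit time $\tau_b$ of $[x,b]$ yields
\be
e^{-2\mu(x+\veps)} \,=\, \PP_{x+\veps}(B^{\mu,1}_{\tau_b}=x)\,e^{-2\mu x} + \PP_{x+\veps}(B^{\mu,1}_{\tau_b}=b)\,e^{-2\mu b}\,,
\ee
and letting $b\to\infty$ (using $\mu>0$, so that $e^{-2\mu b}\to 0$ and the escape probability equals one minus the hitting probability of $x$) gives $p=e^{-2\mu\veps}$. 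The case $\mu=0$ is recovered by continuity and is consistent with the (a.s.\ infinite) number of upcrossings of standard Brownian motion on $[0,\infty[$, since then the formula $(e^{2\mu\veps}-1)^{-1}$ diverges.

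Finally, by the strong Markov property applied successively at the stopping times $T_k^{x,\veps}, S_k^{x,\veps}$ of Definition \ref{def:vepsminmax}: starting from $0\leq x$, the process reaches $x+\veps$ almost surely (as $\mu\geq 0$); each time it is at $x+\veps$ it returns to $x$ independently with probability $p$; each time it is at $x$ it hits $x+\veps$ again almost surely. Consequently, the number $N^{x,x+\veps}$ of complete upcrossings is geometric with $\PP(N^{x,x+\veps}\geq k)=p^{k}$ for $k\geq 0$, whence
\be
\expect{N^{x,x+\veps}_{B^{\mu,1}}} \,=\, \sum_{k\geq 1} p^{k} \,=\, \frac{p}{1-p} \,=\, \frac{1}{e^{2\mu\veps}-1}\,,
\ee
as claimed. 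The only subtle step, and the main obstacle, is the bookkeeping in the first paragraph that distinguishes the complete upcrossings (each contributing a finite persistence diagram point to $R_{x,\veps}$) from the final incomplete escape to $+\infty$; this is precisely what accounts for the answer being $\frac{1}{e^{2\mu\veps}-1}$ rather than $1 + \frac{1}{e^{2\mu\veps}-1} = \frac{e^{2\mu\veps}}{e^{2\mu\veps}-1}$.
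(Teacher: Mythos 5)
Your argument is correct. Note that the paper offers no proof of this proposition at all --- it is quoted from Baryshnikov \cite{Baryshnikov_2019} --- so there is nothing internal to compare against; what you give is a self-contained elementary derivation, and it is sound: the exponential martingale $e^{-2\mu B^{\mu,1}_t}$ with optional stopping on the exit time of $[x,b]$ and $b\to\infty$ gives the downcrossing probability $p=e^{-2\mu\veps}$, the strong Markov property makes the number of completed upcrossing--downcrossing cycles geometric with $\PP(\cdot\geq k)=p^k$, and the expectation is $p/(1-p)=(e^{2\mu\veps}-1)^{-1}$. Your bookkeeping in the first paragraph is in fact the essential point, and you resolve it the right way: finite diagram points in $R_{x,\veps}$ are in bijection with the bounded components of $\{B^{\mu,1}\geq x\}$ that reach $x+\veps$, i.e.\ with completed down-crossings to $x$ after a visit to $x+\veps$, while the unbounded final component carries only the essential class (born at $+\infty$), which is excluded in Baryshnikov's count. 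It is worth observing that the paper's own propagator formalism of section \ref{sec:propagatorslocaltrees}, specialized to the propagators computed just below the proposition and evaluated in the $\lambda\to0$ (equivalently $t\to\infty$) limit, yields $\bigl(1-e^{-2\mu\veps}\bigr)^{-1}=1+(e^{2\mu\veps}-1)^{-1}$, because the paper's finite-horizon counting convention also counts the not-yet-terminated (essential) upcrossing; the discrepancy of exactly $1$ is precisely the infinite-bar convention you identify, so your exclusion of it is what reconciles the renewal/propagator computation with the formula as stated.
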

Following this result, we may deduce the local $\zeta$-function associated to $B^{\mu,1}$ is given by the following expression 
\be
\zeta^x_{B^{\mu,1}}(p)= (2\mu)^{-p} \Gamma(p+1) \zeta(p) \,.
\ee
Of course, we would like to have a similar result over a compact set $[0,t]$. However, the computations in this restricted space turn out to be somewhat more challenging. Applying our theory of propagators detailed in section \ref{sec:propagatorslocaltrees}, and noticing that $B^{\mu,\sigma}$ is Lévy, it suffices to compute $\bra 0 |a \ket$ to gain access to the Laplace transform of $\expect{N^{x,x+\veps}_t}$. 
\begin{proposition}
The propagators of the process $B^{\mu,\sigma}$ for $a>0$ are given by
\begin{align*}
\bra 0 | a \ket &= e^{\frac{a\mu - a \sqrt{\mu^2+2\sigma^2\lambda}}{\sigma^2}}\\
\bra a | 0 \ket &= e^{\frac{-a\mu -a\sqrt{\mu^2+2\sigma^2\lambda}}{\sigma^2}}\,.
\end{align*}
\end{proposition}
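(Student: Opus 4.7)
The plan is a standard application of Doob's optional stopping theorem to the exponential martingale associated with $B^{\mu,\sigma}$. Recall that $B^{\mu,\sigma}$ has Laplace exponent $\psi(\theta) = \mu\theta + \tfrac{1}{2}\sigma^{2}\theta^{2}$, in the sense that
\be
M^{\theta}_{t} := \exp\!\left(\theta(B^{\mu,\sigma}_{t}-B^{\mu,\sigma}_{0}) - \psi(\theta)\,t\right)
\ee
is a martingale for every real $\theta$. Setting $\psi(\theta)=\lambda$ yields the two roots
\be
\theta_{\pm} \;=\; \frac{-\mu \pm \sqrt{\mu^{2}+2\sigma^{2}\lambda}}{\sigma^{2}}\,,
\ee
with $\theta_{+}>0$ and $\theta_{-}<0$ for $\Real(\lambda)>0$.

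For $\bra 0\,|\,a\ket$ with $a>0$, I would start $B^{\mu,\sigma}$ at $0$ and work with $\theta=\theta_{+}$. On the event $\{t\leq T^{a}\}$ the process stays below $a$, so $M^{\theta_{+}}_{t\wedge T^{a}}\leq e^{\theta_{+}a}$, i.e.\ the stopped martingale is uniformly bounded. Doob's optional stopping theorem, together with $\psi(\theta_{+})=\lambda$ and $B^{\mu,\sigma}_{T^{a}}=a$ on $\{T^{a}<\infty\}$, then gives
\be
1 \;=\; \expect{M^{\theta_{+}}_{T^{a}}\mathbf{1}_{\{T^{a}<\infty\}}} \;=\; e^{\theta_{+}a}\,\expect{e^{-\lambda T^{a}}\mathbf{1}_{\{T^{a}<\infty\}}}\,,
\ee
and the contribution of $\{T^{a}=\infty\}$ vanishes for $\Real(\lambda)>0$. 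Solving yields exactly $\bra 0\,|\,a\ket = e^{-\theta_{+}a}$, matching the stated formula.

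The computation of $\bra a\,|\,0\ket$ is symmetric: starting at $a>0$ and working with $\theta=\theta_{-}<0$, the martingale $e^{\theta_{-}(B^{\mu,\sigma}_{t}-a)-\lambda t}$ is bounded by $e^{-\theta_{-}a}$ on $\{t\leq T^{0}\}$ since $B^{\mu,\sigma}_{t}\geq 0$ there. Optional stopping then gives $\bra a\,|\,0\ket = e^{\theta_{-}a}$. The only genuine technical point is justifying optional stopping at the (possibly infinite) hitting time; this is handled by first stopping at $T^{a}\wedge n$ (resp.\ $T^{0}\wedge n$), where the bounded stopping theorem applies directly, and then invoking dominated convergence as $n\to\infty$ using the uniform bound on the stopped martingale. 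I expect no deeper obstacle: everything reduces to the correct choice of the sign of $\theta$ so as to keep the exponential martingale bounded on the relevant side of the barrier.
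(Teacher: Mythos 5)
Your proposal is correct and follows essentially the same route as the paper: apply Doob's optional stopping theorem to the exponential martingale $e^{\theta B^{\mu,\sigma}_t-\psi(\theta)t}$, choosing the root of $\psi(\theta)=\lambda$ whose sign keeps the stopped martingale bounded on the relevant side of the barrier (the paper merely phrases the second case as $\bra a | 0 \ket = \bra 0 | -a \ket$ via the Lévy property instead of starting the process at $a$). Your added care about stopping at $T\wedge n$ and passing to the limit is a harmless refinement of the same argument.
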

\begin{proof}
Since $B^{\mu,\sigma}$ is a Lévy process, we start by noticing that $\bra a | 0 \ket = \bra 0 | -a \ket$, so that it suffices to calculate $\bra 0 | a \ket $ and $\bra 0 | -a \ket$. Denote $M^{\theta}_t$ the martingale defined by 
\be
M^{\theta}_t = \frac{e^{\theta B_t^{\mu,\sigma}}}{\expect{e^{\theta B_t^{\mu,\sigma}}}} \,.
\ee
We may compute the expectation in the denominator to be exactly
\be
\expect{e^{\theta B_t^{\mu,\sigma}}} = e^{\theta\mu t + \half \theta^2 \sigma^2 t} \,.
\ee
Further denoting 
\be
T^a = \inf\{t \geq 0 \, | \, B^{\mu,\sigma}_t = a\} \,,
\ee
we can calculate the propagators by considering the two following cases 
\begin{enumerate}
    \item $\bra 0 |a \ket$. In this case, $M_t^\theta$ is bounded for $\theta >0$ and by an application of Doob's stopping theorem, 
    \be
    \bra 0 | a \ket = \expect{e^{-\lambda T^a}} = e^{\frac{a\mu - a \sqrt{\mu^2+2\sigma^2\lambda}}{\sigma^2}} \, ;
    \ee
    \item $\bra 0 | -a \ket$. In this case, $M_{t \wedge T^a}^\theta$ is bounded for $\theta <0$, so that once again by the stopping theorem
    \be 
    \bra 0 | \!-\!a \ket = \expect{e^{-\lambda T^{-a}}} = e^{\frac{-a\mu -a\sqrt{\mu^2+2\sigma^2\lambda}}{\sigma^2}} \,,
    \ee 
\end{enumerate}
which finishes the proof.
\end{proof}
Following the discussion of section \ref{sec:propagatorslocaltrees},
\be
\Lag(\expect{N^{x,x+\veps}_t})(\lambda) = \frac{e^{x\frac{\mu -\sqrt{\mu ^2+ 2 \lambda  \sigma ^2}}{\sigma ^2}}}{2 \lambda } e^{\frac{\veps \mu}{\sigma^2}}\csch\left(\frac{\veps \sqrt{\mu^2 +2\lambda\sigma^2}}{\sigma^2}\right) \,.
\ee
Taking the Mellin transform we get 
\begin{align*}
\Mel\Lag(\expect{N^{x,x+\veps}_{B^{\mu,\sigma}}})(p,\lambda) &= \frac{e^{\frac{x \left(\mu -\sqrt{\mu ^2+ 2 \lambda  \sigma ^2}\right)}{\sigma ^2}}}{\lambda} \left(\frac{\sigma^2}{2\sqrt{\mu^2+2\lambda\sigma^2}}\right)^p \Gamma(p) \zeta\!\left(p,\half - \frac{\mu}{2\sqrt{\mu^2+2\lambda \sigma^2}}\right) \,,
\end{align*}
where $\zeta$ now denotes the Hurwitz $\zeta$-function, namely
\be
\zeta(p,z) = \sum_{k \geq 1} \frac{1}{(k+z)^p} \,,
\ee
which also admits a meromorphic continuation to the entire complex plane, so we can conclude that $\zeta^x_{B^{\mu,\sigma}}$ does as well. Again, we notice the presence of poles at every negative integer and at $p=1$. However, inverting the Laplace transform of this expression is difficult as soon as $\mu >0$, however, as expected, setting $\mu =0$ and $\sigma=1$, we retrieve the local $\zeta$-function of Brownian motion.

\subsubsection{Distribution of $N^{x,x+\veps}_{B^{\mu,\sigma}}$ and the local time $L_t^x(B^{\mu,\sigma})$}
As before, it is possible to calculate the moments of the occupation numbers $N^{x,x+\veps}_{B^{\mu,\sigma}}$, the result of this calculation is
\be
\Lag(\expect{(N_{B^{\mu,\sigma}}^{x,x+\veps})^{s-1}})(\lambda)=\frac{2 e^{\frac{x}{\sigma^2}\left(\mu -\sqrt{\mu^2 +2\lambda\sigma^2}\right)}e^{\frac{\mu\veps}{\sigma^2}}}{\lambda}  \Li_{-s+1}\!\left(e^{-\frac{2\veps}{\sigma^2} \sqrt{\mu^2 + 2\lambda\sigma^2}}\right) \sinh\!\left(\frac{\veps \sqrt{\mu^2 + 2\lambda \sigma^2}}{\sigma^2}\right) \,,
\ee
which implies that for $x>0$,
\be
\Mel_w \Lag(\PP(L_t^x=w))(\lambda,s)  = e^{\frac{x \left(\mu -\sqrt{2 \lambda  \sigma ^2+\mu ^2}\right)}{\sigma ^2}} \left(\frac{\sigma^2}{\sqrt{\mu^2 + 2\lambda \sigma^2}}\right)^{s-1}\!\!\!\! \Gamma(s) 
\ee
So for $w>0$,
\be
\Lag(\PP(L_t^x=w))(\lambda) =\frac{e^{\frac{x\mu}{\sigma^2}}\sqrt{\mu^2 + 2\lambda \sigma^2}}{\lambda \sigma^2} e^{-\frac{(x+w)}{\sigma^2}\sqrt{\mu^2 +2\lambda\sigma^2}}
\ee
Leading to 
\begin{align*}
\PP&(L_t^x =w) = e^{\frac{x\mu}{\sigma^2}} \int_0^t \frac{(x+w)^2-s\sigma^2}{\sqrt{2\pi s^5 \sigma^6}} e^{-\frac{(x+w)^2 + \mu^2 s^2}{2s\sigma^2}} \;ds \,.
\end{align*}

\subsection{Ornstein-Uhlenbeck process}
\label{sec:OUprocess}
Let us now consider the case of the Ornstein-Uhlenbeck process, \ie the It\^o diffusion satisfying the following SDE (for $\theta >0$ and $\sigma >0$)
\be
dX_t = -\theta X_t dt + \sigma dB_t \,.
\ee
From our discussion in section \ref{sec:ItoDiffusions}, it suffices to find the solutions to the differential equation 
\be
\frac{\sigma^2}{2} \frac{\del^2 \Psi}{\del x^2} -\theta x \frac{\partial \Psi}{\partial x}   = \lambda \Psi \,.
\ee
The solutions to the above equations are known, as this is nothing other than Hermite's differential equation. We can identify the two linearly independent solutions satisfying the respective boundary conditions to be
\be
\Psi_\lambda(x) = {}_1F_1\!\left(\frac{\lambda}{2\theta};\half;\frac{x^2\theta}{\sigma^2}\right) \quad \text{ and } \Phi_\lambda(x) = \Herm\!\left(-\frac{\lambda}{\theta};\frac{x\sqrt{\theta}}{\sigma}\right) \,.
\ee 
where ${}_1F_1$ denotes the confluent hypergeometric function and $\Herm(\mu;z)$ denotes the $\mu$th Hermite polynomial. The propagators of the Ornstein-Uhlenbeck process are readily given products of these functions, as mentionned in section \ref{sec:ItoDiffusions}, and expressions for $\Lag\expect{N^{x,x+\veps}_t}$ can be deduced from equation \ref{eq:LagNxxvepsIto}. For $x=0$, the expression obtained simplifies considerably to give
\be
\Lag[\expect{N^{0,\veps}_t}](\lambda) =\frac{\sigma }{\lambda \veps \sqrt{\theta}} \,\frac{\Gamma \left(\frac{\lambda }{2 \theta }\right)}{\Gamma \left(\frac{\theta +\lambda }{2 \theta }\right) \, _1F_1\left(\frac{\theta +\lambda }{2 \theta };\frac{3}{2};\frac{\veps^2 \theta }{\sigma ^2}\right)} \,.
\ee
This Laplace transform can in principle be formally inverted by virtue of the residue theorem. The expression above admits poles at every $\lambda = -2\theta k$, but there are also poles stemming from the zeros of the confluent hypergeometric function in the denominator, which are more difficult to locate. Nonetheless denoting $\mathcal{P}$ the set of poles stemming from ${}_1F_1$, we get the following formal expression 
\begin{align*}
\expect{N^{0,\veps}_t} &= \frac{\sqrt{\pi } \sigma  (2 \theta  t+\log (4)) \erfi\left(\frac{\veps\sqrt{\theta } }{\sigma }\right)-2 \veps\sqrt{\theta }  {}_1F_1^{(1,0,0)}\left(\frac{1}{2};\frac{3}{2};\frac{\theta  \veps^2}{\sigma ^2}\right)}{\pi ^{3/2} \sigma  \text{erfi}\left(\frac{\veps\sqrt{\theta } }{\sigma }\right)^2}   \\
&- \frac{\sigma}{2\veps\sqrt{\theta}}\sum_{k\geq 1} \frac{(-1)^k  e^{-2 \theta  k t}}{ k^2 \, \Gamma \left(\frac{1-2k}{2}\right) \Gamma (k) \, {}_1F_1\left(\frac{1-2k}{2};\frac{3}{2};\frac{\veps^2 \theta }{\sigma ^2}\right)} + \sum_{z \in \mathcal{P}} \Res(e^{\lambda t}\Lag\expect{N^{0,\veps}}, z) \,.
\end{align*}
More interestingly, equation \ref{eq:LagNxxvepsIto} can be used to yield the distribution of the local time of the Ornstein-Uhlenbeck process. The expresions for $x>0$ are rather involved, so we will study the case $x=0$ explicitly only. The simplification that occurs for the $0$ level is due to the supplementary scaling symmetry of the latter. Taking the limit as $\veps \to 0$ of $\Lag\expect{(N_t^{0,\veps})^{s-1}}$ we obtain 
\be
\Mel_w\Lag(\PP(L^0_t = w))(\lambda) = \frac{\Gamma (s)}{\lambda } \left(\frac{\sigma  \Gamma\left(\frac{\lambda }{2 \theta }\right)}{\sqrt{\theta}\Gamma\!\left(\frac{\theta +\lambda }{2 \theta }\right)}\right)^{s-1} \,.
\ee
This expression yields the Laplace transform of all the moments of $L_t^0$. For integer $s\geq 2$, these Laplace transforms can be formally inverted by using the residue theorem and computing the residues of the expression above. For example, 
\begin{align*}
\expect{L_t^0} &=\frac{\sigma}{\sqrt{\theta}} \frac{(2 \theta  t+\log (4))}{\sqrt{\pi} } + \frac{\sigma}{\sqrt{\theta}}\sum_{k \geq 1} \frac{(-1)^{k+1}  e^{-2 \theta  k t}}{ k^2 \Gamma (\frac{1-2k}{2}) \Gamma (k)} \\
\expect{(L_t^0)^2 } &= \frac{2\sigma^2}{\theta}\frac{\left(6 (\theta  t+\log (4))^2-\pi ^2\right)}{3 \pi  } - \frac{2\sigma^2}{\theta} \sum_{k \geq 1} \frac{ e^{-2 \theta  k t} \left(-2 k H_{-k-\frac{1}{2}}+2 k H_k+2 \theta  k t+1\right)}{ k^4 \Gamma(\frac{1-2k}{2})^2 \Gamma (k)^2} \,,
\end{align*}
and so on. The $H_k$ above denote the $k$th Harmonic number, \ie
\be
H_k = \sum_{n=1}^k \frac{1}{n} = \int_0^1 \frac{1-z^k}{1-z} \;dz\;, 
\ee 
where the second representation of $H_k$ holds for non-integer $k$. Finally, taking the inverse Mellin transform we get
\be
\Lag[\PP(L^0_t = w)](\lambda) = \frac{\sqrt{\theta } \Gamma \left(\frac{\theta +\lambda }{2 \theta }\right) }{\lambda  \sigma  \Gamma \left(\frac{\lambda }{2 \theta }\right)} \exp\left[-\frac{w\sqrt{\theta }  \Gamma \left(\frac{\theta +\lambda }{2 \theta }\right)}{\sigma  \Gamma \left(\frac{\lambda }{2 \theta }\right)}\right] \,.
\ee
This expression exhibits essential singularities at every $\lambda = -(2k-1)\theta$ for every $k \in \N$. For $x>0$, it is possible to treat the problem analogously, but the expressions involved -- while expressible in terms of Hermite polynomials and confluent hypergeometric functions -- remain quite involved. Nonetheless, the elementary methods introduced in this paper allow us to calculate such quantities, which have not been quantitatively studied until recently in \cite{Kishore_2021} via perturbative methods.
\par
For the sake of completeness we give the expression of $\Mel_w\Lag\PP(L_t^x=w)(\lambda,s)$ for $x>0$, 
\be
\Mel_w\Lag\PP(L_t^x=w)(\lambda,s) = \frac{\Gamma(s)}{{}_1F_1(\frac{\lambda}{2\theta};\half;\frac{x^2\theta}{\sigma^2}) \lambda} \left[\frac{\lambda}{\sigma \sqrt{\theta}} \frac{\Herm(-\frac{\lambda+\theta}{\lambda},\frac{x\sqrt{\theta}}{\sigma})}{\Herm(-\frac{\lambda}{\theta},\frac{x\sqrt{\theta}}{\sigma})} + \frac{x\lambda}{\sigma^2}\frac{{}_1F_1\left(1+\frac{\lambda }{2 \theta };\frac{3}{2};\frac{x^2 \theta }{\sigma ^2}\right)}{{}_1F_1\left(\frac{\lambda }{2 \theta };\frac{1}{2};\frac{x^2 \theta }{\sigma ^2}\right)}\right]^{1-s}\,.
\ee
Studying the poles in $\lambda$ of this expression, it is in principle possible to give expressions for the moments of the local times at $x$, as we did in the case of $x=0$.

\section{Acknowledgements}
The author would like to thank Pierre Pansu and Claude Viterbo for helping with the redaction of the manuscript as well as their guidance. Many thanks are also owed to Pascal Massart, Robert Adler, Fr\'ed\'eric Chazal and Shmuel Weinberger for the fruitful discussions regarding this work.

\bibliographystyle{abbrv}
\bibliography{PhDThesis}

\end{document}